\newtheorem{thm}{Theorem}[section]
\newtheorem{cor}[thm]{Corollary}
\newtheorem{lem}[thm]{Lemma}
\newtheorem{prop}[thm]{Proposition}
\theoremstyle{definition}
\newtheorem{defn}[thm]{Definition}
\newtheorem{rem}[thm]{Remark}
\numberwithin{equation}{section}
\newcommand{\norm}[1]{\Vert#1\Vert}
\newcommand{\Norm}[1]{\left\Vert#1\right\Vert}
\newcommand{\na}{\nabla}
\newcommand{\pa}{\partial}
\newcommand{\lec}{\lesssim}
\newcommand{\td}{\tilde}
\renewcommand{\div}{\operatorname{div}}
\newcommand\al{\alpha}
\newcommand\be{\beta}
\newcommand\de{\delta}
\newcommand\De{\Delta}
\newcommand{\ga}{\gamma}
\newcommand\Ga{\Gamma}
\newcommand\e {\varepsilon}
\newcommand\ka{\kappa}
\newcommand{\la}{\lambda}
\newcommand{\La}{\Lambda}
\newcommand\ph{\varphi}
\newcommand{\ta}{\tau}
\renewcommand{\th}{\theta}
\newcommand{\T}{\mathbb{T}}
\newcommand{\R}{\mathbb{R}}
\newcommand{\Z}{\mathbb{Z}}
\newcommand{\N}{\mathbb{N}}
\newcommand{\cF}{\mathcal{F}}
\newcommand{\cP}{\mathcal{P}}
\newcommand{\cR}{\mathcal{R}}
\newcommand{\crF}{\mathscr{F}}
\newcommand{\supp}{\operatorname{supp}}
\newcommand{\tr} {\mathop{\mathrm{tr}}}
\newcommand{\I}{\textrm{Id}}
\newcommand{\tri}{\triangle}
\newcommand{\idv}[1]{\mathcal{R}\left( #1\right)}
\renewcommand{\dot}[1]{\accentset{\circ}#1}
\DeclarePairedDelimiter{\ceil}{\lceil}{\rceil}
\def\dint{\,\ThisStyle{\ensurestackMath{%
  \stackinset{c}{.2\LMpt}{c}{.5\LMpt}{\SavedStyle-}{\SavedStyle\phantom{\int}}}%
  \setbox0=\hbox{$\SavedStyle\int\,$}\kern-\wd0}\int}
\begin{document}

\title{On Non-uniqueness of 
continuous entropy solutions to the isentropic compressible Euler equations}

\author{Vikram Giri, Hyunju Kwon}
\address{\parbox{\linewidth}{
Vikram Giri\\
Department of Mathematics, Princeton University\\
Fine Hall, Princeton, NJ 08544, USA\\
E-mail address: vgiri@math.princeton.edu\\
\\
Hyunju Kwon \\
School of Mathematics, Institute for Advanced Study\\
1 Einstein Dr., Princeton NJ 08540, USA\\
E-mail address: hkwon@ias.edu
 }
 }

\begin{abstract} 
We consider the Cauchy problem for the isentropic compressible Euler equations in a three-dimensional periodic domain under general pressure laws. For any smooth initial density away from the vacuum, we construct infinitely many entropy solutions with no presence of shock. In particular, the constructed density is smooth and the momentum is $\al$-H\"older continuous for $\al<1/7$. Also, we provide a continuous entropy solution satisfying the entropy inequality strictly.      
\end{abstract}

\maketitle

\section{Introduction} 
We consider the Euler equations for an isentropic compressible fluid on the spatially periodic domain $[0,T] \times \T^3$, $\T^3=[-\pi,\pi]^3$ and $T\in(0,\infty)$,
\begin{equation}\label{eqn.CE0}\begin{dcases}
\pa_t  \varrho + \div (\varrho u) =0 \\
\pa_t (\varrho u) + \div \left(\varrho u\otimes u\right) + \na [p(\varrho)] = 0.
\end{dcases}\end{equation}
The two unknowns are the mass density $\varrho:[0,T] \times \T^3\to [0,\infty)$ of the fluid (or gas) and its velocity $u:[0,T] \times \T^3\to \R^3$. In this paper, we consider mass densities bounded below by a positive constant (i.e. no formation of vacuum).
The equations express the conservation laws of mass and linear momentum, respectively, and are called the continuity equation and the momentum equation. The pressure $p$ is given as a function of the density, $p=p(\varrho)$. A typical example of the pressure is given by the polytropic pressure law $p(\varrho) = \ka \varrho^\ga$ for $\ka>0$ and $\ga>1$, but various pressure laws appear in the study of real gases and complex fluids, see \cite{CoFr,Whitham}. In particular, the equations \eqref{eqn.CE0} are a hyperbolic system when $p'(\varrho)>0$. 

For general hyperbolic system, the unique existence of a smooth solution for short time is well-known \cite{Majda, Kato}. As we see from the one-dimensional example, the Burgers equations, however, a smooth solution develops discontinuity in finite time. In an attempt to continue the solution after the singularity occurs, {\it weak solutions} {in $L^\infty_{t,x}$} have been considered, but they are known to be non-unique. This leads the Lax entropy inequalities as a selection principle and they play a successful role in the Burgers equations. Similarly, {the isentropic Euler equations \eqref{eqn.CE0} have many smooth solutions with finite-time blow-up. To single out physically relevant solutions among bounded weak solutions,} 
{\it the entropy inequality} is imposed as
\begin{align}
    \pa_t \left( \frac{\varrho|u|^2}{2} +  \varrho e( \varrho)\right) +
\div\left( \left(
\frac{\varrho|u|^2}{2} +  \varrho e( \varrho) + p( \varrho)\right)u\right)\leq 0 \label{LEI0}.
\end{align}
Here, $e:\R \to \R$ denotes the specific internal energy, which is related to pressure $p( \varrho)$ through $ \varrho^2 e'( \varrho)=p( \varrho)$. We often use $P(\varrho) := \varrho e(\varrho)$ instead, called pressure potential, which satisfies $\varrho P'(\varrho) = P(\varrho) + p(\varrho)$. Since the equality demonstrates the energy conservation law, \eqref{LEI0} is also called the {\it energy inequality}. Indeed, ${\varrho|u|^2}/{2} +  \varrho e( \varrho)$ represents the total energy density, the sum of the kinetic energy density and the internal energy density. {We call weak solutions $(\varrho, u)$ {in $L^\infty_{t,x}$} which solve \eqref{eqn.CE0} and satisfy \eqref{LEI0} in distribution sense as {\it entropy solutions}.} See \cite{Dafermos, Lax} for further discussion for general hyperbolic systems.

Entropy solutions have a weak-strong uniqueness principle {when $p'(\varrho)>0$;} if there exists a short-time classical solution (strong solution) to \eqref{eqn.CE0} then any entropy solutions (weak solutions) with the same initial data must coincide with it, \cite{Dafermos, Weid2018, GJK20}. However, it turns out that the entropy inequality \eqref{LEI0} cannot serve as a selection principle\footnote{For discussions on different-type of admissible conditions such as mixing entropy or maximal entropy production, see \cite{CG2019, CGL2020} for example.}. After conjectured by Elling \cite{Elling}, De Lellis and Sz\'ekelyhidi \cite{DLeSz2010} established the first non-uniqueness result, finding a bounded initial data which has infinitely many entropy solutions on $(0,\infty)\times \R^d$, $d\geq 2$.
This was extended in the works of Chiodaroli \cite{Ch2014} allowing regular initial density $\varrho_0$. 
In the special case of $d=2$ and $p(\varrho)=\varrho^2$, the non-uniqueness results were obtained 
for some Lipschitz initial data in \cite{ChDLKr2015} and even with smooth initial data in \cite{ChKr2021}. Indeed, the constructed solution coincides with the classical solution for finite time but collapsed into a perturbed Riemann state. Also, in the case of $d=2$ and under the polytropic pressure law, Markfelder and Klingenberg \cite{MaKl2018} showed
the non-uniqueness that encompasses a large variety of shocks and rarefaction waves. For a more comprehensive survey of these results, we refer the reader to \cite{Ma2020}. Recently, in the case of $p(\varrho)= \varrho^\gamma$, $1<\ga< 1+ \frac 2d$, $d=2,3$, Chen, Vasseur and Yu \cite{CVY21} obtained a dense subset of the energy space such that any initial data in the set generates infinitely many weak solutions on $(0,\infty)\times \T^d$ with no increment of the average of the total energy on the torus.

All the previous non-uniqueness results for entropy solutions produce entropy solutions. The paper, on the other hand, provides infinite many solutions even without the presence of discontinuities. The constructed solutions furthermore satisfy the energy equation.
The theorem is stated for \eqref{eqn.CE0} formulated in terms of the density and the linear momentum $m= \varrho u$,
\begin{equation}\label{eqn.CE}
\begin{dcases}
\pa_t  \varrho + \div m =0 \\
\pa_t m + \div \left(\frac{m\otimes m}{ \varrho}\right) + \na p( \varrho) = 0.
\end{dcases}
\end{equation}
and the corresponding energy inequality,
\begin{align}
    \pa_t \left( \frac{|m|^2}{2 \varrho} +  \varrho e( \varrho)\right) +
\div\left(\frac{m}{ \varrho} \left(
\frac{|m|^2}{2 \varrho} +  \varrho e( \varrho) + p( \varrho)\right)\right)\leq 0 \, . \label{LEI}
\end{align}

\begin{thm}\label{thm}
For any $0\le \be< 1/7$, initial density $\varrho_0\in C^\infty(\T^3)$ with $\varrho_0\geq \e_0$ for some positive constant $\e_0$, and { pressure $p\in C^\infty([\e_0,\infty))$}, we can find
infinitely many distinct entropy solutions, $\varrho \in C^\infty([0,T]\times \T^3)$ and $m\in C^\be([0, T]\times \T^3)$, to the isentropic compressible Euler equations \eqref{eqn.CE} emanating from the same initial data and satisfying the energy equation
\begin{align}\label{LEE}
    \pa_t \left( \frac{|m|^2}{2 \varrho} +  \varrho e( \varrho)\right) +
\div\left(\frac{m}{ \varrho} \left(
\frac{|m|^2}{2 \varrho} +  \varrho e( \varrho) + p( \varrho)\right)\right)= 0 \, 
\end{align}
in distribution sense.
\end{thm}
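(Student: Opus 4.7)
The plan is to prove Theorem \ref{thm} by a Nash-type convex integration iteration adapted to the compressible setting. I would reformulate \eqref{eqn.CE} as a \emph{compressible Euler-Reynolds system}
\[
\pa_t \varrho_q + \div m_q = 0, \qquad \pa_t m_q + \div\!\left(\frac{m_q\otimes m_q}{\varrho_q}\right) + \na p(\varrho_q) = \div R_q,
\]
where $R_q$ is a symmetric "Reynolds stress" that is reduced along the iteration. A crucial simplification here, absent in the bounded-solution theory, is that we may \emph{fix} $\varrho_q\equiv \varrho$ to be smooth for all $q$: choose $\varrho$ as a smooth extension of $\varrho_0$ bounded below by $\e_0/2$, then solve $\De\phi = -\pa_t\varrho$ and set $m_0=\na\phi$, defining $R_0$ through the momentum equation. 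Because $\varrho$ is then frozen, the continuity equation is preserved throughout the iteration as long as the perturbation $w_{q+1}:=m_{q+1}-m_q$ is taken to be divergence-free.

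At the step $q\mapsto q+1$, I would write $w_{q+1}=\sum_k a_k(t,x)\,W_k(\la_{q+1}x) + (\text{divergence corrector})$, where the $W_k$ are stationary Mikado-type divergence-free building blocks indexed by a finite family of directions, $\la_{q+1}$ is a high spatial frequency in a super-exponentially growing sequence, and the scalar amplitudes $a_k$ are determined via the standard geometric lemma so that
\[
\sum_k a_k^2\,W_k\otimes W_k \;\approx\; \varrho\,\rho_{q+1}(t)\,\I - R_q ,
\]
with $\rho_{q+1}(t)$ a small "energy pumping" profile used to steer the kinetic energy density $|m_{q+1}|^2/(2\varrho)$ toward a prescribed target. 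After mollifying $(\varrho,m_q,R_q)$ at a scale $\ell_{q+1}$ suitably intermediate between $\la_q^{-1}$ and $\la_{q+1}^{-1}$, the new Reynolds stress $R_{q+1}$ splits into the usual oscillation, transport, Nash, and corrector errors, which are then inverted by a symmetric right inverse $\cR$ of the divergence. Balancing these against the frequency and amplitude parameters yields the H\"older exponent $\be<1/7$; the $1/7$ threshold reflects the compressible transport error (amplitudes advected along the coarse mean flow $m_q/\varrho$) together with the single-index Mikado family, and is the natural analogue of the exponent obtained with Mikado building blocks in the incompressible setting.

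The energy equation \eqref{LEE} would be enforced by prescribing a smooth target profile $\b{e}(t)$ compatible with the initial datum and tuning $\rho_{q+1}$ so that $\int_{\T^3}\bigl(|m_{q+1}|^2/(2\varrho)+\varrho e(\varrho)\bigr)dx$ converges to $\b{e}(t)$ fast enough that the Reynolds defect in the energy identity vanishes in the limit. Infinitely many distinct solutions are obtained by choosing countably many admissible targets $\b{e}$ that agree at $t=0$.

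The principal obstacle, relative to an incompressible H\"older scheme, is the interaction of all errors with the non-constant density. The pressure $\na p(\varrho)$ is harmless because $\varrho$ is smooth and fixed and the term can be absorbed into $R_q$ modulo a constant, but the variable coefficient $1/\varrho$ in the nonlinear flux $m\otimes m/\varrho$ and the exact constraint $\div w_{q+1}=0$ produce commutator errors with the mollification. Handling these commutators carefully, and paying the transport-error cost along the mean flow $m_q/\varrho$, is what I expect to be the main source of loss and what ultimately constrains the H\"older exponent to $\be<1/7$.
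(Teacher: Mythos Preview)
Your plan has a genuine gap: tracking only a Reynolds stress $R_q$ is not enough to force the \emph{local} energy equation \eqref{LEE}. If $(m_q,R_q)$ solves the Euler--Reynolds system, multiplying the momentum equation by $m_q/\varrho$ yields an energy identity whose right-hand side contains, besides divergence terms, the ``flux--gradient'' term $\varrho R_q:\na(m_q/\varrho)$. This term has size $\sim \la_q\de_{q+1}$, which for $\al<1/7$ does \emph{not} vanish as $q\to\infty$; it is precisely the source of anomalous dissipation. Prescribing a total-energy profile $\b{e}(t)$ and tuning $\rho_{q+1}$ only controls the spatial average of this defect, not the defect itself in $\mathcal{D}'$. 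The paper deals with this by enlarging the relaxed system to carry a second error, the \emph{unsolved current} $\ph_q$, in a relaxed local-energy equation (the third line of \eqref{app.eq}), and the perturbation is built with \emph{two} families of weights: ``Reynolds weights'' to cancel $R_q$ and separate ``energy weights'' (a second set of Mikado directions with different moment conditions on $\psi_f^3$) to cancel $\ph_q$. It is this simultaneous reduction of $(R_q,\ph_q)$, inherited from the incompressible local-energy scheme of \cite{DLK20}, that fixes the threshold at $1/7$; your attribution of $1/7$ to ``transport error plus single-index Mikado'' is not the right mechanism (with $R_q$ alone, Mikado flows plus gluing reach $1/3$ in the incompressible case).

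Your non-uniqueness mechanism also fails for a related reason: once \eqref{LEE} holds in $\mathcal{D}'$, integrating over $\T^3$ forces the total energy to be \emph{conserved}, so any two solutions with the same initial data have the same $\b{e}(t)$; distinct choices of $\b{e}$ cannot produce distinct solutions. The paper instead takes $E\equiv 0$ throughout and obtains infinitely many solutions by a \emph{bifurcation} argument (Proposition~\ref{p:ind_technical}): at some step $\bar q$ one flips the sign of certain Reynolds weights on a short time interval $\mathcal{I}\subset(0,T)$, producing two tuples that agree near $t=0$ but differ by at least $\e_0\de_{\bar q}^{1/2}$ in $C^0_tL^2_x$, and then controls the temporal support of the difference along the rest of the iteration.
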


\noindent With a suitable modification of our arguments, one can also produce an analogous example of infinitely many entropy solutions with the same initial data that produce entropy. 

\medskip

Another aspect of the entropy inequality \eqref{LEI} is the energy balance law. Any smooth solutions to \eqref{eqn.CE} conserve the energy locally for any time, satisfying \eqref{LEE}. The conservation is still valid when a weak solution $(\varrho, m)$ satisfies $\varrho \in L^3(0,T; B^{\alpha}_{3,\infty}(\T^3))\cap L^\infty([0,T]\times \T^3)$, $\varrho\geq \e_0>0$, $m\in L^3(0,T; B^{\alpha}_{3,\infty}(\T^3))$ for $\al>1/3$ and $p\in C^2([\e_0, \norm{\varrho}_{L^\infty}])$, see \cite{GMSG2018, FGSGW2017} (we also refer to \cite{DE2018} for the conservation in the Euler system). 
The result is sharp because shock solutions \cite{FGSGW2017}, yet even in the absence of shock, an entropy solution can dissipate the total energy.

\begin{thm}\label{thm.onsager}
For any $0\leq \beta < 1/7$, $ \varrho_0\in C^\infty(\T^3)$ with $ \varrho_0\geq \e_0$ for some positive constant $\e_0$, and { $p\in C^\infty([\e_0,\infty))$}, there is an entropy solution $(\varrho, m) \in C^\infty([0,T]\times \T^3)\times C^\be([0, T]\times \T^3)$ to the isentropic compressible Euler equations \eqref{eqn.CE} such that $(\varrho, m)$ satisfies the entropy inequality \eqref{LEI} strictly in distribution sense. 
\end{thm}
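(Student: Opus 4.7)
The plan is to rerun the convex integration scheme that proves Theorem \ref{thm} but with a strictly decreasing target total energy in place of the conserved one. The iterative construction underlying Theorem \ref{thm} is flexible enough to prescribe essentially any smooth profile for the spatial total energy
\[
\cE(t) := \int_{\T^3}\left(\frac{|m|^2}{2\varrho} + \varrho e(\varrho)\right)\,dx
\]
compatible with the initial datum. In Theorem \ref{thm} one takes $\cE$ constant, recovering the energy equality \eqref{LEE}; here one should instead choose a smooth $\cE$ with $\cE'(t) < 0$ throughout $[0, T]$.

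Concretely, fix any smooth initial momentum $m_0$ compatible with the given $\varrho_0 \ge \e_0$, set $E_0 := \int_{\T^3}\left(|m_0|^2/(2\varrho_0) + \varrho_0 e(\varrho_0)\right)\,dx$, and pick a smooth $\cE:[0,T] \to \R$ with $\cE(0) = E_0$ and $\cE'(t) < 0$ on $[0,T]$. Running the adapted scheme will produce an entropy solution $(\varrho, m) \in C^\infty([0,T] \times \T^3) \times C^\be([0,T] \times \T^3)$ emanating from $(\varrho_0, m_0)$ with total energy $\cE(t)$. To read off the strict entropy inequality, let $\mu$ denote the non-positive distribution on the left-hand side of \eqref{LEI} and test against $\phi(t,x) = \psi(t)$ with $\psi \in C^\infty_c((0,T))$, $\psi \ge 0$, $\psi \not\equiv 0$. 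The divergence term integrates to zero on $\T^3$, and integration by parts in $t$ gives
\[
\langle \mu, \phi \rangle = -\int_0^T \psi'(t)\,\cE(t)\,dt = \int_0^T \psi(t)\,\cE'(t)\,dt < 0,
\]
so $\mu$ is a non-trivial non-positive distribution; in other words, \eqref{LEI} holds strictly in the distribution sense.

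The main difficulty is ensuring the local entropy admissibility $\mu \le 0$ alongside the prescribed strict decrease of the global energy. At each iteration step, the energy defect of the approximating subsolution must be arranged as a \emph{non-negative} smooth function with spatial average $-\cE'(t)/|\T^3|$, so that after passing to the limit the defect produces a non-positive measure $\mu$ with the desired global integral $\cE'(t) < 0$. This kind of sign-controlled energy defect is standard in Onsager-type schemes for incompressible Euler; its adaptation to the compressible setting used for Theorem \ref{thm} leverages the fact that the density $\varrho$ remains smooth and is essentially unperturbed by the oscillatory correction, so the internal energy $\varrho e(\varrho)$ contributes a controllable smooth term while the Mikado-like perturbations to $m$ carry the dissipative defect in the kinetic energy $|m|^2/(2\varrho)$. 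Once the iterative construction maintains this sign control at each step, the distributional computation above yields the strict entropy inequality.
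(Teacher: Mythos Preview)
Your high-level plan---prescribe a strictly decreasing total energy and rerun the scheme of Theorem \ref{thm}---matches the paper's approach, but you misidentify where the work lies.

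In the paper's framework the relaxed energy equation in \eqref{app.eq} already carries a prescribed term $\partial_t E$ on the right, with $E=E(t)$ fixed once and for all and preserved verbatim by each inductive step (Proposition \ref{ind.hyp}). When the errors $(R_q,c_q,\varphi_q)\to 0$, the energy balance becomes the exact identity
\[
\partial_t\Bigl(\tfrac{|m|^2}{2\varrho}+P(\varrho)\Bigr)+\div\Bigl(\tfrac{m}{\varrho}\bigl(\tfrac{|m|^2}{2\varrho}+\varrho P'(\varrho)\bigr)\Bigr)=E'(t)
\]
in distributions. For Theorem \ref{thm} one takes $E\equiv 0$; for Theorem \ref{thm.onsager} one takes the explicit $\overline{E}$ of \eqref{def.E}, which has $\overline{E}'<0$ on $[0,T]$, so the left side of \eqref{LEI} equals a smooth, everywhere negative function of $t$ alone. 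Strictness is then immediate, with no separate admissibility check.

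Your ``main difficulty''---arranging a spatially varying non-negative energy defect at each step so that the sign survives the limit---does not arise. At every stage the relaxed energy equation is an \emph{equality}; the defect $\partial_t E$ is spatially constant, and the limit yields an identity, not an inequality propagated through a sign condition. The only constraints are $E'\leq 0$ and the smallness \eqref{est.E} needed for the starting tuple. Separately, you cannot ``fix any smooth initial momentum $m_0$'': the theorem does not prescribe $m|_{t=0}$, and the starting approximation $m_0$ (a function on $[0,T]\times\T^3$, not an initial datum) is built in Section \ref{sec 2.2} from Mikado flows tailored to make $R_0,\varphi_0$ small.
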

\begin{rem} The constructed solution satisfies the total energy/entropy dissipation,
\begin{equation}\label{eqn.Onsager}
    \int_{\T^3} \frac{|m(t,x)|^2}{2 \varrho(t,x)} +  \varrho(t,x) e( \varrho(t,x)) \, dx < \int_{\T^3} \frac{|m_0|^2}{2 \varrho_0} + \varrho_0 e(\varrho_0) \, dx\, ,
\end{equation}
where $(\varrho_0, m_0)$ denotes the initial data of the solution $(\varrho,m)$.
\end{rem}

The proof is relying on the convex integration scheme starting from De Lellis and Sz\'ekelyhidi \cite{DLSz2012,DLSz2013}, used with great success in proving the longstanding open Onsager Conjecture \cite{On1949} for {\it incompressible} Euler equations. Indeed, the full conjecture is established by Isett \cite{Is2016} (see also \cite{BDLSV2020}), after a series of developments \cite{DLSz2013,DlSzJEMS,Is2013, Bu2014,BuDLeSz2013,Bu2015,BuDLeIsSz2015,BuDLeSz2016,DaSz2016,IsOh2016}. In the effort of a rigorous mathematical validation of the classical Kolmogorov's theory of turbulence, a stronger version of Onsager conjecture has been introduced, namely for $\al<1/3$ the existence of $\alpha$-H\"older continuous weak solutions of the incompressible Euler equations satisfying the local energy inequality strictly,
\begin{equation}\label{eqn.ILEE}
\pa_t \frac{|u|^2}2 + \div{\left(u\left(\frac{|u|^2}2 + p\right)\right)} < 0.
\end{equation}
Building upon \cite{DLeSz2010, Is17}, De Lellis and the second author in \cite{DLK20} obtain the result up to the threshold 1/7, which is the same as the threshold exponent $1/7$ in Theorem \ref{thm.onsager}. Indeed,  we adapt the convex integration used in \cite{DLK20} to the compressible case, using the structural similarity between the entropy inequality \eqref{LEI0} and the local energy inequality for the incompressible Euler equations.

\section{Outline of the proof}

We construct {entropy solutions approximations with sequences of {\it dissipative Euler-Reynolds flows}. }

\begin{defn} For a given $ \varrho\in C^\infty({[T_1,T_2]}\times \T^3)$ with $ \varrho\geq \e_0$ for some positive constant $\e_0$, a tuple of smooth tensors $(m, c, R,\ph)$ is a {\it dissipative Euler-Reynolds flow} with {\it global energy loss} $E=E(t,x)$ if it solves the following system
\begin{equation}\begin{split}\label{app.eq}
&\pa_t  \varrho + \div m =0\\
&\pa_t m + \div \left(\frac{m\otimes m}{ \varrho}\right) + \na p( \varrho) = \div( \varrho (R-c\I) )\\
&\pa_t \left( \frac{|m|^2}{2 \varrho} + P( \varrho)\right) +
\div\left(\frac{m}{ \varrho} \left(
\frac{|m|^2}{2 \varrho} +  \varrho P'( \varrho)\right)\right)\\
&\hspace{4cm}=  \varrho \left(\pa_t + \frac{m}{ \varrho}\cdot\na\right) \frac12\tr(R) + \div((R-c\I) m) + \div ( \varrho\ph) + \pa_t E
\end{split}\end{equation}
{in distribution sense,}
where $(\div S)_i = \pa_{j} S_{ij}$ and $\I$ is the identity matrix. To be consistent with the term dissipative, we assume that $\pa_t E\leq 0$. 
\end{defn}

{Compared with a dissipative Euler-Reynolds flow for the incompressible Euler equations introduced in \cite{Is17, DLK20}, the counterpart for the compressible Euler equations has the constant $c$. When the fluid is incompressible ($\varrho={\text{const.}}$), the terms involved with $c$ vanish, while in the compressible case, the constant $c$ plays an important role in cancelling the Reynolds stress from the previous step and estimating the new unsolved current error in the convex integration scheme. }

\subsection{Induction scheme} 
At each $q$th step, we construct a dissipative Euler-Reynold flow $(m_q, c_q, R_q, \ph_q)$ with some fixed global energy loss $E=E(t)$, where $c_q$ is prescribed and $(c_q, R_q,\ph_q)$ converges to $0$ in $\R\times C^0([0,T]\times \T^3)\times C^0([0,T]\times \T^3)$ as $q$ goes to infinity. Then, we see that limit solution will solve \eqref{eqn.CE}, \eqref{LEI} in distribution sense. 

More precisely, for $q\in \N\cup\{0\}$ we introduce the frequency $\la_q$ and the amplitude $\de_q^\frac 12$ of the momentum $m_q$, which have the form
\begin{align*}
\la_q = \ceil{\la_0 ^{(b^q)}}, \quad \de_q = \la_q^{-2\al}\, ,
\end{align*}
where $\alpha$ is a positive parameter smaller than $1$ and $b$ and $\lambda_0$ are real parameters larger than $1$ (however, while $b$ will be typically chosen close to $1$, $\lambda_0$ will be typically chosen very large). In particular, $\lambda_q\delta_q^{\frac{1}{2}}$ is a monotone increasing sequence. We also set $c_q$ as 
\[
c_q = \sum_{j=q+1}^\infty \de_j,
\]
At $(q+1)$th step, we then find a correction $n_{q+1} := m_{q+1} - m_q$ to make the error $(R_q, \ph_q)$ get smaller (in $C^0$ space) as $q$ goes to infinity. 

 In the induction hypothesis, we will assume several estimates on the tuple $(m_q, R_q, \ph_q)$. For technical reasons, the domains of definition of the tuples is changing at each step and it is given by $[-\tau_{q-1}, T+\tau_{q-1}]\times \mathbb T^3$, where $\tau_{-1} = (\la_0\de_0^\frac12)^{-1}$ and for $q\geq 0$ the parameter $\tau_q$ is defined by 
\[
\tau_q = \left(C_\varrho M \la_q^\frac12\la_{q+1}^\frac12 \de_q^\frac14\de_{q+1}^\frac14 \right)^{-1}
\] 
for some constant $C_\varrho$ depending only {on $\varrho$} and a constant $M=M(\varrho, p)$ depending only {on $\varrho$} and $p$; they will be specified later in \eqref{mu.tau} and Proposition \ref{ind.hyp}, respectively. Note the important fact that $\tau_q$ is decreasing in $q$ for the choice of sufficiently large $\la_0$. In order to shorten our formulas, it is convenient to introduce the following notation:
\begin{itemize}
\item $\cal{I} + \sigma$ is the concentric enlarged interval $(a-\sigma, b+\sigma)$ when $\cal{I} = [a,b]$;
\item  $\norm{F_q}_N$ is the $C^0_t C^N_x$ norm of $F_q$ on its domain of definition, namely 
\[
\norm{F_q}_N := \norm{F_q}_{C^0([0, T]+ \tau_{q-1};C^N(\T^3))}\, .
\]
When $F$ is a function of time or space only, we abuse notation and write $\norm{F}_0 =\norm{F}_{C^0(0,T)}$ for a time function $F$ or $\norm{F}_0 =\norm{F}_{C^0(\T^3)}$ for a space function $F$.
\end{itemize}

We are now ready to detail the inductive estimates:
\begin{align}
\norm{m_q}_0 \leq  \underline{M}-\de_q^\frac12, \quad \norm{m_q}_N &\leq M\la_q^N \de_q^\frac 12, \quad N=1,2 \label{est.vp}
\end{align}
and
\begin{align}
\norm{R_q}_N &\leq \la_q^{N-3\ga}\de_{q+1}, \quad \norm{D_{t,q} R_q}_{N-1} \leq \la_q^{N-3\ga} \de_q^\frac 12 \de_{q+1}, \qquad N = 0,1,2\label{est.R}\\
\norm{\ph_q}_N &\leq \la_q^{N-3\ga} \de_{q+1}^\frac 32, \quad
\norm{D_{t,q} \ph_q}_{N-1} \leq \la_q^{N-3\ga}\de_q^\frac12 \de_{q+1}^\frac 32, \qquad N = 0,1,2
 \label{est.ph}
\end{align}
where $D_{t,q} = \pa_t + \frac{m_q}{\varrho}\cdot \na $ and $\ga = (b-1)^2$ and constant $\underline{M}=\underline{M}(\varrho, p)\geq 1$ will be determined in Section \ref{sec 2.2}.

\begin{rem}\label{r:estimates_material_derivative}
Note that in a writing like \eqref{est.R} and \eqref{est.ph}, for $N=0$ we are {\em not claiming any negative Sobolev estimate} on $D_{t,q} F$: the reader should just consider the advective derivative estimate to be an empty statement when $N=0$. The reason for this convention is just to make the notation easier, as we do not have to state in a separate line the estimate for $\|F\|_0$ in many future statements. 
\end{rem} 

 Notice that \eqref{app.eq} is invariant under addition to $E$ of a constant and we adopt the normalization condition $E(0)=0$. We will therefore assume
\begin{equation}\label{e:assumption_on_E}
E(0) =0\, , \quad E' \leq 0.
\end{equation}
Under this setting, the core inductive proposition is given as follows. 
\begin{prop}[Inductive proposition]\label{ind.hyp} Let $\al\in (0,1/7)$ and let $\varrho\in C^\infty({[-\tau_{-1},T+\tau_{-1}]}\times \T^3)$ be a function with $\varrho\geq \e_0$ for some positive constant $\e_0$ and $\frac d{dt}\int_{\T^3} \varrho(t,x) \,d x =  0$ for all $t$. 
There exists a constant $M={M(\varrho)}{>1}$, functions $\bar{b} (\alpha) >1$ and $\Lambda_0=\Lambda_0 (\alpha, b,{M}, \varrho, p) >0$ such that the following property holds. Let $b \in (1, \bar b (\alpha))$ and $\la_0\geq \La_0 $ and let $c_q = \sum_{j=q+1}^\infty \de_j$ for any $q\in \N$. Assume that a tuple of tensors $(m_q,c_q,R_q,\ph_q)$ is a dissipative Euler-Reynolds flow defined on the time interval $[0,T]+ \tau_{q-1}$ satisfying \eqref{est.vp}-\eqref{est.ph} for an energy loss $E$ satisfying \eqref{e:assumption_on_E}. Then, we can find a corrected dissipative Euler-Reynolds flow $(m_{q+1},  c_{q+1},  R_{q+1}, \ph_{q+1})$ with prescribed $c_{q+1}$ on the time interval $[0,T]+\tau_q$ for the same energy loss $E$ which satisfies \eqref{est.vp}-\eqref{est.ph} for $q+1$ and 
\begin{align}\label{cauchy}
\norm{m_{q+1}-m_q}_{C^0([0,T]\times \T^3)}+ \frac 1{\la_{q+1}}\norm{m_{q+1}-m_q}_{C^0([0,T];C^1(\T^3))} \leq  M \de_{q+1}^\frac 12.  
\end{align}
\end{prop}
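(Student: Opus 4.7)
The plan is to execute a single step of a Nash--type convex integration scheme augmented by a Newton iteration, adapted from the incompressible local-energy construction of \cite{DLK20} to the compressible setting, with $\varrho$ treated as a smooth prescribed coefficient. First I would mollify $(m_q, R_q, \ph_q)$ in space at a length scale $\ell$ chosen slightly below $\la_q^{-1}$ (roughly $\ell = \la_q^{-1-\ga}$), obtaining $(m_\ell, R_\ell, \ph_\ell)$ whose spatial derivatives are controlled by powers of $\ell^{-1}$ and advective derivatives by $\tau_q^{-1}$, and whose commutator errors are small enough to be absorbed in the final stress $R_{q+1}$. The smoothness of $\varrho$ together with $\varrho\geq \e_0$ ensures that inverse and $p(\varrho)$ factors cause no harm, provided the constant $M$ depends on finitely many derivatives of $\varrho$ and $p$.

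Next comes the Isett-type \emph{gluing}: I partition the extended time interval $[0,T] + \tau_{q-1}$ into overlapping subintervals of length $\sim\tau_q$ and, on each one, solve the compressible Euler--Reynolds system exactly with density $\varrho$ and initial data equal to $m_\ell$ at the center. A smooth partition of unity in time glues these exact flows into a tuple $(\bar m_q, \bar R_q, \bar \ph_q)$ whose stress and current error are supported in the narrow overlap regions and enjoy improved derivative bounds. Since $\varrho$ is prescribed and $\pa_t\varrho + \div m_q =0$, both the gluing correction and the subsequent perturbation must be divergence-free; the formula $\tau_q = (C_\varrho M \la_q^{1/2}\la_{q+1}^{1/2}\de_q^{1/4}\de_{q+1}^{1/4})^{-1}$ is precisely the CFL threshold compatible with $\|m_\ell\|_1 \lec \la_q \de_q^{1/2}$ and the upcoming frequency $\la_{q+1}$.

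On each temporal piece where $\bar R_q$ lives, I then construct a divergence-free correction $w_{q+1} = w_{q+1}^{(p)} + w_{q+1}^{(c)}$ as a sum of Mikado-type compactly supported building blocks $W_\xi$ oscillating at frequency $\la_{q+1}$, with amplitudes $a_\xi$ determined by the geometric decomposition of $\de_{q+1}\I - \bar R_q$. Here the prescribed choice $c_q - c_{q+1} = \de_{q+1}$ plays a decisive role: it provides exactly the positive identity shift that makes the target tensor positive definite and lets the geometric lemma apply, simultaneously cancelling the leading oscillation in $\div(w_{q+1}\otimes w_{q+1}/\varrho)$ against $\div(\varrho(\bar R_q - c_q\I)) + \div(\varrho c_{q+1}\I)$. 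Setting $m_{q+1} = \bar m_q + w_{q+1}$, the new error $(R_{q+1}, \ph_{q+1})$ is recovered by applying an improved microlocal antidivergence to the oscillation, transport, Nash, and mollification errors, each gaining a factor $\la_{q+1}^{-1}$ against the high-frequency amplitude.

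The main obstacle, and the reason for the threshold $\al<1/7$, is reducing the \emph{current error} $\ph_{q+1}$ down to size $\la_{q+1}^{-3\ga}\de_{q+2}^{3/2}$. A single Nash correction only yields a current error of order $\de_{q+1}^{3/2}$, one full power of $\de$ too large. Following \cite{DLK20}, I would resolve this by iterating a fixed finite number of Newton steps: after the principal Mikado correction, the slowly varying part of the residual current is peeled off by a secondary oscillatory corrector whose amplitude solves a transport equation along the mollified flow, and the procedure is repeated until the residual falls below threshold. Throughout, the scalar constant $c$ in the compressible Euler--Reynolds definition acts as a book-keeping device, absorbing the trace parts of both momentum and energy errors so that the global energy loss $E$ is exactly preserved and both $\|R_{q+1}\|_N$ and $\|D_{t,q+1}R_{q+1}\|_{N-1}$, together with their counterparts for $\ph_{q+1}$, close under the induction provided $b$ is chosen sufficiently close to $1$ and $\la_0$ sufficiently large.
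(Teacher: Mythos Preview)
Your outline describes a gluing-plus-Newton scheme, but that is \emph{not} what the paper (or \cite{DLK20}, which it follows) actually does. There is no gluing step and no Newton iteration anywhere in the construction. Instead, the paper works directly with a partition of unity in \emph{both} time (scale $\tau_q$) and space (scale $\mu_q$), and carries \emph{two} families of Mikado directions $\cF^{j,R}\cup\cF^{j,\ph}$. The weights $\gamma_I$ for $I\in\mathscr I_\ph$ are chosen so that the low-frequency part of the cubic term $\tfrac12|n_o|^2 n_o$ equals $-\varrho^3\ph_\ell$ exactly (see \eqref{can.ph}); this cancels the current in a single pass, with no secondary Newton corrector. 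The $1/7$ threshold in the paper arises from the spatial discretization scale $\mu_q$ needed to keep the transported Mikado tubes disjoint (Proposition~\ref{p:supports}), not from any Newton depth.

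Your proposed route is closer to Isett's original dissipative scheme \cite{Is17} than to \cite{DLK20}, and it has a real obstacle in this compressible setting: the gluing step asks you to solve the Euler system \emph{with prescribed density $\varrho$} on each subinterval. Since $\varrho$ is fixed and $\pa_t\varrho+\div m=0$ must hold, you are not solving a genuine compressible Cauchy problem but a constrained transport system for $m$ with $\div m=-\pa_t\varrho$ forced; obtaining exact local solutions with the required estimates is not standard and would need its own argument. The paper sidesteps this entirely by never solving anything exactly: the $\theta_u$ cutoffs plus the backward flow maps $\xi_I$ replace the gluing, and the errors generated are handled through the microlocal antidivergence lemma (Corollary~\ref{cor.mic2}). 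Your mollification scale $\ell=\la_q^{-1-\ga}$ also differs from the paper's $\ell=\la_q^{-3/4}\la_{q+1}^{-1/4}(\de_{q+1}/\de_q)^{3/8}$, which is tuned to the two-scale structure.
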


While the latter proposition would be enough to prove Theorem \ref{thm.onsager}, we will indeed need a technical refinement in order to show Theorem \ref{thm}. In its statement we use the following convention:
\begin{itemize}
\item Given a function $f$ on $[0, T]\times \T^3$, $\supp_t (f)$ denotes its temporal support, 
\[
\supp_t (f) := \{t: \exists \ x \;\mbox{with}\; { f (t,x)} \neq 0\}\, .
\] 
\item Given an interval $\cal{I} = [a,b]$, $|\cal{I}|$ means its length $(b-a)$, and $\cal{I} + \sigma$ denotes the concentric enlarged interval $(a-\sigma, b+\sigma)$. 
\end{itemize}

{
\begin{rem} The assumption
$\frac d{dt}\int_{\T^3} \varrho(t,x) \,d x =  0$ for all $t$ is a compatibility condition of the continuity equation.
\end{rem}
\begin{rem} \label{rem:dep}
The constant $M$ depends on an upper bound of $\norm{\varrho}_{C([0,T]+\tau_{-1}; C^2(\T^3))}$ and $\e_0$ more precisely. Also, the dependence of $\La_0$ on $\varrho$ and $p$ means the dependence on $\e_0$ and upper bounds of $\norm{\varrho}_{C([0,T]+\tau_{-1}; C^{n_0+1}(\T^3))}$, $\norm{\pa_t\varrho}_{C([0,T]+\tau_{-1}; C^{n_0+1}(\T^3))}$ and $\norm{p}_{C^{n_0+1}(\T^3)}$, where $n_0 = \ceil{\frac{2b(2+\al)}{(b-1)(1-\al)}}$. The norm on $p$ can be weakened, but we don't pursue that direction in this paper. 
\end{rem}
}

\begin{prop}[Bifurcating inductive proposition]\label{p:ind_technical}
{Let $\varrho\in C^\infty({[-\tau_{-1},T+\tau_{-1}]}\times \T^3)$ be a function with $\varrho\geq \e_0$ for some positive constant $\e_0$ and $\frac d{dt}\int_{\T^3} \varrho(t,x) \,d x =  0$ for all $t$. }
Let a constant $M$, the functions  $\bar b$ and $\Lambda_0$, the parameters $\alpha$, $b$, $\lambda_0$ and the tuple $(m_q,c_q,R_q,\ph_q)$ be as in the statement of Proposition \ref{ind.hyp}. For any time interval $\cal{I}\subset (0, T)$ with $|\cal{I}|\geq 3\tau_q$
we can produce a first tuple $({m}_{q+1}, c_{q+1},  R_{q+1},  \ph_{q+1})$ and a second one $(\td m_{q+1},  c_{q+1}, \td R_{q+1},  \td \ph_{q+1})$ which share the same initial data, satisfy the same conclusions of Proposition \ref{ind.hyp} and additionally
\begin{align}\label{e:distance_and_support}
\norm{m_{q+1}-\td m_{q+1}}_{C^0([0, T];L^2(\T^3)) }\geq {{\e_0}} \de_{q+1}^\frac 12, \quad
\supp_t(m_{q+1}-\td m_{q+1}) \subset \cal{I}. 
\end{align}
Furthermore, if we are given two tuples $(m_q,c_q,R_q,\ph_q)$ and $(\td m_q, c_q, \td R_q,  \td\ph_q)$ satisfying \eqref{est.vp}-\eqref{est.ph} and
\[
\supp_t(m_q-\td m_q, R_q-\td R_q,  \ph_q-\td \ph_q)\subset \cal{J} 
\]
for some interval $\cal{J}\subset (0, T)$, we can exhibit corrected counterparts $({m}_{q+1},  c_{q+1},  R_{q+1},  \ph_{q+1})$ and $(\td m_{q+1},  c_{q+1}, \td R_{q+1},  \td\ph_{q+1})$ again satisfying the same conclusions of Proposition \ref{ind.hyp} together with the following control on the support of their difference:
\begin{equation}\label{e:second_support}
\supp_t(m_{q+1}-\td m_{q+1}, R_{q+1}-\td R_{q+1},  \ph_{q+1}-\td \ph_{q+1})\subset \cal{J} + (\la_q\de_q^\frac12)^{-1}.
\end{equation}
\end{prop}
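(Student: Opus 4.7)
The plan is to revisit the construction of Proposition \ref{ind.hyp} and introduce the two modifications in parallel, following the bifurcation scheme of the incompressible analog in \cite{DLK20}. Recall that the correction produced in that proposition has the schematic form $w_{q+1} := m_{q+1}-m_q = \sum_{k} a_k(t,x)\, W_k(\la_{q+1}\Phi_k(t,x)) + (\text{lower order})$, where the $W_k$ are the high-frequency building blocks (of Mikado/intermittent type) in a finite index set $\Lambda$, the $a_k$ are amplitudes of size $\sim \de_{q+1}^{\frac12}$ determined from the (mollified) $R_q$ with normalization depending on $\varrho\geq\e_0$, and $\Phi_k$ are phase functions transported by the coarse-scale flow of $m_q/\varrho$.

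For the first assertion, I fix a smooth temporal cutoff $\eta\in C^\infty_c(\cal{I})$ equal to $1$ on a sub-interval $\cal{I}'\subset\cal{I}$ of length at least $\tau_q$; this is possible because $|\cal{I}|\geq 3\tau_q$. I construct $({m}_{q+1},c_{q+1},R_{q+1},\ph_{q+1})$ exactly as in Proposition \ref{ind.hyp}, and in parallel I construct $(\td m_{q+1},c_{q+1},\td R_{q+1},\td\ph_{q+1})$ by replacing, on the set $\{\eta>0\}$, each building block $W_k$ by a phase-shifted (or disjoint-direction) counterpart $W_k'$ chosen so that $\langle W_k'\otimes W_k'\rangle = \langle W_k\otimes W_k\rangle$ and $\langle W_k\cdot W_k'\rangle = 0$ in the high-frequency $L^2$ average. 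The first identity ensures that the same cancellation of $R_q$ takes place and that the prescribed $c_{q+1}$ and the global energy loss $E$ are unaffected, so the conclusions of Proposition \ref{ind.hyp} transfer verbatim to the second tuple. Since $\eta$ vanishes at $t=0$, the two tuples share initial data and $\supp_t(m_{q+1}-\td m_{q+1})\subset \cal{I}$. Restricting to $t\in\cal{I}'$, where $\eta\equiv 1$, the second identity combined with a standard high-frequency stationary-phase/mollification argument gives
\[
\int_{\T^3}|w_{q+1}(t,x)-\td w_{q+1}(t,x)|^2\,dx \;\gtrsim\; \sum_{k}\int_{\T^3} a_k^2\,dx \;\gtrsim\; \e_0\,\de_{q+1},
\]
where the factor $\e_0$ tracks the $\varrho^{-1}$ appearing in the normalization of $a_k$. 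This yields \eqref{e:distance_and_support} after taking the supremum over $t\in\cal{I}'$.

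For the second assertion I run the same construction on both input tuples; since the scheme is deterministic in its inputs and local in space-time up to the mollification, transport and cutoff scales, the difference $m_{q+1}-\td m_{q+1}$ (and similarly $R_{q+1}-\td R_{q+1}$, $\ph_{q+1}-\td \ph_{q+1}$) is supported in the union of: (i) the mollification of $\cal{J}$ at a temporal scale $\lesssim \tau_q$; (ii) the enlargement of $\cal{J}$ by the time-cutoffs of scale $\tau_q$ used to partition $R_q$; and (iii) the transport of $\cal{J}$ under the coarse flow of $m_q/\varrho$ over times of length $\tau_q$, which displaces points by at most $\|m_q/\varrho\|_0\,\tau_q\lesssim \tau_q$. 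Summing these, the spread is bounded by $C\tau_q$, and since by the definition of $\tau_q$ and $b>1$ one has $\tau_q \ll (\la_q\de_q^{\frac12})^{-1}$ for $\la_0$ sufficiently large, the support bound \eqref{e:second_support} follows.

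The main obstacle is verifying that the bifurcated tuple $(\td m_{q+1},\td R_{q+1},\td\ph_{q+1})$ satisfies \emph{all} the inductive estimates \eqref{est.vp}--\eqref{est.ph} — including the advective derivative estimates — in the presence of the cutoff $\eta$, whose derivative $\pa_t\eta$ is of size $\tau_q^{-1}$ and contributes new terms to $\td R_{q+1}$ and $\td\ph_{q+1}$ through the inverse-divergence operator. These new terms are, however, exactly balanced: each factor of $\pa_t\eta$ costs $\tau_q^{-1}$, while the high-frequency inverse divergence gains $\la_{q+1}^{-1}$, and the product $\tau_q^{-1}\la_{q+1}^{-1}$ is dominated by the generic error size $\la_q^{-3\ga}\de_{q+1}$ in the same manner as the transport and oscillation errors already handled in Proposition \ref{ind.hyp}. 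The remaining new terms (from $\pa_t\eta$ acting on the temporal corrector and on the energy identity) are handled identically, so the same parameter choices $b,\la_0$ that close Proposition \ref{ind.hyp} close this refinement as well.
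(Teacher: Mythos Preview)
Your approach to the bifurcation differs from the paper's and has a gap. The paper does not introduce a new cutoff $\eta$; it uses the time partition $\{\theta_u\}$ already built into the construction. Since $|\mathcal{I}|\geq 3\tau_q$ there is $u_0$ with $\supp(\theta_{u_0}(\tau_q^{-1}\cdot))\subset\mathcal{I}$, and the paper simply flips the sign of the Reynolds weights, setting $\tilde\gamma_I := -\gamma_I$ for every $I=(u_0,v,f)\in\mathscr{I}_R$ while leaving all other weights untouched. Because the Reynolds cancellation \eqref{can.R} depends only on $\gamma_I^2$ and the current cancellation \eqref{can.ph} only on the (unchanged) energy weights, every algebraic identity and estimate in the construction carries over verbatim to the tilde flow; there are \emph{no new error terms whatsoever}. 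The $L^2$ lower bound then comes from the pointwise identity $|n_o-\tilde n_o|^2 = 4\sum_{I\in\mathscr{I}_R,\,u_I=u_0}\theta_I^2\chi_I^2(\xi_I)\gamma_I^2|\tilde f_I|^2\psi_I^2(\la_{q+1}\xi_I)$, the normalization $\langle\psi_I^2\rangle=1$, and a stationary-phase bound on the high-frequency remainder.

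The difficulty you single out in your last paragraph---that $\partial_t\eta$ of size $\tau_q^{-1}$ produces new terms compensated by a $\la_{q+1}^{-1}$ gain from the inverse divergence---is not the real obstacle in your scheme. The real obstacle is the \emph{zeroth-order} cancellation on the transition region $\{0<\eta<1\}$: whatever combination of $W_k$ and $W_k'$ you place there, its tensor square must still equal $\varrho^2(\de_{q+1}\I - R_\ell)$ at low frequency, or else the oscillation error $R_{O1}$ picks up a low-frequency piece of size $\de_{q+1}$ on which the inverse divergence gains nothing, and \eqref{est.R} fails outright. A linear interpolation $\eta W_k' + (1-\eta)W_k$ does not achieve this (the cross terms and the coefficient $\eta^2+(1-\eta)^2\neq 1$ break it); one can repair it with pointwise-disjoint Mikados and cutoffs that are already sixth powers, but you have not specified this, and in any case the paper's sign-flip device sidesteps the entire issue by keeping $\gamma_I^2$ fixed.

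Your treatment of the second assertion is essentially correct in outline: the construction is local in time up to the mollification scale $\ell_t$ (along the flow) and the transport over intervals of length $\tau_q$, both of which are bounded by $(\la_q\de_q^{1/2})^{-1}$. The paper's version is a single observation: $R_\ell,\ph_\ell$ differ from $\tilde R_\ell,\tilde\ph_\ell$ only on $\mathcal{J}+\ell_t\subset\mathcal{J}+(\la_q\de_q^{1/2})^{-1}$, and everything downstream inherits this support.
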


\subsection{Construction of a starting tuple with stationary density}\label{sec 2.2}
{For the simplicity, we choose stationary density $\varrho(t,\cdot) = \varrho_0$ for all $t\in\R$. In order to construct a starting momentum, unlike the incompressible case, we need to cancel out the prescribed pressure. 
To this end, we use {\it Mikado} flows as building-blocks,} which are stationary solutions of the {incompressible} Euler equation first introduced in \cite{DaSz2016}. In order to define them, consider a function $\vartheta$ on $\mathbb R^2$ and let $\bar U (x) = e_3 \vartheta (x_1, x_2)$, where $e_3 = (0,0,1)$. Then, we apply a stretching factor $s>0$, a general rotation $O$, and a translation by a vector $\bar x$ to define 
\[
U (x) = s O \bar U (O^{-1} (x-\bar x))\, .
\] 
Observe that the periodization of this function solves the stationary incompressible Euler equations on $\mathbb T^3$ when $O e_3$ belongs to $a \mathbb Q^3$ for some $a>0$. From now on, with a slight abuse of our terminology, a Mikado flow always refers to such periodization. Moreover the vector $f = s O e_3$ will be, without loss of generality assumed to belong to $\mathbb Z^3$ and will be called the {\em direction} of the Mikado flow, while $\bar x$ will be called  its {\em shift}. For each $f$ we will specify an appropriate choice of $\vartheta$, which will be smooth and compactly supported in a disk $B (0, r_0)$ for some small $r_0>0$. Also, $\vartheta$ will not depend on the shift $\bar x$ and we will denote by $U_f$ the corresponding Mikado flows when $\bar x=0$. The Mikado flows then can be written as $U_f = f \psi_f$ for some smooth $\psi_f\in C^\infty_c (\mathbb R^3)$ with $f\cdot \nabla \psi_f =0$. We recall the following elementary lemma, used since the pioneering work \cite{DaSz2016}.
\begin{lem}\label{l:Mikado}
Let $\mathcal{F}$ be a set of vectors in $\Z^3$ with finite cardinality. For each $f\in \mathcal{F}$, let $\bar x(f) \in \mathbb R^3$, $\gamma_f \in \mathbb R$ and $\lambda \in \mathbb N$. If the supports of the maps $U_f (\cdot - \bar x (f))$ are pairwise disjoint, then
\[
\sum_{f\in \mathcal{F}} \gamma_f U_f (\lambda (x-\bar x(f)))
\]
is a stationary solution of the incompressible Euler equations on $\mathbb T^3$.
\end{lem}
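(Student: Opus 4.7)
The plan is to verify the two defining conditions of a stationary incompressible Euler solution --- namely $\div u = 0$ and $\div(u\otimes u) + \na p = 0$ for some pressure $p$ --- directly for the sum $u(x) := \sum_{f\in\cF} \ga_f U_f(\la(x-\bar x(f)))$. The mechanism that makes this work is that pairwise disjointness of the supports kills all off-diagonal contributions in the nonlinearity $u\otimes u$, reducing everything to single-mode computations.

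First I would check that a single Mikado $U_f = f\psi_f$, with the property $f\cdot\na\psi_f = 0$ recalled above, satisfies both $\div U_f = f\cdot\na\psi_f = 0$ and
\[
(U_f\cdot\na)U_f \;=\; \psi_f\,(f\cdot\na)(f\psi_f) \;=\; \psi_f\, f\,(f\cdot\na\psi_f) \;=\; 0,
\]
so that $\div(U_f\otimes U_f) = (\div U_f)U_f + (U_f\cdot\na)U_f = 0$. Hence $U_f$ is already a stationary Euler solution with constant (in fact zero) pressure. These two properties are invariant under the affine change of variables $x\mapsto \la(x-\bar x(f))$ and under rescaling by the constant $\ga_f$, so each summand $V_f(x) := \ga_f U_f(\la(x-\bar x(f)))$ still satisfies $\div V_f = 0$ and $\div(V_f\otimes V_f) = 0$ on $\T^3$ (the periodization having been encoded in the definition of $U_f$ via the rational-direction assumption $Oe_3 \in a\mathbb Q^3$).

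Second, summing gives $\div u = \sum_f \div V_f = 0$ by linearity. For the nonlinear term,
\[
\div(u\otimes u) \;=\; \sum_{f,g\in\cF} \div(V_f\otimes V_g).
\]
The hypothesis that the supports of the maps $x\mapsto U_f(\la(x-\bar x(f)))$ are pairwise disjoint forces $V_f(x)\otimes V_g(x)\equiv 0$ whenever $f\neq g$, so all cross terms vanish identically. We are left with the diagonal sum $\sum_f \div(V_f\otimes V_f) = 0$, by the single-mode calculation, so the stationary momentum equation is satisfied with pressure $p\equiv 0$.

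The only point requiring care is bookkeeping the periodization: the pairwise-disjoint support assumption must be understood on $\T^3$, and the rational-direction condition $Oe_3 \in a\mathbb Q^3$ is precisely what guarantees that the periodized Mikado $U_f$ is well-defined and still obeys $\div U_f = f\cdot\na\psi_f = 0$ in the periodic setting. Apart from this, the argument is an essentially one-line verification and I do not anticipate any genuine obstacle.
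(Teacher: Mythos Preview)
Your argument is correct and is exactly the standard verification; the paper itself offers no proof, simply citing the result as an elementary fact from the Mikado-flow literature (Daneri--Sz\'ekelyhidi).

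One small caveat worth flagging: the lemma's hypothesis is literally phrased for the \emph{unscaled} maps $U_f(\cdot-\bar x(f))$, while your cross-term step uses disjointness of the \emph{scaled} summands $U_f(\lambda(\cdot-\bar x(f)))$. These are not equivalent in general --- scaling replaces the effective shift by $\lambda\bar x(f)\pmod{2\pi}$, which can destroy disjointness (e.g.\ a shift of $(0,0,\pi)$ becomes trivial when $\lambda=2$, and one can then build examples where the scaled tubes intersect and the cross terms are not a gradient). This is a minor imprecision in the paper's phrasing rather than a flaw in your reasoning: your reading, that the actual summands have pairwise disjoint supports, is the intended hypothesis and is what is arranged in every application of the lemma.
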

\noindent Note that the supports of the functions $U_f (\cdot - \bar x)$ and $\psi_f (\cdot - \bar x)$ are contained in a $r_0$-neighborhood of 
\begin{equation}
l_f+\bar x := \left\{x\in \mathbb T^3: \left(x -\sigma f - \bar x\right) \in 2\pi \mathbb Z^3 \quad \mbox{for some $\sigma \in \mathbb R$}\right\}\, .
\end{equation} 
If $r_0$ is sufficiently small, depending on $f$, the latter is a ``thin tube'' winding around the torus a finite number of time.

Another ingredient for defining the starting errors $R_0$ and $ \ph_0$ is the inverse divergence operator introduced in \cite{DLSz2013}.

\begin{defn}[Inverse divergence operator]\label{idv.defn}
For any $f\in C^\infty(\T^3;\R^3)$, the inverse divergence operator is defined by
\[
(\cal{R}f)_{ij}= \cal{R}_{ijk} f_k 
= -\frac 12 \De^{-2} \pa_{ijk} f_k + \frac 12\De^{-1} \pa_k f_k \de_{ij} - \De^{-1}\pa_i f_j - \De^{-1}\pa_jf_i. 
\]
\end{defn}
\begin{rem} The image of the divergence free operator $\cal{R}f(x)$ is designed to be a trace-free symmetric matrix at each point $x$ and to solve 
\[
\div(\cal{R}f) = f - \langle f \rangle\, .
\]
\end{rem}
\noindent To define $\ph_0$, we abuse the notation and define an inverse divergence operator on $C^\infty(\T^3;\R)$,
\[
(\cR g)_i = \De^{-1} \pa_i g. 
\]
Indeed, it maps a smooth scalar function to a vector-valued function, and solves $\div \cR g =g - \langle g \rangle$. 

\bigskip

We are now ready to find a starting tuple. Set $\varrho(t,x)= \varrho_0(x)$ for all $t\in \R$, so that $\pa_t \varrho=0$ and $\div m_0 =0$. {By the choice of $\varrho$, one can fix $M$, $\al$, $b$, and $\la_0$ by Proposition \ref{ind.hyp} and Proposition \ref{p:ind_technical}}. We also set the preponderant part $\overline m_0$ of $m_0$ as
\begin{align*}
    \overline m_0 = \varrho^\frac12(\overline{C}_{\varrho, p} - p(\varrho) - c_0 \varrho)^\frac12 \sum_{i=1}^3 \psi_i(\overline \la x) e_i, \end{align*}
where $\overline{C}_{\varrho, p}$ is a positive constant defined by $\overline{C}_{\varrho, p} = 2(\norm{p(\varrho)}_0 + c_0\norm{\varrho}_0)$, and $e_i$, $i=1,2,3$, are standard unit vectors whose $i$th component is $1$. Also, $\psi_i = \psi_{e_i}(\cdot - \bar{x}_i)$ is associated functions to each Mikado direction $e_i$, which is compactly supported and satisfies $e_i \cdot \na \psi_i =0$ and
\[
\int_{\T^3} \psi_i d x = \int_{\T^3} \psi_i^3 d x  =0, \quad \dint_{\T^3} \psi_i^2 d x =1.
\]
Suitable shifts $\bar{x}_i$ are chosen to have pairwise disjoint $\supp(\psi_i)$. We remark that $\pa_t \overline{m}_0 =0$ because $\varrho$ is time-independent. Since we have
\begin{align*}
    \frac{\overline m_0 \otimes \overline m_0}{\varrho}
    =( \overline{C}_{\varrho, p} - p(\varrho) -c_0\varrho )  \I
    + ( \overline{C}_{\varrho, p} - p(\varrho) -c_0\varrho  ) \sum_{i=1}^3 (\psi_i^2(\overline \la x) -1) e_i\otimes e_i,
\end{align*}
its divergence satisfies
\begin{align}\label{div.bar.m0}
    \div \left(\frac{\overline m_0 \otimes \overline m_0}{\varrho} \right) + \na (p(\varrho) +c_0\varrho )
    =  -\sum_{i=1}^3 \pa_i( p(\varrho) +c_0\varrho  )(\psi_i^2(\overline \la x) -1) e_i.
\end{align}
The last equation follows from $\div ((\psi_i^2(\overline \la x) -1) e_i\otimes e_i) = 0$ because of $e_i\cdot \na \psi_i =0$. Since $\psi_i$ is a smooth periodic function on $\T^3$ with zero-mean, one can represent it as its Fourier series $\psi_i = \sum_{k\in \Z\setminus\{0\}} \bar{b}_{i,k}  e^{i k\cdot x}$. Note that the divergence-free condition of $\psi_i e_i$ implies $\bar{b}_{i,k}(e_i \cdot k)=0$ for all non-zero $k\in \Z^3$. Using this condition, one can write $\overline m_0$ as
\begin{align*}
    \overline m_0 
    = \sum_{j=1}^3 \sum_{k\in \Z\setminus\{0\}} \varrho^\frac12 (C_{\varrho, p} -p(\varrho) - c_0\varrho)^\frac12 \operatorname{curl}\left( \frac{i \bar{b}_{j,k} k\times e_j}{\overline \la |k|^2} e^{i\overline \la k\cdot x} \right).
\end{align*}
Then, adding a correction
\begin{align*}
    \nu = \sum_{j=1}^3 \sum_{k\in \Z\setminus\{0\}} \na (\varrho^\frac12(C_{\varrho, p} -p(\varrho) - c_0\varrho)^\frac12) \times \left( \frac{i \bar{b}_{j,k} k\times e_j}{\overline \la |k|^2} e^{i\overline \la k\cdot x} \right),
\end{align*}
we get a divergence-free, {mean-zero}, approximate momentum $m_0 = \overline m_0 + \nu$.

Now, we choose a Reynold stress error $R_0$. To make it solve the relaxed momentum equation in \eqref{app.eq}, 
\begin{equation}\begin{split} \label{eq.R0}
    \div (\varrho R_0) 
    &=  \div \left(\frac{ m_0 \otimes  m_0}{\varrho} \right) + \na (p(\varrho) +c_0\varrho )\\
& = -\sum_{i=1}^3 \pa_i( p(\varrho) +c_0\varrho  )(\psi_i^2(\overline \la x) -1) e_i + 
 \div \left(\frac{1}{\varrho} (\overline m_0 \otimes \nu  + \nu \otimes \overline m_0 +\nu \otimes \nu)  \right),
\end{split}\end{equation}
we set
\begin{align*}
    \varrho R_0 &:= \mathcal{R} \left(- \sum_{i=1}^3 \pa_i(  p(\varrho) +c_0\varrho  )(\psi_i^2(\overline \la x) -1) e_i  \right) + \frac{\overline m_0 \otimes \nu  + \nu \otimes \overline m_0 +\nu \otimes \nu}{\varrho} - \frac 23 E(t) \I\\
    &=: \varrho \overline R_0 - \frac 23 E(t) \I.
\end{align*}
We remark that the argument of the inverse divergence operator is mean-zero (see \eqref{eq.R0}), so that $\div(\varrho \overline R_0)$ is the argument itself. 
Also, $E$ is a global energy loss depending only on time, which will be specified later. Note that $\overline R_0$ is independent of time, so that $\pa_t(\varrho \ka_0)= -E'$, where $\ka_0 = \tr(R_0)/2$. 

Lastly, we choose an unsolved current error $\ph_0$. Since both $m_0$ and $\varrho$ are time independent, it is enough to find a solution $\ph_0$ to 
\begin{align*}
    \div (\varrho \ph_0) 
    = \div\left( \frac{m_0}{\varrho} \left( \frac{|m_0|^2}{2\varrho} + \varrho P'(\varrho) 
    \right) \right)
    - \pa_t(\varrho \ka_0) - \div (\ka_0 m_0) - \div (R_0m_0) - E',
\end{align*}
where we used $\varrho \left( \pa_t + \frac{m_0}{\varrho} \cdot \na\right) \ka_0 = \pa_t(\varrho \ka_0) + \div(\ka_0 m_0)$ and $\div (c_0 m_0) = 0$. Since we have $\pa_t (\varrho \ka_0 ) = -E'$, we set
\begin{align*}
    \varrho \ph_0
    &: = \mathcal{R} \left( \sum_{i=1}^3 \pa_i\left( \frac{(C_{\varrho, p}- p(\varrho) -c_0\varrho)^\frac32}{2\varrho^\frac12}\right) \psi^3_i(\overline \la x)e_i\right) + \left(\frac{|m_0|^2m_0}{2\varrho}-\frac{|\overline m_0|^2\overline m_0}{2\varrho}  \right) \\
    &\quad + \mathcal{R}(m_0 \cdot \na P'(\varrho)) - \ka_0 m_0 - R_0 m_0.
\end{align*}
Here we used $e_i \cdot\na \psi^3_i =0$ and hence
\begin{align*}
    \div\left( \frac{\overline m_0}{\varrho} \left( \frac{|\overline m_0|^2}{2\varrho}
    \right) \right)
&= \div  \left(
\frac{(C_{\varrho, p} - p(\varrho) - c_0 \varrho))^\frac32}{2\varrho^\frac12} \sum_{i=1}^3 \psi_i^3(\overline \la x) e_i
\right)\\
& = \sum_{i=1}^3 \pa_i \left( \frac{(C_{\varrho, p}- p(\varrho) -c_0\varrho)^\frac32}{2\varrho^\frac12}\right) \psi^3_i(\overline \la x)e_i.
\end{align*}

Then, $(m_0,c_0, R_0, \ph_0)$ solves \eqref{app.eq}, and can be estimated as follows;
\begin{align*}
&\norm{m_0}_0 \leq \norm{\overline m_0}_0 
+\norm{\nu}_{0}
\leq  \frac 14\underline{M}(\varrho, p ) \left(1+ \overline{\la}^{-1}\right)  \leq\underline M - \de_0^\frac12
\\
&\norm{m_0}_{N} \leq \norm{\overline m_0}_N 
+\norm{\nu}_{N}
\lec_{\varrho, p,} \overline \la^N 
\qquad \forall N = 1,2
\end{align*}
for some {$\underline M(\varrho, p)\geq 2$}, and\footnote{Here and in the rest of the note, given two quantities $A_q$ and $B_q$ depending on the induction parameter $q$ we will use the notation $A\lec B$ meaning that $A\leq CB$ for some constant $C$ which is independent of $q$. In some situations we will need to be more specific and then we will explicitly the dependence of $C$ on the various parameters involved in our arguments.}
\begin{align*}
    &\norm{R_0}_N
    \lec_{\varrho, p} \overline{\la}^{N-1} + \norm{E}_0, 
    \qquad
    \norm{\ph_0}_N
    \lec_{\varrho, p} \overline \la^{N-1} + \norm{E}_0 \overline{\la}^N 
    \\
    &\norm{D_{t,q} R_0}_{N-1}
    \leq \norm{\pa_t R_0}_{N-1} + \Norm{\frac{m_0}{\varrho} \cdot \na R_0}_{N-1}
    \lec_{\varrho, p} \norm{E'}_0 + \overline \la^{N-1}(1+\norm{E}_0)
    \\
    &\norm{D_{t,q} \ph_0}_{N-1}
    \leq \norm{\pa_t \ph_0}_{N-1} + \Norm{\frac{m_0}{\varrho }\cdot \na \ph_0}_{N-1}
    \lec_{\varrho, p} {\norm{E'}_0 \overline{\la}^{N-1}} + \norm{E}_0 \overline{\la}^{N} + \overline{\la}^{N-1} 
\end{align*}
for any $N=0,1,2$ (the cases $\norm{\cdot}_{-1}$ are empty statements). 
Let $C(p, \varrho)\geq 1$ be the maximum of all implicit constants in the above inequalities. For $\bar{b}(\al)$ sufficiently close to $1$ and  sufficiently large $\la_0\geq \La_0(\al, b, M(\varrho), \varrho, p)$,  we can always find a positive integer $\overline \la$ to satisfy
\begin{equation}\label{choice.bar.lam}
2C(\varrho, p) \la_0^{3\ga} \de_1^{-\frac32}\leq \overline \la \leq (2C(\varrho, p))^{-1}\la_0\de_0^\frac12.
\end{equation}

{For the choice of such $\overline \la$ and $E=0$, the constructed initial approximate solution $(m_0, R_0, \ph_0)$ satisfies \eqref{est.vp}-\eqref{est.ph}, and hence serves as a starting tuple for Theorem \ref{thm}.
} On the other hand, one can find a non-trivial $E$  satisfying \eqref{e:assumption_on_E} (moreover, $E'<0$) and
\begin{equation}\label{est.E}
4C(\varrho, p)\norm{E}_0\leq \la_0^{-3\ga}\de_1^\frac32,\quad \mbox{and}\quad 
4C(\varrho, p)\norm{E'}_0 \leq \la_0^{1-3\ga}\de_0^\frac12 {\de_1^\frac32}. 
\end{equation}
For example,
\begin{equation}\label{def.E}
\overline{E}(t) = -\frac{\la_0^{-3\ga} \de_1^\frac32}{8C(\varrho, p)} (1-\exp(-\la_0\de_0^\frac12t)).    
\end{equation}
For such choice of $\overline\la$ and $\overline{E}$, the constructed initial approximate solution $(m_0, R_0, \ph_0)$ again satisfies \eqref{est.vp}-\eqref{est.ph}, {which works as a starting tuple for Theorem \ref{thm.onsager}.}

\subsection{Construction of a starting tuple with time-dependent density}\label{sec:time-dependent.density} 
The construct density for Theorem \ref{thm}-\ref{thm.onsager} can be {\it time-dependent}. In this subsection, we provide a new starting tuple $(\td m_0, \td R_0, \td \ph_0)$ with a time-dependent density $\varrho$ defined by perturbing the stationary density $\varrho_0$. We first fix the numbers $\al$ and $b$ and define the functions $M$ and $\La_0$ by Proposition \ref{ind.hyp} and Proposition \ref{p:ind_technical}, which gives a fixed number $n_0$ (see Remark \ref{rem:dep}). We then write a time-dependent density as
\begin{equation}\label{def.varrho2}
    \varrho(t,x) := \varrho_0(x) + \epsilon \hat \varrho(t,x),
\end{equation}
where $\hat \varrho(t,x)$ can be any smooth function such that $\int_{\T^3}\pa_t\hat\varrho dx=0$ on $[-1,T+1]$ and 
\begin{equation*}
    \norm{\hat \varrho}_{C^0([-1,T+1];C^N(\T^3))}\leq \norm{\varrho_0}_{C^N(\T^3)}, \quad \norm{\pa_t \hat \varrho}_{C^0([-1,T+1];C^N(\T^3))} \leq \norm{\varrho_0}_{C^N(\T^3)}\qquad \forall N\in [0,n_0+1].
\end{equation*}
We consider $\epsilon\in (0,1/2)$ to have $|\varrho|\geq \e_0/2$.
Then, $M(\varrho)$ and $\Lambda_0 (\alpha, b,{M(\varrho)}, \varrho, p)$ are independent of $\epsilon$ and $\hat\varrho$ (see Remark \ref{rem:dep})).    

To define $(\td m_0, \td R_0, \td \ph_0)$, we let $(m_0,R_0,\ph_0)$ be as in the previous subsection \ref{sec 2.2}; if needed, we adjust $\la_0$ to satisfy $\la_0\geq \Lambda_0 (\alpha, b,{M(\varrho)}, \varrho, p)$ and then $\overline \la$ to satisfy \eqref{choice.bar.lam} with the replacement of $C(\varrho,p)$ and $\la_0$ . We set $\td m_0$ first as 
\begin{equation*}
    \td m_0(t,x) := m_0(x) + \hat m_0(t,x)
\end{equation*}
where
\begin{equation*}
    \hat m_0 := - \mathcal{R} \pa_t \varrho = - \epsilon\mathcal{R} \pa_t \hat\varrho.
\end{equation*}
Since the average of $\pa_t \varrho$ on $\T^3$ is zero and $\div m_0 =0$, it solves
$\pa_t \varrho + \div \td m_0 =0$. Also, for sufficiently small $\epsilon$, we have 
\begin{align*}
    \norm{\td m_0}_0 &\leq \norm{m_0}_0 + \norm{\hat m_0}_0 \leq \frac 14\underline{M}( \varrho_0, p) \left(1+ 1/\overline{\la}\right) +
    \epsilon \norm{\pa_t \varrho}_0  \leq\underline M(\varrho, p ) - \de_0^\frac12 \\
    \norm{\td m_0}_N &\leq \norm{m_0}_N + \norm{\hat m_0}_N \lesssim_{\varrho, p} \overline{\la}^N + \epsilon \norm{\pa_t \varrho}_N \lesssim \overline{\la}^N, \hspace{1.5cm} \forall N = 1,2
\end{align*}
for some $\underline M(\varrho, p)\geq 2$. Then, we set the initial Reynolds stress as
\begin{align*}
    \varrho \td R_0 = \varrho_0 R_0 + c_0(\varrho - \varrho_0)\I + \mathcal{R} \left[\pa_t \td m_0 + \div \left(\frac{\td m_0 \otimes \td m_0}{\varrho} - \frac{m_0 \otimes m_0}{\varrho_0}\right) + \na(p(\varrho)-p( \varrho_0))\right] + \zeta\I.
\end{align*}
Since we will choose $\zeta=\zeta(t)$, $(\td m_0, c_0, \td R_0)$ solves the relaxed momentum equation in \eqref{app.eq} with $\varrho$ defined in \eqref{def.varrho2}. The purpose of $\zeta$, on the other hand, is as follows. The relaxed energy equation in \eqref{app.eq} now can be seen as an equation for $\td \ph_0$. Since $\varrho (\pa_t + (\td m_0/\varrho)\cdot \na) \td\kappa_0 = \pa_t(\varrho \td\kappa_0) + \div{(\td m_0 \td\kappa_0)}$, where $\td\kappa_0 = \tr{\td R_0}/2$, we need the following compatibility condition, 
\begin{align}\label{compa}
    \int_{\T^3} \pa_t \left(\frac{|\td m_0|^2}{2\varrho} + P(\varrho)\right) - \pa_t (\varrho \td\kappa_0) - \pa_t E \, dx = 0.
\end{align}
Recalling that \eqref{compa} holds for $(\varrho_0, m_0, R_0)$, we choose $\zeta$ as 
\begin{align*}
    \dint_{\T^3} \frac{|\td m_0|^2}{2\varrho} - \frac{|m_0|^2}{2\varrho_0} + P(\varrho) - P(\varrho_0) - \frac32 c_0 (\varrho - \varrho_0) \, dx =: 
    \frac32 \zeta(t).
\end{align*}
to have \eqref{compa} for $(\varrho, \td m_0, \td R_0)$. Then, we choose $\td\ph_0$ as
\begin{align*}
    \varrho \td\ph_0 &:= \varrho_0 \ph_0 + \mathcal{R}\pa_t \left(\frac{|\td m_0|^2}{2\varrho} - \frac{|m_0|^2}{2\varrho_0} + (P(\varrho) - P( \varrho_0)) - (\varrho \td\kappa_0 -\varrho_0 \kappa_0) \right)\\
    &\quad+ \left(\frac{\td m_0}{\varrho} \left(
    \frac{|\td m_0|^2}{2 \varrho} +  \varrho P'( \varrho)\right) - \frac{ m_0}{\varrho_0} \left(
    \frac{| m_0|^2}{2 \varrho_0} +  \varrho_0 P'( \varrho_0)\right)\right)\\
    &\quad- (\td\kappa_0\td m_0 - \kappa_0 m_0 ) - (\td R_0\td m_0 - R_0 m_0  ) + c_0 (\td m_0 - m_0) 
\end{align*}
so that $(\td m_0, c_0, \td R_0, \td \ph_0)$ satisfies the relaxed energy equation and hence solves \eqref{app.eq}. Notice that for every $\delta >0$, we can find sufficiently small $\epsilon=\epsilon(\delta)\in (0,1/2)$ so that $|\zeta|,|\pa_t \zeta|,|\pa_t^2 \zeta| \leq \delta$. Therefore, the new starting tuple $(\td m_0, c_0, \td R_0, \td \ph_0)$ also satisfies the inductive estimates \eqref{est.vp}-\eqref{est.ph} for sufficiently small $\epsilon$; in the estimate \eqref{est.R} we use $\pa_t \td m_0 = \pa_t \hat m_0$.

\subsection{Proof of Theorem \ref{thm.onsager}} Fix $\beta < \frac{1}{7}$ and choose $\alpha \in (\beta, \frac{1}{7})$. We let the density $\varrho(t,x) = \varrho_0(x)$ be stationary for all $t\in \R$, and set $M$, $\bar{b}(\al)$, and $\La_0$ as in Proposition \ref{ind.hyp}.
  For the non-trivial global energy less $\overline E$ defined by \eqref{def.E}, which is a function of time, the constructed starting tuple $(m_0, R_0, \ph_0)$ in Section \ref{sec 2.2} solves \eqref{app.eq} for $c= c_0$ and satisfies \eqref{est.vp}-\eqref{est.ph}, by adjusting $\bar b(\al)$ and $\La_0$ if necessary.

Now, we apply Proposition \ref{ind.hyp} iteratively to produce a sequence of approximate solutions $(m_q,R_q, \ph_q)$,
which solves \eqref{app.eq} with the energy less $\overline E$ and $c=c_q$, and satisfies \eqref{est.vp}-\eqref{est.ph}. Since the sequence $\{m_q\}$ also satisfies \eqref{cauchy}, it is Cauchy in $C^0([0, T];C^{\be}(\T^3))$. Indeed, for any $q<q'$, we have
the following estimates
\begin{align*}
\norm{m_{q'} - m_q}_{C^0([0, T];C^\be(\T^3))}
&\leq \sum_{l=1}^{q'-q}\norm{m_{q+l} - m_{q+l-1}}_{C^0([0, T];C^\be(\T^3))}\\
&\lec \sum_{l=1}^{q'-q}
\norm{m_{q+l} - m_{q+l-1}}_{0}^{1-\be}
\norm{m_{q+l} - m_{q+l-1}}_{1}^{\be}\\
&\lec \sum_{l=1}^{q'-q} 
\la_{q+1}^{\be} \de_{q+1}^{\frac12} = \sum_{l=1}^{q'-q} 
\la_{q+1}^{\be-\al} \to 0, 
\end{align*}
as $q$ goes to infinity because of $\be-\al<0$. Therefore, we obtain its limit $m$ in $ C^0([0, T];C^\be( \T^3))$. Since $(c_q, R_q, \ph_q)$ converges to $0$ in $C^0([0, T]\times \T^3)$, the limit $m$ solves the compressible equation \eqref{eqn.CE} with stationary density $\varrho$ and satisfies 
\begin{align*}
\pa_t \left( \frac{|m|^2}{2 \varrho} +  \varrho e( \varrho)\right) +
\div\left(\frac{m}{ \varrho} \left(
\frac{|m|^2}{2 \varrho} +  \varrho e( \varrho) + p( \varrho)\right)\right) = E'
\end{align*}
in the distributional distribution.
Here, we used the identity $\varrho\left(\pa_t + \frac{m}{\varrho}\cdot \na\right)\ka
= \pa_t \ka + \div (\ka m)$, $\ka=\frac 12 \tr(R)$. Since $E'<0$ for all $t\in [0,T]$, the constructed solution satisfies the entropy inequality \eqref{LEI} strictly. 

In particular, testing with a time-dependent function $\chi \in C^\infty_c ((0,T))$, we conclude
\[
- \dint_0^\infty \chi'(t) \int_{\mathbb T^3}  \frac{|m|^2}{2 \varrho} (t,x)\, dx\, dt = \int_0^\infty E'(t) \chi (t)\, dt. 
\] 
Given that $E$ is $C^1$, the latter implies that the total kinetic energy is a $C^1$ function and it in facts coincides with $E(t)/(2\pi)^3$ up to a constant addition, namely
\[
\dint_{\T^3} \frac{|m|^2}{2\varrho} (T,x)\, dx - \dint_{\T^3} \frac{|m|^2}{2\varrho} (0,x)\, dx = E(T,x ) - E(0,x)<0.
\] 
Therefore, the constructed solution has total kinetic energy dissipation \eqref{eqn.Onsager}.

Lastly, estimating 
\begin{align*}
    \norm{\pa_t m_q}_{0}
    \lec_{\varrho} \norm{m_q}_0\norm{m_q}_1 + \norm{\na p(\varrho)}_0 + \norm{R_q}_1 
    + \norm{\na \varrho}_0
    \lec_{\varrho, p} \la_q\de_q^\frac12,
\end{align*}
where $\norm{\cdot}_N= \norm{\cdot}_{C^0([0,T];C^N (\T^3))}$,
we have
\begin{align*}
    \norm{m_q}_{C^1([0,T]\times \T^3)}
    \lec_{\varrho, p} \la_q\de_q^\frac12.
\end{align*}
Then, the time regularity of the limit momentum $m$ can be concluded by interpolation argument. Hence $m\in C^\alpha ([0, T]\times \T^3)$.

\subsection{ Proof of Theorem \ref{thm}} 
In this argument we assume $T\geq 20$ {for convenience. Let $\varrho(t, \cdot) = \varrho_0$ for all $t\in \R$.} Fix $\beta< \frac 17$ and $\al\in (\be, \frac  17)$. {Then, choose $b$ and $\la_0$ in the range suggested in Proposition \ref{ind.hyp}.} Also, choose the initial approximate solution $(m_0, c_0, R_0, \ph_0)$ as in Section \ref{sec 2.2} with $E\equiv 0$. As before, we see that it solves \eqref{app.eq} {on $[0,T]+\tau_{-1}$} and satisfies \eqref{est.vp}-\eqref{est.ph}. 

Now, we apply Proposition \ref{ind.hyp} iteratively to produce a sequence of approximate solutions $(m_q,c_q,R_q, \ph_q)$, which solves \eqref{app.eq} with the energy loss $0$, and satisfies \eqref{est.vp}-\eqref{est.ph}. Since the sequence $\{m_q\}$ also satisfies \eqref{cauchy}, it is Cauchy in $C^0([0, T];C^{\be}(\T^3))$, and estimating the equation as before, we conclude that it is also Cauchy in $C^\beta ([0, T]\times \T^3)$.

On the other hand, fix $\bar q\in \N\cup\{0\}$ satisfying $b^{\bar  q}\geq \bar q$. At the $\bar q$th step using Proposition \ref{p:ind_technical} we can produce two distinct tuples, one which we keep denoting as above and the other which we denote by $(\td m_q, c_q, \td R_q, \td \ph_q)$ and satisfies \eqref{e:distance_and_support}, namely
\begin{align*}
\norm{\td m_{\bar q} - m_{\bar q}}_{C^0([0, T];L^2(\T^3))} \geq  {\e_0} \de_q^\frac 12, \quad \supp_t (m_q-\td m_q) \subset \cal I\, ,  
\end{align*}
with $\cal I  = (10, 10+3\tau_{\bar q-1})$. 
Applying now Proposition \ref{ind.hyp} iteratively, we can build a new sequence $(\td m_q, c_q, \td R_q,  \td \ph_q)$ of approximate solutions which satisfy \eqref{est.vp}-\eqref{cauchy} and \eqref{e:second_support}, inductively. Arguing as above, this second sequence converges to a solution $(\varrho_0, \td m)$ to the compressible Euler equation \eqref{eqn.CE}.
Indeed, $\td m\in C^\beta ([0, T]\times \T^3)$. We remark that for any $q\geq \bar q$,
\[
\supp_t(m_{q} -\td m_{q}) \subset \cal I + \sum_{q=\bar q}^\infty (\la_{q}\de_{q}^\frac12)^{-1} \subset[9,T],
\]
(by adjusting $\la_0$ to be even larger than chosen above, if necessary), and hence $\td m_{ {q}}$ shares initial data with $m_{{q}}$ { for all $q$. As a result, two solutions $\td m_q$ and $m_q$ have the same initial data.} However, the new solution $\td m$ differs from $m$ because
\begin{align*}
\norm{m - \td m}_{C^0([0, T];L^2(\T^3))}
&\geq \norm{m_{\bar q} -\td m_{\bar q}}_{C^0([0,T];L^2(\T^3))}
-\sum_{q=\bar q}^\infty \norm{m_{q+1}-m_q - (\td m_{q+1} - \td m_q)}_{C^0([0,T];L^2(\T^3))} \\
&\geq \norm{m_{\bar q} - \td m_{\bar q}}_{C^0([0,T];L^2(\T^3))}
-(2\pi)^\frac32 \sum_{q=\bar q}^\infty (\norm{m_{q+1}-m_q}_0 + \norm{\td m_{q+1}-\td m_q}_0)\\
&\geq  {\e_0} \de_{\bar q}^\frac 12  - 2(2\pi)^\frac32M \sum_{q=\bar q}^\infty\de_{q+1}^\frac12>0.
\end{align*} 
The last inequality follows from adjusting $\la_0$ to a larger one if necessary. 
By changing the choice of time interval $\cal I$ and the choice of $\bar{q}$, we can easily generate infinitely many solutions. {Since we choose $E=0$, the infinitely many solutions satisfy the entropy equality.}

\section{Construction of the momentum correction}\label{sec:correction.const}

In this section we detail the choice of the correction $n:= m_{q+1}-m_q$. As in the literature  which started from the paper \cite{DLSz2013}, the perturbation $n$ is, in first approximation, obtained from a family of highly oscillatory stationary solution of the {\it incompressible} Euler equation, which are modulated by the errors $R_q$ and $\varphi_q$ and transported along the course grain flow of the background vector field $m_q/\varrho$. For stationary Euler flows, we use Mikado flows, introduced in section \ref{sec 2.2},   
\[
\sum_{f\in \mathcal{F}} \gamma_f U_f (\lambda (x-p(f))).\]

In the construction of perturbation, $f$ will vary in a finite fixed set of directions $\mathcal{F}$ (which in fact will have cardinality ${270}$) and for each $f$ we will specify an appropriate choice of $\vartheta$. 
In a first approximation we wish to define our perturbation $m_{q+1} - m_q$ as
\[
\sum_{f\in \mathcal{F}} \gamma_f (R_q(t,x), \varphi_q (t,x), \varrho(t,x)) U_f (\lambda (x-p(f))
\]
where the coefficients $\gamma_f$ are appropriately chosen smooth functions (later on called ``weights''), $\lambda$ is a very large parameter and the $x(f)$ are appropriately chosen shifts to ensure the disjoint support condition of Lemma \ref{l:Mikado}. 
As already pointed out such Ansatz must be corrected and we need to modify the perturbation so that it is approximately advected by the velocity $m_q/\varrho$. Note that on large time-scales the flow of the velocity $m_q/\varrho$ does not satisfy good estimates, while it satisfies good estimates on a sufficiently small scale $\tau_q$. Following \cite{BuDLeIsSz2015}, \cite{Is2013}, and \cite{DLK20} this issue is solved by introducing a partition of unity in time and restarting the flow at a discretized set of times, roughly spaced according to the parameter $\tau_q$. Like the case in \cite{DLK20}, one has to face the delicate problem of keeping the supports of the various Mikado flows disjoint. This is done by discretizing the construction in space too, taking advantage of the fact that for sufficiently small space and time scales, the supports of the transported Mikado flows remain roughly straight thin tubes: the argument requires then a subtle combinatorial choice of the ``shifts''. As in \cite{DLK20}, the introduction of the space-time discretization deteriorates the estimates and accounts for the H\"older threshold $1/7$.

\subsection{Mikado directions}\label{ss:directions} To determine a set of suitable directions $f$, we recall two geometric lemmas \cite{Nash, DLSz2013, DLK20}. In the first we denote by $\mathbb{S}$ the subset of $\mathbb R^{3\times 3}$ of all symmetric matrices, set $|K|_\infty := |(k_{lm})|_\infty = \max_{l,m} |k_{lm}|$ for $K\in \mathbb R^{3\times 3}$.
\begin{lem}[Geometric Lemma I]\label{lem:geo1} 
Let $\cF= \{f_i\}_{i=1}^6$ be a set of vectors in $\Z^{3}$ and $C$ a positive constant such that
\begin{equation}\label{con.FIR}
\sum_{i=1}^6 f_i \otimes f_i = C\I, \quad\mbox{and}\quad
\{f_i\otimes f_i\}_{i=1}^6 \text{ forms a basis of }\,\, \mathbb{S}\, .
\end{equation}
Then, there exists a positive constant $N_0 = N_0(\cF)$ such that for any $N\leq N_0$, we can find functions $\{\Ga_{f_i}\}_{i=1}^6\subset C^\infty(S_{N}; (0,\infty))$, with domain $S_{N}:= \{\I-K: K \text{ is symmetric, } |K|_\infty\leq N\}$,
satisfying 
\begin{align*}
\I -K = \sum_{i=1}^6 \Ga_{f_i}^2(\I - K) (f_i\otimes f_i), 
\quad\forall (\I-K) \in S_{N}\, .
\end{align*}
\end{lem}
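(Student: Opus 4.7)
\medskip

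The plan is to reduce the lemma to a simple application of linear algebra plus the implicit smoothness of the square root away from zero. Since $\{f_i \otimes f_i\}_{i=1}^6$ is, by hypothesis, a basis of the six-dimensional space $\mathbb S$ of symmetric $3\times 3$ matrices, any symmetric matrix admits a \emph{unique} linear decomposition in this basis. In particular, for every symmetric $K$ we can write
\begin{equation*}
\I - K = \sum_{i=1}^{6} \alpha_i(K)\, (f_i \otimes f_i)\, ,
\end{equation*}
where the coefficients $\alpha_i : \mathbb S \to \mathbb R$ are affine (hence $C^\infty$) functions of the entries of $K$.

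Next I would evaluate at $K = 0$. The hypothesis $\sum_{i=1}^{6} f_i \otimes f_i = C\,\I$ gives $\I = \sum_{i=1}^{6} (1/C)(f_i \otimes f_i)$, and uniqueness of the expansion in the basis $\{f_i \otimes f_i\}$ forces $\alpha_i(0) = 1/C > 0$ for every $i$. Since each $\alpha_i$ is continuous (in fact affine), there exists a positive constant $N_0 = N_0(\mathcal F)$ such that $\alpha_i(K) > 0$ for all $i \in \{1,\dots,6\}$ whenever $|K|_\infty \leq N_0$; concretely one can take $N_0$ to be a small fraction of $1/C$ divided by the operator norm of the linear map $K \mapsto (\alpha_i(K) - 1/C)_i$ (which depends only on $\mathcal F$).

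Finally, on the domain $S_{N}$ with $N \leq N_0$ I would simply define
\begin{equation*}
\Gamma_{f_i}(\I - K) := \sqrt{\alpha_i(K)}\, .
\end{equation*}
Since $\alpha_i$ is smooth and strictly positive on $\{|K|_\infty \leq N\}$, and the principal square root is $C^\infty$ on $(0,\infty)$, each $\Gamma_{f_i}$ is smooth and takes values in $(0,\infty)$. Squaring and summing recovers exactly $\I - K = \sum_i \Gamma_{f_i}^2(\I - K)(f_i \otimes f_i)$ by construction, which is the desired identity.

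There is no real obstacle here: the only content is the basis property (for existence and uniqueness of the linear coefficients $\alpha_i$) together with the symmetry condition $\sum f_i \otimes f_i = C\I$ (to ensure $\alpha_i(0)>0$, so that the square roots are well defined near $K=0$). The role of $N_0$ is purely to quantify how far from $K=0$ the positivity of the $\alpha_i$'s persists, and its dependence on $\mathcal F$ is exactly through the basis expansion.
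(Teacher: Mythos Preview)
Your proof is correct and is essentially the standard argument: the paper does not give its own proof of this lemma but simply recalls it from \cite{Nash, DLSz2013, DLK20}, where the reasoning is exactly the linear-algebra-plus-continuity argument you outline.
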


\begin{lem}[Geometric Lemma II] \label{lem:geo2}
Suppose that 
\begin{equation}\label{con.FIph}
\{f_1, f_2, f_3\}\subset \Z^3\setminus\{0\} \text { is an orthogonal frame and } 
f_4=-(f_1+ f_2+f_3).
\end{equation}
Then, for any $N_0>0$, there are affine functions $\{\Ga_{f_k}\}_{1\leq k \leq 4} \subset C^\infty(\cal{V}_{N_0}; [N_0, \infty))$ with domain $\cal{V}_{N_0} := \{m\in \R^3: |m| \leq N_0\}$ such that
\[
m = \sum_{k=1}^4 \Ga_{f_k} (m) f_k  \qquad\forall m \in \cal{V}_{N_0}\, .
\] 
\end{lem}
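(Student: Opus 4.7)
The plan is to exploit the underdetermined nature of the identity $m = \sum_{k=1}^4 \Ga_{f_k}(m) f_k$: with four scalar unknowns and only three equations per fixed $m$, there is a one-parameter family of affine solutions. The relation $f_4 = -(f_1+f_2+f_3)$ is precisely what produces the kernel vector $(1,1,1,1)$ for the coefficient map $(\gamma_k) \mapsto \sum_k \gamma_k f_k$, so I can freely add the same constant to all four $\Ga_{f_k}$ without breaking the identity. This freedom is what I will use to force the values into $[N_0,\infty)$.

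First I would construct a particular linear solution using the orthogonality hypothesis. Since $\{f_1,f_2,f_3\}$ is an orthogonal basis of $\R^3$, every $m\in\R^3$ decomposes as
\[
m = \sum_{k=1}^3 \frac{m\cdot f_k}{|f_k|^2}\, f_k,
\]
so I set $\Ga_{f_k}^{\mathrm{p}}(m):=(m\cdot f_k)/|f_k|^2$ for $k=1,2,3$ and $\Ga_{f_4}^{\mathrm{p}}(m):=0$. This is a linear, hence smooth, solution of $\sum_{k=1}^4 \Ga_{f_k}^{\mathrm{p}}(m) f_k = m$.

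Next I would shift by a constant: define $\Ga_{f_k}(m) := \Ga_{f_k}^{\mathrm{p}}(m) + C$ for each $k\in\{1,2,3,4\}$. The identity $\sum_{k=1}^4 \Ga_{f_k}(m) f_k = m$ is preserved, because $f_1+f_2+f_3+f_4=0$. For $|m|\leq N_0$ and $k\in\{1,2,3\}$,
\[
|\Ga_{f_k}^{\mathrm{p}}(m)| \leq \frac{|m|}{|f_k|} \leq N_0,
\]
where I used $|f_k|\geq 1$ since $f_k\in\Z^3\setminus\{0\}$. Choosing $C=2N_0$ then gives $\Ga_{f_k}(m) \geq -N_0 + 2N_0 = N_0$ for $k=1,2,3$ and $\Ga_{f_4}(m) = 2N_0 \geq N_0$. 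Each $\Ga_{f_k}$ is affine (hence $C^\infty$), with range in $[N_0,\infty)$, as required.

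There is no real obstacle here; the two key ingredients are (i) the orthogonality of $\{f_1,f_2,f_3\}$, used to trivially invert the $3\times 3$ linear part of the system, and (ii) the closure relation $f_4=-(f_1+f_2+f_3)$, used to boost positivity by a uniform constant shift.
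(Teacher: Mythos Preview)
Your proof is correct. The paper does not include its own proof of this lemma (it is recalled from the literature, cf.\ the citations \cite{Nash, DLSz2013, DLK20} at the start of Section~\ref{ss:directions}), but your argument---first solving linearly via the orthogonal decomposition and then exploiting the kernel direction $(1,1,1,1)$ coming from $f_1+f_2+f_3+f_4=0$ to shift all coefficients into $[N_0,\infty)$---is precisely the standard one.
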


Based on these lemmas, we choose 27 pairwise disjoint families $\cF^j$ indexed by $j\in \mathbb{Z}_3^3$, where each $\cF^j$ consists further of two (disjoint) subfamilies $\cF^{j,R}\cup\cF^{j,\ph} $ with cardinalities $|\cF^{j,R}|=6$ and  $|\cF^{j,\ph}|=4$, chosen so that $\cF^{j,R}$ and $\cF^{j,\ph}$ satisfy \eqref{con.FIR} and \eqref{con.FIph}, respectively. For example, for $j= (0,0,0)$ we can choose
\begin{align*}
\cF^{j,R} = \{(1,\pm 1, 0), (1, 0, \pm 1), (0, 1, \pm1)\}, \quad
\cF^{j,\ph} = \{(1,2, 0), (-2, 1, 0), (0, 0, 1), (1,-3,-1)\}
\end{align*}
and then we can apply $26$ suitable rotations (and rescalings). 
Next, the function $\vartheta$ will be chosen for each $f$ in two different ways, depending on whether $f\in \cF^{j,R}$ or $f\in \cF^{j,\ph}$. Introducing the shorthand notation
$\langle u \rangle = \dint_{\T^3} u (x)\, dx$, we impose the moment conditions
\begin{equation}\label{con.psi2}
\begin{split}
\langle \psi_f \rangle = \langle \psi_f^3 \rangle = 0, \quad \langle \psi_f^2\rangle =1  \qquad \forall f\in \cF^{j,R},\\
\langle \psi_f\rangle = 0, \quad \langle \psi_f^3 \rangle=1
\qquad \forall f\in \cF^{j,\ph}.
\end{split}
\end{equation} 
The main point is that the Mikado directed along $f\in \cF^{j,R}$ will be used to ``cancel the error $R_q$'', while the ones directed along $f\in \cF^{j, \ph}$ will be used to ``cancel the error $\ph_q$'' and the different moment conditions will play a major role. In both cases we assume also that
\begin{equation}\label{e:eta}
\supp (\psi_f) \subset B \left(l_f, \frac{\eta}{10}\right)
{ := \{x\in \R^3: |x-y|<\textstyle{\frac {\eta}{10}} \text{ for some } y\in l_f\} }
\, , 
\end{equation}
where $\eta$ is a geometric constant which will be specified later, cf. Proposition \ref{p:supports}. 

\subsection{Regularization and drift}\label{ss:regularization} We start with smoothing the tuple $(m_q, R_q, \varphi_q)$. To this end, we first introduce the parameters $\ell$ and $\ell_t$, defined by
\[
\ell = \frac 1{\la_q^\frac34\la_{q+1}^\frac14} \left(\frac{\de_{q+1}}{\de_q}\right)^\frac38, \quad
\ell_t = \frac 1{\la_q^{\frac12-3\ga} \la_{q+1}^\frac12 \de_q^\frac14\de_{q+1}^\frac14 }\, .
\]
The space regularization of $m_q$ is defined by applying a ``low-pass filter'' which roughly speaking eliminates all the waves larger than $\ell^{-1}$. 
In order to do so we first introduce some suitable notation. First of all, for a function $f$ in the Schwartz space $\cal{S}(\R^3)$, the Fourier transform of $f$ and its inverse on $\R^3$ are denoted by 
\[
\widehat{f} (\xi) = \frac 1{(2\pi)^3}\int_{\R^3} f(x) e^{-ix \cdot \xi} dx,\quad
\widecheck{f} (x) = \int_{\R^3} f(\xi) e^{ix \cdot \xi} d\xi. 
\]   
As usual, we understand the Fourier transform on more general functions as extended by duality to $\mathcal{S}' (\R^3)$. Since practically all the objects considered in this note are functions, vectors and tensors defined on $I\times \mathbb T^3$ for some time domain $I\subset \mathbb R$, regarding them as spatially periodic functions on $I\times \mathbb R^3$, we will consider their Fourier transform as time-dependent elements of $\mathcal{S}' (\R^3)$. We then follow the standard convention on Littlewood-Paley operators. We let $\phi(\xi)$ be a radial smooth function supported in ${B(0,2)}$ which is identically $1$ on $\overline{B(0,1)}$. For any number $j\in \Z$ and distribution $f$ in $\R^3$, we set
\begin{align*}
\widehat{P_{\leq 2^j} f}(\xi) &:=\phi \left(\frac{\xi}{2^j}\right) \hat{f}(\xi),\quad
\widehat{P_{> 2^j} f}(\xi) := \left(1-\phi \left(\frac{\xi}{2^j}\right)\right) \hat{f}(\xi), 
\end{align*}
and for $j\in \Z$
\begin{align*}
\widehat{P_{2^j}f}(\xi) :=\left( \phi \left(\frac{\xi}{2^j}\right)
-\phi \left(\frac{\xi}{2^{j-1}}\right)\right) \hat{f}(\xi).
\end{align*}
For a positive real number $S$, we finally let $P_{\leq S}$ equal the operator $P_{\leq 2^J}$ for the largest $J$ such that $2^J\leq S$. 
We are thus ready to introduce the coarse scale velocity $m_\ell$ defined by 
\begin{align}\label{def.vl}
m_\ell = P_{\le \ell^{-1}}m_{{q}}, 
\end{align}
Note that, regarding $m_q$ as a spatially periodic function on $I\times \mathbb T^3$, $P_{\leq \ell^{-1}} m$ can be written as the space convolution of $m$ with the kernel $2^{3J} \widecheck{\phi} (2^J \cdot)$, which belongs to $\mathcal{S} (\R^3)$. In particular $m_\ell$ is also spatially periodic and will be in fact regarded as a function on $I\times \mathbb T^3$. Similar remarks apply to several other situations in the rest of this note.

The regularization of the errors $R_q$ and $\varphi_q$ is more laborious and follows the intuition that, while we need to regularize them in time and space, we want such regularization to give good estimates on their advective derivatives along $\frac{m_\ell}{\varrho}$, for which we introduce the ad hoc notation
\[
D_{t, \ell} := \pa_t + \frac{m_\ell}{\varrho} \cdot \na\, .
\]
First of all we let $\Phi(\tau,x;t)$ be the forward flow map with the drift velocity $\frac{m_\ell}{\varrho}$ defined on some time interval $[a,b]$ starting at the initial time $t\in [a,b)$:
\begin{align}\label{e:fflow_first_instance}
\begin{dcases}
\pa_\tau \Phi(\tau, x;t)= \frac{m_\ell}{\varrho} (\tau, \Phi(\tau, x;t))\\
\Phi(t, x;t) = x\, .
\end{dcases}
\end{align}

\begin{rem}\label{r:periodicity}
Strictly speaking the map above is defined on $[a,b] \times \mathbb R^3$. Note however that the periodicity of $\frac{m_\ell}{\varrho}$ implies that $\Phi$ induces a well-defined map from $[a,b]\times \mathbb T^3$ into $\mathbb T^3$. From now on we will implicitly identify both maps.
\end{rem}

We then take a standard mollifier $ \rho$ on $\R$, namely a nonnegative smooth bump function satisfying $\norm{ \rho}_{L^1(\R)}=1$ and $\supp \rho\subset (-1,1)$. As usual we set $ \rho_\delta (s) = \delta^{-1}  \rho( \delta^{-1} s)$ for any $\de>0$. We can thus introduce the mollification along the trajectory
\begin{align*}
( \rho_\delta   \ast_{\Phi} F) (t,x)
= \int_{\R} F(t+s,\Phi(t+s,x;t))   \rho_\delta (s) ds. 
\end{align*}
(Note that if $F$ and $m_\ell$ are defined on some time interval $[a,b]$, then $ \rho_\delta   \ast_{\Phi} F$ is defined on $[a,b]-\delta$.)
This mollification can be found in \cite{Is2013}
and is designed to satisfy 
\begin{equation}\begin{split}\label{char.mol.traj}
D_{t,\ell} ( \rho_\delta   \ast_{\Phi} F)(t,x)
&= \int (D_{t,\ell}F)(t+s, \Phi(t+s, x;t))  \rho_\delta (s) ds\\
&= - \int F(t+s, \Phi(t+s, x;t))  \rho'_\delta (s) ds\, .
\end{split}\end{equation}
 The regularized errors are then given by
\begin{align}\label{def.me}
R_\ell =  \rho_{\ell_t}   \ast_{\Phi} P_{\leq \ell^{-1}} R_{{q}}, \quad
\ph_\ell =  \rho_{\ell_t}   \ast_{\Phi} P_{\leq \ell^{-1}}\ph_{{q}}. 
\end{align}
These errors can be defined on $[0,T]+2\tau_q$ by the choice of sufficiently large $\la_0$. We will need later quite detailed estimates on the difference between the original tuple and the regularized one and on higher derivatives of the latter. Such estimates are in fact collected in Section \ref{s:estimates_mollification}.

\subsection{Partition of unity and shifts}\label{ss:discretization}  We first introduce nonnegative smooth functions $\{\chi_v\}_{v\in\Z^3}$ and $\{\th_u\}_{u\in \Z}$ whose sixth powers give suitable partitions of unity in space $\R^3$ and in time $\R$, respectively:
\[
\sum_{v\in \Z^3} \chi_v^6(x) =1, \quad
\sum_{u\in \Z} \th_u^6(t) =1.   
\]
Here, $\chi_v(x) = \chi_0(x-{2}\pi v)$ where $\chi_0$ is a non-negative smooth function supported in $Q(0, {9/8}\pi)$ satisfying $\chi_0 = 1$ on $\overline{Q(0, {7/8}\pi)}$, where from now on $Q (x, r)$ will denote the cube $\{y: |y-x|_\infty < r\}$ (with $|z|_\infty := \max \{|z_i|\}$). Similarly, $\th_u(t) = \th_0(t-u)$ where $\th_0\in C_c^\infty(\R)$ satisfies $\th_0 = 1$ on $[1/8, 7/8]$ and $\th_0 =0$ on $(-1/8,9/8)^c$. Then, we divide the integer lattice $\Z^3$ into 27 equivalent families $[j]$ with $j \in \mathbb{Z}_3^3$ via the usual equivalence relation
\[
v=(v_1,v_2,v_3) \sim \td v = (\td v_1,\td v_2,\td v_3) 
\iff v_i \equiv \td v_i\; \mod 3\quad \text{for all }i=1,2,3.
\]
We use these classes to define the set of indices 
\[
\mathscr{I} := \{(u,v, f): (u,v)\in \mathbb Z\times \mathbb Z^3 \quad\mbox{and}\quad f\in \mathcal{F}^{[v]}\}\, .
\]
For each $I$ we denote by $f_I$ the third component of the index and we further subdivide $\mathscr{I}$ into $\mathscr{I}_R\cup \mathscr{I}_\varphi$ depending on whether $f_I\in \mathcal{F}^{[v],R}$ or $f_I\in \mathcal{F}^{[v], \varphi}$. 
Next we introduce the parameters $\tau = \tau_q$ and $\mu = \mu_q$ with $\tau_q^{-1}>0$ and $\mu_q^{-1}\in \mathbb N \setminus \{0\}$, which are explicitly given by
\begin{align}\label{mu.tau}
\mu_q^{-1} = 3 \ceil{\la_q^{\frac12}\la_{q+1}^{\frac12} \de_q^\frac14\de_{q+1}^{-\frac14} /3}, \quad 
\ta_q^{-1} = 40\pi \dot{C}_{\varrho} {M}\eta^{-1}\cdot {\la_q^\frac12\la_{q+1}^\frac12 \de_q^\frac14\de_{q+1}^\frac14 },
\end{align}
where $\dot{C}_{\varrho}$ is chosen as a constant depending on $\varrho$ such that $\norm{\na (m_q/{\varrho})}_0 \leq \dot{C}_{\varrho}M\la_q\de_q^\frac12$.
(For the motivation of $\tau_q$ and $\eta$, see \eqref{con.tau} and Proposition \ref{p:supports}).
We define 
\begin{align*}
\th_I(t) = 
\begin{cases}
\th_u^3(\tau^{-1}t), \quad I\in \mathscr{I}_R\\
\th_u^2(\tau^{-1}t), \quad I\in \mathscr{I}_\ph,
\end{cases}
\quad
\chi_I(x)=
\begin{cases}
\chi_v^3(\mu^{-1}x), \quad I\in \mathscr{I}_R\\
\chi_v^2(\mu^{-1}x), \quad I\in \mathscr{I}_\ph\, .
\end{cases}
\end{align*}
Next, for each $I$ let $U_{f_I}$ be the corresponding Mikado flow. Moreover, given $I= (u,v,f)$, denote by $t_u$ the time $t_u = u\tau$ and let $\xi_I = \xi_u$ be the solution of the following PDE (which we understand as a map on $\mathbb  R \times \mathbb T^3$ taking values in $\mathbb T^3$, cf. Remark \ref{r:periodicity}):  
\begin{align}\label{def.bflow}
\begin{cases}
\pa_t \xi_u + (\frac{m_\ell}{\varrho} \cdot \na)\xi_u =0\\
\xi_u(t_u,x) =x 
\end{cases}
\end{align}
In the rest of the paper $\nabla \xi_I$ will denote the Jacobi Matrix of the partial derivatives of the components of the vector map $\xi_I$ and we will use the shorthand notations $\nabla \xi_I^\top$, $\nabla \xi_I^{-1}$ and $\nabla \xi_I^{-\top}$ for, respectively, its transpose, inverse and transport of the inverse. Moreover, for any vector $f\in \mathbb R^3$ and any matrix $A\in \mathbb R^{3\times s}$ the notation $\nabla \xi_I f$ and $\nabla \xi_I A$ (resp. $\nabla \xi_I^{-1} f$, etc.) will be used for the usual matrix product, regarding $f$ as a column vector (i.e. a $\mathbb R^{3\times 1}$-matrix). 
 
For each $I = (u,v,f)$ we will also choose a shift 
\[
z_I= z_{u,v} + \bar x_f \in \mathbb R^3
\] 
and, setting $\lambda= \lambda_{q+1}$, we are finally able to introduce the main part of our perturbation, which is achieved using the following ``master function''
\begin{equation}\label{e:master}
\dot{N}(R, \varphi, \varrho, t,x) := \sum_{I\in \mathscr{I}} \theta_I (t) \chi_I (\xi_I (t,x)) {\gamma}_I (R, \varphi,\varrho, t,x) \nabla \xi_I^{-1} (t,x) U_{f_I} (\lambda (\xi_I (t,x) - z_I))\, ,
\end{equation}
where the $\gamma_I$'s are smooth scalar functions (the ``weights'') whose choice will be specified in the next section.
In order to simplify our notation we will use $U_I$ for $U_{f_I} (\cdot - z_I)$, $\psi_I$ for $\psi_{f_I} (\cdot - z_I)$ and $\tilde{f}_I$ for $\nabla \xi_I^{-1} f_I$. We therefore have the writing
\begin{equation}\label{e:master2}
\dot{N} := \sum_{I\in \mathscr{I}} \theta_I \chi_I (\xi_I) {\gamma}_I \tilde{f}_I \psi_I (\lambda \xi_I)\, .
\end{equation}
Note that, since we want $\dot{N}$ to be a periodic function of $x$, we will impose 
\begin{equation}\label{e:periodicity}
z_{u,v} = z_{u, v'} \qquad \mbox{if ${\mu}(v-v')\in 2\pi \mathbb Z^3$.}
\end{equation}
Finally, the preponderant part of the correction $m_{q+1}-m_q$ will take the form
\begin{equation}\label{e:wo}
n_o (t,x) := \dot{N} (R_\ell (t,x), \varphi_\ell (t,x), \varrho(t,x), t,x)\, 
\end{equation}
which is well-defined on $[0,T]+ 2\tau_q$. (Indeed, it is possible to have $[0,T]+ 3\tau_q\subset [0,T]+ \tau_{q-1}$ by the choice of sufficiently large $\la_0$). In the rest of Section \ref{sec:correction.const}, without mentioning, our analysis is done in the time interval $[0,T]+ 2\tau_q$.
Given the complexity of several formulas and future computations, it is convenient to break down the functions $ \dot{N}$ and $n_o$ in more elementary pieces. To this aim we introduce the scalar maps
\[
n_I (t,x) := \theta_I (t) \chi_I (\xi_I (t,x)) \psi_I (\lambda (\xi_I (t,x)))\, ,
\]
using which we can write
\[
\dot{N} (R, \varphi, \varrho, t, x) = \sum_{I\in \mathscr{I}} \gamma_I (R, \varphi, \varrho, t,x) \nabla \xi_I (t,x)^{-1} f_I n_I (t,x) 
= \sum_{I\in \mathscr{I}} \gamma_I (R, \varphi, \varrho, t,x) \tilde{f}_I (t,x) n_I (t,x)\, 
\]
and
\[
n_o = \sum_{I\in \mathscr{I}} \gamma_I  \nabla \xi_I^{-1} f_I n_I =  \sum_{I\in \mathscr{I}} \gamma_I  \tilde{f}_I n_I\, .
\]

The crucial point in our construction is the following proposition, obtained in \cite[Proposition 3.5]{DLK20}
\begin{prop}\label{p:supports}
There is a constant $\eta = \eta (\mathcal{F})$ in \eqref{e:eta} such that it allows a choice of the shifts $z_I= z_{u,v} + \bar x_f$ which ensure that {for each $(\mu_q, \tau_q, \la_{q+1})$,} the conditions $\supp (n_I) \cap \supp (n_J)=\emptyset$ for every $I\neq J$ and that \eqref{e:periodicity} for every $u,v$ and $v'$. 
\end{prop}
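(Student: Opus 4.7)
The argument is combinatorial and follows the strategy of \cite[Proposition 3.5]{DLK20}: I reduce the pairwise support-disjointness of $\{n_I\}$ to a shift-selection problem within each spacetime cell, then verify that the selection is compatible with the periodicity \eqref{e:periodicity}.

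First I would unpack $\supp(n_I)$ for $I=(u,v,f)$: it is contained in
\[
\{t\in \tau_q[u-\tfrac{1}{8}, u+\tfrac{9}{8}]\}\cap \{(t,x):\xi_I(t,x)\in Q(2\pi\mu_q v, \tfrac{9}{8}\pi\mu_q)\}\cap \{(t,x):\lambda_{q+1}\xi_I(t,x)\in B(l_{f_I}+z_I,\tfrac{\eta}{10})\}.
\]
Since $\xi_I(t,\cdot)$ is a diffeomorphism of $\mathbb{T}^3$, I track $\xi_I$-images. For distinct indices $I=(u,v,f)$ and $J=(u',v',f')$, temporal overlap forces $|u-u'|\leq 1$. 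When $[v]=[v']$ and $v\neq v'$, the relation $v-v'\in 3\mathbb{Z}^3$ forces $|v-v'|_\infty\geq 3$, so the cubes $Q(2\pi\mu_q v,\tfrac{9}{8}\pi\mu_q)$ are disjoint. The remaining conflict types are: (a) $u=u'$, $v=v'$, $f\neq f'\in \mathcal{F}^{[v]}$; (b) $u=u'$, $[v]\neq [v']$ with overlapping cubes; (c) $u'=u\pm 1$.

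Next I would exploit the free shifts $\bar{x}_f$ and $z_{u,v}$. The bound built into $\tau_q$ in \eqref{mu.tau}, namely $\tau_q\|\nabla(m_\ell/\varrho)\|_0\lec \eta$, guarantees that over one cell duration the flow $\xi_u$ distorts the Mikado tubes $\{\lambda_{q+1}\xi_u(t,x)\in B(l_f+z_I,\eta/10)\}$ by an amount smaller than their transverse size $\eta/\lambda_{q+1}$, so inside a single spatial cell these tubes look essentially like straight thin cylinders. For conflicts of type (a), I pick the $\bar{x}_f$ for $f\in \mathcal{F}^{[v]}$ so that the at most $10$ Mikado tubes in a cell are pairwise disjoint; this is feasible since a cube of side $\mu_q$ hosts $\sim (\mu_q\lambda_{q+1})^3\gg 1$ admissible tube positions and only a bounded number must be placed. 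For conflicts of type (b) and (c) I select the cell-level shifts $z_{u,v}$ to resolve the $O(1)$ neighbor-neighbor clashes by pigeonhole, which becomes admissible once $\eta=\eta(\mathcal{F})$ is taken small enough.

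Finally, the periodicity condition \eqref{e:periodicity} demands $z_{u,v}=z_{u,v'}$ whenever $v$ and $v'$ label the same physical cell in $\mathbb{T}^3$, i.e.\ whenever $\mu_q(v-v')\in 2\pi\mathbb{Z}^3$. Since $\mu_q^{-1}$ is divisible by $3$ by construction in \eqref{mu.tau}, the mod-$3$ equivalence on $\mathbb{Z}^3$ descends to the quotient $\mathbb{Z}^3/(\mu_q^{-1}\mathbb{Z}^3)$, so the shift assignment of the previous paragraph can be made on a single fundamental domain and then extended periodically in $v$. The main obstacle is precisely the simultaneous resolution of conflicts (a)--(c) by a single shift assignment that is also periodic in $v$: this is the combinatorial heart of \cite[Proposition 3.5]{DLK20}, and it relies both on the scale separation $\eta/\lambda_{q+1}\ll \mu_q$ and on the fact that $\mathcal{F}$ is fixed and finite, so that a sufficiently small $\eta=\eta(\mathcal{F})$ provides the slack needed to beat the bounded number of constraints imposed in each cell.
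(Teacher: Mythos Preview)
Your proposal is correct and matches the paper's approach: the paper does not give an independent proof of this proposition but simply invokes \cite[Proposition 3.5]{DLK20}, observing that the argument there does not rely on the drift velocity being divergence-free and so carries over verbatim once the parameter relations \eqref{con.tau} are in place. Your sketch accurately reconstructs the combinatorial mechanism of that reference (reduction to finitely many conflict types within and between neighboring cells, resolution via pigeonhole on the shifts for $\eta=\eta(\mathcal{F})$ small, and compatibility of the mod-$3$ coloring with the period $\mu_q^{-1}\in 3\mathbb{N}$), so there is nothing to add.
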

Note that the difference between Proposition \ref{p:supports} and Proposition 3.5 in \cite{DLK20} is that in \cite{DLK20} the drift velocity is divergence-free while $m_{\ell}/{\varrho}$ is not. However, the proof is not relying on the divergence-free condition. Also, we remark that the proof of Proposition \ref{p:supports} requires the choice of parameters satisfying the following relations:
\begin{align}\label{con.tau}
    \mu_q^{-1}\ll \la_{q+1} \in \N, \quad
    \tau_q \Norm{\na ({m_q}/{\varrho})}_0 
    \leq \frac 1{10}, \quad
    \mu_q \tau_q \Norm{\na ({m_q}/{\varrho})}_0
    \leq \frac{\eta}{10\pi \la_{q+1}}
\end{align}
where $\eta$ is a positive constant determined by $\cF = \bigcup_{j\in \mathbb{Z}^3_3} \cF^j$, which has finite cardinality.

\subsection{Choice of the weights}\label{ss:weights} We next detail the choice of the functions $\gamma_I$, subdividing it into two cases.
\subsubsection{Energy weights} The weights $\ga_I$ for $I\in \mathscr{I}_\ph$ will be chosen so that the low frequency part of $\frac 12|n_o|^2 n_o$ makes a cancellation with the mollified unsolved current $\varrho^3 \ph_\ell$. 
Because of Proposition \ref{p:supports}, we have
\begin{align*}
|n_o|^2 n_o  =&\ \sum_{I\in \mathscr{I}}  \th_I^3 \chi_I^3(\xi_I) \ga_I^3 \psi_I^3(\la_{q+1}\xi_I) |\na \xi_I^{-1} f_I|^2\na \xi_I^{-1}  f_I\\
=&\underbrace{\sum_{I\in \mathscr{I}}  \th_I^3 \chi_I^3(\xi_I) \ga_I^3 \langle \psi_f^3\rangle   |\td f_I|^2\td f_I}_{=: (|n_o|^2 n_o)_L}
+\underbrace{\sum_{I\in \mathscr{I}}  \th_I^3 \chi_I^3(\xi_I) \ga_I^3 \left(\psi_I^3(\la_{q+1}\xi_I)-\langle \psi_I^3\rangle
\right) |\td f_I|^2\td f_I}_{=:(|n_o|^2 n_o)_H}\, .
\end{align*}
In order to find the desired $\ga_I$, we introduce the notation $\mathscr{I}_{u,v,\ph}$ for $\{I\in \mathscr{I}_\ph : I = (u,v,f)\}$ and we observe that, by \eqref{con.psi2}, it suffices to achieve
\begin{align}\label{W3L}
(|n_o|^2 n_o)_L
=\sum_{u,v} \th_u^6\left(\frac t{\tau_q}\right) \chi_v^6\left(\frac{\xi_u}{\mu_q}\right)\sum_{I \in \mathscr{I}_{u,v,\ph}} \ga_I^3 |\tilde{f}_I|^2 \tilde{f}_I\, .
\end{align}
Next we look for our coefficients in the following form:
\[
\ga_I =\frac{\la_q^{-\ga}\de_{q+1}^\frac12 \Ga_I}{|\tilde{f}_I|^\frac23}\, ,
\]
where $\Gamma_I$ will be specified in a moment.

Recall that $\xi_I$ is a solution to \eqref{def.bflow} and satisfies $\na \xi_I |_{t=t_u} = \I$ and 
\[
\na \xi_I^{-1} (t,x) = \na \Phi_u (t, \xi_I(t,x))\, ,
\] 
where $\Phi_u$ is the ``forward flow''
$\Phi (t,x;t_u)$ introduced in \eqref{e:fflow_first_instance} and thus solves
\begin{equation}\label{eqn.fflow}
\begin{cases}
\pa_t \Phi_u(t,x) =\frac{m_\ell}{\varrho} (t,\Phi_u(t,x))\\
\Phi_u(t_u, x) = x. 
\end{cases}
\end{equation}
This implies that  
\begin{align}
\norm{\na \xi_I}_{C^0(\cal{I}_u\times \R^3)} &\leq  \exp\left(2\tau_q \Norm{\na ({m_\ell}/{\varrho})}_0\right)
\leq  \exp(2\dot{C}_\varrho{M}\tau_q\la_q\de_q^\frac12), \label{est.naxi}
\end{align}
\begin{equation}\begin{split}
\norm{\I - \na \xi_I^{-1}}_{C^0(\cal{I}_u\times \R^3)} 
&{= \norm{\I - \na \Phi_u}_{C^0(\cal{I}_u\times \R^3)}}\\
&\le 2\tau_q \norm{\na \Phi_u}_{C^0(\cal{I}_u\times \R^3)}\Norm{\na ({m_\ell}/{\varrho})}_0\\
&\leq 2\dot{C}_{\varrho}M\tau_q\la_q\de_q^\frac12\exp(2\dot{C}_{\varrho}M\tau_q \la_q\de_q^\frac12 ) \label{bf.Id}
\end{split}\end{equation}
for the time interval $\cal{I}_u := [t_u -\frac 12\tau_q, t_u + \frac 32\tau_q]  \cap  [0,T]+2\tau_q$. 
Therefore, for sufficiently large $\la_0$, we have 
\begin{align*}
|\tilde{f}_I| = |\na \xi_I^{-1}f_I|&\geq \frac 34\\
\norm{2\la_q^{3\ga}\de_{q+1}^{-\frac32}(\na\xi_I)\varrho^3 \ph_\ell}_{C^0_x}&\leq 3C_1(\varrho)
\end{align*} 
on the support of $\th_I$ for some positive constant $C_1(\varrho)$.  
Since $\{f_I : I \in \mathscr{I}_{u,v, \ph}\}= \mathcal{F}^{[v], \ph}$ satisfies \eqref{con.FIph}, we can apply Lemma \ref{lem:geo2} with $N_0= 3C_1(\varrho)$ to solve
\begin{align*}
-2 \varrho^3 \ph_\ell
=\sum_{I\in \mathscr{I}_{u,v,\ph}} \ga_I^3|\tilde{f}_I|^2 \tilde{f}_I
\iff 
-2\la_q^{3\ga}\de_{q+1}^{-\frac32}(\na\xi_I)\varrho^3 \ph _\ell = \sum_{I\in \mathscr{I}_{u,v\ph}}\Ga_I^3 f_I
\end{align*}
on each support of $\th_I$ (observe that we have crucially used that $\xi_I=\xi_u$ is independent of $f_I$ for $I\in \mathscr{S}_{u,v,\ph}$). We are thus in the position to apply Lemma \ref{lem:geo2} to the set $\mathcal{F}^{[v],\ph} = \{f_I: I \in \mathscr{I}_{u,v,\ph}\}$ and we let $\Gamma_{f_I}$, $I\in \mathscr{I}_{u,v,\ph}$ be the corresponding functions. As a result, we can set
\begin{align}\label{Ga.ph}
\Ga_I(t,x) 
={\Ga}_{f_I}^{\frac{1}{3}}
({-2
{\la_q^{3\ga}\de_{q+1}^{-\frac 32}(\na\xi_I)\varrho^3 \ph_\ell}} 
)\,.
\end{align}
Note that the smoothness of the selected functions $\Gamma_{f_I}$ depends only on $C_1$ and that in fact Lemma \ref{lem:geo2} is just applied $27$ times, taking into consideration that $[v]\in \mathbb{Z}_3^3$. For later use we record here the important ``cancellation property'' that the choice of our weights achieves:
\begin{align}\label{can.ph}
\frac {1}{2\varrho^2} (|n_o|^2n_o)_L
=- \varrho \ph_\ell.
\end{align}

\subsubsection{Reynolds weights}\label{subsec:R}
Similarly to the previous section we decompose $n_o \otimes n_o$ into the low and high frequency parts,
\begin{align*} 
n_o \otimes n_o 
=&\ \sum_I \th_I^2 \chi_I^2(\xi_I) \ga_I^2 \psi_I^2(\la_{q+1}\xi_I)
\tilde f_I\otimes \tilde f_I\\
=&\underbrace{\sum_I \th_I^2 \chi_I^2(\xi_I) \ga_I^2 \langle\psi_I^2\rangle \tilde f_I\otimes \tilde f_I}_{=: (n_o \otimes n_o)_L}
+\underbrace{\sum_I \th_I^2 \chi_I^2(\xi_I) \ga_I^2 \left(\psi_I^2(\la_{q+1}\xi_I)-\langle\psi_I^2\rangle \right) \tilde f_I\otimes \tilde f_I}_{=: (n_o \otimes n_o)_H}.
\end{align*}
Since the weights for $I\in \mathscr{I}_\ph$ have already been established, for each fixed $(u,v)$ we denote by $I (u,v)$ the sets of indices $(u', v')$ such that $\max \{|u-u'|_\infty, |v-v'|_\infty\}\leq 1$ (where $|w|_\infty := \max \{|w_1|, |w_2|, |w_3|\}$ for any $w\in \mathbb R^3$) and {rewrite
\begin{align*}
& (n_o \otimes n_o)_L
= \sum_{u,v} \th^6_u\left(\frac t{\tau_q}\right) \chi^6_v\left(\frac{\xi_u}{\mu_q}\right) 
\sum_{I\in \mathscr{I}_{u,v,R}} \ga_I^2 \td f_I \otimes \td f_I
+\sum_{J\in  \mathscr{I}_{\ph}} \th_J^2 \chi_J^2(\xi_I) \ga_J^2 \langle\psi_J^2\rangle \tilde f_J\otimes \tilde f_J\\
=& \sum_{u,v} \th^6_u\left(\frac t{\tau_q}\right) \chi^6_v\left(\frac{\xi_u}{\mu_q}\right) \left[
\sum_{I\in \mathscr{I}_{u,v,R}} \ga_I^2 \td f_I \otimes \td f_I
+\sum_{\substack{J\in  \mathscr{I}_{u',v',\ph}\\ (u',v')\in I(u,v)}}\th_J^2 \chi_J^2(\xi_I) \ga_J^2 \langle\psi_J^2\rangle \tilde f_J\otimes \tilde f_J\right].
\end{align*}}
To make $n_o \otimes n_o$ cancel out $\varrho^2(\de_{q+1}\I - {R_\ell})$, we recall that $\tilde{f}_I\otimes \tilde{f}_I = \nabla \xi_I^{-1} (f_I\otimes f_I) \nabla \xi_I^{-\top}$ and set
\begin{equation}\begin{split}
\label{eq.Ga}
&\sum_{I\in \mathscr{I}_{u,v,R}} \ga_I^2 f_I \otimes f_I\\
&\quad = \na \xi_I 
\Bigg[\varrho^2(\de_{q+1} \I  -R_\ell) -\sum_{(u',v') \in I (u,v)} \sum_{J\in \mathscr{I}_{u',v',\ph}} \theta_J^2 \chi_J^2 \ga_{J}^2 (\xi_J) \langle\psi_{J}^2\rangle  \tilde{f}_J \otimes  \tilde{f}_J \Bigg]
\na \xi_I^{\top}\, . 
\end{split}\end{equation}
We now define $\mathcal{M}_I$ as 
\begin{align*}
\mathcal{M}_I &= \de_{q+1} [\na \xi_I \na \xi_I^{\top}-\I ] -
\na \xi_I R_\ell \na \xi_I^{\top}  \\
&\quad
-\na\xi_I \frac{1}{\varrho^2}\left[\sum_{(u',v') \in I (u,v)} \sum_{J\in \mathscr{I}_{u',v',\ph}}\th_{J}^2\chi_{J}^2(\xi_{J}) \ga_{J}^2 \langle \psi_{J}^2\rangle  \tilde f_J \otimes  \td f_J \right] \na\xi_I^{\top}
\end{align*}
and $\gamma_I = \delta_{q+1}^{\frac{1}{2}} \varrho \Gamma_I$ (for $I\in \mathscr{I}_{u,v,R}$) and impose
\begin{equation} \label{eq.Ga1}
\sum_{I\in \mathscr{I}_{u,v,R}} \Ga_I^2 f_I  \otimes  f_I
= \I + \de_{q+1}^{-1} \mathcal{M}_I
\end{equation}
In order to show that such a choice is possible observe that we can make $\norm{\de_{q+1}^{-1}\mathcal{M}_I}_{C^0(\supp(\th_I)\times \R^3)}$ sufficiently small, provided that $\la_0$ is sufficiently large, because of \eqref{est.naxi}, \eqref{bf.Id}, $\norm{\de_{q+1}^{-1}R_\ell}_0\lec \la_q^{-3\ga}$, and, $\norm{\de_{q+1}^{-1}\ga_J^2}_0\lec \la_q^{-2\ga}$ when $J \in \mathscr{I}_{u',v',\ph}$. We can thus apply Lemma \ref{lem:geo1} to $\{f_I : I \in \mathscr{I}_{u,v,R}\}= \mathcal{F}^{[v],R}$ and, denoting by $\Gamma_{f_I}$ the corresponding functions, we just need to set 
\[
\Ga_I=\Ga_{f_I} ( \I + \de_{q+1}^{-1} \mathcal{M}_I)\, .
\] 
Observe once again that this means applying Lemma \ref{lem:geo1} just $27$ times, given that there are $27$ different families $\mathcal{F}^{[v], R}$. 
We finally record the desired ``cancellation property'' that the choice of the weights achieves:
\begin{equation}\begin{split} \label{can.R}
( n_o\otimes n_o)_L
&=\sum_{u,v}\th_u^6\left(\frac t{\tau_q}\right) \chi_n^6\left(\frac{\xi_u}{\mu_q}\right)  \varrho^2 (\de_{q+1}\I-R_\ell)
=\varrho^2 (\de_{q+1}\I-R_\ell).
\end{split}\end{equation}

\subsection{Fourier expansion in fast variables and corrector $n_c$}\label{ss:correction} In the rest of this article, we use a representation of $n_o$, $n_o\otimes n_o$, and $\frac12 |n_o|^2n_o$ based on the Fourier series of $\psi_I$, $\psi_I^2$ and $\psi_I^3$.
Indeed, since $\psi_I$ is a smooth function on $\T^3$ with zero-mean, we have
\begin{equation}\label{rep.psi}
\psi_I (x) = \sum_{k\in \Z^3\setminus \{0\}} \dot{b}_{I,k}  e^{ i k\cdot x},\quad
\psi_I^2 (x) = \dot c_{I,0} 
 +\sum_{k\in \Z^3\setminus \{0\}} \dot c_{I,k}   e^{ i k\cdot x},\quad
\psi_I^3 (x) = \dot d_{I,0}  + 
\sum_{k\in \Z^3\setminus \{0\}} \dot d_{I,k}   e^{ i k\cdot x}
\end{equation}
In particular, 
\[
\dot c_{I,0} =\langle \psi_I^2\rangle, \quad
\dot d_{I,0} = \langle \psi_I^3\rangle.
\]
Since $\psi_I$ is in $C^\infty(\T^3)$, we have  
\begin{equation}\label{est.k}
\sum_{k\in \Z^3} |k|^{n_0+2}|\dot{b_{I,k}}|
+\sum_{k\in \Z^3} |k|^{n_0+2}|\dot{c_{I,k}}|
+\sum_{k\in \Z^3} |k|^{n_0+2}|\dot{d_{I,k}}|
 \lec 1 , \quad 
\sum_{k\in\Z^3} |\dot{c}_{I,k}|^2\lec 1. 
\end{equation}
for $n_0 = \ceil{\frac{2b(2+\al)}{(b-1)(1-\al)}}$. Also,  it follows from $f_I \cdot \na \psi_I = f_I \cdot \na \psi_I^2=f_I \cdot \na \psi_I^3=0 $ that
\begin{align}\label{coe.van}
\dot b_{I,k} (f_I\cdot k )=\dot c_{I,k} (f_I\cdot k )=\dot d_{I,k} (f_I\cdot k )=0.
\end{align}
Next, as a consequence of \eqref{can.ph}, \eqref{can.R}, and \eqref{rep.psi}, we have 
\begin{align}
&n_o = \sum_{u} \sum_{k\in \Z^3\setminus \{0\}} \de_{q+1}^\frac 12b_{u,k} e^{i\la_{q+1} k\cdot \xi_I} \label{rep.W}\\
&n_o\otimes n_o
= \varrho^2(\de_{q+1}\I -R_\ell)
+\sum_{u} \sum_{k\in \Z^3\setminus \{0\}}  \de_{q+1} c_{u,k} e^{ i\la_{q+1} k\cdot \xi_I} \label{alg.eq}\\
&\frac12 |n_o|^2 n_o =-\varrho^3 \ph_\ell
+\frac12\sum_{u} \sum_{k\in \Z^3\setminus \{0\}}  \de_{q+1}^\frac 32 d_{u,k} e^{ i\la_{q+1} k\cdot \xi_I}\label{e:rep3}\\
&\frac12|n_o|^2
=-\varrho^2 \ka_\ell +\frac32 \varrho^2 \de_{q+1}  
+\frac 12\sum_{u} \sum_{k\in \Z^3\setminus \{0\}}  \de_{q+1} \tr(c_{u,k}) e^{ i\la_{q+1} k\cdot \xi_I}.\label{e:rep4}
\end{align}
where {$\ka_\ell = \frac 12 \tr(R_\ell)$} and the relevant coefficients are defined as follows:
\begin{equation}\begin{split}\label{coef.def}
&b_{u,k} = \sum_{I: u_I=u} \th_I\chi_I(\xi_I) \de_{q+1}^{-\frac 12}\ga_I \dot b_{I,k} \tilde f_I =: \sum_{I:u_I =u} B_{I,k} \tilde{f}_I,\\
&c_{u,k}
=
\sum_{I: u_I=u} \th_I^2\chi_I^2(\xi_I)
\de_{q+1}^{-1}\ga_I^2 \dot c_{I,k}
\tilde f_I\otimes \tilde f_I,\\
&d_{u,k}
= \sum_{I: u_I=u} \th_I^3\chi_I^3(\xi_I)
\de_{q+1}^{-\frac 32}\ga_I^3 \dot d_{I,k}
|\tilde f_I|^2 \tilde f_I.
\end{split}\end{equation}
Observe that,  by the choice of $\th_I$, if $|u-u'|> 1$, then
\begin{align*}
\supp_{t,x} (b_{u,k})\cap \supp_{t,x} (b_{u',k'})
&=\supp_{t,x} (c_{u,k})\cap \supp_{t,x} (c_{u',k'})\\
&=\supp_{t,x} (d_{u,k})\cap \supp_{t,x} (d_{u',k'})
 = \emptyset
\end{align*}
for any $k, k'\in \Z^3\setminus\{0\}$.  

We next prescribe an additional correction $n_c$ to make $n=n_o+n_c$ divergence-free. 
Since we have \eqref{coe.van} and the identity $\na \times (\na \xi_I^{\top}U(\xi_I)) = \operatorname{cof}(\na \xi_I^\top)(\na \times U)(\xi_I) =\det(\na \xi_I) \na \xi_I^{-1} (\na \times U)(\xi_I) $ for any smooth function $U$ (see for example \cite{DaSz2016}), we have 
\[
\frac 1{\la_{q+1}\det(\na \xi_I)}\na \times \left( \dot{b}_{I,k}\na\xi_I^{\top} \frac{ik \times f_I}{|k|^2}  e^{i\la_{q+1} k\cdot {\xi_I}}\right) =\dot{b}_{I,k} \na \xi_I^{-1}f_I e^{i\la_{q+1} k\cdot \xi_I}.
\]
Using this, the preponderant part $n_o$ of the momentum correction can be written as
\begin{align*}
n_o 
&= \sum_{\substack{u\in \Z\\k\in \Z^3\setminus \{0\}}}  \de_{q+1}^\frac 12 \sum_{I:u_I=u} B_{I,k} \na \xi_I^{-1}f_I e^{i\la_{q+1} k\cdot \xi_I}\\
& =  \sum_{\substack{u\in \Z\\k\in \Z^3\setminus \{0\}}}  \frac{ \de_{q+1}^\frac 12}{\la_{q+1}}\sum_{I:u_I=u}  
\frac{B_{I,k}}{\det(\na \xi_I)} \na \times\left( \na \xi_I^{\top}\frac{ik \times f_I}{|k|^2} e^{i\la_{q+1} k\cdot \xi_I}\right).
\end{align*}
Note that $\det(\na \xi_I)$ is away from $0$ on the support of $B_{I,k}$. 
Therefore, we define
\begin{align}\label{def.Wc}
n_c = \frac {\de_{q+1}^\frac 12 } {\la_{q+1}} \sum_{\substack{u\in \Z\\k\in \Z^3\setminus\{0\}}} \sum_{u_I =u} \na \left(\frac{B_{I,k}}{\det(\na \xi_I)}\right) \times \left(\na\xi_I^{\top} \frac{ik \times f_I}{|k|^2} \right) e^{i\la_{q+1} k\cdot \xi_I}
=: \frac {\de_{q+1}^\frac 12}{\la_{q+1}\mu_q}\sum_{u,k} e_{u,k} e^{i\la_{q+1} k\cdot \xi_I}
\end{align}
where
\begin{align}\label{def.e}
e_{u,k} 
= {\mu_q}\sum_{I:u_I=u} \na(\det(\na \xi_I^{-1})\th_I\chi_I(\xi_I) \de_{q+1}^{-\frac 12}\ga_I \dot b_{I,k}) \times  \left(\na\xi_I^{\top} \frac{ik \times f_I}{|k|^2} \right).
\end{align}
In this way, the final momentum correction $m_{q+1}-m_q =: n= n_o+n_c$ can be written as
\begin{align*}
n = \na\times \left( \frac {\de_{q+1}^\frac12}{\la_{q+1}}
\sum_{I,k}  
\det(\na \xi_I^{-1}) B_{I,k} \na\xi_I^{\top} \frac{ik \times f_I}{|k|^2}e^{i\la_{q+1} k\cdot \xi_I} \right),
\end{align*}
and hence it is divergence-free {and mean-zero}. For later use, we remark that if $|u-u'| {>} 1$, $\supp_{t,x} (e_{u,k})\cap \supp_{t,x} (e_{u',k'})
 = \emptyset$ holds
for any $k, k'\in \Z^3\setminus\{0\}$.
 Also, by its definition, the correction $n$ has the representation
\begin{align}\label{def.w}
n= \sum_{u\in \Z} \sum_{k\in \Z^3\setminus \{0\}} \de_{q+1}^\frac 12 (b_{u,k} + (\la_{q+1}\mu_q)^{-1} e_{u,k} )e^{i\la_{q+1} k\cdot \xi_I}.
\end{align}

\section{Definition of the new errors}

\subsection{New Reynolds stress}
With the correction $n$ of the momentum defined as in the previous section, we reorganize the Euler-Reynolds system and the relaxed energy equation as the equations for the new Reynolds stress $R_{q+1}$ and for the unsolved current $\ph_{q+1}$, respectively.
  
We first define $R_{q+1}$. Using the momentum equation 
\begin{align*}
    \pa_t m_q + \na\cdot\left(\frac{m_q\otimes m_q} \varrho\right) + \na p(\varrho)  = \div( \varrho (R_q-c_q\I))
\end{align*}
at $q$th step with $c_q= \sum_{j=q+1}^\infty \de_j$, we can write the equation for $R_{q+1}$ as 
\begin{align*}
\div( \varrho R_{q+1})
&=\underbrace{ \varrho D_{t,\ell} \frac{n}{ \varrho} - \div(m_q-m_\ell)\frac{n}{ \varrho}}_{=\na \cdot  ( \varrho R_T)} 
+ \underbrace{\div\left(\frac{n\otimes n}{ \varrho} +  \varrho R_\ell - \de_{q+1}  \varrho \I\right)}_{=:\na \cdot ( \varrho  R_O)} \\
&\quad+\underbrace{  (n\cdot \na) \frac{m_\ell}{ \varrho} }_{=:\na\cdot  ( \varrho R_N)} + \underbrace{\div \left(\frac{(m_q-m_\ell)\otimes n}{ \varrho} + \frac{n\otimes (m_q-m_\ell)}{ \varrho} +  \varrho(R_q-R_\ell)\right) }_{=:\na\cdot  ( \varrho R_M)},
\end{align*}
where $D_{t,\ell} = \left(\pa_t+\frac{m _\ell}{ \varrho} \cdot \na\right)$, and decompose $R_O$ further as 
\[
\na\cdot (\varrho R_O)=\underbrace{\div\left(\frac{n_o\otimes n_o}{ \varrho} +  \varrho R_\ell - \de_{q+1}  \varrho \I\right)}_{=\na \cdot (\varrho R_{O1})}
+ \underbrace{\div\left(\frac{n_o\otimes n_c}{ \varrho}+\frac{n_c\otimes n_o}{ \varrho}+\frac{n_c\otimes n_c}{ \varrho}\right) }_{=\na \cdot (\varrho R_{O2})}.
\]
Then, define $R_{q+1}$ as
\begin{equation}\label{e:splitting_of_Reynolds}
R_{q+1} =  R_T +  R_N +  R_{O1} + R_{O2}+ R_M  + \frac 23 \frac{\zeta(t)}{\varrho}\I. 
\end{equation}
Here the last term does not affect $\div(\varrho R_{q+1})$ because $\zeta$ is a function of time, which will be specified in Section \ref{sec:new.current}. Our choice of $R_{O2}$ and $ R_M$ are
\begin{equation}\begin{split}\label{def.RO2}
 \varrho R_{O2} =  \frac{n_o\otimes n_c}{ \varrho}+\frac{n_c\otimes n_o}{ \varrho}+\frac{n_c\otimes n_c}{ \varrho}.
\end{split}\end{equation}
and
\begin{equation}\begin{split}\label{def.RM}
 \varrho R_M =
\underbrace{ \varrho(R_q-R_\ell)}_{=\varrho R_{M1}}+\underbrace{\frac{(m_q-m_\ell)\otimes n}{ \varrho} + \frac{n\otimes (m_q-m_\ell)}{ \varrho}}_{=\varrho R_{M2}},
\end{split}\end{equation}
which are the only two Reynolds stress errors which might have nonzero trace. For the other errors, we solve the divergence equation by using the inverse divergence operator $\cal{R}$ in Definition \ref{idv.defn} to get trace-free errors, namely we set
\begin{align*}
 \varrho R_{O1} &= \mathcal{R} \left(\div\left(\frac{n_o\otimes n_o}{ \varrho} +  \varrho R_\ell - \de_{q+1}  \varrho \I\right) \right)\\
 \varrho R_{N} &= \mathcal{R} \left( (n\cdot \na) \frac{m_\ell}{ \varrho}\right)\\
 \varrho R_{T} &= \mathcal{R} \left( \varrho D_{t,\ell} \frac{n}{ \varrho} - \div(m_q-m_\ell)\frac{n}{ \varrho}\right)\, . 
\end{align*}
Here, we used that $\textstyle{\varrho D_{t,\ell} \frac{n}{ \varrho} - \div(m_q-m_\ell)\frac{n}{ \varrho} = \pa_t n + \div \left(\frac{n\otimes m_\ell}{\varrho}\right)}$ and $\textstyle{(n\cdot\na)\frac{m_\ell}{\varrho} = \div \left(\frac{m_\ell \otimes n}{\varrho} \right)} $ and hence, they have zero average. As a result, we have
\[
\tr ( \varrho R_{q+1}) = \tr ( \varrho R_{O2} +  \varrho R_{M}) + 2\zeta,
\]
which gives 
\begin{equation}\label{askR}
\kappa_{q+1} :=\frac{1}{2} \tr R_{q+1}= \frac 12  \tr( R_{O2} +  R_{M}) + \frac{\zeta}{\varrho} 
\end{equation}

\subsection{New current}\label{sec:new.current}
Applying the frequency cut-off $P_{\leq \ell^{-1}}$ to the Euler-Reynolds system, we have
\[
\pa_t m_\ell + \na \cdot \left(\frac{m_\ell\otimes m_\ell}{\varrho}\right) + \na p_\ell(\varrho)
= \na \cdot P_{\le \ell^{-1}} (\varrho R_q - \varrho c_q \I) +  Q(m_q,m_q).
\]
where $p_\ell(\varrho) = P_{\le \ell^{-1}} p(\varrho)$ and $Q(m_q, m_q)= Q_{\ell, \varrho}(m_q,m_q)$ is defined as
\begin{equation}\label{e:quadratic_form_Q}
Q(m_q,m_q) := \na \cdot \left(\frac{m_\ell \otimes m_\ell}{\varrho} - P_{\le \ell^{-1}}\left(\frac{m_q\otimes m_q}{\varrho}\right)\right)\, .
\end{equation}
Also, we recall that the tuple $(m_q,c_q, R_q,\ph_q)$ solves 
\begin{align*}
&\pa_t \left( \frac{|m_q|^2}{2 \varrho} + P( \varrho)\right) +
\div\left(\frac{m_q}{ \varrho} \left(
\frac{|m_q|^2}{2 \varrho} +  \varrho P'( \varrho)\right)\right)\\
&\hspace{4cm}=  \varrho \left(\pa_t + \frac{m_q}{ \varrho}\cdot\na\right) \ka_q + \div((R_q-c_q\I) m_q) + \div ( \varrho\ph_q) + \pa_t E
\end{align*}
with $\ka_q := \frac 12 \tr(R_q)$. 
Using these equations, we can write the equation for $\ph_{q+1}$ as 
\begin{align*}
\varrho D_{t,q+1} & \ka_{q+1} 
+ \div ( \varrho\ph_{q+1})\\
=&  
\underbrace{\varrho D_{t,q} \left(\frac {|n|^2}{2\varrho^2} + \ka_q+ \frac{(m_q-m_\ell)\cdot n)}{\varrho^2} \right)
}_{=: \varrho D_{t,q+1} \ka_{q+1}- (\zeta_1+ \zeta_2+\zeta_3+\zeta_4)' + \na \cdot (\varrho\ph_T)} 
+ \underbrace{\na\cdot \left(\frac {|n|^2n}{2\varrho^2} + \varrho\ph_\ell \right)}_{=\na\cdot(\varrho\ph_O)} \\
&  \underbrace{-\div(R_{q+1} n)}_{= \na\cdot(\varrho\ph_R)}   
+\underbrace{\frac n{\varrho}\cdot (\div P_{\le \ell^{-1}}( \varrho (R_q-c_q \I)) + Q(m_q,m_q)) }_{=\nabla \cdot (\varrho\ph_{H1}) + \zeta_1'} \\
&+ \underbrace{\na \cdot \left(
\frac{|m_q-m_\ell|^2}{2\varrho}
\frac n{\varrho}\right)+ \na \cdot (\varrho(\ph_q-\ph_\ell)) }_{=\na\cdot(\varrho\ph_{M1})}
+\underbrace{ \frac n{\varrho} \cdot \na (p(\varrho)-p_\ell(\varrho))
}_{=\na \cdot(\varrho\ph_{M4}) + \zeta_4'}
\\
&+ \underbrace{\na\cdot  \left(\left (\frac{n\otimes n}{\varrho}+\varrho R_q -\varrho R_{q+1} - \de_{q+1}\varrho \I \right) \frac{m_q-m_\ell}{\varrho}\right) + \div (m_q-m_\ell) \frac{n\cdot m_\ell}{\varrho^2}} _{=\na\cdot(\varrho\ph_{M2})+\na\cdot(\varrho\ph_{M3})+ \zeta_2'}
\\
&+ \underbrace{\left(\frac{n\otimes n}{\varrho}-\de_{q+1}\varrho\I+\varrho R_q-\varrho R_{q+1} + \frac{(m_q-m_\ell)\otimes n}{\varrho}+ \frac{n\otimes (m_q-m_\ell)}{\varrho} \right): \na \frac{m_\ell}{\varrho}}_{=   \na\cdot(\varrho\ph_{H2})+  \zeta_3'}\\
\end{align*}
where $D_{t,q} = \left(\pa_t + \frac{m_{q}}{ \varrho}\cdot\na\right)$.
The functions {$ \zeta_1$}, $ \zeta_2$, $ \zeta_3$, and $ \zeta_4$ will be defined to invert the divergence. Then, we set
\begin{align*}
\varrho\varphi_O &:= 
\underbrace{\mathcal{R}\left(\na\cdot \left(\frac {|n_o|^2n_o}{2\varrho^2} +\varrho \ph_\ell \right)\right)}_{=: \varrho\varphi_{O1}} 
+ \underbrace{\frac {|n|^2n - |n_o|^2 n_o}{2\varrho^2}  }_{=: \varrho\varphi_{O2}} \\
\varrho\varphi_R &:= -R_{q+1}n\\
\varrho\varphi_{M1} &:=  \frac{|m_q-m_\ell|^2}{2\varrho}\frac n{\varrho}+ \varrho(\ph_q-\ph_\ell)\\
\varrho\varphi_{M2} &:= \left (\frac{n\otimes n}{\varrho}+\varrho R_q -\varrho R_{q+1} - \de_{q+1}\varrho \I \right) \frac{m_q-m_\ell}{\varrho}.
\end{align*}

Next, recall that the definition of $\ka_q$, $R_{O2}$, $R_M$ and $\ka_{q+1} = \frac12 \tr (R_{O2} + R_M) + \frac{\zeta}{\varrho}$ to get
\begin{equation}\begin{split}\label{reo.tra}
\frac {|n|^2}{2\varrho^2} + \ka_q + \frac{(m_q-m_\ell) \cdot n}{\varrho^2}
&= \frac 12\tr\left(  \frac{n_o\otimes n_o}{\varrho^2} - \de_{q+1} \I + R_\ell \right) + \frac 32 \de_{q+1}    
+ \frac 12 \tr( R_M + R_{O2}) 
\\
&=\frac 32 \de_{q+1} + \ka_{q+1} -\frac{\zeta}{\varrho} +\frac 12\tr\left(  \frac{n_o\otimes n_o}{\varrho^2}- \de_{q+1} \I + R_\ell\right).
\end{split}\end{equation}
Set $\zeta = \zeta_0 + \zeta_1 +\zeta_2 + \zeta_3 + \zeta_4$, where $\zeta_i$ will be determined below. \eqref{reo.tra} then gives the equation for $  \ph_T$; since we have $\varrho D_{t,q} (\zeta/\varrho) = \zeta' + \div((m_q \zeta)/\varrho)$, 
\begin{align*}
\nabla \cdot   (\varrho\ph_T) + \zeta _0' &= 
 \div \left(-\ka_{q+1}  n+ \frac12 \tr\left(  \frac{n_o\otimes n_o}{\varrho^2} - \de_{q+1} \I + R_\ell \right)(m_q-m_\ell) - \frac{m_q\zeta}{\varrho}
\right)\\
& +\frac{\varrho}2 D_{t,\ell}\tr\left(  \frac{n_o\otimes n_o}{\varrho^2} - \de_{q+1} \I + R_\ell \right) 
- \frac 12\tr\left(  \frac{n_o\otimes n_o}{\varrho^2} - \de_{q+1} \I + R_\ell \right)\div(m_q-m_\ell)
\end{align*}
Now, we define $ \zeta_0$ to make the divergence equation solvable and set $  \ph_T =   \ph_{T1} +   \ph_{T2}$ as 
\begin{align}
  \varrho\ph_{T1} &= -\ka_{q+1}  n+ \frac12 \tr\left(  \frac{n_o\otimes n_o}{\varrho^2} - \de_{q+1} \I + R_\ell \right)(m_q-m_\ell) - \frac{m_q\zeta}{\varrho}\label{e:phi_T1}\\
 \zeta_{0} (t) &= \int_0^t \Big\langle \frac{\varrho}2 D_{t,\ell}\tr\left(  \frac{n_o\otimes n_o}{\varrho^2}- \de_{q+1} \I + R_\ell \right)   \Big\rangle (s)\, ds \nonumber\\
&\quad - \int_0^t   \Big\langle\frac 12\tr\left(  \frac{n_o\otimes n_o}{\varrho^2} - \de_{q+1} \I + R_\ell \right)\div(m_q-m_\ell)  \Big\rangle (s)\, ds \nonumber\\
  \varrho\ph_{T2} &= \mathcal{R} \left(\frac{\varrho}2 D_{t,\ell}\tr\left(  \frac{n_o\otimes n_o}{\varrho^2} - \de_{q+1} \I + R_\ell  \right) 
\right) \nonumber\\
&\quad -
\mathcal{R}\left( \frac 12\tr\left(  \frac{n_o\otimes n_o}{\varrho^2} - \de_{q+1} \I + R_\ell \right)\div(m_q-m_\ell) \right).\label{e:phi_T2}
\end{align}
Here, we remark that, by the definition of $\mathcal{R}$, we have
\begin{equation}\label{e:average-free}
 \mathcal{R} (g (t, \cdot)) := \mathcal{R} (g (t, \cdot)- h(t))
\end{equation}
for every smooth periodic time-dependent vector field $g$ and for every $h$ which depends only on time.

In a similar way, we let
\begin{align}
 \zeta_1 (t) &:= \int_0^t \Big \langle \frac n{\varrho}\cdot (\div P_{\le \ell^{-1}}(\varrho(R_q-c_q \I)) + Q(m_q,m_q)) \Big\rangle (s)\, ds \nonumber\\
 \zeta_2 (t) &:= \int_0^t \Big\langle\div (m_q-m_\ell) \frac{n\cdot m_\ell}{\varrho^2}\Big\rangle (s)\, ds \nonumber\\
 \zeta_{3} (t) &:= \int_0^t \Big\langle\left(\frac{n\otimes n}{\varrho}-\de_{q+1}\varrho\I+\varrho R_q-\left(\varrho R_{q+1} -\frac23 \zeta\I\right)\right) : \na \frac{m_\ell}{\varrho}\Big\rangle (s)\, ds
 \label{e:define_varrho_2}
 \\
&\quad+ \int_0^t \Big\langle 
 \left(\frac{(m_q-m_\ell)\otimes n}{\varrho}+ \frac{n\otimes (m_q-m_\ell)}{\varrho} \right): \na \frac{m_\ell}{\varrho}\Big\rangle (s)\, ds \nonumber
 \\
 \zeta_4 (t) &:= \int_0^t \Big \langle \frac{n}{\varrho} \cdot \na(p(\varrho) - p_\ell (\varrho)) \Big\rangle (s)\, ds \nonumber
\end{align}
and
\begin{align*}
  \varrho\ph_{H1} &:= \mathcal{R} \left(\frac n{\varrho}\cdot (\div P_{\le \ell^{-1}}(\varrho(R_q-c_q \I)) + Q(m_q,m_q))\right)\\
  \varrho\ph_{M3} &:= \mathcal{R} \left( 
  \div (m_q-m_\ell) \frac{n\cdot m_\ell}{\varrho^2} \right)\\
  \varrho\ph_{M4} &:= \mathcal{R} \left( \frac{n}{\varrho} \cdot \na(p(\varrho) - p_\ell (\varrho)) \right)\\
  \varrho \ph_{H2} &:=\mathcal{R} \left(\left(\frac{n\otimes n}{\varrho}-\de_{q+1}\varrho\I+\varrho R_q-\left(\varrho R_{q+1}-\frac23\zeta \I\right)\right) : \na \frac{m_\ell}{\varrho} \right)\\
  &\quad + \mathcal{R} \left(\left(\frac{(m_q-m_\ell)\otimes n}{\varrho}+ \frac{n\otimes (m_q-m_\ell)}{\varrho} \right): \na \frac{m_\ell}{\varrho}  \right) -  \frac{2m_\ell\zeta}{3\varrho}  \, .
\end{align*}
Here, $\zeta_{3}$ is well-defined because $\varrho R_{q+1} - (2\zeta)/3 \I $ is independent of $\zeta$ (see \eqref{e:splitting_of_Reynolds}). 

\section{Preliminary estimates}\label{s:estimates_mollification}

We now start detailing the estimates which will lead to the proof of the inductive propositions. In this section, we set $\norm{\cdot}_N = \norm{\cdot}_{C^0([0,T]+\tau_q; C^N(\T^3))}$.

\subsection{Regularization} First of all we address a series of a-priori estimates on the regularized tuple and on their differences with the original one.
By its construction, we can easily see that
\begin{align*}
\hspace{2cm}&\norm{m_\ell}_N \lec_N \ell^{1-N} \la_q \de_q^\frac12 \le \ell^{-N} \de_{q}^\frac12, 
\hspace{2.7cm} \forall N\geq 1,\\
\norm{D_{t,\ell}^s R_\ell}_0 &\lec_{s} \ell_t^{-s}\la_q^{-3\ga} \de_{q+1}, \quad
\norm{D_{t,\ell}^s \ph_\ell}_0 \lec_{s} \ell_t^{-s}  \la_q^{-3\ga} \de_{q+1}^\frac32, \quad \forall s\geq 0. 
\end{align*}
{Also, there exists $\bar{b}(\al)>1$ such that
for any $b\in (1,\bar b(\al))$ we can find $\La_0=\La_0(\al,b,M, \varrho)$ with the following property: if $\la_0\geq \La_0$, then $|\na^{N+1} \Phi(t+\tau,x;t)|\lec_{M, \varrho}  \ell^{-N}$ holds for $N\geq 0$ and $\tau\in [-\ell_t,\ell_t]$. This implies}
\begin{align}
\ell_t^s\norm{D_{t,\ell}^s R_\ell}_N {\lec_{s,N,M, \varrho}} \ \ell^{-N} \la_q^{-3\ga} \de_{q+1}
\label{est.mR}
\\
\ell_t^s\norm{D_{t,\ell}^s \ph_\ell}_N 
{\lec_{s,N,M,\varrho}} \ \ell^{-N} \la_q^{-3\ga} \de_{q+1}^\frac32
\label{est.mph}.
\end{align}
The estimate can be obtained using \eqref{char.mol.traj} (see also
\cite[Section18]{Is2013}.)

On the other hand, the differences between the regularized objects $(v_\ell, p_\ell, R_\ell, \ph_\ell)$ and their original counterparts satisfy the following estimates.
 
\begin{lem}{There exists $\bar{b}(\al)>1$ such that
for any $b\in (1,\bar b(\al))$ we can find $\La_0(\al,b,M)$ with the following property. If $\la_0\geq \La_0$}
and $N\in \{0,1,2\}$ (recall that we follow the notational convention explained in Remark \ref{r:estimates_material_derivative}) then:
\begin{align}
&\norm{m_q-m_\ell}_N 
+ \de_q^{-\frac12}\norm{D_{t,\ell}(m_q-m_\ell)}_{N-1}  
\lec \ell^{2-N} \la_q^2 \de_q^\frac12, \label{est.v.dif} \\
&\norm{R_q-R_\ell}_N+ 
\de_{q+1}^{-\frac12}\norm{D_{t,\ell} (R_q-R_\ell)}_{N-1} 
\lec
\la_{q+1}^N {\la_q^\frac12}{\la_{q+1}^{-\frac12}}
\de_q^\frac14 \de_{q+1}^\frac34
\label{est.R.dif} \\
&\norm{\ph_q-\ph_\ell}_N+\de_{q+1}^{-\frac12}\norm{D_{t,\ell} (\ph_q-\ph_\ell)}_{N-1}
 \lec  
\la_{q+1}^N{\la_q^\frac12}{\la_{q+1}^{-\frac12}}\de_q^\frac14 \de_{q+1}^\frac54.\label{est.ph.dif} 
\end{align}
{Here, we allow the implicit constants to be depending on $M$, $ \varrho$ and $p$.}
\end{lem}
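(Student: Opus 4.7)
The three displayed bounds are pure approximation estimates, one kernel at a time. For the momentum, $m_q-m_\ell=(I-P_{\le\ell^{-1}})m_q$ is a plain Bernstein/Littlewood-Paley error, so I would use $\|(I-P_{\le\ell^{-1}})f\|_N\lesssim \ell^{2-N}\|f\|_2$ together with the inductive bound $\|m_q\|_2\le M\la_q^2\de_q^{1/2}$ of \eqref{est.vp}. For the Reynolds stress and current, I split
\begin{align*}
R_q-R_\ell
&=(I-P_{\le\ell^{-1}})R_q+\bigl(P_{\le\ell^{-1}}R_q-\rho_{\ell_t}\ast_{\Phi}P_{\le\ell^{-1}}R_q\bigr),
\end{align*}
and analogously for $\ph_q-\ph_\ell$, and treat the two pieces separately: frequency truncation in space and mollification along $\Phi$ in time.

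\textbf{Spatial pieces.} The first summand is bounded by $\ell^{2-N}\|R_q\|_2\lesssim \ell^{2-N}\la_q^{2-3\ga}\de_{q+1}$ and similarly for $\ph_q$, and one checks that with the choice $\ell=\la_q^{-3/4}\la_{q+1}^{-1/4}(\de_{q+1}/\de_q)^{3/8}$ this is dominated by the right-hand sides of \eqref{est.R.dif}–\eqref{est.ph.dif}, provided $b\in(1,\bar b(\al))$ with $\bar b$ close to $1$ and $\la_0$ sufficiently large; the verification is an exponent count using $\de_j=\la_j^{-2\al}$ and $\la_{q+1}\simeq\la_q^b$.

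\textbf{Temporal pieces along $\Phi$.} Here I would use the standard trick
\[
f-\rho_{\ell_t}\ast_\Phi f
=-\int\!\!\int_0^s (D_{t,\ell}f)(t+r,\Phi(t+r,x;t))\,dr\,\rho_{\ell_t}(s)\,ds
\]
combined with the symmetry of $\rho$ to get $\|f-\rho_{\ell_t}\ast_{\Phi}f\|_0\lesssim \ell_t^{2}\|D_{t,\ell}^2 f\|_0$; higher spatial derivatives come by differentiating under the integral and using the Lagrangian flow bound $|\nabla^{N+1}\Phi|\lesssim\ell^{-N}$ already invoked for \eqref{est.mR}–\eqref{est.mph}. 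Applied with $f=P_{\le\ell^{-1}}R_q$, the derivative $D_{t,\ell}^2 f$ is rewritten as $D_{t,q}^{2}f$ plus commutators with $\frac{m_\ell-m_q}{\varrho}\cdot\na$ and $[D_{t,q},P_{\le\ell^{-1}}]$, and is estimated via the inductive bounds \eqref{est.vp}–\eqref{est.R} on $D_{t,q}R_q$, on $\|m_q\|_{1,2}$, and via the already obtained spatial piece $\|m_q-m_\ell\|_N$. With $\ell_t=(\la_q^{1/2-3\ga}\la_{q+1}^{1/2}\de_q^{1/4}\de_{q+1}^{1/4})^{-1}$, the resulting bound matches the right-hand side of \eqref{est.R.dif}; the argument for $\ph_q-\ph_\ell$ is identical with $\de_{q+1}$ replaced by $\de_{q+1}^{3/2}$.

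\textbf{Advective derivatives.} For $D_{t,\ell}$ of the differences I would write
\[
D_{t,\ell}(R_q-R_\ell)=D_{t,q}R_q+\tfrac{m_\ell-m_q}{\varrho}\cdot\na R_q-D_{t,\ell}R_\ell,
\]
controlling the first term by \eqref{est.R}, the second by the product of the already proved $\|m_q-m_\ell\|_0$ with $\|R_q\|_1$, and the third directly from \eqref{est.mR}. For $D_{t,\ell}(m_q-m_\ell)$ I would substitute $\pa_t m_q$ from the momentum equation in \eqref{app.eq} — so that $\pa_t m_q$ is replaced by a divergence of quadratic expressions in $m_q/\varrho$, a pressure gradient and $\div(\varrho(R_q-c_q\I))$ — and then commute $P_{\le\ell^{-1}}$ through, picking up a standard commutator whose symbol localizes on frequencies $\sim\ell^{-1}$ and hence is again bounded by Bernstein.

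\textbf{Main obstacle.} The delicate point, and essentially the only one, is the second-order advective-derivative control $\|D_{t,\ell}^{2}(P_{\le\ell^{-1}}R_q)\|_0$: the inductive hypothesis only provides one material derivative of $R_q$, so the second application of $D_{t,q}$ must be absorbed either into the frequency truncation (using Bernstein and the spatial regularity of $R_q$) or into commutators with $P_{\le\ell^{-1}}$, each of which is paid for by a factor $\ell^{-1}$ or $\la_q\de_q^{1/2}$. The exponent bookkeeping is what forces the specific powers in the definitions of $\ell$ and $\ell_t$, and what fixes the threshold $\bar b(\al)$; once this balance is verified, all other estimates are routine.
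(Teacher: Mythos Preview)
Your overall decomposition — spatial Bernstein tail plus error from the flow-mollification — matches the paper, and your treatment of $m_q-m_\ell$ and of the advective derivatives is essentially the paper's. The genuine gap is in the temporal piece $P_{\le\ell^{-1}}R_q-\rho_{\ell_t}\ast_\Phi P_{\le\ell^{-1}}R_q$. You invoke a \emph{second-order} expansion along the flow,
\[
\|f-\rho_{\ell_t}\ast_\Phi f\|_0\lec \ell_t^{2}\|D_{t,\ell}^2 f\|_0,
\]
appealing to symmetry of $\rho$. Two problems. First, the paper only assumes $\rho\ge 0$, $\|\rho\|_{L^1}=1$, $\supp\rho\subset(-1,1)$; no symmetry is imposed, so the second-order gain is not available as written. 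Second — and this is the real issue — even granting symmetry, you then need $\|D_{t,\ell}^2(P_{\le\ell^{-1}}R_q)\|_0$. After one commutation this contains $P_{\le\ell^{-1}}D_{t,q}^2R_q$, and the inductive hypothesis \eqref{est.R} supplies \emph{only one} material derivative of $R_q$. Your proposed workaround (``absorb the second $D_{t,q}$ into frequency truncation or commutators with $P_{\le\ell^{-1}}$'') does not close: Bernstein and commutators trade \emph{spatial} derivatives for factors of $\ell^{-1}$, but the uncontrolled object is a genuine time derivative, $\pa_t(D_{t,q}R_q)$, on which no bound whatsoever is assumed. There is no way to convert that missing time regularity into the available spatial regularity of $R_q$.

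The paper avoids this entirely by staying at \emph{first order} along the flow:
\[
\|f-\rho_{\ell_t}\ast_\Phi f\|_0\lec \ell_t\,\|D_{t,\ell}f\|_0,
\]
together with the commutator identity $D_{t,\ell}P_{\le\ell^{-1}}R_q=P_{\le\ell^{-1}}D_{t,\ell}R_q+\bigl[\tfrac{m_\ell}{\varrho}\!\cdot\!\na,P_{\le\ell^{-1}}\bigr]R_q$. This uses exactly the single material derivative provided by \eqref{est.R}, and the parameter $\ell_t$ is chosen precisely so that $\ell_t\,\la_q\de_q^{1/2}\la_q^{-3\ga}\de_{q+1}=\la_q^{1/2}\la_{q+1}^{-1/2}\de_q^{1/4}\de_{q+1}^{3/4}$; the exponent bookkeeping you flag as the ``main obstacle'' simply disappears. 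For the advective derivative of this same temporal piece the paper does something even simpler: since $D_{t,\ell}(\rho_{\ell_t}\ast_\Phi g)=\rho_{\ell_t}\ast_\Phi(D_{t,\ell}g)$ by \eqref{char.mol.traj}, the triangle inequality gives $\|D_{t,\ell}(g-\rho_{\ell_t}\ast_\Phi g)\|_{N-1}\le 2\|D_{t,\ell}g\|_{N-1}$, again needing only one material derivative. Replacing your second-order step by this first-order argument fixes the proof.
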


\begin{proof} Set $P_{>\ell^{-1}}F := F- P_{\leq \ell^{-1}} F = P_{>2^J}F$, where $J\in \N\cup\{0\}$ is the largest number such that $2^J\le \ell^{-1}$. By Bernstein's inequality, we have $\norm{F-P_{\le \ell^{-1}}F}_0 = \norm{P_{> \ell^{-1}}F}_0 \lec \ell^{j}\norm{\na^j F}_0$ for any $F\in C^j(\T^3)$. Using \eqref{est.vp} we then get for $N\leq2$
\begin{align}\label{diff.m}
&\norm{m_q-m_\ell}_N 
\lec \ell ^{2-N}\norm{\na^2 m_q}_0
\lec {\ell^{2-N}}\la_q^2 \de_q^\frac12, 
\end{align}
Also, we have
\begin{align*}
(\varrho_{\ell_t}   \ast_{\Phi} F-F)(t,x)
&=  \int_{\R} (F(t+s,\Phi(t+s,x;t)) - F(t,x))  \varrho_{\ell_t}(s) ds\\
&= \int_{\R} \int_0^s D_{t,\ell} F(t+\tau, \Phi(t+ \tau,x;t)) d\tau   \varrho_{\ell_t}(s) ds, 
\end{align*}
from which we conclude $ \norm{F-  \varrho_{\ell_t}   \ast_{\Phi} F}_{C^0([a,b]\times \T^3)} \lec \ell_t \norm{D_{t,\ell} F}_{C^0([a,b]+\ell_t\times \T^3)}$ because of $\supp( \varrho_{\ell_t}) \subset (-\ell_t,\ell_t)$. 
In addition, we have the following decomposition,
\begin{align}
F- \varrho_{\ell_t}   \ast_\Phi P_{\le \ell^{-1}} F &= (F-P_{\le \ell^{-1}}F) + (P_{\le \ell^{-1}}F- \varrho_{\ell_t}   \ast_\Phi P_{\le \ell^{-1}} F),  \label{rep.dif}\\
D_{t,\ell}P_{\le \ell^{-1}} F 
&= P_{\le \ell^{-1}}D_{t,\ell} F + \left[\frac{m_\ell}{\varrho}\cdot \na, P_{\le \ell^{-1}}\right]F\, ,
\label{rep.DtF}
\end{align}
where as usual $[A,B]$ denotes the commutator $AB - BA$ of the two operators $A$ and $B$.
Note that $D_{t,\ell} F$ can be further decomposed as  $D_{t,\ell} F = D_{t,q} F + \frac{(m_q-m_\ell)}{\varrho}\cdot \na F$. Then, using \eqref{est.R}, \eqref{est.ph}, \eqref{def.me}, and \eqref{est.com1}, we obtain
\begin{align*}
\norm{R_q-R_\ell}_0 
&\lec \norm{P_{> \ell^{-1}} R_q}_0
+ \ell_t\norm{D_{t,\ell}  P_{\le \ell^{-1}}R_q}_{C(\mathcal{I}^q \times \T^3)}\\
&\lec \ell^2\norm{R_q}_2 +\ell_t(  \norm{D_{t,\ell}  R_q}_{C(\mathcal{I}^q \times \T^3)} + \ell \norm{\na \frac{m_\ell}{\varrho}}_{C(\mathcal{I}^q \times \T^3)}\norm{\na R_q}_{C(\mathcal{I}^q \times \T^3)})\\
&\lec  ( (\ell\la_q)^2 + \ell_t \la_q\de_q^\frac12)\la_q^{-3\ga} \de_{q+1}
\lec  {\la_q^\frac12}{\la_{q+1}^{-\frac12}}\de_q^\frac14 \de_{q+1}^\frac34,
\end{align*}
and
\begin{align*}
\norm{\ph_q-\ph_\ell}_0
&\lec
\norm{P_{> \ell^{-1}} \ph_q}_0 + \ell_t \norm{D_{t,\ell}P_{\le \ell^{-1}}\ph_q}_{C(\mathcal{I}^q \times \T^3)} \\
&\lec   \ell^2\norm{\ph_q}_2 
+ \ell_t(\norm{D_{t,\ell}\ph_q}_{C(\mathcal{I}^q \times \T^3)} + \ell^{1} \norm{\na \frac{m_\ell}{\varrho}}_{C(\mathcal{I}^q \times \T^3)} \norm{\na \ph_q}_{C(\mathcal{I}^q \times \T^3)}) \\
&\lec 
((\ell\la_q)^2 + \ell_t \la_q\de_q^\frac12)
\la_q^{-3\ga} \de_{q+1}^\frac32
\lec  {\la_q^\frac12}{\la_{q+1}^{-\frac12}}\de_q^\frac14 \de_{q+1}^\frac54,
\end{align*}
where $\mathcal{I}^q = [0,T]+\tau_{q-1}$.
Furthermore, we have 
for $N=1,2$
\begin{align*}
\norm{R_q-R_\ell}_N 
&\lec \norm{R_q}_N + \norm{R_\ell}_N 
\lec  {\la_q^N} \la_q^{-3\ga} \de_{q+1}
\lec \la_{q+1}^N {\la_q^\frac12}{\la_{q+1}^{-\frac12}}\de_q^\frac14 \de_{q+1}^\frac34\\
\norm{\ph_q-\ph_\ell}_N
&\lec
+ {\norm{\ph_q}_N + \norm{\ph_\ell}_N} 
\lec {\la_q^N}
\la_q^{-3\ga} \de_{q+1}^\frac32
\lec  \la_{q+1}^N {\la_q^\frac12}{\la_{q+1}^{-\frac12}}\de_q^\frac14 \de_{q+1}^\frac54.
\end{align*}

Now, we consider the advective derivatives. We remark that for $F_\ell= P_{\le \ell^{-1}} F$, we can write
\begin{align*}
D_{t,\ell} (F-F_\ell) 
= D_{t,\ell} P_{> \ell^{-1}}F 
=  P_{> \ell^{-1}} D_{t,\ell} F + \left[\frac{m_\ell}{\varrho} \cdot \na,  P_{> \ell^{-1}}\right] F.
\end{align*}
Then, we apply this to $F=m$ and $F=p(\varrho)$ and use \eqref{diff.m} and \eqref{est.com3} to obtain
\begin{align}
\norm{D_{t,\ell} (m_q-m_\ell)}_{N-1}
&\lec \norm{P_{>\ell^{-1}}D_{t,\ell} m_q}_{N-1} + \norm{[(m_\ell/\varrho) \cdot \na,  P_{> \ell^{-1}}] m_q}_{N-1}
\label{est.adv.m}
\\
&\lec \norm{P_{>\ell^{-1}}D_{t,q} m_q}_{N-1} + \ell \norm{((m_q-m_\ell)/\varrho \cdot \na)m_q}_N \nonumber\\
&\quad + \Norm{[m_\ell/\varrho \cdot \na,  P_{> \ell^{-1}}]m_q}_{N-1}
\nonumber\\
&\lec \ell^{2-N} (\la_q \de_q^\frac 12)^2 \nonumber. 
\end{align}
Here, we obtain the estimate for $\norm{P_{>\ell^{-1}} D_{t,q}m_q}_{N-1}$ from the relaxed momentum equation:
\begin{align*}
    \norm{P_{>\ell^{-1}} D_{t,q} m_q}_{N-1}
    &\leq \norm{P_{>\ell^{-1}} p(\varrho)}_{N}
    +\norm{P_{>\ell^{-1}}(\varrho R_q)}_{N}
    +\norm{P_{>\ell^{-1}}  \varrho}_{N}
    +\norm{P_{>\ell^{-1}} (\div(m_q/\varrho) m_q)}_{N-1}\\
    &\lec \ell^2 (\norm{p(\varrho)}_{N+2}+\norm{\varrho}_{N+2}) + \ell^{2-N}\norm{\varrho R_q}_2
    +\ell \norm{m_q (-\pa_t \varrho/\varrho + m_q \cdot \na \varrho^{-1}) }_N\\
    &\lec \ell^{2-N}(\la_q\de_q^\frac12)^2
\end{align*}

In a similar way, we have
\begin{equation}\begin{split}\label{est.hDt}
\norm{D_{t,\ell} P_{> \ell^{-1}}R_q}_{N-1}
\lec \norm{D_{t,\ell} R_q}_{N-1} + \norm{[(m_\ell/\varrho) \cdot \na,  P_{> \ell^{-1}}] R_q}_{N-1}
\lec {\la_{q+1}^{N-1} \la_q} \de_q^\frac12 \la_q^{-3\ga} \de_{q+1}\\
\norm{D_{t,\ell} P_{> \ell^{-1}}\ph}_{N-1}
\lec \norm{D_{t,\ell} \ph_q}_{N-1} 
+ \norm{[(m_\ell/\varrho) \cdot \na,  P_{> \ell^{-1}}] \ph_q}_{N-1} 
\lec {\la_{q+1}^{N-1} \la_q} \de_q^\frac12 \la_q^{-3\ga} \de_{q+1}^\frac32.
\end{split}\end{equation}
Simply applying the triangle inequality, it can be easily shown that
\begin{equation}\begin{split}\label{est.Dtdif}
\norm{D_{t,\ell}(P_{\le \ell^{-1}} R_q - \varrho_{\ell_t}  \ast_{\Phi} P_{\le \ell^{-1}} R_q)}_{N-1}
\le 2\norm{D_{t,\ell} P_{\le \ell^{-1}} R_q}_{N-1} 
\lec \la_{q+1}^{N-1} \la_q\de_q^\frac12 \la_q^{-3\ga}\de_{q+1}\\
\norm{D_{t,\ell}(P_{\le \ell^{-1}} \ph_q - \varrho_{\ell_t}  \ast_{\Phi} P_{\le \ell^{-1}} \ph_q)}_{N-1}
\le 2\norm{D_{t,\ell} P_{\le \ell^{-1}} \ph_q}_{N-1} 
\lec \la_{q+1}^{N-1} \la_q\de_q^\frac12 \la_q^{-3\ga}\de_{q+1}^\frac32.
\end{split}\end{equation}
Combining \eqref{rep.dif}, \eqref{est.hDt}, and \eqref{est.Dtdif}, it follows that
\begin{align*}
\norm{D_{t,\ell}(R_q-R _\ell)}_{N-1} &\lec  \la_{q+1}^N\de_{q+1}^\frac12\cdot {\la_q^\frac12}{\la_{q+1}^{-\frac12}}\de_q^\frac14 \de_{q+1}^\frac34\\
\norm{D_{t,\ell}(\ph_q-\ph _\ell)}_{N-1} &\lec  \la_{q+1}^N\de_{q+1}^\frac12 \cdot{\la_q^\frac12}{\la_{q+1}^{-\frac12}}\de_q^\frac14 \de_{q+1}^\frac54.
\end{align*}
\end{proof}
 
\subsection{Quadratic commutator} We next deal with a quadratic commutator estimate, which is a version of the estimate in \cite{CoETi1994} leading to the proof of the positive part of the Onsager conjecture for the incompressible Euler equations. In the compressible case, the situation is more complicated due to the presence of the density $\varrho$. Indeed, it leads to additional commutator terms which need to be estimated. Another difference from the incompressible case found in \cite{DLK20} is that we can do the estimate for a fixed, finite number of derivatives but cannot estimate all derivatives. This is because $\varrho$ is just a smooth function and thus only a fixed number of its derivatives can be controlled.

\begin{lem}\label{lem:est.Qvv} For any integer $N\in [0, \overline N] $, $\overline N \in \N$ independent of $q$, $Q_{\varrho}(m_q,m_q)
$ defined as in \eqref{e:quadratic_form_Q} satisfies 
\begin{align}\label{est.Qvv}
\norm{Q(m_q,m_q)}_N\lec \ell^{1-N} (\la_q\de_q^\frac12)^2, \quad
\norm{D_{t,\ell} Q(m_q,m_q)}_{N} \lec \ell^{-N} \de_q^{\frac12}(\la_q\de_q^\frac12)^2. 
\end{align}
{Here, we allow the implicit constants to be depending on $M$, $\varrho$, {$p$} and $\overline N$.}
\end{lem}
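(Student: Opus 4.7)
The plan is to adapt the classical Constantin-E-Titi quadratic commutator estimate to the compressible setting, where the extra factor $\varrho^{-1}$ produces additional pieces to handle. I would first split the argument of the divergence as
\begin{equation*}
\frac{m_\ell \otimes m_\ell}{\varrho} - P_{\le \ell^{-1}}\Bigl(\frac{m_q \otimes m_q}{\varrho}\Bigr)
= \frac{m_\ell \otimes m_\ell - P_{\le \ell^{-1}}(m_q \otimes m_q)}{\varrho} + \bigl[P_{\le \ell^{-1}},\, \varrho^{-1}\bigr](m_q \otimes m_q)\, ,
\end{equation*}
where $[P_{\le \ell^{-1}}, \varrho^{-1}] f := P_{\le \ell^{-1}}(\varrho^{-1} f) - \varrho^{-1} P_{\le \ell^{-1}} f$.

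For the first piece I would use the standard identity
\begin{equation*}
(fg)_\ell - f_\ell g_\ell = \int \eta_\ell(x-y)\bigl(f(y)-f(x)\bigr)\bigl(g(y)-g(x)\bigr)\, dy - (f-f_\ell)(g-g_\ell),
\end{equation*}
where $\eta_\ell$ denotes the (integral-one) convolution kernel of $P_{\le \ell^{-1}}$. Applied componentwise to $f=g=m_q$, it yields a pointwise bound of order $\norm{m_q - m_\ell}_0^2 \lec \ell^2 (\la_q\de_q^{1/2})^2$ via \eqref{est.v.dif}; distributing spatial derivatives by Leibniz and using the same bound at higher orders gives $C^N$ control of size $\ell^{2-N}(\la_q\de_q^{1/2})^2$. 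The second piece is a standard Fourier-multiplier--smooth-function commutator: exploiting the even symmetry of $\eta_\ell$ and Taylor-expanding $\varrho^{-1}(y) - \varrho^{-1}(x)$, one obtains an analogous $\ell^{2-N}(\la_q\de_q^{1/2})^2$ bound, with the implicit constants depending on $\e_0$ and on finitely many derivatives of $\varrho$. Taking the outer divergence costs an additional factor $\ell^{-1}$, which produces the first estimate in \eqref{est.Qvv}.

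For the advective derivative bound I would commute $D_{t,\ell}$ through both $P_{\le \ell^{-1}}$ and multiplication by $\varrho^{-1}$, using \eqref{rep.DtF}. This generates a principal piece of the same commutator shape but with $m_q \otimes m_q$ replaced by $D_{t,q}(m_q \otimes m_q)/\varrho$, plus commutators $[\tfrac{m_\ell}{\varrho}\cdot\na, P_{\le \ell^{-1}}]$ and $[\tfrac{m_\ell}{\varrho}\cdot\na, \varrho^{-1}]$. The principal piece is controlled via the Euler--Reynolds momentum equation exactly as in the derivation of \eqref{est.adv.m}, gaining the expected factor $\de_q^{1/2}/\ell$; the commutator pieces are estimated by the standard loss $\norm{\na(m_\ell/\varrho)}_0 \lec \la_q\de_q^{1/2}$, which matches the same gain. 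Summing yields the second estimate.

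The main obstacle is bookkeeping: the triple product $m_q\cdot m_q\cdot\varrho^{-1}$, combined with one application of $D_{t,\ell}$, produces many cross-terms, each needing a separate commutator-type argument. Moreover, since $\varrho$ is smooth only in a fixed, non-scale-invariant sense, these commutators consume derivatives of $\varrho$ at each step; this is precisely why the estimate is restricted to $N \le \overline N$ for some fixed $\overline N$ rather than holding for all $N$, and constitutes the main new difficulty compared to the incompressible analogue in \cite{DLK20}.
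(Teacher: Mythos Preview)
Your decomposition and strategy coincide with the paper's: the split into the Constantin--E--Titi bilinear piece and the $[P_{\le\ell^{-1}},\varrho^{-1}]$ commutator is precisely the paper's $Q=Q_1+Q_2$, and the advective-derivative argument (commuting $D_{t,\ell}$ through $P_{\le\ell^{-1}}$, then invoking the momentum equation as in \eqref{est.adv.m}) is the same in outline, though the paper carries out the bookkeeping you allude to in considerably more detail. One small correction: the even symmetry of $\eta_\ell$ does \emph{not} kill the first-order Taylor term in $[P_{\le\ell^{-1}},\varrho^{-1}](m_q\otimes m_q)$, since $(m_q\otimes m_q)(x-z)$ is not constant in $z$; the paper instead uses the plain first-order commutator bound \eqref{est.com0}, which yields $\lec\ell^{1-N}$ for that piece and is then absorbed into $\ell^{1-N}(\la_q\de_q^{1/2})^2$ because $\la_q\de_q^{1/2}\ge 1$.
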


\begin{proof} For the convenience, we drop the index $\varrho$ from $Q_{\varrho}$ and $q$ from $m_q$. 
We write $Q(m,m)$ as follows
\begin{align*}
Q(m,m) &= \na \cdot \left(\frac{m_\ell \otimes m_\ell}{\varrho} - \frac{P_{\le \ell^{-1}} (m\otimes m)}{\varrho}\right) 
+ \na \cdot \left(\frac{P_{\le \ell^{-1}} (m\otimes m)}{\varrho} - P_{\le \ell^{-1}}\left(\frac{m\otimes m}{\varrho}\right)\right)\\
&=: Q_1 + Q_2
\end{align*}
Writing $Q_1 $ as
\begin{align*}
    Q_1 &= \underbrace{{\varrho}^{-1} \na \cdot \left(m_\ell\otimes m_\ell - P_{\le \ell^{-1}} (m\otimes m)\right)}_{=:Q_{11}} + \underbrace{\left(m_\ell\otimes m_\ell - P_{\le \ell^{-1}} (m\otimes m)\right) :\na {\varrho}^{-1}}_{=:Q_{12}}
\end{align*}
and using \eqref{est.com} on setting $f=g=m$, we have
\begin{align*}
\norm{Q_{1}}_N \leq \norm{Q_{11}}_N + \norm{Q_{12}}_N\lec 
\norm{m_\ell\otimes m_\ell - P_{\le \ell^{-1}} (m\otimes m)}_{N+1}
\lec \ell^{1-N}\norm{m}_1^2 \lec \ell^{1-N}(\la_q\de_q^\frac12)^2. 
\end{align*}
On the other hand, $Q_2$ can be written as
\begin{align*}
Q_2 &= \frac{P_{\le \ell^{-1}} \div (m\otimes m)}{\varrho} - P_{\le \ell^{-1}}\left(\varrho^{-1}\div (m\otimes m)\right) \\
&\quad + P_{\le \ell^{-1}} (m \otimes m) : \na \varrho^{-1} - P_{\le \ell^{-1}}( m \otimes m :\na \varrho^{-1}).
\end{align*}
Here, $\na \varrho^{-1}$ means that $\na (1/\varrho)$ rather than $(\na \varrho)^{-1}$. 
Since both the first two and last two terms are of the form $(P_{\le \ell^{-1}}f) g - P_{\le \ell^{-1}}(fg)$, we can apply \eqref{est.com0} to get that 
\begin{align*}
\norm{Q_2}_N \lesssim \ell^{1-N} \la_q \de_q^{\frac12} + \ell^{1-N}\lec \ell^{1-N} (\la_q \de_q^{\frac12})^2.
\end{align*}

Now, we consider the advective derivative. We first show $\norm{D_{t,\ell}(\varrho Q_{11})}_N \lec \ell^{-N} \de_q^\frac12(\la_q\de_q^\frac12)^2$. Then, the desired estimate $\norm{D_{t,\ell}Q_{11}}_N \lec \ell^{-N} \de_q^\frac12(\la_q\de_q^\frac12)^2$ easily follows because
\begin{align*}
\norm{D_{t,\ell}Q_{11}}_{N} \lesssim \norm{D_{t,\ell} (\varrho Q_{11})}_{N} + \sum_{N_1+N_2=N} \norm{D_{t,\ell} \varrho}_{N_1} \norm{Q_{11}}_{N_2}.
\end{align*}
We first note that using the equation, obtained by taking mollification $P_{\leq \ell^{-1}}$ to the relaxed momentum equation,
\begin{align*}
    D_{t,\ell} m_\ell 
    &=- \div (m_\ell/\varrho) m_\ell - \na p_\ell(\varrho) + P_{\leq \ell^{-1}}
    \div(\varrho R_q - c_q\varrho\I) + Q(m, m) \\
    &= (m_\ell P_{\leq \ell^{-1}}\pa_t \varrho)/\varrho - m_\ell (m_\ell\cdot\na)\varrho^{-1} - \na p_\ell(\varrho) + P_{\leq \ell^{-1}}
    \div(\varrho R_q - c_q\varrho\I) + Q(m, m),
\end{align*}
we get 
\begin{align}\label{adv.ml0}
    \norm{D_{t,\ell}m_\ell}_0
    \lec_{\varrho, {p}} \norm{m}_0 + \norm{m}_0^2 + 1 + \norm{ R_q}_1 + \norm{Q(m,m)}_0
    \lec \la_q\de_q
\end{align}
and for $N\geq 1$,
\begin{equation}\begin{split}\label{adv.mlN}
    \norm{D_{t,\ell}m_\ell}_N
    &\lec \ell^{1-N}(\norm{m_\ell P_{\leq\ell^{-1}}\pa_t \varrho}_{1}\norm{\varrho^{-1}}_N
    + \norm{m_\ell \otimes m_\ell}_{1}\norm{\na \varrho^{-1}}_N
    + \norm{p_\ell(\varrho)}_2+\norm{\varrho}_2 )\\
    &\quad+ \ell^{1-N}\norm{R_q}_2 + 
    \norm{Q(m,m)}_{N}\\
    &\lec \ell^{1-N}(\la_q\de_q^\frac12)^2.  \end{split}
\end{equation}
As a consequence, we have
\begin{align}\label{est.Dtvl}
\norm{D_{t,\ell} \na m _\ell }_N \leq
\norm{\na D_{t,\ell}m_\ell}_N + \norm{(\na (m_\ell/\varrho)\cdot \na) m_\ell}_N
\lec\ell^{-N} (\la_q\de_q^\frac12)^2.
\end{align} 
Also, similar to \eqref{est.adv.m}, one can obtain
\begin{align}\label{adv.m}
    \norm{D_{t,\ell} m}_0
    \lec \la_q\de_q.
\end{align}
Since $\varrho Q_{11}$ can be decomposed into
\begin{align}\label{eqn.Q11}
\varrho Q_{11}
=  (m _\ell-m)\cdot \na m _\ell + [m\cdot \na, P_{\le \ell^{-1}}] m + m_\ell \div m_\ell - P_{\le \ell^{-1}} ( m \div m)
\end{align}
their advective derivative can be estimated as follows; using the density equation,
\begin{align*}
&\norm{D_{t,\ell} (m_\ell \div m_\ell - P_{\le \ell^{-1}} ( m \div m))}_{N} = \norm{D_{t,\ell} (m_\ell (\pa_t \varrho)_\ell) - D_{t,\ell}P_{\le \ell^{-1}} ( m \pa_t \varrho)}_{N}\\
&\le \norm{(D_{t,\ell} m_\ell) (\pa_t \varrho)_\ell + m_\ell (D_{t,\ell}(\pa_t \varrho)_\ell) - P_{\le \ell^{-1}} D_{t,\ell} ( m\pa_t \varrho)}_{N} + \norm{[\frac{m_\ell}{\varrho}, P_{\le \ell^{-1}}] ( m \pa_t \varrho)}_{N}\\
&\lec \norm{D_{t,\ell} m_\ell}_N + \sum_{N_1+N_2 = N} \norm{m_\ell}_{N_1}\norm{D_{t,\ell}(\pa_t\varrho)_\ell}_{N_2} + \ell^{-N}\norm{D_{t,\ell}(m\pa_t \varrho)}_0\\
&\quad + \norm{[{m_\ell}P_{\leq \ell^{-1}}{\varrho}^{-1}, P_{\le \ell^{-1}}] ( m \pa_t \varrho)}_{N}
+\norm{[{m_\ell}P_{>\ell^{-1}}{\varrho}^{-1}, P_{\le \ell^{-1}}] ( m \pa_t \varrho)}_{N}\\
&\lec \ell^{1-N}(\la_q\de_q^\frac12)^2
+ \ell^{-N}\de_q^\frac12 + \ell^{-N}\la_q\de_q + \ell^{1-N}(\la_q\de_q^\frac12)^2 + \ell^{1-N}
\la_q\de_q^\frac12
\lec \ell^{-N}\de_q^\frac12(\la_q\de_q^\frac12)^2.
\end{align*}
Here, we denoted $P_{\ell^{-1}} \pa_t \varrho$ by $(\pa_t \varrho)_\ell$ and used \eqref{adv.mlN}, \eqref{est.vp}, \eqref{adv.m}, \eqref{est.com1}, and \eqref{est.com3}.

To estimate the first two terms in \eqref{eqn.Q11}, we recall that $\widehat{P_{\le \ell^{-1}} f}(\xi) =\widehat{P_{\le 2^J} f}(\xi) =  \phi \left(\frac{\xi}{2^J}\right) \widehat{f}(\xi)$ for some radial function $\phi \in \cal{S}$, where $J\in \N\cup\{0\}$ is the maximum number  satisfying $2^J\le \ell^{-1}$. For the convenience, we set $\widecheck{\phi}_{\ell}(x) = 2^{3J}\widecheck{\phi}(2^Jx)$. Then, by Poison summation formula, $P_{\le \ell^{-1}} f(x) = \int_{\R^3} f(x-y) \widecheck{\phi} _\ell(y)dy$ holds.  
Using this, the advective derivative of the commutator term can be written as follows,
\begin{align*}
D_{t,\ell}[m\cdot \na, P_{\le \ell^{-1}}] m
&=  (\pa_t + \frac{m_\ell}{\varrho}(x)\cdot \na) \int ((m(x)-m(x-y))\cdot \na)m(x-y) \widecheck{\phi} _\ell(y) dy\\
&=  \int  ((D_{t,\ell}m(x)-D_{t,\ell}m(x-y)) \cdot  \na )m(x-y) \widecheck{\phi} _\ell(y) dy\\
&\quad-\int (\frac{m _\ell}{\varrho}(x)-\frac{m _\ell}{\varrho}(x-y))_a\na_a  m_b(x-y)  \na_b m(x-y) \widecheck{\phi} _\ell(y) dy\\
&\quad+ \int ((m(x)-m(x-y))\cdot D_{t,\ell}\na) m(x-y) \widecheck{\phi} _\ell(y) dy\\
&\quad + \int (m(x)-m(x-y))_a (\frac{m _\ell}{\varrho}(x)-\frac{m _\ell}{\varrho}(x-y))_b (\pa_{ab}m)(x-y) \widecheck{\phi} _\ell(y) dy. 
\end{align*} 
Based on the decomposition, we use \eqref{est.vp}, \eqref{est.v.dif}, and $\norm{|y|^n\widecheck{\phi} _\ell}_{L^1(\R^3)}\lec \ell^n$, $n\geq 0$, to get
\begin{align*}
\norm{D_{t,\ell}(\varrho Q_{11})}_0
&\lec \norm{D_{t,\ell}(m-m _\ell)}_0  \norm{\na m_\ell}_0 + \norm{m-m _\ell}_0\norm{D_{t,\ell}\na m _\ell}_0 
+ \ell\norm{\na D_{t,\ell} m}_0\norm{\na m}_0
+\ell\norm{m}_1^3 \\
&\quad+ \ell \norm{\na m}_0 \norm{D_{t,\ell} \na m}_0 + \ell^2 \norm{ m}_1^2 \norm{\na^2 m}_0 + \de_q^\frac12(\la_q\de_q^\frac12)^2
\lec \de_q^\frac12(\la_q\de_q^\frac12)^2
\end{align*}
where we used \eqref{adv.mlN}. 
In the case of $N\geq 1$, we write
\begin{align*}
\norm{D_{t,\ell} (\varrho Q_{11}) }_N &\leq \norm{(\pa_t + m_\ell P_{\le \ell^{-1}} \varrho^{-1} \cdot \na) (\varrho Q_{11}) }_N + \norm{((m_\ell P_{> \ell^{-1}} \varrho^{-1}) \cdot \na) (\varrho Q_{11}) }_N\\
&\lesssim \ell^{-N} \norm{(\pa_t + m_\ell P_{\le \ell^{-1}} \varrho^{-1} \cdot \na) (\varrho Q_{11}) }_0 \\
&\quad+ \sum_{N_1+N_2+N_3=N} \norm{m_\ell}_{N_1} \norm{P_{> \ell^{-1}} \varrho^{-1}}_{N_2} \norm{ \na (\varrho Q_{11}) }_{N_3}\\
&\lesssim \ell^{-N} (\norm{D_{t,\ell} (\varrho Q_{11}) }_0 + \norm{(m_\ell P_{> \ell^{-1}} \varrho^{-1} \cdot \na) \varrho Q_{11} }_0)  + \ell^{2-N} (\la_q \de_q^\frac12)^2\\
&\lesssim \ell^{-N} (\norm{D_{t,\ell} (\varrho Q_{11} )}_0 + \de_q^\frac12 (\la_q \de_q^\frac12)^2) + \ell^{1-N} (\la_q \de_q^\frac12)^3 \\
&\lesssim \ell^{-N} \de_q^\frac12 (\la_q \de_q^\frac12)^2.
\end{align*}
The second inequality follows from Bernstein's inequality. 
 
 In order to estimate the advective derivative of $Q_{12}$, we first estimate
\begin{align*}
\norm{D_{t,\ell} &\left(m_\ell\otimes m_\ell - P_{\le \ell^{-1}} (m\otimes m)\right)}_N \\
&\leq \norm{ (D_{t,\ell}m)_\ell\otimes m_\ell +
 m_\ell \otimes(D_{t,\ell}m)_\ell - P_{\le \ell^{-1}}D_{t,\ell} (m\otimes m)}_N\\
&\quad + 2 \norm{([m_\ell P_{\leq \ell^{-1}}{\varrho}^{-1}\cdot \na, P_{\le \ell^{-1}}] m) \otimes m_\ell}_N + \norm{ [m_\ell P_{\leq \ell^{-1}}{\varrho}^{-1}\cdot \na, P_{\le \ell^{-1}}] (m \otimes m)}_N\\
&\quad + 2 \norm{([m_\ell P_{> \ell^{-1}}{\varrho}^{-1}\cdot \na, P_{\leq \ell^{-1}}] m) \otimes m_\ell}_N + \norm{ [m_\ell P_{> \ell^{-1}}{\varrho}^{-1}\cdot \na, P_{\le \ell^{-1}}] (m \otimes m)}_N\\
&\lesssim \ell^{2-N} \norm{D_{t,\ell}m}_1 \norm{m}_1 + \sum_{N_1+N_2=N} \ell^{1-N_1} \norm{m}_1^2  \norm{m_\ell}_{N_2} + \ell^{1-N} \norm{m}_1 \norm{ m \otimes m}_1\\
&\quad+ \sum_{N_1+N_2=N} \ell^{1-N_1} \norm{\na (m_\ell P_{> \ell^{-1}}{\varrho}^{-1})}_{N_1} \norm{ m}_1 \norm{m_\ell}_{N_2} + \ell^{1-N} \norm{\na (m_\ell P_{> \ell^{-1}}{\varrho}^{-1})}_N \norm{m \otimes m}_1\\
&\lesssim \ell^{2-N} \la_q^2 \de_q^\frac12 \la_q \de_q^\frac12 + \ell^{1-N} (\la_q \de_q^\frac12)^2 + \ell^{1-N} (\la_q \de_q^\frac12)^2 + \ell^{1-N} \la_q \de_q^\frac12 + \ell^{1-N} \la_q \de_q^\frac12\\ &\lesssim
\ell^{1-N} (\la_q \de_q^\frac12)^2
\lec\ell^{-N} \la_q \de_q^\frac12
\end{align*}
In the second inequality, we have used \eqref{est.com} for the first term, \eqref{est.com1} for the second and third term and \eqref{est.com3} for the latter two terms in the estimate. The third inequality follows from $\norm{m_\ell P_{> \ell^{-1}}{\varrho}^{-1}}_{\overline{N}+1} \lesssim_{\overline{N}, \varrho} 1$. 
Then, it follows that
\begin{align*}
    \norm{D_{t,\ell}Q_{12}}_{N}
    &\lec \norm{ \na \varrho^{-1}:  D_{t,\ell}\left(m_\ell\otimes m_\ell - P_{\le \ell^{-1}} (m\otimes m)\right)}_N\\
    &\quad+\norm{\left(m_\ell\otimes m_\ell - P_{\le \ell^{-1}} (m\otimes m)\right): D_{t,\ell}\na\varrho^{-1}}_{N}
    \lec \ell^{-N} \de_q^\frac12(\la_q\de_q^\frac12)^2.
\end{align*}

Finally, it remains to estimate the term $Q_2$. We write it as
\begin{align*}
Q_2 = \frac{P_{\le \ell^{-1}} \div (m\otimes m)}{\varrho} - P_{\le \ell^{-1}}\left(\frac{\div (m\otimes m)}{\varrho}\right) + (P_{\le \ell^{-1}} (m \otimes m)) \na \varrho^{-1} - P_{\le \ell^{-1}}( m \otimes m \na \varrho^{-1})
\end{align*}
Since both the first and last two terms are of the form $(P_{\le \ell^{-1}}f) g - P_{\le \ell^{-1}}(fg)$, we can apply \eqref{est.com0} to get that 
\begin{align*}
\norm{Q_2}_N \lesssim \ell^{1-N} \la_q \de_q^{\frac12} + \ell^{1-N}
\end{align*}
Observe that we can estimate the advective derivative of $(P_{\le \ell^{-1}}f) g - P_{\le \ell^{-1}}(fg)$ as follows
\begin{align*}
&\norm{D_{t,\ell} (f_\ell g - (fg)_\ell)}_N = \norm{D_{t,\ell}(f_\ell g) - D_{t,\ell}(fg)_\ell)}_N \\
&= \norm{(D_{t,\ell}f_\ell)g + f_\ell D_{t,\ell}g - (D_{t,\ell}(fg))_\ell - [{m_\ell}/{\varrho} \cdot \na, P_{\le \ell^{-1}}](fg)}_N\\
&\leq \norm{(D_{t,\ell}f)_\ell g - ((D_{t,\ell}f)g)_\ell}_N + \norm{f_\ell D_{t,\ell}g  - (fD_{t,\ell}g)_\ell}_N+ \Norm{\left[\left[({m_\ell}/{\varrho}) \cdot \na, P_{\le \ell^{-1}}\right],g\right]f}_{N}\\
&\leq \norm{(D_{t,\ell}f)_\ell g - ((D_{t,\ell}f)g)_\ell}_N + \sum_{N_1+N_2=N} \norm{f_\ell}_{N_1} \norm{(D_{t,\ell}g - (D_{t,\ell}g)_\ell)}_{N_2}   \\
&\quad +\norm{f_\ell (D_{t,\ell}g)_\ell-(fD_{t,\ell}g)_\ell}_N+\Norm{\left[\left[({m_\ell}/{\varrho}) \cdot \na, P_{\le \ell^{-1}}\right],g\right]f}_{N}.
\end{align*}
We estimate this by applying \eqref{est.com0} to the the first norm, \eqref{est.com} to the third. For the last term, we first note that since $\varrho$ is smooth, we can bound a finite number of its derivatives and so we can bound $\norm{m_\ell/\varrho}_N \lec_N \ell^{-N}\de_q^\frac12$ for $N \in [1, \overline N]$; then we use Lemma \ref{lem.com4}. Therefore, we get the estimate for advective derivative of $Q_2$ as
\begin{align*}
&\norm{D_{t,\ell} Q_2}_N\\ 
&\lesssim \ell^{1-N} \norm{D_{t,\ell} \div (m \otimes m)}_0 \norm{\varrho^{-1}}_{\max (1,N)} + \sum_{N_1+N_2=N} \ell^{-N_1}  \norm{\div (m \otimes m)}_0
\norm{D_{t,\ell} \varrho^{-1}}_{N_2}
\\
& \quad + \ell^{2-N} \norm{\div (m \otimes m)}_1 \norm{D_{t,\ell} \varrho^{-1}}_{1} 
+ (\ell^{1-N} \de_q^\frac12\norm{\na \div (m \otimes m)}_0  
+ \ell^{-N} \de_q^\frac12 \norm{\div (m \otimes m)}_0) 
\\
& \quad +\ell^{1-N} \norm{D_{t,\ell} (m \otimes m)}_0 \norm{\na \varrho^{-1}}_{\max (1,N)} + 
\sum_{N_1+N_2=N} \ell^{-N_1}
 \norm{m \otimes m}_0 \norm{D_{t,\ell} \na \varrho^{-1}}_{N_2} \\
& \quad+ \ell^{2-N} \norm{ m \otimes m}_1 \norm{D_{t,\ell} \na \varrho^{-1}}_{1} 
+ (\ell^{1-N}\de_q^\frac12\norm{\na (m \otimes m)}_0
+ \ell^{-N}\de_q^\frac12 \norm{m \otimes m}_0) \\
&\lesssim \ell^{-N} (\ell \la_q^2 \de_q + \la_q\de_q) + \ell^{-N} \la_q \de_q^\frac12 + \ell^{2-N} \la_q^2 \de_q^\frac12 \ell^{-1} \de_q^\frac12
+ \ell^{1-N} \la_q^2 \de_q + \ell^{-N} \la_q \de_q \\
&\quad + \ell^{1-N} \la_q \de_q  + \ell^{-N}
 + \ell^{2-N} \la_q \de_q^\frac12 \ell^{-1} \de_q^\frac12  + \ell^{1-N} \la_q \de_q + \ell^{-N}\de_q^\frac12\\
&\lesssim \ell^{-N} \de_q^\frac12 (\la_q \de_q^\frac12)^2
\end{align*}
Observe that we have used $\norm{D_{t,\ell} \div m \otimes m}_0 \leq 2 \norm{(D_{t,\ell}m) \otimes m}_1 + \norm{\div \frac{m_\ell}{\varrho}}_0 \norm{m \otimes m}_1 \lesssim \la_q^2 \de_q + \ell^{-1} \de_q^\frac12 \la_q \de_q^\frac12$
\end{proof}

\subsection{Estimates on the backward flow} Finally we address the estimates on the backward flow $\xi_I$.

\begin{lem}\label{lem:est.flow} {For every $b>1$ there exists $\La_0=\La_0(b, \varrho)$ such that for $\la_0\geq \La_0$} the backward flow map $\xi_I$ satisfies the following estimates on 
the time interval $\cal{I}_u = [t_u - \frac 12 \tau_q , t_u + \frac 32\tau_q]\cap [0,T]+2\tau_q$
\begin{align}
&\norm{\I - \na \xi_I}_{C^0(\cal{I}_u \times \R^3)} \leq \frac 15 \label{est.flow1}\\
&\norm{D_{t,\ell}^s \na \xi_I}_{C^0(\cal{I}_u; C^N(\R^3))}\lec_{\varrho, {p}, N,M} \ell^{-N} (\la_q \de_q^\frac12)^s \label{est.flow2} \\
&\norm{D_{t,\ell}^s (\na \xi_I)^{-1}}_{C^0(\cal{I}_u; C^N(\R^3))}
\lec_{\varrho,{p},N,M} \ell^{-N} (\la_q \de_q^\frac12)^s, \label{est.flow3}
\end{align}
for any $N\geq 0$ and $s=0,1,2$. Note that the implicit constants in the inequalities are independent of the index $I=(m,n,f)$. {In particular,
\begin{align}\label{est.flow.indM}
\norm{\na \xi_I}_{C^0(\cal{I}_u; C^N(\R^3))} 
+ \norm{(\na \xi_I)^{-1}}_{C^0(\cal{I}_u; C^N(\R^3))}
\lec_{\varrho,N} \ell^{-N}.
\end{align}
The implicit constant in this inequality is also independent of $M$.}
\end{lem}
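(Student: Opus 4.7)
The plan is to derive everything from the transport equation for $\na \xi_I$ obtained by differentiating \eqref{def.bflow} in space,
\[
D_{t,\ell}\na\xi_I = -\bigl(\na\tfrac{m_\ell}{\varrho}\bigr)^{\top} \na\xi_I, \qquad \na\xi_I\big|_{t=t_u}=\I,
\]
together with analogous equations obtained from it by further spatial and advective differentiation. Along the forward flow $\Phi_u$ from \eqref{eqn.fflow}, each such equation becomes a linear ODE on an interval of length at most $2\tau_q$, to which standard Gr\"onwall bounds apply.

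I would first prove \eqref{est.flow1}. The regularization estimates of Section \ref{s:estimates_mollification} together with the lower bound $\varrho\geq \e_0$ yield $\norm{\na(m_\ell/\varrho)}_0 \leq \dot C_\varrho M \la_q\de_q^{\frac12}$; combining with the definition \eqref{mu.tau} of $\tau_q$ gives
\[
\tau_q\norm{\na(m_\ell/\varrho)}_0 \;\lec_\varrho\; \la_q^{\frac12}\la_{q+1}^{-\frac12}\de_q^{\frac14}\de_{q+1}^{-\frac14},
\]
which can be made arbitrarily small by enlarging $\La_0$ (using $b>1$). A direct Gr\"onwall application on $\mathcal{I}_u$ then yields $\norm{\na\xi_I - \I}_0 \leq \exp(2\tau_q\norm{\na(m_\ell/\varrho)}_0) - 1 \leq 1/5$. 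The crucial observation is that the smallness parameter $\tau_q M\la_q\de_q^{\frac12}$ is independent of $M$, which is what will enable the $M$-independent bound \eqref{est.flow.indM}.

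Next, for \eqref{est.flow2}--\eqref{est.flow3} and \eqref{est.flow.indM} with $s=0$, I apply $\na^N$ to the transport equation: the resulting linear equation for $\na^{N+1}\xi_I$ has forcing given by a sum of products $\na^{j+1}(m_\ell/\varrho)\cdot\na^{k+1}\xi_I$ with $j+k\leq N$. Using the regularization bound $\norm{\na^{j+1}(m_\ell/\varrho)}_0 \lec_{\varrho,j} M\ell^{-j}\la_q\de_q^{\frac12}$, induction on $N$ combined with Gr\"onwall gives $\norm{\na^{N+1}\xi_I}_0 \lec_{\varrho, N}\ell^{-N}$, with each factor of $M$ absorbed via $\tau_q M\la_q\de_q^{\frac12}\lec_\varrho 1$; this proves the $M$-independent estimate \eqref{est.flow.indM}. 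For $s=1$, I read $D_{t,\ell}\na\xi_I$ directly from the transport equation and bound $\norm{D_{t,\ell}\na\xi_I}_N \lec \sum_{N_1+N_2=N}\norm{\na(m_\ell/\varrho)}_{N_1}\norm{\na\xi_I}_{N_2} \lec_{\varrho,M,N} \ell^{-N}\la_q\de_q^{\frac12}$. For $s=2$, I apply $D_{t,\ell}$ once more; the additional factor $D_{t,\ell}\na(m_\ell/\varrho)$ is controlled by \eqref{adv.mlN}--\eqref{est.Dtvl}, producing the extra $\la_q\de_q^{\frac12}$. The estimates on $(\na\xi_I)^{-1}$ follow from the Neumann series $(\na\xi_I)^{-1}=\sum_{k\geq 0}(\I-\na\xi_I)^k$, convergent in $C^0$ by \eqref{est.flow1}, combined with the identity $D_{t,\ell}[(\na\xi_I)^{-1}] = -(\na\xi_I)^{-1}[D_{t,\ell}\na\xi_I](\na\xi_I)^{-1}$ and a standard product-rule induction.

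The main technical point is the $M$-independence in \eqref{est.flow.indM}. The mechanism is that every spatial derivative of $m_\ell/\varrho$ contributes either a factor $M\la_q\de_q^{\frac12}$ or $\ell^{-1}$ (from the regularization), while the Gr\"onwall integration runs over an interval of length $\tau_q$; the tailored choice \eqref{mu.tau} ensures that $\tau_q M\la_q\de_q^{\frac12}$ depends only on $\varrho$, $\eta$, and $\dot C_\varrho$. For $s\geq 1$ the $M$-dependence is allowed, and the computation reduces to routine product/chain rule bookkeeping together with the advective derivative bounds of Section \ref{s:estimates_mollification}.
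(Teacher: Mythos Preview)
Your approach is essentially the same as the paper's: derive the transport identity for $\na\xi_I$, use Gr\"onwall on $\mathcal{I}_u$ together with $\tau_q\norm{\na(m_\ell/\varrho)}_0\leq \tfrac{1}{10}$ for \eqref{est.flow1}, prove the $s=0$ spatial estimates by induction on $N$ (the paper compresses this to $\norm{\na\xi_I}_{C^N}\lec_N 1+\tau_q\norm{\na(m_\ell/\varrho)}_N$), and read off $s=1,2$ directly from the explicit formulas $D_{t,\ell}\na\xi_I=-\na\xi_I\,\na(m_\ell/\varrho)$ and $D_{t,\ell}^2\na\xi_I=\na\xi_I(\na(m_\ell/\varrho))^2-\na\xi_I\,D_{t,\ell}\na(m_\ell/\varrho)$, invoking \eqref{est.Dtvl}. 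One cosmetic point: with the paper's Jacobian convention the transport equation reads $D_{t,\ell}\na\xi_I=-\na\xi_I\,\na(m_\ell/\varrho)$, not $-(\na(m_\ell/\varrho))^{\top}\na\xi_I$ as you wrote; this does not affect any norm estimate. For the inverse, the paper uses the induced identity $D_{t,\ell}(\na\xi_I)^{-1}=\na(m_\ell/\varrho)(\na\xi_I)^{-1}$ directly rather than your Neumann series, but the two routes are equivalent.
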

\begin{proof} {First, we can find $\La_0(b,\varrho)$ such that for any $\la_0\geq \La_0(b)$, $\tau_q\norm{\na \frac{m_\ell}{\varrho}}_0\leq \frac 1{10}$ holds.} Then, \eqref{est.flow1} easily follows from \eqref{bf.Id}. Also, 
\begin{align}
&\norm{\na \xi_I}_{C^0(\cal{I}_u; C^N(\R^3))} \lec_N 1 + \tau_q
\norm{\na \frac{m_\ell}{\varrho}}_{N} \lec_{\varrho,N} 1+ \ell^{-N} \lec_{\varrho,N} \ell^{-N} \label{est.flow01}
\end{align}
from which follows $\norm{(\na \xi_I)^{-1}}_{C^0(\cal{I}_u; C^N(\R^3))} \lec_N \ell^{-N}$. 
Since we have
\begin{align*}
D_{t,\ell} \na \xi_I = -(\na\xi_I)(\na \frac{m_\ell}{\varrho}), 
\quad D_{t,\ell}^2 \na \xi_I = (\na\xi_I)(\na \frac{m_\ell}{\varrho})^2-(\na \xi_I)D_{t,\ell}\na \frac{m_\ell}{\varrho},
\end{align*}
using \eqref{est.Dtvl} and \eqref{est.flow01}, $\norm{D_{t,\ell}^s \na \xi_I}_{C^0(\cal{I}_u; C^N(\R^3))} \lec_{\varrho,{p},N,M} \ell^{-N}(\la_q\de_q^\frac12)^s $ easily follows. Lastly, we have 
\begin{align*}
D_{t,\ell}(\na\xi_I)^{-1}
= \na \frac{m_\ell}{\varrho} (\na \xi_I)^{-1}, \quad
D_{t,\ell}^2 (\na\xi_I)^{-1}
= D_{t,\ell}\na \frac{m_\ell}{\varrho} (\na \xi_I)^{-1} + (\na \frac{m_\ell}{\varrho})^2 (\na\xi_I)^{-1}.
\end{align*}
Therefore, \eqref{est.flow3} can be obtained similarly. 
\end{proof}

\section{Estimates in the momentum correction}

The main point of this section is to get the estimates on the momentum correction. {In this section, we set $\norm{\cdot}_N = \norm{\cdot}_{C^0([0,T]+\tau_q; C^N(\T^3))}$.}

The following proposition provides the estimates for the perturbation $n$. 
\begin{prop}\label{p:velocity_correction_estimates} For $N=0,1,2$ and $s=0,1,2$, the following estimates hold for $n_o$, $n_c$, and $n=n_o+n_c$:
\begin{align}
& \tau_q^s \norm{D_{t,\ell}^s n_o}_{N}
{\lec_{{\varrho, p,}M}} \la_{q+1}^{N} \de_{q+1}^\frac12 	\label{est.W}	\\
&\tau_q^s\norm{D_{t,\ell}^s n_c}_{N}
 \lec_{{\varrho, p,}M} \la_{q+1}^{N}\frac { \de_{q+1}^\frac12}{\la_{q+1}\mu_q} 		\label{est.Wc} \\
&\tau_q^s\norm{D_{t,\ell}^s n}_{N}
 {\lec_{{\varrho, p,}M}} \la_{q+1}^{N}\de_{q+1}^\frac12, \label{est.w}
\end{align}
where the implicit constants are independent of $s$, $N$, and $q$. Moreover, 
\begin{align}\label{est.w.indM}
\norm{n}_{N} \lec_{\varrho} \la_{q+1}^N \de_{q+1}^\frac12,
\end{align}
where the implicit constant is additionally independent of {$p$} and $M$. 
\end{prop}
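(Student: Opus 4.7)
The plan is to analyze $n_o$, $n_c$, and $n=n_o+n_c$ factor by factor using the decompositions from Section \ref{sec:correction.const}, namely $n_o = \sum_I \gamma_I \tilde{f}_I\, n_I$ with $n_I = \theta_I \chi_I(\xi_I) \psi_I(\lambda_{q+1}\xi_I)$, and the explicit formula \eqref{def.Wc} for $n_c$. A key simplification is Proposition \ref{p:supports}: the summands have pairwise disjoint supports, so each $C^N$ norm reduces to a single-$I$ estimate.

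For the spatial $C^N$ bounds on $n_o$, I estimate each factor separately. The oscillatory factor $\psi_I(\lambda_{q+1}\xi_I)$ contributes the leading frequency $\lambda_{q+1}^N$; the higher-order derivatives $\|\nabla\xi_I\|_{C^N}\lesssim\ell^{-N}$ from Lemma \ref{lem:est.flow} contribute only at order two and above and are subdominant since $\ell^{-1}\ll\lambda_{q+1}$. The cutoff $\chi_I(\xi_I)$ costs $\mu_q^{-1}$ per derivative, also subdominant. For the weight $\gamma_I$, which is a smooth function of $(R_\ell,\varphi_\ell,\varrho,\nabla\xi_I)$ by the construction in Section \ref{ss:weights}, I apply chain and Leibniz rules together with \eqref{est.mR}--\eqref{est.mph} and Lemma \ref{lem:est.flow}; one checks $\|\gamma_I\|_0\lesssim\delta_{q+1}^{1/2}$ (with implicit constant depending on $\varrho, p$, since $\Gamma_{f_I}$ is evaluated near the identity for $I\in\mathscr{I}_R$ or on a bounded set for $I\in\mathscr{I}_\varphi$ where it is smooth), and each spatial derivative costs only $\ell^{-1}$. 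Collecting yields $\|n_o\|_N\lesssim_{\varrho,p,M} \lambda_{q+1}^N\delta_{q+1}^{1/2}$.

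The advective derivative estimates crucially exploit the defining identity $D_{t,\ell}\xi_I=0$ from \eqref{def.bflow}: both $\chi_I(\xi_I)$ and $\psi_I(\lambda_{q+1}\xi_I)$ are annihilated by $D_{t,\ell}$, so $D_{t,\ell}$ acts only on $\theta_I$, $\gamma_I$, and $\tilde{f}_I=\nabla\xi_I^{-1}f_I$. Each contributes a factor $\tau_q^{-1}$: $\theta_I$ directly from differentiation of its $t/\tau_q$ argument; $\nabla\xi_I^{-1}$ via Lemma \ref{lem:est.flow}, since $\|D_{t,\ell}\nabla\xi_I^{-1}\|_0\lesssim\lambda_q\delta_q^{1/2}\lesssim\tau_q^{-1}$ by the very definition of $\tau_q$; and $\gamma_I$ via the chain rule together with $\|D_{t,\ell}R_\ell\|_0\lesssim\ell_t^{-1}\lambda_q^{-3\gamma}\delta_{q+1}$, $\|D_{t,\ell}\varphi_\ell\|_0\lesssim\ell_t^{-1}\lambda_q^{-3\gamma}\delta_{q+1}^{3/2}$, and the fact that $\ell_t^{-1}\lesssim\tau_q^{-1}$. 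Iterating handles $s=2$. For $n_c$, the representation \eqref{def.Wc} carries an extra prefactor $(\lambda_{q+1}\mu_q)^{-1}$, and the same techniques, applied to $e_{u,k}$, produce \eqref{est.Wc}; since $\lambda_{q+1}\mu_q\gg 1$, the triangle inequality immediately delivers \eqref{est.w} for $n=n_o+n_c$.

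Finally, the $M$-independent bound \eqref{est.w.indM} uses the refined flow estimate \eqref{est.flow.indM}, which is $M$-free because $\tau_q$ was defined with $M$ in its denominator precisely so as to absorb $\|\nabla(m_\ell/\varrho)\|_0\lesssim \dot{C}_\varrho M\lambda_q\delta_q^{1/2}$. No advective derivatives appear in \eqref{est.w.indM}, so the delicate $\tau_q^{-1}$ gains are unnecessary, and the weights $\gamma_I$ are bounded in terms of $\varrho$ only since their dependence on $R_\ell,\varphi_\ell,\nabla\xi_I$ enters through smooth functions on fixed compact sets. The main obstacle throughout is bookkeeping: verifying that every spatial derivative on the nested compositions produces only subdominant factors $\ell^{-1}$ or $\mu_q^{-1}$ relative to the leading $\lambda_{q+1}^N$, and carefully tracking which implicit constants depend on $M$ and $p$ versus those that do not, so that the $M$-free bound truly holds.
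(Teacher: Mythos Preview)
Your proposal is correct and follows essentially the same approach as the paper: both rely on the disjointness of supports (Proposition \ref{p:supports}), the key annihilation $D_{t,\ell}\xi_I=0$, the weight and flow-map estimates of Section \ref{ss:weights} and Lemma \ref{lem:est.flow}, and the $M$-free flow bound \eqref{est.flow.indM} for the final claim. The only organizational difference is that the paper first packages these ingredients into the coefficient estimates of Lemma \ref{lem:est.coe} for $b_{u,k}$ and $e_{u,k}$ via the Fourier expansion \eqref{rep.psi}--\eqref{rep.W}, then reads off the proposition from those, whereas you work directly with the physical-space form $n_o=\sum_I\gamma_I\tilde f_I n_I$; this buys the paper a reusable lemma for later Reynolds and current error estimates, but for the proposition itself your route is equally valid.
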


The latter estimates are in fact a simple consequence of estimates on the functions $b_{u,k}$, $c_{u,k}$, $d_{u,k}$ and $e_{u,k}$ defined in \eqref{coef.def} and \eqref{def.e}

\begin{lem}\label{lem:est.coe} For any $N\geq 0$ and $s=0,1,2$, the coefficients $b_{u,k}$, $c_{u,k}$, $d_{u,k}$, and $e_{u,k}$ defined by \eqref{coef.def} and \eqref{def.e} satisfy the following,
\begin{align}
\ta_q^s\norm{D_{t,\ell}^s b_{u,k}}_N
&{\lec} \ \mu_q^{-N}\max_I |\dot{b}_{I,k}| \label{est.b}\\
\ta_q^s\norm{D_{t,\ell}^s c_{u,k}}_N
&{\lec} \ \mu_q^{-N}\max_I |\dot{c}_{I,k}| \label{est.c}\\
\ta_q^s\norm{D_{t,\ell}^s d_{u,k}}_N
&{\lec}\  \mu_q^{-N}\max_I |\dot{d}_{I,k}| \label{est.d}\\
\ta_q^s\norm{D_{t,\ell}^s e_{u,k}}_N
&{\lec}\  \mu_q^{-N}\max_I |\dot{b}_{I,k}|.\label{est.e},
\end{align}
where the implicit constants in the inequalities \eqref{est.b}-\eqref{est.e} depend on $\varrho$, {$p$}, $N$, and  $M$.
{Moreover, for $N=0,1,2$,
\begin{align}\label{est.be.indM}
\norm{b_{u,k}}_N + \norm{e_{u,k}}_N
\lec_{\varrho} \mu_q^{-N} \max_I |\dot{b}_{I,k}|,
\end{align}
where the implicit constant is independent of $M$ and $N$.}
\end{lem}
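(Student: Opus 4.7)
The plan is to reduce each bound to a single summand in the definition of the coefficient and then apply the Leibniz rule together with the regularity from Lemma \ref{lem:est.flow}, the mollified-error estimates \eqref{est.mR}--\eqref{est.mph}, and the explicit scalings of $\theta_I$ and $\chi_I\circ\xi_I$. First I would note that by construction of the partitions of unity $\{\theta_u^6\}$, $\{\chi_v^6\}$ and the finite families $\mathcal{F}^{[v]}$, at each $(t,x)$ only a uniformly bounded number of indices $I=(u,v,f)$ with $u_I=u$ yield a nonzero $\theta_I(t)\chi_I(\xi_I(t,x))$; hence by the triangle inequality it is enough to establish each bound on a single summand such as $B_{I,k}\tilde f_I=\theta_I\chi_I(\xi_I)\de_{q+1}^{-1/2}\ga_I\dot b_{I,k}\tilde f_I$ in the case of $b_{u,k}$.

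I would then estimate $\|\nabla^N(\chi_I\circ\xi_I)\|_0$ via the Fa\`a di Bruno formula: it decomposes into sums of products $(\nabla^j\chi_I)(\xi_I)\cdot P_{j,N}(\nabla\xi_I,\dots,\nabla^{N-j+1}\xi_I)$ with $\|\nabla^j\chi_I\|_0\lec\mu_q^{-j}$ and $P_{j,N}$ controlled by $\ell^{-(N-j)}$ via \eqref{est.flow.indM}. Provided $\la_0$ is large enough to ensure $\mu_q\le\ell$ (which is the case once $b$ is close to $1$), every such term is bounded by $\mu_q^{-N}$. The weight $\ga_I$ is a smooth composition of the fixed functions $\Ga_{f_I}$ from Lemmas \ref{lem:geo1}--\ref{lem:geo2} with $R_\ell,\ph_\ell,\nabla\xi_I$ and $\varrho$, so the chain rule together with \eqref{est.mR}--\eqref{est.mph} and Lemma \ref{lem:est.flow} gives $\|\nabla^N\ga_I\|_0\lec \ell^{-N}\de_{q+1}^{1/2}$ on $\supp\theta_I$ with constants depending on $\varrho,p$ and $M$. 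Since $\|\nabla^N\tilde f_I\|_0\lec\ell^{-N}$ by \eqref{est.flow3} and $\ell^{-1}\le\mu_q^{-1}$, the Leibniz rule produces the desired spatial bound $\|B_{I,k}\tilde f_I\|_N\lec\mu_q^{-N}|\dot b_{I,k}|$.

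For the advective derivative I would exploit the crucial identity $D_{t,\ell}\xi_I=0$, which is direct from \eqref{def.bflow} and implies $D_{t,\ell}(\chi_I\circ\xi_I)=0$. Hence $D_{t,\ell}^s$ only falls on $\theta_I$, $\ga_I$ and $\tilde f_I$. The time-dependent factor $\theta_I=\theta_u^k(\tau_q^{-1}t)$ gives the dominant factor $\tau_q^{-s}$ because $(m_\ell/\varrho)\cdot\nabla\theta_I=0$. The contribution from $D_{t,\ell}\ga_I$ introduces $\ell_t^{-1}$ coming from $D_{t,\ell}R_\ell$ and $D_{t,\ell}\ph_\ell$, together with $\la_q\de_q^{1/2}$ from $D_{t,\ell}\nabla\xi_I$; similarly $D_{t,\ell}^s\tilde f_I$ is at worst $(\la_q\de_q^{1/2})^s$ by \eqref{est.flow3}. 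Recalling $\tau_q^{-1}\simeq M\la_q^{1/2}\la_{q+1}^{1/2}\de_q^{1/4}\de_{q+1}^{1/4}$ and $\ell_t/\tau_q\simeq\la_q^{3\ga}$, we get $\tau_q\la_q\de_q^{1/2}\ll 1$ and $\tau_q\ell_t^{-1}\lec\la_q^{-3\ga}\ll 1$ for $\la_0$ sufficiently large, so only the $\theta_I$-contribution matters and \eqref{est.b} follows.

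The same reasoning gives \eqref{est.c} and \eqref{est.d}, with the powers $\ga_I^k\sim\de_{q+1}^{k/2}$ cancelling the normalizations $\de_{q+1}^{-k/2}$ for $k=2,3$ and only requiring extra Leibniz bookkeeping for the additional tensor factors $\tilde f_I$. For $e_{u,k}$, the extra $\nabla$ in \eqref{def.e} produces at most $\mu_q^{-1}$ (the worst case being when it hits $\chi_I\circ\xi_I$), which is exactly balanced by the prefactor $\mu_q$, giving \eqref{est.e}. Finally, \eqref{est.be.indM} for $N\le 2$ follows because the purely spatial estimates for $b_{u,k}$ and $e_{u,k}$ only invoke the $M$-independent bound \eqref{est.flow.indM} on the flow, and because $\ga_I=\de_{q+1}^{1/2}\varrho\Ga_I$ with $\Ga_I$ a fixed smooth function of its arguments whose derivatives up to any fixed order can be controlled purely in terms of $\varrho$-dependent constants. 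The principal technical obstacle will be the careful Fa\`a di Bruno bookkeeping for $\chi_I\circ\xi_I$ together with the simultaneous verification that $\mu_q\le\ell$ and $\tau_q\la_q\de_q^{1/2}\ll 1$ can be arranged by the parameter choices in Proposition \ref{ind.hyp}.
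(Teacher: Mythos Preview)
Your overall strategy coincides with the paper's: reduce to a single summand by finite overlap, estimate $\chi_I\circ\xi_I$ at scale $\mu_q^{-1}$ via the chain rule and Lemma~\ref{lem:est.flow}, use $D_{t,\ell}\xi_I=0$ to kill advective derivatives on $\chi_I\circ\xi_I$, and let $\theta_I$ supply the $\tau_q^{-s}$. The parameter checks $\mu_q\le\ell$ and $\tau_q\la_q\de_q^{1/2},\ \tau_q\ell_t^{-1}\lec 1$ are correct.

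There is, however, one genuine oversight. Your description ``$\ga_I$ is a smooth composition of $\Ga_{f_I}$ with $R_\ell,\ph_\ell,\nabla\xi_I$ and $\varrho$'' is accurate only for $I\in\mathscr{I}_\ph$. For $I\in\mathscr{I}_R$ the weight is $\ga_I=\de_{q+1}^{1/2}\varrho\,\Ga_{f_I}(\I+\de_{q+1}^{-1}\mathcal{M}_I)$, and $\mathcal{M}_I$ contains the term
\[
-\nabla\xi_I\,\frac{1}{\varrho^2}\Big[\sum_{(u',v')\in I(u,v)}\ \sum_{J\in\mathscr{I}_{u',v',\ph}}\theta_J^2\,\chi_J^2(\xi_J)\,\ga_J^2\,\langle\psi_J^2\rangle\,\tilde f_J\otimes\tilde f_J\Big]\nabla\xi_I^{\top},
\]
so the Reynolds weight carries the cutoffs $\theta_J$ and $\chi_J(\xi_J)$ of the \emph{energy} indices. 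In particular the spatial derivatives of $\ga_I$ pick up $\mu_q^{-1}$ from $\chi_J(\xi_J)$, not merely $\ell^{-1}$, and the advective derivatives pick up $\tau_q^{-1}$ from $\theta_J$ as well. Your claimed bound $\|\nabla^N\ga_I\|_0\lec\ell^{-N}\de_{q+1}^{1/2}$ is therefore too strong for $I\in\mathscr{I}_R$; the correct bound is $\|\de_{q+1}^{-1/2}\ga_I\|_N\lec\mu_q^{-N}$ and $\tau_q^s\|D_{t,\ell}^s(\de_{q+1}^{-1/2}\ga_I)\|_N\lec\mu_q^{-N}$, which is exactly what the paper proves (see \eqref{est.Ga.R}). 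This does not damage the final inequality---you were aiming for $\mu_q^{-N}$ anyway---but the intermediate step as written is false and the argument is incomplete until you treat the two families separately, first establishing \eqref{est.Ga.ph} for $I\in\mathscr{I}_\ph$ and then feeding it into the estimate on $\mathcal{M}_I$ for $I\in\mathscr{I}_R$.
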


\begin{rem}\label{r:Fourier_coefficients}
Observe that, by the definition of the respective coefficients, the moduli $|\dot{b}_{I,k}|$,  $|\dot{c}_{I,k}|$ and $|\dot{d}_{I,k}|$ just depend on the third component of the index $I=(u,v,f)$, since they involve the functions $\psi_f$, but not the ``shifts'' $z_{u,v}$. In particular, the set of their possible values is a finite number, independent of  $q$ and just depending on the collection of the family of functions $\psi_f$ and on the frequency $k$.
\end{rem}

\begin{proof}
First of all, it is easy to see that for any $s=0,1,2$ and $N\geq 0$, 
\begin{equation}\begin{split}
\label{est.th.chi}
\norm{D_{t,\ell}^s\th_I}_{C^0(\R)} 
&= \norm{\pa_t^s\th_I}_{C^0(\R )} \lec \tau_q^{-s}, \\ 
\norm{\chi_I(\xi_I)}_{C^0(\cal{I}_u; C^N(\R^3))} 
&\lec_{\varrho, N} \mu_q^{-N}, \  D_{t,\ell}^s [\chi_I(\xi_I)] = 0,
\end{split}\end{equation}
where $\cal{I}_u = [t_u - \frac 12\tau_q, t_u + \frac 32 \tau_q]$. Indeed, the estimate of $\chi_I(\xi_I)$ follows from \eqref{est.flow2}, Lemma \ref{lem:est.com}, and $\ell^{-1}\leq \mu_q^{-1}$. We remark that the implicit constants are independent of $I$. 

Recall that when $f \in \mathscr{I}_\ph$, 
\[
\ga_I=\frac{\la_q^{-\ga}\de_{q+1}^\frac12\Ga_I}{|\tilde{f}_I|^{\frac{2}{3}}} =
\frac{\la_q^{-\ga}\de_{q+1}^\frac12\Ga_I}{|\nabla \xi_I^{-1} f_I|^{\frac{2}{3}}}
\] 
for 
\[
\Ga_I(x) 
=\Gamma_{f_I}^{\frac{1}{3}}
(-2 \la_q^{3\ga}\de_{q+1}^{-\frac 32}(\na\xi_I)\varrho^3\ph_\ell)
\]
where $\Gamma_f$'s are the functions given by Lemma \ref{lem:geo2}.  
First it is easy to see that \eqref{est.flow3} implies 
\begin{align}\label{est4}
\norm{D_{t,\ell}^s [(\na\xi_I)^{-1}f_I]}_{C^0(\cal{I}_u; C^N(\R^3))}
{\lec_{{\varrho, p}, N,M}} (\la_q \de_q^\frac 12)^s \ell^{-N}.
\end{align}
Also, using \eqref{est.flow2} and \eqref{est.mph},  
\begin{align}\label{est5}
\norm{D_{t,\ell}^s(2\la_q^{3\ga} \de_{q+1}^{-\frac32}(\na \xi_I)\varrho^3 \ph_\ell)}_{C^0(\cal{I}_u; C^N(\R^3))}
{ \lec_{{\varrho, p}, N,M}} (\ell_t^{-s} + (\la_q \de_q^\frac12)^s) \ell^{-N}\lec \ta_q^{-s}\ell^{-N}.
\end{align}
Next, for any smooth functions $\Ga=\Ga(x)$ and $g=g(t,x)$ we have
\begin{equation}\begin{split}\label{formula0}
&\norm{D_{t,\ell} \Ga(g)}_{C^N_x} \lec \sum_{N_1+N_2=N} \norm{D_{t,\ell} g}_{C^{N_1}_x} \norm{(\na\Ga)(g)}_{C^{N_2}_x},\\
&\norm{D_{t,\ell}^2 \Ga(g)}_{C^N_x} 
\lec \sum_{N_1+N_2=N} \norm{D_{t,\ell}^2 g}_{C^{N_1}_x} \norm{(\na\Ga)(g)}_{C^{N_2}_x}
+ \norm{D_{t,\ell} g\otimes D_{t,\ell} g}_{C^{N_1}_x} \norm{(\na^2\Ga)(g)}_{C^{N_2}_x},
\end{split}\end{equation}
and therefore we obtain by Lemma \ref{lem:est.com}
\begin{align*}
&\norm{D_{t,\ell}^s |(\na\xi_I)^{-1}f_I|^{-\frac 23}}_{C^0(\cal{I}_u; C^N(\R^3))}
{\lec_{{\varrho, p},N,M}} (\la_q \de_q^\frac 12)^s \ell^{-N}\\
&\norm{D_{t,\ell}^s[ ({\Ga_{f_I}})^\frac13(-2\la_q^{3\ga} \de_{q+1}^{-\frac32}(\na \xi_I)\varrho^3 \ph_\ell)]}_{C^0(\cal{I}_u; C^N(\R^3))}
{\lec_{{\varrho, p},N,M}} \ta_q^{-s} \ell^{-N}.  
\end{align*}
Here, we used \eqref{est4} and \eqref{est5} which we can apply thanks to the fact that $|(\na \xi_I)^{-1} f_I| \geq  \frac 34$ and $\Ga_{f_I}\geq 3$ (according to our choice of $N_0$ in applying Lemma \ref{lem:geo2}). 
Also the implicit constant in the second inequality can be chosen to be independent of $I$ because of the finite cardinality of the functions $f_I$. {On the other hand, in the case of $s=0$ and $N=0,1,2$, because of \eqref{est.ph} and \eqref{est.flow.indM}, the implicit constants in both inequalities can be chosen to be independent of $N$ and $M$.}
Therefore, it follows that, when $I \in \mathscr{I}_\ph$, 
\begin{align}\label{est.Ga.ph}
\norm{D_{t,\ell}^s \de_{q+1}^{-\frac12}\ga_I}_N {\lec_{{\varrho, p},N,M}} \ta_q^{-s} \ell^{-N}.
\end{align}
In particular, for $N=0,1,2$,
{
\begin{align*}
\norm{\de_{q+1}^{-\frac12}\ga_I}_N \lec_{\varrho} \ell^{-N},
\end{align*}
where the implicit constant depends on an upper bound of $\norm{\varrho}_{C([0,T]+\tau_{-1}; C^2(\T^3))}$ and $\e_0$ more precisely.
}

On the other hand, when $I\in \mathscr{I}_R$, recall that $\de_{q+1}^{-\frac 12} \varrho^{-1} \ga_I = \Ga_I = \Ga_{f_I} (\I+\de_{q+1}^{-1}\mathcal{M}_I)$ for a finite collection of smooth functions $f_I$ chosen through Lemma \ref{lem:geo1}. First $\mathcal{M}_I$ can be estimated as 
\begin{align*}
&\norm{\mathcal{M}_I}_{C^0(\cal{I}_u; C^N(\R^3))} \\
&\lec_N\ \de_{q+1}  \norm{(\na \xi_I)(\na \xi_I)^{\top} - \I}_{C^0(\cal{I}_u; C^N(\R^3))}  + \sum_{\substack{N_1+N_2\\+N_3=N}}\norm{\na \xi_I}_{C^0(\cal{I}_u; C^{N_1}_x)} \norm{R_\ell}_{N_2} \norm{\na \xi_I}_{C^0(\cal{I}_u; C^{N_3}_x)}\\
&\ + \norm{\varrho^{-2}}_{C^0(\cal{I}_u; C^{N}_x)} \sum\sum \norm{\na \xi_I}_{C^0(\cal{I}_u; C^{N_1}_x)} \norm{\chi_{J}^2(\xi_{J})}_{C^0(\cal{I}_u; C^{N_2}_x)} \norm{\ga_{J}^2}_{C^0(\cal{I}_u; C^{N_3}_x)}  \norm{\na \xi_I}_{C^0(\cal{I}_u; C^{N_4}_x)}\\
&\lec_{{\varrho, }N,M} \de_{q+1} \mu_q^{-N}, 
\end{align*}
where the double summations in the third line is taken over $N_1+N_2+N_3+N_4=N$ and $J=(u',v',f')$ satisfying {$f\in \cF_{{J}, \ph} := \cF^{[v'],\ph}$} and the last line follows from
\eqref{est.flow1}, \eqref{est.flow2}, \eqref{est.mR}, \eqref{est.th.chi}, and \eqref{est.Ga.ph}. In the case of $N=0,1,2$, we note that the implicit constants can be chosen to be independent of $N$ and $M$. 
Similarly, we have
\begin{align*}
\norm{D_{t,\ell}^s \mathcal{M}_I}_N \lec_{{\varrho, p,}N,M} \de_{q+1}\ta_q^{-s} \mu_q^{-N},
\end{align*}

Then, \eqref{formula0} and Lemma \ref{lem:est.com} imply that when $f_I \in \cF_{I, R}=\cF_{(u,v,f), R}:= \cF^{[v],R}$, for $s=0,1,2$ and $N\geq 0$, 
\begin{align}\label{est.Ga.R}
\norm{D_{t,\ell}^s \de_{q+1}^{-\frac 12} \varrho^{-1} \ga_I}_N = \norm{D_{t,\ell}^s(\Ga_{f_I} (\I+\de_{q+1}^{-1}\mathcal{M}_I))}_N
\lec_{{\varrho, p,}N,M} \ta_q^{-s} \mu_q^{-N}.
\end{align}
In particular, for $s=0, N=0,1,2$, the implicit constant can be chosen to be independent of $M$ and $N$ but {to depend only on $\e_0$ and an upper bound of $\norm{\varrho}_{C([0,T]+\tau_{-1}; C^2(\T^3))}$};
\begin{align*}
\norm{\de_{q+1}^{-\frac 12} \varrho^{-1} \ga_I}_N 
\lec_{\varrho }  \mu_q^{-N}.
\end{align*}
Finally, recall the definition of $b_{u,k}$, $c_{u,k}$, $d_{u,k}$, and $e_{u,k}$. Then, the estimates \eqref{est.b}-\eqref{est.be.indM} follows from \eqref{est.th.chi}, \eqref{est.Ga.ph}, \eqref{est.Ga.R}, and \eqref{est.flow.indM}.
\end{proof}

\begin{proof}[Proof of Proposition \ref{p:velocity_correction_estimates}]
Using \eqref{est.be.indM}, \eqref{est.flow.indM}, and \eqref{est.k}, we easily have  $\la_{q+1}^{-N}\norm{n_o}_{N} \lec_{{\varrho,}N} \de_{q+1}^\frac12$ and  $\la_{q+1}^{-N}\norm{n_c}_N \lec_{{\varrho,}N} (\la_{q+1}\mu_q)^{-1}\de_{q+1}^\frac12$, recalling 
Remark \ref{r:Fourier_coefficients}. On the other hand, we observe that $D_{t,\ell} e^{i\la_{q+1}k\cdot \xi_I} = 0$ because of $D_{t,\ell} \xi_I =0$. Hence the remaining inequalities in \eqref{est.W} and \eqref{est.Wc} are obtained in a similar fashion. Finally, \eqref{est.w} follows from \eqref{est.W} and \eqref{est.Wc}. Note that all estimates used in the proof have implicit constants independent of $q$. Moreover, the finite cardinalities of the range of $N$ and $s$ make it possible to choose the implicit constants in \eqref{est.W}, \eqref{est.Wc}, and \eqref{est.w} independent of $N$ and $s$ too. {Furthermore, when $s=0$, we can also make the implicit constants independent of $M$.}
\end{proof}

\section{A microlocal lemma} 

We will need in the sequel a suitable extension of \cite[Lemma 4.1]{IsVi2015} whose proof can be found in \cite[Lemma 8.1]{DLK20}. We will use the notation
\[
\crF[f] (k) = \dint_{\T^3} f(x) e^{-ix \cdot k} dx,\quad
f (x) = \sum_{k\in \Z^3}\crF[f](k)   e^{ik \cdot x} 
\]   
for the Fourier series of periodic functions.

\begin{lem}[Microlocal Lemma]\label{mic} Let $T$ be a Fourier multiplier defined on $C^\infty(\T^3)$ by
\[
\crF[Th](k) = \mathfrak{m}(k)\crF[h](k), \quad \forall k\in \Z^3
\]
for some $\mathfrak{m}$ which has an extension in $\cal{S}(\R^3)$ (which for convenience we keep denoting by $\mathfrak{m}$). Then, for any $n_0\in \N$, $\la>0$, and any scalar functions $a$ and $\xi$ in $C^\infty(\T^3)$, 
$T(a e^{i\la \xi})$ can be decomposed as
\begin{equation*}
\begin{split}
T(a e^{i\la \xi}) 
=&\   \left[a \mathfrak{m}(\la\na \xi)
+\sum_{k=1}^{2{n_0}} C_{k}^\la(\xi,a): (\na^k \mathfrak{m})(\la \na\xi)
+\e_{n_0}(\xi,a)\right]e^{i\la \xi}
\end{split}
\end{equation*}
for some tensor-valued coefficient {$C_{k}^\la(\xi,a)$} and a remainder $\e_{n_0}(\xi, a)$ which is specified in the following formula: 
\begin{equation}\label{def.eN}\begin{split}
\e_{n_0}(\xi, a)(x)
&= 
\sum_{\substack{n_1+n_2\\=n_0+1}} \frac{(-1)^{n_1}c_{n_1,n_2}}{n_0!} \\
&\cdot\int_0^1\int_{\R^3} \widecheck{\mathfrak{m}}(y) e^{-i\la \na\xi(x)\cdot y} ((y\cdot\na)^{n_1}a)(x-ry) e^{i\la Z[\xi](r)}\be_{n_2} [\xi](r) (1-r)^{n_0} dydr,
\end{split}
\end{equation}
where $c_{n_1,n_2}$ is a constant depending only on $n_1$ and $n_2$, and the function $\be_n[\xi]$ is
\begin{align*}
\be_{n} [\xi](r)
&=B_{n} (i\la Z'(r), i\la Z''(r), \cdots, i\la Z^{(n)}(r)),\\
Z{[\xi]}(r) &=Z{[\xi]}_{x,y}(r) = r\int_0^1(1-s) (y\cdot\na )^2 \xi(x-rsy) ds,
\end{align*}
with $B_n$ denoting the $n$th complete exponential Bell polynomial; 
\begin{align}\label{e:Bell}
B_n(x_1,\dots,x_n)
=\sum_{k=1}^n B_{n,k}(x_1,x_2,\dots,x_{n-k+1}),
\end{align} 
where
\begin{align*}
B_{n,k}(x_1,x_2,\dots,x_{n-k+1}) =
 \sum\frac{n!}{j_1!  j_2! \cdots j_{n-k+1}!}
\left(\frac{x_1}{1!}\right)^{j_1}
\left(\frac{x_2}{2!}\right)^{j_2}\cdots
\left(\frac{x_{n-k+1}}{(n-k+1)!}\right)^{j_{n-k+1}},
\end{align*}
and the summation is taken over $\{j_k\}\subset \N\cup\{0\}$ satisfying
\begin{align}\label{sum.bel}
j_1 + j_2 + \cdots + j_{n-k+1} = k, \quad
j_1 + 2 j_2 + 3 j_3 + \cdots + (n-k+1)j_{n-k+1} = n.
\end{align}
\end{lem}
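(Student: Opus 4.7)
The approach is to realize $T$ as convolution with $\widecheck{\mathfrak{m}}\in\mathcal{S}(\R^3)$, factor out the fast phase $e^{i\lambda\xi(x)}$, and Taylor-expand a single-variable auxiliary function $F(r)$ which interpolates between the frozen principal symbol at $r=0$ and the full action of $T$ at $r=1$. Since $\mathfrak{m}$ has a Schwartz extension, for any periodic $h$ the Fourier series representation gives
\begin{equation*}
Th(x) = \frac{1}{(2\pi)^3}\int_{\R^3} \widecheck{\mathfrak{m}}(y)\, h(x-y)\, dy.
\end{equation*}
Applying this with $h=ae^{i\lambda\xi}$, extracting $e^{i\lambda\xi(x)}$, and using the integral Taylor identity $\xi(x-y)-\xi(x)+y\cdot\nabla\xi(x)=\int_0^1(1-s)(y\cdot\nabla)^2\xi(x-sy)\,ds=Z[\xi]_{x,y}(1)$, one obtains $e^{-i\lambda\xi(x)}T(ae^{i\lambda\xi})(x)=F(1)$, where
\begin{equation*}
F(r) := \frac{1}{(2\pi)^3}\int \widecheck{\mathfrak{m}}(y)\,a(x-ry)\,e^{-i\lambda y\cdot\nabla\xi(x)}\,e^{i\lambda Z[\xi]_{x,y}(r)}\,dy.
\end{equation*}
At $r=0$, $Z[\xi](0)=0$ and $a(x)$ is constant in $y$, so the Fourier inversion identity $\mathfrak{m}(\eta)=(2\pi)^{-3}\int\widecheck{\mathfrak{m}}(y) e^{-iy\cdot\eta}\,dy$ identifies $F(0)=a(x)\mathfrak{m}(\lambda\nabla\xi(x))$, which is the principal contribution in the statement.

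Next, one applies the integral form of Taylor's theorem $F(1)=\sum_{k=0}^{n_0}\frac{F^{(k)}(0)}{k!}+\int_0^1\frac{(1-r)^{n_0}}{n_0!}F^{(n_0+1)}(r)\,dr$. Differentiating $F(r)$ under the integral sign by Leibniz on $a(x-ry)\cdot e^{i\lambda Z(r)}$ and combining with the Fa\`a di Bruno formula $\frac{d^{n_2}}{dr^{n_2}}e^{i\lambda Z(r)}=B_{n_2}(i\lambda Z',\dots,i\lambda Z^{(n_2)})e^{i\lambda Z(r)}=\beta_{n_2}[\xi](r)e^{i\lambda Z(r)}$ (exactly \eqref{e:Bell}) produces
\begin{equation*}
F^{(k)}(r)=\sum_{n_1+n_2=k}\binom{k}{n_1}(-1)^{n_1}\frac{1}{(2\pi)^3}\int \widecheck{\mathfrak{m}}(y)\,\bigl((y\cdot\nabla)^{n_1}a\bigr)(x-ry)\,\beta_{n_2}[\xi](r)\,e^{i\lambda Z(r)}\,e^{-i\lambda y\cdot\nabla\xi(x)}\,dy.
\end{equation*}
Substituting $k=n_0+1$ into the Taylor remainder reproduces the integrand of \eqref{def.eN} with $c_{n_1,n_2}=\binom{n_0+1}{n_1}$. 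Evaluating $F^{(k)}(r)$ at $r=0$, $Z(0)=0$ and a direct computation gives $Z^{(j)}(0)=\tfrac{(-1)^{j-1}}{j+1}(y\cdot\nabla)^{j+1}\xi(x)$, so $\beta_{n_2}[\xi](0)$ is a polynomial in $y$ of degree at most $2n_2$ whose coefficients depend only on $\lambda$ and derivatives of $\xi$ at $x$. Each resulting term in $F^{(k)}(0)/k!$ then has the form $P(y)e^{-i\lambda y\cdot\nabla\xi(x)}$ integrated against $\widecheck{\mathfrak{m}}(y)$; the monomial identity $\int y^\alpha \widecheck{\mathfrak{m}}(y)e^{-iy\cdot\eta}\,dy=i^{|\alpha|}(2\pi)^3\partial^\alpha\mathfrak{m}(\eta)$ rewrites each such term as a tensor contraction involving $(\nabla^j\mathfrak{m})(\lambda\nabla\xi(x))$ for some $j\le n_1+2n_2\le 2k\le 2n_0$. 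Summing over $k=1,\dots,n_0$ and grouping by the derivative order $j$ defines the coefficients $C_k^\lambda(\xi,a)$ for $k=1,\dots,2n_0$.

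The only real task is the combinatorial bookkeeping: tracking which $y$-monomials arise from the product $\beta_{n_2}[\xi](0)\cdot(y\cdot\nabla)^{n_1}$ and matching them, via the monomial-to-derivative identity, to specific $\partial^j\mathfrak{m}(\lambda\nabla\xi(x))$ of various orders. The Bell-polynomial formalism built into $\beta_n[\xi]$ packages the Fa\`a di Bruno chain rule automatically, and the Schwartz decay of $\widecheck{\mathfrak{m}}$ guarantees absolute convergence of all the $y$-integrals involved, so no further analytic ingredient is required beyond these two.
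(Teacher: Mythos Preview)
Your proof is correct and follows exactly the approach of the referenced proof in \cite[Lemma~8.1]{DLK20}: represent $T$ as convolution with $\widecheck{\mathfrak m}$, factor out $e^{i\lambda\xi(x)}$, interpolate via $F(r)$, and Taylor-expand at $r=0$ using Leibniz together with the Fa\`a~di~Bruno/Bell-polynomial identity for $\frac{d^{n_2}}{dr^{n_2}}e^{i\lambda Z(r)}$. The only cosmetic discrepancy is that the factor $(2\pi)^{-3}$ from the convolution representation should be absorbed into $c_{n_1,n_2}$, so $c_{n_1,n_2}=(2\pi)^{-3}\binom{n_0+1}{n_1}$ rather than $\binom{n_0+1}{n_1}$; this is immaterial since the lemma only claims $c_{n_1,n_2}$ depends on $n_1,n_2$.
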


\bigskip
We now collect an important consequence on the anti-divergence operator $\mathcal{R}$.

\begin{cor}\label{cor.mic2} Let $N=0,1,2$ and $F = \sum_{k\in \Z^3\setminus\{0\}}\sum_{u\in \Z} a_{u,k}e^{i\la_{q+1}k\cdot\xi_u}$. Assume that a function $a_{u,k}$ fulfills the following requirements. 
\begin{enumerate}[(i)]
\item The support of $a_{u,k}$ satisfies $\supp(a_{u,k}) \subset (t_u - \frac 12\tau_q, t_u + \frac 32\tau_q)\times \R^3$. In particular, 
for $u$ and $u'$ neither same nor adjacent, we have
\begin{equation}\label{dis.amk}
\supp(a_{u,k}) \cap \supp(a_{u',k'}) = \emptyset, \quad \forall k, k' \in \Z^3\setminus\{0\}.
\end{equation}
\item For any $0\le j\le n_0+1$ and $(u,k)\in \Z\times \Z^3$,  
\begin{equation}\label{con.a}
    \norm{a_{u,k}}_j+ (\la_{q+1}\de_{q+1}^\frac 12)^{-1}\norm{D_{t,\ell} a_{u,k}}_j\lec_j  \mu_q^{-j} |\dot{a}_k|, \quad
\sum_k |k|^{n_0+2} |\dot{a}_k| \leq  a_F, 
\end{equation}
for some $a_F>0$, where $ n_0 = {\ceil{\frac{2b(2+\al)}{(b-1)(1-\al)}}}$ and {$\norm{\cdot}_{j} = \norm{\cdot}_{C(\mathcal{I}; C^j(\T^3))}$ on some time interval $\mathcal{I}\subset \R$. }
\end{enumerate}
Then, for any $b\in (1,3)$, we can find $\La_0=\La_0(b, {\varrho, p})$ such that for any $\la_0\geq \La_0$, $\cR F$ satisfies the following inequalities: 
\begin{align*}
\norm{\cR F}_{N} \lec \la_{q+1}^{N-1} a_F, \quad
\norm{   D_{t,q+1} \cR F}_{N-1} \lec \la_{q+1}^{N-1}\de_{q+1}^\frac12 a_F\,
\end{align*}
upon setting $   D_{t,q+1} = \pa_t + \frac{m_{q+1}}{\varrho} \cdot \na $.
\end{cor}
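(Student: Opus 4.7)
The plan is to apply the Microlocal Lemma \ref{mic} mode-by-mode to each term $\cR(a_{u,k} e^{i\la_{q+1} k \cdot \xi_u})$, treating $T = \cR$ as a Fourier multiplier whose symbol (smoothly cut off near the origin so as to lie in $\mathcal{S}(\R^3)$) is of order $-1$ in $\eta$. First, by the disjoint temporal-support condition \eqref{dis.amk}, at each $(t,x)$ only $O(1)$ indices $u$ contribute, hence
$\norm{\cR F}_N \lec \sup_u \sum_{k \neq 0} \norm{\cR(a_{u,k} e^{i\la_{q+1} k \cdot \xi_u})}_N$. The principal term in the expansion is $a_{u,k} \, \mathfrak{m}(\la_{q+1} (\na\xi_u)^\top k) \, e^{i\la_{q+1} k\cdot \xi_u}$, of pointwise size $\sim (\la_{q+1}|k|)^{-1} |\dot a_k|$ since $|\na \xi_u| \approx 1$ and $|(\na\xi_u)^{-1}| \approx 1$ uniformly by Lemma \ref{lem:est.flow}. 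Taking $N\leq 2$ spatial derivatives, each hit on $e^{i\la_{q+1} k\cdot \xi_u}$ costs $\la_{q+1}|k|$, while the slowly varying pre-factors lose at most $\mu_q^{-1}$ (per derivative of $a_{u,k}$, by (ii)) or $\ell^{-1}$ (per derivative of $\na\xi_u$); since $\mu_q^{-1}, \ell^{-1} \ll \la_{q+1}$ for $\la_0$ large, the dominant contribution is $\la_{q+1}^{N-1}|k|^{N-1}|\dot a_k|$, and the correctors are suppressed by further factors of $(\la_{q+1}\mu_q)^{-1}$ or $(\la_{q+1}\ell)^{-1}$. Summing over $k$ using the moment bound in (ii) yields the claimed $\la_{q+1}^{N-1} a_F$.

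For the advective derivative I decompose
\[
D_{t,q+1} \cR F = \cR(D_{t,\ell} F) + [D_{t,\ell}, \cR] F + \Bigl(\tfrac{m_{q+1}-m_\ell}{\varrho}\cdot \na\Bigr) \cR F.
\]
Because $D_{t,\ell} \xi_u = 0$, $D_{t,\ell} F = \sum_{u,k} (D_{t,\ell} a_{u,k}) e^{i\la_{q+1} k\cdot \xi_u}$, and the amplitudes $(\la_{q+1}\de_{q+1}^{1/2})^{-1} D_{t,\ell} a_{u,k}$ satisfy the hypotheses of the corollary with the same $\dot a_k$; applying the first part gives $\norm{\cR(D_{t,\ell} F)}_{N-1} \lec \la_{q+1}^{N-1}\de_{q+1}^{1/2} a_F$. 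The commutator $[D_{t,\ell},\cR] = [(m_\ell/\varrho)\cdot\na,\cR]$ is handled by a second application of Lemma \ref{mic}: commuting a smooth vector field past the multiplier of order $-1$ produces an operator of order $-2$, yielding the same gain $\la_{q+1}^{-1}$ at the leading order while losing only $\norm{\na(m_\ell/\varrho)}_0 \lec \la_q\de_q^{1/2} \lec \de_{q+1}^{1/2}\tau_q^{-1}$. The last drift term is bounded by $\norm{(m_{q+1}-m_\ell)/\varrho}_0 \lec \de_q^{1/2}$ (from \eqref{est.v.dif} combined with \eqref{est.w.indM}) times $\norm{\na \cR F}_0$, already controlled by the first part.

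The technical heart is the bound on the remainder $\e_{n_0}(\xi, a_{u,k})$ from \eqref{def.eN}. The Bell polynomial $\be_{n_2}[\xi]$ is a sum of monomials $\prod_p(\la_{q+1}\na^p \xi_u)^{j_p}$ subject to the constraints \eqref{sum.bel}, so its worst scaling in $\la_{q+1}$ is $\la_{q+1}^{n_2/2}$, while the corresponding derivatives of $\xi_u$ contribute at most $\ell^{-(n_2-j_1)}$ via Lemma \ref{lem:est.flow}. The factor $(y\cdot\na)^{n_1} a_{u,k}$ produces $|y|^{n_1}\mu_q^{-n_1}|\dot a_k|$, with $|y|^{n_1}$ absorbed by the Schwartz decay of $\widecheck{\mathfrak{m}}$. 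Collecting powers with $n_1+n_2 = n_0+1$ and dividing by the $\la_{q+1}^{-(n_0+1)}$ coming from the base symbol decay, the remainder is bounded by a fixed polynomial in $\la_{q+1}$ times $\bigl(\mu_q^{-1}\ell^{-1}\la_{q+1}^{1/2}\bigr)^{n_0+1}\la_{q+1}^{-(n_0+1)}$. A direct power count using $\mu_q^{-1}\sim \la_q^{1/2}\la_{q+1}^{1/2}\de_q^{1/4}\de_{q+1}^{-1/4}$ and $\ell^{-1}\sim \la_q^{3/4}\la_{q+1}^{1/4}(\de_q/\de_{q+1})^{3/8}$, together with $b\in(1,\bar b(\al))$ and the choice $n_0 = \ceil{2b(2+\al)/((b-1)(1-\al))}$, provides precisely the margin needed so that for $\la_0$ sufficiently large the remainder is dominated by $\la_{q+1}^{N-1}a_F$ after summation in $k$ via $\sum_k|k|^{n_0+2}|\dot a_k|\lec a_F$. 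I expect this power-counting step—rather than any functional-analytic subtlety—to be the main obstacle.
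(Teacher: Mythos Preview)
Your overall architecture is sound, and the decomposition of $D_{t,q+1}\cR F$ matches the paper's. But there is a genuine gap in the way you invoke Lemma~\ref{mic}: that lemma requires the multiplier $\mathfrak{m}$ to have an extension in $\mathcal{S}(\R^3)$, and the symbol of $\cR$ (of type $\eta_i/|\eta|^2$) does not. Cutting off near the origin removes the singularity but does nothing about the lack of decay at infinity; a symbol behaving like $|\eta|^{-1}$ for large $\eta$ is never Schwartz. Concretely, the remainder formula \eqref{def.eN} involves $\int_{\R^3}\widecheck{\mathfrak{m}}(y)\,|y|^{n}\,(\cdots)\,dy$, and for a genuine order $-1$ symbol the kernel $\widecheck{\mathfrak{m}}$ does not have the rapid decay needed to absorb arbitrary powers of $|y|$. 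So the ``Schwartz decay of $\widecheck{\mathfrak{m}}$'' you appeal to in the last paragraph is simply not available.

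The paper (following \cite{DLK20}) gets around this by applying Lemma~\ref{mic} not to $\cR$ but to the Littlewood--Paley projectors $P_{2^j}$, whose symbols are compactly supported and smooth, hence Schwartz. Summing over $2^j\gtrsim \la_{q+1}$, the main terms reconstruct $\cP_{\gtrsim\la_{q+1}}F$ exactly, and the remainders satisfy $\|\sum_{u,k}\e_{n_0}^{\la_{q+1}}(k\cdot\xi_u,a_{u,k})\|_0\lec (\la_{q+1}\mu_q)^{-(n_0+1)}a_F\lec \la_{q+1}^{-2}a_F$; this is precisely what the choice of $n_0$ is calibrated to give. In particular the low-frequency piece $\cP_{\lec\la_{q+1}}F$ is tiny, and on the high-frequency piece $\cR$ gains $\la_{q+1}^{-1}$ by elementary Bernstein estimates. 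The commutator $[\frac{m_\ell}{\varrho}\cdot\na,\cR]F$ is then handled by splitting $\varrho^{-1}=P_{\le\ell^{-1}}\varrho^{-1}+P_{>\ell^{-1}}\varrho^{-1}$ and using Lemma~\ref{lem:com*} on the frequency-localized part, rather than by a second application of the Microlocal Lemma. A minor point: your drift bound should read $\|(m_{q+1}-m_\ell)/\varrho\|_0\lec\de_{q+1}^{1/2}$, not $\de_q^{1/2}$; the dominant contribution is $n$, of size $\de_{q+1}^{1/2}$, and this is exactly the factor you need in the final estimate.
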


The proof of Corollary \ref{cor.mic2} follows from the same argument in the proof of \cite[Corollary 8.2]{DLK20} with slight revision because the the backward flow map $\xi_u$ has velocity $m_\ell/\varrho$ instead of $v_\ell$ in \cite{DLK20}. The flow map $\xi_u$ and $m_{\ell}/\varrho$, on the other hand, satisfy the same estimates as for the original flow map and its velocity leading to in \cite{DLK20} (see Lemma \ref{lem:est.flow}, \eqref{est.vp}, \eqref{est.v.dif}), while the velocity $m_{\ell}/\varrho$ no longer has frequency localization to $\leq \ell^{-1}$. As a result, the same argument works except for one part in the material derivative estimate which relies on frequency localization. Also, we remark that through careful examination of the proof in \cite[Corollary 8.2]{DLK20} one could see that \eqref{con.a} only for $0\leq j\leq n_0+1$ is needed. For the completeness, we sketch the proof and point out the needed revision below.

\begin{proof}[Sketch of the proof.]
The proof is relying on the decomposition   
\begin{align}\label{dec.mic}
F 
= \cP_{\gtrsim \la_{q+1}} \left(\sum_{u,k} a_{u,k} e^{i\la_{q+1} k\cdot \xi_u} \right) - \sum_{u,k} \e_{n_0}^{\la_{q+1}}(k\cdot \xi_u, a_{u,k}) e^{i\la_{q+1} k\cdot \xi_u},
\end{align}
where $\cP_{\gtrsim \la_{q+1}}$ is defined by
\[
\cP_{\gtrsim \la_{q+1}} = \sum_{2^j \geq \frac 38\la_{q+1}} P_{2^j}
\]
and
\[
\e_{n_0}^{\la_{q+1}} (k\cdot \xi_u, a_{u,k})
 = \sum_{2^j\geq \frac 38\la_{q+1}} \e_{n_0, j} (k\cdot \xi_u, a_{u,k}).
\]
The remainder $ \e_{n_0, j}(\xi,a)$ is obtained by applying Lemma \ref{mic} to $P_{2^j}$ and $n_0= {\ceil{\frac{2b(2+\al)}{(b-1)(1-\al)}}}$. In particular, the remainder part of $F$ has frequency localization 
\begin{align}\label{rem.low}
\cP_{\lec \la_{q+1}} F := F - \cP_{\gtrsim \la_{q+1}}F
=- \sum_{k,u}\e_{n_0}^{\la_{q+1}}(k\cdot \xi_u, a_{u,k}) e^{i\la_{q+1} k\cdot \xi_u}
\end{align}
and satisfies
\begin{align}
\norm{\sum_{u,k}\e_{n_0}^{\la_{q+1}}(k\cdot \xi_u, a_{u,k})}_0 
&\lec_{n_0} (\la_{q+1}\mu_q)^{-(n_0+1)} a_F\lec \la_{q+1}^{-2}a_F, \label{est.eN}
\\
\norm{\sum_{u,k}D_{t,\ell} \e_{n_0}^{\la_{q+1}}(k\cdot \xi_u, a_{u,k})}_0 
&\lec_{n_0}   \la_{q+1}\de_{q+1}^\frac12 (\la_{q+1}\mu_q)^{-(n_0+1)} a_F 
\lec \la_{q+1}\de_{q+1}^\frac12\cdot\la_{q+1}^{-2} a_F. \label{est.DteN}
\end{align}
Using this, one can easily obtain $\norm{\cR F}_{N} \lec \la_{q+1}^{N-1} a_F$. To estimate the material derivative of $\mathcal{R}F$, we use the following decomposition,
\begin{align*}
    D_{t, q+1} \mathcal{R}F
    &= \mathcal{R} D_{t,\ell} F
    + \left[\frac{m_\ell} {\varrho} \cdot \na, \mathcal{R} \right]F
    + \left(\frac{n+ m_q-m_\ell}{\varrho} \right)\cdot \na \mathcal{R}F.
\end{align*}
The first and the last terms on the right hand side can be estimated as in \cite[Corollary 8.2]{DLK20}. To estimate the second term, we further decompose it into 
\begin{align*}
\left[m_\ell P_{\leq \ell^{-1}}\varrho^{-1} \cdot \na, \mathcal{R} \right]F
    + \left[m_\ell P_{> \ell^{-1}}\varrho^{-1} \cdot \na, \mathcal{R} \right]F.   
\end{align*}
Since $m_{\ell} P_{\leq \ell^{-1}}\varrho^{-1} = P_{\lec \ell^{-1}}(m_{\ell} P_{\leq \ell^{-1}}\varrho^{-1})$, we can estimate it as in \cite[Corollary 8.2]{DLK20} (see also the proof of Lemma \ref{lem:com*}.) Therefore, it suffices to estimate the remaining term;
\begin{align*}
    \Norm{\left[m_\ell P_{> \ell^{-1}}\varrho^{-1} \cdot \na, \mathcal{R} \right]F}_{N-1}
    &\lec \sum_{N_1+N_2=N-1}
    \norm{m_\ell P_{> \ell^{-1}}\varrho^{-1}}_{N_1}\norm{\na F}_{N_2}\\
    &\lec \la_{q+1}^N \ell^2 a_F
    \lec \la_{q+1}^{N-1}\de_{q+1}^\frac12 a_F,
\end{align*}
where we used $\norm{m_\ell P_{>\ell^{-1}}\varrho^{-1}}_{N_1}\lec{\ell^{2} \norm{m_\ell}_{0}} \norm{\varrho^{-1}}_{N_1+2}$ and the choice of $b<3$ and sufficiently large $\La_0$.

\end{proof}

\section{Estimates on the Reynolds stress}

In this section, we obtain the relevant estimates for the new Reynolds stress and its new advective derivative $   D_{t,q+1} R_{q+1} = \partial_t R_{q+1} + \frac{m_{q+1}}{\varrho}\cdot \nabla R_{q+1}$, summarized in the following proposition. For technical reasons it is however preferable to estimate rather $R_{q+1} - \frac{2}{3}  \zeta \I$, as indeed the estimates on the function $ \zeta (t)$ are akin to those for the new current, which will be detailed in the next section. For the remaining sections, we set $\norm{\cdot}_N = \norm{\cdot}_{C^0([0,T]+\tau_q; C^N(\T^3))}$.

\begin{prop}\label{p:Reynolds}
{There exists  $\bar{b}(\al)>1$ with the following property. For any $1<b<\bar{b}(\al)$ we can find 
$\Lambda_0 = \Lambda_0 (\al,b, M, {\varrho, p})$} such that the following estimates hold for every  $\lambda_0 \geq \Lambda_0$: 
\begin{equation}\label{est.asR}\begin{split}
\norm{R_{q+1} - {\textstyle{\frac{2}{3}}}  \zeta{/\varrho } \I \,}_N &\lec {C_{\varrho, p, M}}\la_{q+1}^N \cdot
 {\la_q^\frac12}\la_{q+1}^{-\frac12} \de_q^\frac14\de_{q+1}^\frac34 
 \leq \frac12 \la_{q+1}^{N-3\ga} \de_{q+2}, 
\\
\norm{   D_{t,q+1} (R_{q+1} - {\textstyle{\frac{2}{3}}}  \zeta{/\varrho } \I)}_{N-1} &\leq {C_{\varrho, p, M}} \la_{q+1}^N \de_{q+1}^\frac12\cdot
 {\la_q^\frac12}\la_{q+1}^{-\frac12} \de_q^\frac14\de_{q+1}^\frac34  \leq \frac12\la_{q+1}^{N-3\ga}\de_{q+1}^\frac12 \de_{q+2}.
\end{split}\end{equation}
for ant $N=0,1,2$ ( $\norm{\cdot}_{-1}$ is an empty statement), where $C_{\varrho, p, M}$ depends only upon $\varrho$, {$p$} and the $M>1$ of Proposition \ref{ind.hyp} and Proposition \ref{p:ind_technical}.
\end{prop}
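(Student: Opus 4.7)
The plan is to bound each summand in the splitting
\[
R_{q+1} - \tfrac{2}{3}\tfrac{\zeta}{\varrho}\I \;=\; R_T + R_N + R_{O1} + R_{O2} + R_{M1} + R_{M2}
\]
separately, aiming in every case for the intermediate bound
\[
\la_{q+1}^N \la_q^{1/2}\la_{q+1}^{-1/2}\de_q^{1/4}\de_{q+1}^{3/4}
\]
(and the analogous $\de_{q+1}^{1/2}$-rescaled version for $D_{t,q+1}$). The passage from the intermediate bound to the final bound $\frac12 \la_{q+1}^{N-3\ga}\de_{q+2}$ is standard parameter chasing: plugging $\la_{q+1}=\la_q^b$ and $\de_q=\la_q^{-2\al}$ and expanding around $b=1$, the gap has exponent $\tfrac{(\al-1)(b-1)}{2}+O((b-1)^2)$ in $\la_q$, which is strictly negative for $\al<1/7<1$ and $b$ sufficiently close to $1$; the factor $C_{\varrho,p,M}$ is then absorbed by choosing $\la_0$ large.

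The two ``perturbative'' terms $R_{M1}=R_q-R_\ell$ and $R_{M2}=\varrho^{-2}((m_q-m_\ell)\otimes n+n\otimes(m_q-m_\ell))$ are controlled directly from \eqref{est.R.dif}, \eqref{est.v.dif} and the correction estimate \eqref{est.w}; for $R_{O2}$ a product bound using \eqref{est.W}--\eqref{est.Wc} yields a gain $(\la_{q+1}\mu_q)^{-1}\de_{q+1}$, which already lies well below the target. In each case the advective derivative via $D_{t,q+1}=D_{t,\ell}+\frac{n+m_q-m_\ell}{\varrho}\cdot\na$ brings exactly one extra factor of order $\tau_q^{-1}\de_{q+1}^{1/2}\cdot(\la_q\de_q^{1/2})^{-1}\lec \de_{q+1}^{1/2}$ once the flow estimates of Lemma \ref{lem:est.flow} are applied.

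The oscillatory pieces $R_{O1}$, $R_N$, $R_T$ are all handled through Corollary \ref{cor.mic2} after rewriting them in the Fourier form $\sum_{u,k\ne 0}a_{u,k}\,e^{i\la_{q+1}k\cdot\xi_u}$ using \eqref{rep.W}--\eqref{alg.eq} and \eqref{def.w}. The cancellation \eqref{alg.eq} gives
\[
\varrho R_{O1}=\mathcal{R}\left(\div\left(\frac{1}{\varrho}\sum_{u,k\ne 0}\de_{q+1}c_{u,k}\,e^{i\la_{q+1}k\cdot\xi_u}\right)\right),
\]
whose amplitudes are controlled by Lemma \ref{lem:est.coe} and \eqref{est.k}, producing $\norm{R_{O1}}_N\lec \la_{q+1}^{N-1}\de_{q+1}$. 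For $R_N$ the representation \eqref{def.w} makes the amplitudes have size $\de_{q+1}^{1/2}\la_q\de_q^{1/2}$ (via \eqref{est.vp} on $\na(m_\ell/\varrho)$), giving $\norm{R_N}_N\lec \la_{q+1}^{N-1}\de_{q+1}^{1/2}\la_q\de_q^{1/2}$, which is much smaller than the target.

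The genuine obstacle is the transport error $R_T$: this is the term that pinpoints the choice of $\tau_q$ in \eqref{mu.tau}. The crucial algebraic fact is $D_{t,\ell}\xi_u=0$, and therefore $D_{t,\ell}e^{i\la_{q+1}k\cdot\xi_u}=0$, so that using \eqref{def.w}
\[
\varrho D_{t,\ell}\tfrac{n}{\varrho}-\div(m_q-m_\ell)\tfrac{n}{\varrho}
=\sum_{u,k\ne 0}\de_{q+1}^{1/2}\bigl[D_{t,\ell}\bigl(b_{u,k}+(\la_{q+1}\mu_q)^{-1}e_{u,k}\bigr)+\text{lower}\bigr]e^{i\la_{q+1}k\cdot\xi_u},
\]
where the ``lower'' terms arise from $D_{t,\ell}\varrho^{-1}$ and from $(m_q-m_\ell)$ and are controlled by \eqref{est.v.dif}. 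By Lemma \ref{lem:est.coe} the leading amplitudes have size $\tau_q^{-1}\de_{q+1}^{1/2}$, so Corollary \ref{cor.mic2} yields
\[
\norm{R_T}_N\lec C_{\varrho,p,M}\,\la_{q+1}^{N-1}\tau_q^{-1}\de_{q+1}^{1/2}
= C_{\varrho,p,M}\,\la_{q+1}^{N}\,\la_q^{1/2}\la_{q+1}^{-1/2}\de_q^{1/4}\de_{q+1}^{3/4},
\]
using \eqref{mu.tau}, which is exactly the intermediate bound. The advective derivative $D_{t,q+1}R_T$ is treated via the commutator argument already contained in Corollary \ref{cor.mic2}, producing one further factor $\tau_q^{-1}$ on the amplitudes; the identity $\tau_q^{-1}\cdot(\la_q^{1/2}\la_{q+1}^{1/2}\de_q^{1/4}\de_{q+1}^{1/4})^{-1}=C_\varrho M$ shows this cost is exactly compensated by a $\de_{q+1}^{1/2}$-gain, matching the second line of \eqref{est.asR}. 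This is the only place where the threshold value $\tfrac{1}{7}$ enters, through the calibration of $\tau_q$; once $R_T$ closes, the remaining summands have strictly better scaling and the proposition follows.
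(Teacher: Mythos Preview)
Your outline coincides with the paper's: the same splitting, Corollary~\ref{cor.mic2} for $R_T,R_N,R_{O1}$, direct product bounds for $R_{M}$ and $R_{O2}$, and the identification of $R_T$ as the term that saturates the intermediate bound through the choice of $\tau_q$ in \eqref{mu.tau}. One step is genuinely incomplete and one remark is off.

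For $R_{O1}$ you invoke \eqref{alg.eq} to rewrite the argument of $\mathcal R$ as the divergence of a purely high--frequency tensor and then claim $\norm{R_{O1}}_N\lec\la_{q+1}^{N-1}\de_{q+1}$. But when $\div$ hits the phase $e^{i\la_{q+1}k\cdot\xi_u}$ it produces an amplitude of size $\la_{q+1}\de_{q+1}$, which after Corollary~\ref{cor.mic2} yields only $\la_{q+1}^{N}\de_{q+1}$---strictly larger than the target (indeed $\de_{q+1}\not\le\la_{q+1}^{-3\ga}\de_{q+2}$). The missing ingredient is the structural identity $\dot c_{I,k}\,(f_I\cdot k)=0$ from \eqref{coe.van}: since $c_{u,k}$ is a sum of rank--one pieces $\tilde f_I\otimes\tilde f_I$ and $\na\xi_u\,\tilde f_I=f_I$, the contraction $c_{u,k}\,\na\xi_u^{\top}k$ vanishes, so the divergence lands only on the slow factor $c_{u,k}/\varrho$. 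This gives amplitude $\de_{q+1}\mu_q^{-1}$ and hence $\norm{R_{O1}}_N\lec\la_{q+1}^{N}\tfrac{\de_{q+1}}{\la_{q+1}\mu_q}$, which does fit the intermediate bound via \eqref{rel.par}.

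Your final parameter check is also miscomputed: expanding the ratio of $\la_q^{1/2}\la_{q+1}^{-1/2}\de_q^{1/4}\de_{q+1}^{3/4}$ to $\la_{q+1}^{-3\ga}\de_{q+2}$ about $b=1$ gives linear term $\tfrac{5\al-1}{2}(b-1)$, not $\tfrac{\al-1}{2}(b-1)$, so Proposition~\ref{p:Reynolds} only needs $\al<1/5$. The threshold $\al<1/7$ is forced by the \emph{current} errors in Proposition~\ref{p:current}, not here.
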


Taking into account \eqref{e:splitting_of_Reynolds}, we will just estimate the separate terms $   R_T$, $   R_N$, $   R_{O1}$, $   R_{O2}$ and $   R_M$.
For the errors $ R_{O2}$ and $ R_M$, we use a direct estimate, while the other errors, including the inverse divergence operator, are estimated by Corollary \ref{cor.mic2}. In the following subsections, we fix $n_0 = \ceil{\frac{2b(2+\al)}{(b-1)(1-\al)}}$ so that $\la_{q+1}^2({\la_{q+1}\mu_q})^{-(n_0+1)}\lec \de_{q+1}^\frac12$ for any $q$ {and allow the dependence on $M$ of the implicit constants in $\lec$.} Also, we remark that 
\begin{align}\label{rel.par}
\frac {1}{\la_{q+1}\tau_q} +\frac{\de_{q+1}^\frac12}{\la_{q+1}\mu_q} \lec_M  {\la_q^\frac12}{\la_{q+1}^{-\frac12}} \de_q^\frac14\de_{q+1}^\frac14.
\end{align}
We note that it is enough to estimate $\varrho R{-\frac23 \zeta }$ and its advective derivative for the various Reynolds error terms. For the convenience, we restrict the range of $N$ as in \eqref{est.asR} in this section, without mentioning it further.

\subsection{Transport stress error}
Recall that
\[
 \varrho R_{T} = \mathcal{R} \left( \varrho D_{t,\ell} \frac{n}{ \varrho} - \div(m_q-m_\ell)\frac{n}{ \varrho}\right)
\]
Since $\textstyle{\varrho D_{t,\ell} ({n}/{ \varrho}) - \div(m_q-m_\ell)({n}/{ \varrho}) = \pa_t n + \div \left({(n\otimes m_\ell)}/{\varrho}\right)}$, we see that it has zero mean and thus we can apply the inverse divergence operator. Now as $D_{t,\ell} \xi_I =0$, we have  
\begin{align*}
\varrho D_{t,\ell} (n/ \varrho) 
&= \varrho D_{t,\ell} \left( \frac1\varrho \sum_{u\in\Z} \sum_{k\in \Z^3\setminus \{0\}} \de_{q+1}^\frac 12(b_{u,k} + (\la_{q+1}\mu_q)^{-1}e_{u,k}) e^{i\la_{q+1} k\cdot \xi_I}\right)\\
&=\sum_{u} \sum_{k} \de_{q+1}^\frac 12 \varrho D_{t,\ell} \left(\frac{b_{u,k}}{\varrho} + (\la_{q+1}\mu_q)^{-1}\frac{e_{u,k}}{\varrho}\right) e^{i\la_{q+1} k\cdot \xi_I}.
\end{align*}
Since $b_{u,k}$ and $e_{u,k}$ satisfy
$\supp(b_{u,k}), \supp(e_{u,k})\subset (t_u-\frac 12\tau_q, t_u + \frac 32\tau_q) \times \R^3$
and 
\begin{align*}
&\Norm{D_{t,\ell} (\frac{b_{u,k}}{\varrho} + (\la_{q+1}\mu_q)^{-1}\frac{e_{u,k}}{\varrho})}_{\bar{N}} + (\la_{q+1}\de_{q+1}^\frac 12)^{-1}\Norm{D_{t,\ell}^2 (\frac{b_{u,k}}{\varrho} + (\la_{q+1}\mu_q)^{-1}\frac{e_{u,k}}{\varrho})}_{\bar{N}}\\
&\qquad\qquad \lec_{\bar{N},M} \mu_q^{-\bar{N}} \frac {|\dot{b}_{I,k}|}{\tau_q},
\end{align*}
for any $\bar{N}\geq 0$ by \eqref{est.b} and \eqref{est.e}. We now have
\begin{align*}
\div(m_q-m_\ell)\frac{n}{ \varrho} = \sum_{u\in\Z} \sum_{k\in \Z^3\setminus \{0\}} \de_{q+1}^\frac 12 \frac{\div(m_q-m_\ell)}{\varrho} (b_{u,k} + (\la_{q+1}\mu_q)^{-1}e_{u,k}) e^{i\la_{q+1} k\cdot \xi_I}
\end{align*}
Since $\div(m_q-m_\ell)  = -\pa_t \varrho + (\pa_t \varrho)_\ell$, we can estimate it similarly as {
\begin{align*}
&\norm{\varrho^{-1}\div(m_q-m_\ell) (b_{u,k} + (\la_{q+1}\mu_q)^{-1}e_{u,k})}_{\bar N}   
\\
&+(\la_{q+1}\de_{q+1}^\frac12)^{-1}\norm{D_{t,\ell }(\varrho^{-1}\div(m_q-m_\ell) (b_{u,k} + (\la_{q+1}\mu_q)^{-1}e_{u,k}))
}_{\bar N}
\lec_{\varrho, p, M, \bar{N}} \mu_q^{-\bar N}\frac{|\dot{b}_{I,k}|}{\tau_q}. 
\end{align*}
}
We can thus apply Corollary \ref{cor.mic2} to get
\begin{align}\label{est.RT}
\norm{ R_T}_N
\lec \la_{q+1}^N\frac {\de_{q+1}^\frac 12}{\la_{q+1} \tau_q} , \quad
\norm{   D_{t,q+1}  R_T}_{N-1}  \lec \la_{q+1}^N\de_{q+1}^\frac12 \frac {\de_{q+1}^\frac 12}{\la_{q+1} \tau_q}.
\end{align}
 
 \subsection{Nash stress error}
Recall $ \varrho R_{N} = \mathcal{R} \left( (n\cdot \na) \frac{m_\ell}{ \varrho}\right)$ and observe that
\begin{align*}
(n\cdot \na) \frac{m_\ell}{ \varrho}
&=  \sum_{u} \sum_{k\in \Z^3\setminus \{0\}} \de_{q+1}^\frac 12((b_{u,k} + (\la_{q+1}\mu_q)^{-1}e_{u,k})\cdot \na)\frac{m_\ell}{ \varrho} e^{i\la_{q+1} k\cdot \xi_I}.
\end{align*}
Since $b_{u,k}$ and $e_{u,k}$ satisfy
$\supp(b_{u,k}), \supp(e_{u,k})\subset (t_u-\frac 12\tau_q, t_u + \frac 32\tau_q) \times \R^3$ and
\begin{align*}
\norm{(b_{u,k} + (\la_{q+1}\mu_q)^{-1}e_{u,k})\cdot \na)\frac{m_\ell}{ \varrho}}_{\bar{N}}
&\lec_{\bar N} \mu_q^{-{\bar{N}}} |\dot{b}_{I,k}|\la_q\de_{q}^\frac12\\
\norm{D_{t,\ell}[(b_{u,k} + (\la_{q+1}\mu_q)^{-1}e_{u,k})\cdot \na)\frac{m_\ell}{ \varrho}]}_{\bar{N}}
&\lec_{\bar N} \la_{q+1}\de_{q+1}^\frac12 \mu_q^{-{\bar{N}}} |\dot{b}_{I,k}|\la_q\de_{q}^\frac12
\end{align*}
for any ${\bar{N}}\geq0$ by \eqref{est.vp}, \eqref{est.Dtvl}, \eqref{est.b}, and \eqref{est.e}, we apply Corollary \ref{cor.mic2} and obtain
\begin{align}\label{est.RN}
\norm{ R_N}_N 
\lec \la_{q+1}^N \frac {\de_{q+1}^\frac 12}{\la_{q+1}\tau_q} , \quad 
\norm{   D_{t,q+1} R_N}_{N-1}  \lec \la_{q+1}^N\de_{q+1}^\frac12 \frac {\de_{q+1}^\frac 12}{\la_{q+1}\tau_q}.
\end{align}

\subsection{Oscillation stress error}
Recall that $ R_O =  R_{O1} +  R_{O2}$ where
\begin{align*}
 \varrho R_{O1} &= \mathcal{R} \left(\div\left(\frac{n_o\otimes n_o}{ \varrho} +  \varrho R_\ell - \de_{q+1}  \varrho \I\right) \right)\\
 \varrho R_{O2} &=  \frac{n_o\otimes n_c}{ \varrho}+\frac{n_c\otimes n_o}{ \varrho}+\frac{n_c\otimes n_c}{ \varrho}.
\end{align*}
We compute
\begin{align*}
\div \left( \frac{n_o\otimes n_o}{ \varrho} +  \varrho R_\ell - \de_{q+1}  \varrho \I\right)
&= \div \left( \sum_{u\in \Z,\, k\in \Z^3\setminus \{0\}} \de_{q+1} \frac{c_{u,k}}{\varrho} e^{ i\la_{q+1} k\cdot \xi_I} \right)\\
&=\sum_{u,k} \de_{q+1} \div\left(\frac{c_{u,k}}{\varrho}\right) e^{ i\la_{q+1} k\cdot \xi_I},
\end{align*}
because of $\dot{c}_{I,k} (f_I\cdot k) =0$. Also, since we have
\begin{align*}
D_{t,\ell} \div \frac{c_{u,k}}{\varrho} = \div\left( D_{t,\ell} \frac{c_{u,k}}{\varrho}\right) 
- \left(\na \frac{m_\ell}{\varrho}\right)_{ij}\left(\na \frac{c_{u,k}}{\varrho}\right)_{ji},
\end{align*}
it follows from \eqref{est.c} that $\norm{\div \frac{c_{u,k}}{\varrho}}_{\bar{N}}+(\la_{q+1}\de_{q+1}^\frac12)^{-1}\norm{D_{t,\ell} \div \frac{c_{u,k}}{\varrho}}_{\bar{N}} \lec_{\bar{N},M}  \mu_q^{-\bar{N}} \frac{|\dot{c}_{I,k}|}{\mu_q}$ for any $\bar{N}\geq 0$. Finally using $\supp(c_{u,k})\subset (t_u-\frac12\tau_q, t_u +\frac32\tau_q)\times \R^3$, we apply Corollary \ref{cor.mic2} to get
\begin{equation}\label{est.RO1}
\begin{split}
\norm{\varrho R_{O1}}_N
&\lec \la_{q+1}^N\cdot \frac {\de_{q+1}}{\la_{q+1}\mu_q}, \quad
\norm{  D_{t,q+1} \varrho  R_{O1}}_{N-1}\lec\la_{q+1}^N\de_{q+1}^\frac12\cdot \frac {\de_{q+1}}{\la_{q+1}\mu_q} . 
\end{split}
\end{equation}
On the other hand, we use \eqref{est.W}, \eqref{est.Wc}, \eqref{est.w}, and \eqref{est.v.dif} to estimate $ R_{O2}$ as follows,
\begin{align*}
\norm{ R_{O2}}_N
&\lec  \sum_{N_0+N_1+N_2=N} \norm{\varrho^{-1}}_{N_0}\norm{n_o}_{N_1}\norm{n_c}_{N_2} + \sum_{N_0+N_1+N_2=N}\norm{\varrho^{-1}}_{N_0}\norm{n_c}_{N_1}\norm{n_c}_{N_2}\\
&\lec \la_{q+1}^N\cdot\frac{\de_{q+1}}{\la_{q+1}\mu_q} ,\\
\norm{   D_{t,q+1} R_{O2}}_{N-1}
&\leq \norm{D_{t,\ell}  R_{O2}}_{N-1} + \Norm{\frac{(n+m_q-m_\ell)}{\varrho}\cdot\na  R_{O2}}_{N-1}\\
&\lec \norm{\varrho^{-1}}_{N} \sum_{N_1+N_2=N-1} 
(\norm{D_{t,\ell} n_o}_{N_1}\norm{n_c}_{N_2} 
+ \norm{ n_o}_{N_1}\norm{D_{t,\ell} n_c}_{N_2}
+\norm{D_{t,\ell}n_c}_{N_1}\norm{n_c}_{N_2})\\
&\quad+ \norm{\varrho^{-1}}_{N}\sum_{N_1+N_2=N-1} 
(\norm{n}_{N_1} + \norm{m_q-m_\ell}_{N_1})\norm{   R_{O2}}_{N_2+1}
\lec \la_{q+1}^{N}\de_{q+1}^\frac12 \cdot\frac{ \de_{q+1}}{\la_{q+1}\mu_q}.
\end{align*}
Therefore, we have 
\begin{equation}\label{est.RO}
\norm{ R_O}_N \lec \la_{q+1}^N \frac{\de_{q+1}}{\la_{q+1}\mu_q}, \quad
\norm{   D_{t,q+1}  R_O}_{N-1}   
\lec \la_{q+1}\de_{q+1}^{\frac12}
\cdot \la_{q+1}^N \frac{\de_{q+1}}{\la_{q+1}\mu_q}.
\end{equation} 
\subsection{Mediation stress error}
Recall that 
\[
 \varrho R_M =
\varrho(R_q-R_\ell)+\frac{(m_q-m_\ell)\otimes n}{ \varrho} + \frac{n\otimes (m_q-m_\ell)}{ \varrho}.
\]
Using \eqref{est.R.dif}, \eqref{est.v.dif}, and \eqref{est.w}, we have
\begin{align*}
\norm{ R_M}_{N} 
&\lec \norm{\varrho}_{N}\norm{R_q-R_\ell}_N+\sum_{N_0+N_1+N_2=N}\norm{\varrho^{-2}}_{N_0}\norm{m_q-m_\ell}_{N_1}\norm{n}_{N_2}\\
&\lec \la_{q+1}^N \cdot ( {\la_q^\frac12}\la_{q+1}^{-\frac12} \de_q^\frac14\de_{q+1}^\frac34  + (\ell\la_q)^2\de_q^\frac12\de_{q+1}^\frac12)
\lec \la_{q+1}^N \cdot  {\la_q^\frac12}\la_{q+1}^{-\frac12} \de_q^\frac14\de_{q+1}^\frac34.
\end{align*}
To estimate $   D_{t,q+1}   R_M$, we use the
decomposition $D_{t,q+1}   R_M  = D_{t,\ell}  R_M + {\left(\frac{m_q-m_\ell+n}{\varrho}\right)}\cdot\na  R_M $ additionally to obtain
\begin{align*}
\norm{   D_{t,q+1} (\varrho R_M)}_{N-1}
&\lec \norm{  { D_{t, \ell}(\varrho} (R_q-R_\ell))}_{N-1} 
+  \Norm{    D_{t, \ell}\left( \frac{(m_q-m_\ell)\otimes n}{\varrho}\right)}_{N-1}  \\
&\hspace{1cm}+\Norm{\left(\frac{m_q-m_\ell+n}{\varrho}\right)\cdot\na (\varrho R_M)}_{N-1}
\lec \la_{q+1}^N \de_{q+1}^\frac12\cdot  {\la_q^\frac12}\la_{q+1}^{-\frac12} \de_q^\frac14\de_{q+1}^\frac34.
\end{align*} 
To summarize, we obtain 
\begin{align}\label{est.RM}
\norm{ R_M}_{N}
\lec  \la_{q+1}^N  {\la_q^\frac12}\la_{q+1}^{-\frac12} \de_q^\frac14\de_{q+1}^\frac34, \quad
\norm{   D_{t,q+1}  R_M}_{N-1}
\lec \la_{q+1}^N \de_{q+1}^\frac12  {\la_q^\frac12}\la_{q+1}^{-\frac12} \de_q^\frac14\de_{q+1}^\frac34.
\end{align} 
 \
 
{Finally, Proposition \ref{p:Reynolds} follows from
\eqref{est.RT}-\eqref{est.RM} and \eqref{rel.par}.}

\section{Estimates for the new current}\label{sec.cur} 

In this section, we obtain the last needed estimates, on the new unsolved current $\ph_{q+1}$ and on the remaining part of the Reynolds stress $\frac{2}{3}  \zeta{/\varrho }\I$, which we
summarize in the following proposition.

\begin{prop}\label{p:current}
{There exists  $\bar{b}(\al)>1$ with the following property. For any $1<b<\bar{b}(\al)$ there is 
$\Lambda_0 = \Lambda_0 (\al,b,M, \varrho, p)$ such that} the following estimates hold for $\lambda_0 \geq \Lambda_0 $:
\begin{align}\begin{split}
\label{est.asph}
\norm{\ph_{q+1}}_N
&\leq \la_{q+1}^{N-3\ga} \de_{q+2}^\frac32 , \qquad\quad \forall N=0,1,2,\\
\norm{   D_{t,q+1} \ph_{q+1}}_{N-1} 
&\leq \la_{q+1}^{N-3\ga}\de_{q+1}^\frac12 \de_{q+2}^\frac32, \quad \forall N=1,2\, ,
\end{split}
\\
\norm{ \zeta}_0 + \norm{\zeta'}_0&\leq {\frac{\e_0^2}{20\underline{M}}}   \la_{q+1}^{-3\ga} \de_{q+2}^\frac32
\label{est.zeta}
\end{align}
{for $\underline M$ defined as in \eqref{est.vp}}.
\end{prop}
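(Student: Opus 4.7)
The plan is to estimate separately every term in the decomposition
\[
\ph_{q+1}=\ph_{T1}+\ph_{T2}+\ph_{O1}+\ph_{O2}+\ph_R+\ph_{H1}+\ph_{H2}+\ph_{M1}+\ph_{M2}+\ph_{M3}+\ph_{M4},
\]
together with the five scalars $ \zeta_0,\ldots, \zeta_4$ introduced in Section \ref{sec:new.current}, following the template already used for $R_{q+1}$ in Proposition \ref{p:Reynolds}. The guiding principle is that every term is either \emph{(a)} an inverse divergence of an oscillatory sum of the form $\sum_{u,k}a_{u,k}e^{i\la_{q+1}k\cdot\xi_u}$, to which Corollary \ref{cor.mic2} applies, or \emph{(b)} a pointwise product that can be estimated directly from Proposition \ref{p:velocity_correction_estimates}, Proposition \ref{p:Reynolds} and the mollification inequalities \eqref{est.v.dif}--\eqref{est.ph.dif}.

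In case \emph{(a)} the coefficients $a_{u,k}$ are built out of $b_{u,k}$, $c_{u,k}$, $d_{u,k}$, $e_{u,k}$ from Lemma \ref{lem:est.coe} multiplied by smooth factors such as $1/\varrho$, $\ka_\ell$, $\na(m_\ell/\varrho)$, $R_q-R_\ell$, $\ph_q-\ph_\ell$, $p(\varrho)-p_\ell(\varrho)$ or $Q(m_q,m_q)$; each such product is controlled by the Leibniz rule together with the bounds collected in Section \ref{s:estimates_mollification} and Lemma \ref{lem:est.Qvv}. The key representative is $\ph_{O1}$: the cancellation identity \eqref{e:rep3} allows one to rewrite $\tfrac{|n_o|^2 n_o}{2\varrho^2}+\varrho\ph_\ell$ as $\tfrac{1}{2\varrho^2}\sum_{u,k\neq 0}\de_{q+1}^{\frac32}d_{u,k}\,e^{i\la_{q+1}k\cdot\xi_u}$, and Corollary \ref{cor.mic2} yields a bound of order $\la_{q+1}^{N}\de_{q+1}^{\frac32}/(\la_{q+1}\mu_q)$. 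Crucially, the orthogonality $\dot d_{I,k}(f_I\cdot k)=0$ from \eqref{coe.van} kills the potentially dangerous $O(\la_{q+1})$ cross term that appears when the divergence hits the fast phase; the same cancellation mechanism is in force for $\ph_{T2}$, $\ph_{H1}$, $\ph_{H2}$, $\ph_{M3}$, $\ph_{M4}$. The pieces $\ph_{T1}$, $\ph_R$, $\ph_{O2}$, $\ph_{M1}$, $\ph_{M2}$ are handled in case \emph{(b)}, using the sharper bound $\norm{R_{q+1}-\tfrac{2}{3}(\zeta/\varrho)\I}_0\lec\la_q^{\frac12}\la_{q+1}^{-\frac12}\de_q^{\frac14}\de_{q+1}^{\frac34}$ supplied by Proposition \ref{p:Reynolds} (rather than the coarser second inequality stated there). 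Each $ \zeta_i$ is treated by the same microlocal philosophy: the integrand is the spatial average of an oscillatory sum, and the non-stationary phase gain $(\la_{q+1}\mu_q)^{-(n_0+1)}$ built into Corollary \ref{cor.mic2} with our choice $n_0=\lceil\tfrac{2b(2+\al)}{(b-1)(1-\al)}\rceil$ beats $\la_{q+1}^{-3\ga}\de_{q+2}^{\frac32}$ by a huge margin; the bound on $ \zeta_i'$ is obtained by estimating the integrand before integration.

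The main obstacle is the precise parameter bookkeeping, which is exactly where the threshold $\al<1/7$ appears. The most delicate terms are $\ph_R=-R_{q+1}n/\varrho$ and $\ph_{T1}$, for which the sharp naive bound is of order $\la_q^{\frac12}\la_{q+1}^{-\frac12}\de_q^{\frac14}\de_{q+1}^{\frac54}$; writing $b=1+\ep$, $\ga=(b-1)^2$ and comparing with $\la_{q+1}^{-3\ga}\de_{q+2}^{\frac32}$ reduces the required inequality to $(7\al-1)\ep/2+O(\ep^2)\le 0$, which holds precisely when $\al<1/7$ and $\ep=b-1$ is chosen small enough in terms of $\al$. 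A secondary technical difficulty is the control of the advective derivative of the pieces that go through the inverse divergence, since $\mathcal{R}$ does not commute with $   D_{t,q+1}=\pa_t+\tfrac{m_{q+1}}{\varrho}\cdot\na$; one has to use the splitting $   D_{t,q+1}\mathcal{R}=\mathcal{R}D_{t,\ell}+[\tfrac{m_\ell}{\varrho}\cdot\na,\mathcal{R}]+\tfrac{n+m_q-m_\ell}{\varrho}\cdot\na\mathcal{R}$ employed in the proof of Corollary \ref{cor.mic2}, with the additional caveat that $m_\ell/\varrho$ is only approximately frequency-localized (because $\varrho$ is merely smooth), which forces the further decomposition $m_\ell\varrho^{-1}=m_\ell P_{\le\ell^{-1}}\varrho^{-1}+m_\ell P_{>\ell^{-1}}\varrho^{-1}$ inside the commutator, exactly as in the corresponding estimate for $R_T$.
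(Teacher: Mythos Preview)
Your plan is correct and matches the paper's approach almost exactly, including the two-case split, the use of Corollary \ref{cor.mic2} for the oscillatory pieces, the direct product bounds for $\ph_{T1},\ph_{O2},\ph_R,\ph_{M1},\ph_{M2}$, the identification of $\la_q^{1/2}\la_{q+1}^{-1/2}\de_q^{1/4}\de_{q+1}^{5/4}$ as the critical size, and the commutator handling of $D_{t,q+1}\mathcal{R}$ with the extra $P_{\le\ell^{-1}}\varrho^{-1}$ versus $P_{>\ell^{-1}}\varrho^{-1}$ split.

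One point is understated in your sketch: $\ph_{H2}$ (and the corresponding piece $\td\zeta_{32}'$ of $\zeta_3'$) is \emph{not} directly of the form (a). After using \eqref{rep:low.freq.app} you face terms $\mathcal{R}\big(\varrho R_\tri:\na(m_\ell/\varrho)\big)$ with $R_\tri\in\{R_{O1},R_T,R_N\}$, where $R_\tri=\mathcal{R}G_\tri$ is \emph{already} an inverse divergence of an oscillatory sum; plugging $R_\tri$ into Corollary \ref{cor.mic2} as a coefficient would only recover $\la_{q+1}^{-1}$, not the $\la_{q+1}^{-2}$ you need. The paper resolves this by the further splitting $R_\tri=\mathcal{R}\cP_{\gtrsim\la_{q+1}}G_\tri+\mathcal{R}\cP_{\lec\la_{q+1}}G_\tri$ together with $\na(m_\ell/\varrho)=\na(m_\ell/\varrho)_1+\na(m_\ell/\varrho)_2$: the high–low product stays at frequency $\gtrsim\la_{q+1}$ so the outer $\mathcal{R}$ gains a second $\la_{q+1}^{-1}$, while the low part uses the remainder bound \eqref{est.eN}. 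The same device is needed for $\zeta_3$, where the high–low pairing has zero mean and the low part is controlled by \eqref{est.DtGtr}. This is the only substantive step your outline does not make explicit; everything else you describe is what the paper does.
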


Without mentioning, we assume that $N$ is in the range above and allow the dependence on $M$ of the implicit constants in $\lec$ in this section.
For convenience, we single out the following fact, which will be repeatedly used: note that there exists $\bar{b}(\al)>1$ such that for any $1<b<\bar{b}(\al)$ and a constant ${\td C_{M, \varrho, p}}$ depending only on $\varrho$, ${p}$, and $M$, we can find $\La_0=\La_0(\al, b,M,\varrho,p)$ which gives
\[
{\td C_{M, \varrho, p}}\left[ \frac {\de_{q+1}}{\la_{q+1}\tau_q} +\frac{\de_{q+1}^\frac32}{\la_{q+1}\mu_q} + \frac {\la_q^\frac12}{\la_{q+1}^\frac12} \de_q^\frac14\de_{q+1}^\frac54\right]
\leq  \la_{q+1}^{-3\ga} \de_{q+2}^\frac32,
\] 
for any $\lambda_0 \geq \La_0$. {This is possible because $\al<\frac17$.}

\subsection{High frequency current error}
We start by observing that $  \ph_{H1}$ is
 \begin{equation}\label{e:formula_H1}
   \varrho\ph_{H1} = \mathcal{R} \left(\frac n{\varrho}\cdot (\div P_{\le \ell^{-1}}(\varrho(R_q-c_q \I)) + Q(m_q,m_q))
   \right)
 \end{equation}
 by \eqref{e:average-free}. We thus can apply Corollary \ref{cor.mic2} to
\begin{align*}
\frac n{\varrho}\cdot & (\div P_{\le \ell^{-1}}(\varrho(R_q-c_q \I)) + Q(m_q,m_q))
\\
&= \sum_{\substack{u,k\\k \neq 0}} \frac1\varrho (\div P_{\le \ell^{-1}}(\varrho(R_q-c_q \I)) + Q(m_q,m_q)) \de_{q+1}^\frac12(b_{u,k} + (\la_{q+1}\mu_q)^{-1}e_{u,k})e^{i\la_{q+1}k\cdot \xi_I}.
\end{align*}
Indeed, using \eqref{est.R}, \eqref{est.Qvv}, \eqref{est.b}, \eqref{est.e}, we obtain
\begin{align*}
&\norm{  \ph_{H1} }_N
\lec \la_{q+1}^{N-1} \de_{q+1}^\frac12 (\la_q^{1-3\ga}\de_{q+1} + (\ell \la_q) \la_q\de_q) 
\lec \la_{q+1}^N \frac{\la_q^{1-3\ga}}{\la_{q+1}} \de_{q+1}^\frac32.
\end{align*}
Furthermore, \eqref{est.vp}, \eqref{est.R}, \eqref{est.v.dif}, \eqref{est.com1}, and {\eqref{est.com3}} imply
\begin{align*}
&\norm{D_{t,\ell} \div P_{\le \ell^{-1}} (\varrho R_q)}_{N-1}\\
&\leq \norm{\div P_{\le \ell^{-1}}  (D_{t,\ell} \varrho R_q)}_{N-1}
+ \norm{\div [m_\ell P_{\leq \ell^{-1}}\varrho^{-1} \cdot\na, P_{\le \ell^{-1}}] \varrho R_q}_{N-1}\\
&\quad+ \norm{\div [m_\ell P_{> \ell^{-1}}\varrho^{-1} \cdot\na, P_{\le \ell^{-1}}] \varrho R_q}_{N-1} + \norm{(\na (m_\ell/\varrho))_{ki} \pa_k P_{\le \ell^{-1}} (\varrho R_q)_{ij}}_{N-1}\\
&\lec \la_{q+1}^{N-1} (\norm{D_{t,\ell} (\varrho R_q)}_{1}
+ \norm{ [m_\ell P_{\leq \ell^{-1}}\varrho^{-1} \cdot\na, P_{\le \ell^{-1}}] \varrho R_q}_{1}) \\
&\quad + \norm{ [m_\ell P_{> \ell^{-1}}\varrho^{-1} \cdot\na, P_{\le \ell^{-1}}] \varrho R_q}_{N}
+\sum_{N_1+N_2=N-1}\norm{m_\ell}_{N_1{+1}} \ell^{-N_2-1}\norm{\varrho R_q}_0
\\
&\lec \la_{q+1}^{N}\de_{q+1}^\frac12 \la_q^{1-3\ga} \de_{q+1}.
\end{align*}
In the last inequality, we used
\begin{align*}
\norm{m_\ell P_{>\ell^{-1}}\varrho^{-1}}_{N'}
&\lec 
\norm{m_\ell}_{0}\norm{P_{>\ell}\varrho^{-1}}_{N'}
+\sum_{\substack{N_1+ N_2= N'\\N_1\geq 1}} \norm{m_\ell}_{N_1}\norm{P_{>\ell}\varrho^{-1}}_{N_2}\\
&\lec 1 + \sum_{\substack{N_1+ N_2= N'\\N_1\geq 1}} \de_q^\frac12  \norm{\na^{N_1}\varrho^{-1}}_{N_2} 
{\lec 1}
\end{align*}
for $N'\in [1, n_0+1]$. 
In a similar way, we also get $\norm{D_{t,\ell} \div P_{\le \ell^{-1}} (\varrho c_q \I)}_{N-1}{ \lec \la_{q+1}^N \de_{q+1}^\frac12 \la_q^{1-3\ga}\de_{q+1}}$. Then, {using \eqref{est.Qvv}}, it follows that
\begin{align*}
\norm{   D_{t,q+1} \ph_{H1}}_{N-1}
\lec\la_{q+1}^N\de_{q+1}^\frac12\cdot \frac{\la_q^{1-3\ga}}{\la_{q+1}} \de_{q+1}^\frac32.
\end{align*}
In order to deal with $  \ph_{H2}$, we use the definition of $R_{q+1}$ to get
\begin{equation}\begin{split}
\label{rep:low.freq.app}
\frac{n\otimes n}{\varrho}&-\de_{q+1}\varrho \I+ \varrho R_q - \varrho R_{q+1} + {\textstyle{\frac{2}{3}}} \zeta \I\\
&=
\left(\frac{n_o\otimes n_o}{\varrho}-\de_{q+1}\varrho \I+\varrho R_\ell\right )-  \varrho R_{O1}  - \varrho R_T -\varrho R_N-\varrho R_{M2} \, .
\end{split}\end{equation}
Thus, we can write
\begin{align}
  \varrho \ph_{H2} &=\mathcal{R} \left(\left(\frac{n\otimes n}{\varrho}-\de_{q+1}\varrho\I+\varrho R_q-\varrho R_{q+1}+\frac23\zeta \I\right) : \na \frac{m_\ell}{\varrho}
  \right) \nonumber\\ 
  &\quad + \mathcal{R} \left(\left(\frac{(m_q-m_\ell)\otimes n}{\varrho}+ \frac{n\otimes (m_q-m_\ell)}{\varrho} \right): \na \frac{m_\ell}{\varrho} 
  \right) -  \frac{2m_\ell\zeta}{3\varrho} \label{e:formula_H2} \\ 
&= \idv{\left(\left(\frac{n_o\otimes n_o}{\varrho}-\de_{q+1}\varrho \I+\varrho R_\ell \right)-  \varrho R_{O1}  -\varrho R_T -\varrho R_N\right):\na \frac{m_\ell}{\varrho}}-  \frac{2m_\ell\zeta}{3\varrho}
.\nonumber
\end{align}
To estimate the term with $\zeta$ first, assuming {\eqref{est.zeta}}, we have
\begin{align*}
    \norm{(m_\ell \zeta)/\varrho^2}_0
    &\leq \norm{\zeta}_N \norm{m_\ell/\varrho^2}_N
    \leq \frac 1{10}\la_{q+1}^{N-3\ga} \de_{q+2}^\frac32   
\end{align*}
for $N=0,1,2$ and
\begin{align*}
    \norm{D_{t,q+1} ((m_\ell \zeta)/\varrho^2) }_{N-1}
    &\leq (\norm{(D_{t,q+1} m_\ell) \varrho^{-2}}_{N-1}
    +\norm{m_\ell D_{t,q+1} \varrho^{-2}}_{N-1} )\norm{\zeta}_0
    +\norm{\zeta'}_0\norm{m_\ell/\varrho^2}_{N-1}\\
    &\leq \frac 1{10} \la_{q+1}^{N-3\ga}\de_{q+1}^\frac12 \de_{q+2}^\frac32
\end{align*}
for $N=1,2$. In the last inequality, we used {
\begin{align}\label{nmtdv.ml}
\norm{D_{t,q+1} m_\ell}_{N-1}
\lec {\la_{q+1}^{N-1}\de_{q+1}^\frac12\la_q\de_q^\frac12 + }(\la_q^N + \ell^{-N}(\ell\la_q)^3)\de_q,    
\end{align}
obtained from \eqref{adv.ml0}, \eqref{adv.mlN}, \eqref{diff.m}, and {\eqref{est.w.indM}}} and
\begin{equation}\label{est.mtdv.rho}
    \begin{split}
      &\norm{D_{t,q+1}\varrho}_0 
    \lec  \norm{\pa_t \varrho}_0 + \norm{m_{q+1}/\varrho}_0 \norm{\na\varrho}_0
    \lec 1\\
    &\norm{D_{t,q+1}\varrho}_N
    \lec  \norm{\pa_t \varrho}_N + \sum_{N_1+N_2=N}\norm{m_{q+1}/\varrho}_{N_1} \norm{\na\varrho}_{N_2}
    \lec \la_{q+1}^{N}\de_{q+1}^\frac12, \hspace{1cm} \forall N=1,2.  
    \end{split}
\end{equation}
Apply, on the other hand, Corollary \ref{cor.mic2} with \eqref{alg.eq}, \eqref{est.c}, and \eqref{est.vp}, we have
\begin{align*}
\Norm{\idv{\left(\frac{n_o\otimes n_o}{\varrho}-\de_{q+1}\varrho \I+\varrho R_\ell \right):\na \frac{m_\ell}{\varrho}}}_N
&\lec \la_{q+1}^N \la_q \de_q^\frac12 \frac {\de_{q+1}}{\la_{q+1}},\\
\Norm{   D_{t,q+1} \idv{\left(\frac{n_o\otimes n_o}{\varrho}-\de_{q+1}\varrho \I+\varrho R_\ell \right):\na \frac{m_\ell}{\varrho}}}_{N-1}
& \lec  \la_{q+1}^N\de_{q+1}^\frac12 \la_q \de_q^\frac12 \frac {\de_{q+1}}{\la_{q+1}}.
\end{align*}

To estimate the remaining term, by \eqref{dec.mic} and \eqref{rem.low}, recall that the Reynolds stress errors $ R_\tri$, which represents either $ R_{O1}$, $ R_T$, or $ R_N$, can be written as $ R_\tri  = \cR G_\tri$ satisfying 
\begin{align}\label{est.Gtr} 
\norm{G_\tri}_N \lec \la_{q+1}^N \left(\frac{\de_{q+1}}{\mu_q} + \frac{\de_{q+1}^\frac12}{\tau_q} \right), 
\quad
\norm{D_{t,\ell} G_\tri}_{N-1} \lec \la_{q+1}^N \de_{q+1}^\frac12
\left(\frac{\de_{q+1}}{\mu_q} + \frac{\de_{q+1}^\frac12}{\tau_q} \right) 
\end{align}
Furthermore, such $G_\tri$ has the form $\sum_{u,k}g_\tri^{u,k} e^{i\la_{q+1}k\cdot\xi_I}$ and has a decomposition 
\begin{equation}\label{dec.Gtri}
G_\tri = \cP_{\gtrsim \la_{q+1}} G_\tri+\cP_{\lec \la_{q+1}} G_\tri\, ,    
\end{equation}
as in \eqref{dec.mic} and \eqref{rem.low}, where $\cP_{\lec \la_{q+1}} G_\tri$ satisfies
\begin{equation}\begin{split}
\norm{\cP_{\lec \la_{q+1}}G_\tri}_0 &\lec \la_{q+1}^{-2}\de_{q+1}^\frac12 \left(\frac{\de_{q+1}}{\mu_q} + \frac{\de_{q+1}^\frac12}{\tau_q} \right),\\ 
\norm{D_{t,\ell}\cP_{\lec \la_{q+1}}G_\tri}_0 &\lec \la_{q+1}^{-1}\de_{q+1} \left(\frac{\de_{q+1}}{\mu_q} + \frac{\de_{q+1}^\frac12}{\tau_q} \right). \label{est.DtGtr}
\end{split}\end{equation}
Indeed, they follow from \eqref{est.eN} and \eqref{est.DteN}.
Now, we write 
\begin{align*}
    \na \frac{m_\ell}{\varrho} &= \na \left( m_\ell \cP_{\lec \frac1{64}\la_{q+1}} \frac{1}{\varrho}\right) + \na \left( m_\ell \cP_{\gtrsim \frac1{64}\la_{q+1}} \frac{1}{\varrho}\right)\\&=: \na \left(m_\ell / \varrho\right)_1 + \na \left(m_\ell / \varrho\right)_2
\end{align*}
 Now $\na \left(m_\ell / \varrho\right)_1$ has the frequency localized to ${\leq} \frac1{32} \la_{q+1}$ since $m_\ell$ had frequency localized to $\ell^{-1}$ and $\ell^{-1} \leq \frac 1{128}\la_{q+1}$ for sufficiently large $\la_0$. Thus, {$\cR \cP_{\gtrsim \la_{q+1}} G_\tri:\na \left(m_\ell / \varrho\right)_1$ has the frequency localized to $\gtrsim \la_{q+1}$ and}
\begin{equation}\begin{split} \label{est.high.1}
\norm{\idv{\cR \cP_{\gtrsim \la_{q+1}} G_\tri:\na \left(m_\ell / \varrho\right)_1}}_N
&\lec \frac 1{\la_{q+1}} \norm{\cR  \cP_{\gtrsim \la_{q+1}} G_\tri:\na \left(m_\ell / \varrho\right)_1}_N\\
&\lec \frac 1{\la_{q+1}^2} \sum_{N_1+N_2=N} \norm{ G_\tri}_{N_1} \norm{\na \frac{m_\ell}{\varrho}}_{N_2}
\end{split}\end{equation}
On the other hand, $\cR \cP_{\lec \la_{q+1}} G_\tri:\na \left(m_\ell / \varrho\right)_1$ has the frequency localized to $\lec \la_{q+1}^{-1}$, so that
\begin{equation}\begin{split} \label{est.low.1}
\norm{\idv{\cR \cP_{\lec \la_{q+1}} G_\tri:\na \left(m_\ell / \varrho\right)_1}}_N
&\lec \la_{q+1}^N \norm{\idv{\cR \cP_{\lec \la_{q+1}} G_\tri:\na \left(m_\ell / \varrho\right)_1}}_0
\\
&\lec \la_{q+1}^N \norm{\cP_{\lec \la_{q+1}} G_\tri}_0\norm{\na \frac{m_\ell}{\varrho}}_0.
\end{split}\end{equation}
Observe that since $\varrho$ is smooth in space-time and bounded below by a positive constant $\e_0$, we have by Bernstein's inequality that 
\begin{equation}\label{est.varrho_smooth}
    \norm{P_{\gtrsim \la} \varrho^{-1}}_{N'}+ \norm{\pa_t P_{\gtrsim \la} \varrho^{-1}}_{N'} 
    \lec \la^{-2}(\norm{\na^2 \varrho^{-1}}_{N'}+ \norm{\na^2\pa_t  \varrho^{-1}}_{N'})
    \lec \la^{-2} 
\end{equation}
for any $\la \geq 1$ and for any $N'=0,1,2,3$. So we see $\norm{(m_\ell/\varrho)_2}_{N'} \lesssim \frac1{\la_{q+1}^2} \norm{m_\ell}_{N'}$ for $N'=0,1,2,3$. It then follows that 
\begin{equation}\begin{split} \label{est.low.2}
\norm{\idv{\cR G_\tri:\na \left(m_\ell / \varrho\right)_2}}_N 
&\lec \sum_{N_1+N_2=N} \norm{ G_\tri}_{N_1} \norm{\na \left(m_\ell / \varrho\right)_2}_{N_2}\\
&\lec \frac1{\la_{q+1}^{2}} \sum_{N_1+N_2=N} \norm{ G_\tri}_{N_1} \norm{ m_\ell}_{N_2+1}
\end{split}\end{equation}
Therefore, using \eqref{est.Gtr} and \eqref{est.DtGtr}, we obtain
\begin{align*}
\Norm{\cal{R}( R_{O1}:\na \frac{m_\ell}{\varrho})}_N+\Norm{\cal{R}( R_T:\na \frac{m_\ell}{\varrho})}_N
&+\Norm{\cal{R}( R_N:\na \frac{m_\ell}{\varrho})}_N\\
&\lec\la_{q+1}^N
\left(\frac {\de_{q+1}^\frac12}{\la_{q+1}\tau_{q}}+ \frac {\de_{q+1}}{\la_{q+1}\mu_{q}}\right) \frac{\la_q\de_q^\frac12}{\la_{q+1}}.
\end{align*}
To estimate their advective derivatives, consider the decomposition
\begin{align*}
   D_{t,q+1} \idv{ R_{\triangle}:\na \frac{m_\ell}{\varrho}} 
= D_{t,\ell}\idv{ R_{\triangle}:\na \frac{m_\ell}{\varrho}} + \left(\frac{n+ (m_q-m_\ell)}{\varrho}\cdot \na \right)\idv{ R_{\triangle}:\na \frac{m_\ell}{\varrho}}. 
\end{align*}
We can easily see that 
\begin{align*}
\Norm{\frac{n+ (m_q-m_\ell)}{\varrho}\cdot \na \idv{ R_{\triangle}:\frac{m_\ell}{\varrho}}}_{N-1}
&\lec  \la_{q+1}^N\de_{q+1}^\frac12
\left(\frac {\de_{q+1}^\frac12}{\la_{q+1}\tau_{q}}+ \frac {\de_{q+1}}{\la_{q+1}\mu_{q}}\right) \frac{\la_q\de_q^\frac12}{\la_{q+1}}.
\end{align*}
As for the first term, we use again the decompositions $ R_\tri = \cR \cP_{\gtrsim \la_{q+1}} G_\tri + \cR \cP_{\lec \la_{q+1}} G_\tri$ and $\na \frac{m_\ell}{\varrho} = \na \left(m_\ell / \varrho\right)_1 + \na \left(m_\ell / \varrho\right)_2$ and consider
\begin{align*}
\norm{D_{t,\ell}\cal{R}( R_{\triangle}:\na \left(m_\ell / \varrho\right)_1)}_{N-1}
&\leq 
\norm{D_{t,\ell}\cal{R}(\cR \cP_{\gtrsim \la_{q+1}} G_\tri :\na \left(m_\ell / \varrho\right)_1)}_{N-1}\\
&\quad+\norm{D_{t,\ell}\cal{R}( \cR \cP_{\lec \la_{q+1}} G_\tri:\na \left(m_\ell / \varrho\right)_1)}_{N-1}.
\end{align*}
In order to estimate the first term on the right hand side, consider the decomposition 
\begin{align*}
D_{t,\ell}\cR P_{\gtrsim \la_{q+1}} H
&=\cR {P}_{\gtrsim \la_{q+1}} D_{t,\ell}  H 
+\cR \left[\frac{m_\ell}{\varrho}\cdot\na, P_{\gtrsim \la_{q+1}}\right]  H 
+ \left[\frac{m_\ell}{\varrho}\cdot\na, \cR\right] P_{\gtrsim \la_{q+1}} H,
\end{align*}
for any smooth {vector-valued} function $H$ and Littlewood-Paley operator $P_{\gtrsim\la_{q+1}}$ projecting to the frequency $\gtrsim \la_{q+1}$. 
{Since Lemmas \ref{lem:com2} is still valid when $\ell^{-1}$ is replaced by $C\la_{q+1}$, additionally using \eqref{est.varrho_smooth},} we have
\begin{align*}
\norm{[(m_\ell/\varrho)_1 \cdot\na,P_{\gtrsim \la_{q+1}}] H}_{N-1}
&{\lec \la_{q+1}^{N-2} \norm{\na(m_\ell/\varrho)_1}_0 \norm{\na H}_0}
\leq \la_{q+1}^{N-2} \norm{m_q}_1 \norm{\na H}_0\\
\norm{[(m_\ell/\varrho)_2\cdot\na,P_{\gtrsim \la_{q+1}}] H}_{N-1}
&{\lec \la_{q+1}^{N-2} \norm{\na(m_\ell/\varrho)_2}_{1} \norm{\na H}_0
\lec \la_{q+1}^{N-4}} \norm{m_q}_1 \norm{\na H}_0
\end{align*}
Also, by Lemma \ref{lem:com*} we obtain
{
\begin{align}\label{est1}
\Norm{\left[{m_\ell}{P_{\leq \ell^{-1}}\varrho^{-1}}\cdot\na, \cR\right]P_{\gtrsim \la_{q+1}} H}_{N-1} 
\lec \sum_{N_1+N_2=N-1} \ell \Norm{\na ({m_\ell}{P_{\leq \ell^{-1}}\varrho^{-1}})}_{N_1} \norm{H}_{N_2} ,
\end{align}
and using $\norm{\mathcal{R}f}_0\lec \norm{f}_0$ and $\norm{P_{>\ell^{-1}}\varrho^{-1}}_{N'}\lec \ell^3 \norm{\na^3 \varrho^{-1}}_{N'}$ we get
\begin{align*}
    \Norm{\left[{m_\ell}{P_{> \ell^{-1}}\varrho^{-1}}\cdot\na, \cR\right]P_{\gtrsim \la_{q+1}} H}_{N-1} 
&\lec \sum_{N_1+N_2+N_3=N-1}
\norm{m_{\ell}}_{N_1}
\norm{P_{>\ell^{-1}}\varrho^{-1}}_{N_2}\norm{\na P_{\gtrsim \la_{q+1}}H}_{N_3}\\
&\lec \ell^3 \sum_{N_1+N_3=N-1}
\norm{m_{\ell}}_{N_1}
\norm{\na P_{\gtrsim \la_{q+1}}H}_{N_3}.
\end{align*}
}
Since $P_{\gtrsim \la_{q+1}} D_{t,\ell}  H$ and $ [(m_\ell/\varrho)_1\cdot\na, P_{\gtrsim \la_{q+1}}]  H$ have frequencies localized to $\gtrsim \la_{q+1}$, it follows that
\begin{equation}\begin{split}\label{est.H}
\norm{&D_{t,\ell}\cR P_{\gtrsim \la_{q+1}} H}_{N-1}\\
&\lec \norm{\cR P_{\gtrsim \la_{q+1}} D_{t,\ell}  H }_{N-1} 
+\norm{\cR [(m_\ell/\varrho)_1\cdot\na, P_{\gtrsim \la_{q+1}}]  H }_{N-1}
\\
&\quad+\norm{\cR [(m_\ell/\varrho)_2\cdot\na, P_{\gtrsim \la_{q+1}}]  H }_{N-1}+\norm{ [\frac{m_\ell}{\varrho}\cdot\na, \cR] P_{\gtrsim\la_{q+1}} H}_{N-1}\\
&\lec 
\frac 1{\la_{q+1}} \norm{ P_{\gtrsim \la_{q+1}} D_{t,\ell}  H }_{N-1} 
+  \frac 1{\la_{q+1}}\norm{ [(m_\ell/\varrho)_1\cdot\na, P_{\gtrsim \la_{q+1}}]  H }_{N-1}\\
&\quad {+\norm{ [(m_\ell/\varrho)_2\cdot\na, P_{\gtrsim \la_{q+1}}]  H }_{N-1}}
+\norm{ [{m_\ell}/{\varrho}\cdot\na, \cR] P_{\gtrsim \la_{q+1}} H}_{N-1}\\
&\lec \frac 1{\la_{q+1}} \norm{ D_{t,\ell}  H }_{N-1} 
+ \la_{q+1}^{N-3}\norm{m_q}_1  \norm{\na H}_0
+\sum_{N_1+N_2=N-1} \ell \norm{\na \frac{m_\ell}{\varrho}}_{N_1}\norm{H}_{N_2}.
\end{split}\end{equation}
Now, we apply it to $H= \cR \cP_{\gtrsim \la_{q+1}} G_\tri:\na \left(m_\ell / \varrho\right)_1$. For such $H$, we have $H= P_{\geq \frac18\la_{q+1}}H$ for sufficiently large $\la_0$, so that
\begin{equation}\begin{split}\label{est.HL}
&\norm{D_{t,\ell}\cR(\cR  \cP_{\gtrsim \la_{q+1}}  G_\tri:\na \left(m_\ell / \varrho\right)_1)}_{N-1}\\
&\lec \frac 1{\la_{q+1}}\norm{D_{t,\ell}(\cR  \cP_{\gtrsim \la_{q+1}}  G_\tri: \na \left(m_\ell / \varrho\right)_1)}_{N-1} 
+ \la_{q+1}^{N-3}\norm{ m_q}_1 
\norm{ \cR \cP_{\gtrsim \la_{q+1}} G_\tri:\na \left(m_\ell / \varrho\right)_1}_1\\
&\quad+\sum_{N_1+N_2=N-1} \ell\Norm{\na \frac{m_\ell}{\varrho}}_{N_1}\norm{\cR \cP_{\gtrsim \la_{q+1}} G_\tri:\na \left(m_\ell / \varrho\right)_1}_{N_2}\\
&\lec \la_{q+1}^{N-2} \de_{q+1}^\frac12
\left(\frac{\de_{q+1}}{\mu_q} + \frac{\de_{q+1}^\frac12}{\tau_q} \right) \la_q\de_q^\frac12.
\end{split}\end{equation}
Indeed, the second inequality can be obtained by applying \eqref{est.H} again to $H= G_\tri$,
\begin{align*}
&\norm{D_{t,\ell}(\cR \cP_{\gtrsim \la_{q+1}} G_\tri: \na \left(m_\ell / \varrho\right)_1)}_{N-1} \\
&\lec \sum_{N_1+N_2=N-1}
\norm{D_{t,\ell}\cR  \cP_{\gtrsim \la_{q+1}}  G_\tri}_{N_1} \norm{\na (m_\ell / \varrho)}_{N_2}
+\norm{\cR  \cP_{\gtrsim \la_{q+1}}  G_\tri}_{N_1}\norm{ D_{t,\ell}\na (m_\ell / \varrho)}_{N_2}\\
&\lec \sum_{N_1+N_2=N-1}
\left(\frac1{\la_{q+1}}\norm{D_{t,\ell}G_\tri}_{N_1}
+ \la_{q+1}^{N_1-2}\norm{m_q}_1\norm{\na G_\tri}_0 \right)\norm{\na (m_\ell / \varrho)}_{N_2}\\
&\quad +\sum_{\substack{N_{11}+N_{12} = N_1\\ N_1+N_2=N-1}} \ell \norm{\na m_\ell}_{N_{11}}\norm{G_\tri}_{N_{12}}\norm{\na (m_\ell / \varrho)}_{N_2}
+\sum_{N_1+N_2=N-1}\frac{\norm{ G_\tri}_{N_1}}{\la_{q+1}}\norm{ D_{t,\ell}\na (m_\ell / \varrho)}_{N_2}\\
&\lec \la_{q+1}^{N-1} \de_{q+1}^\frac12
\left(\frac{\de_{q+1}}{\mu_q} + \frac{\de_{q+1}^\frac12}{\tau_q} \right) \la_q\de_q^\frac12,
\end{align*}
{where we used 
\begin{equation}\begin{split}\label{est.Dtvl2}
    \norm{D_{t,\ell} \na (m_\ell/\varrho)}_{N_2}
    &\leq
    \norm{\varrho^{-1}D_{t,\ell} \na m_\ell}_{N_2}
    +\norm{D_{t,\ell} m_\ell \otimes\na\varrho^{-1}}_{N_2}
    \\
    &\hspace{1.5cm}+\norm{ \na m_\ell D_{t,\ell}\varrho^{-1}}_{N_2}+\norm{m_\ell\otimes D_{t,\ell}{\na}\varrho^{-1}}_{N_2}
    \lec \la_{q+1}^{ 
    N_2+1}\de_{q+1}^\frac12 \la_q\de_q^\frac12,
\end{split}
\end{equation}
which follows from \eqref{adv.mlN} and \eqref{est.Dtvl}.}
Also, we get
\begin{align*}
\norm{\cR \cP_{\gtrsim \la_{q+1}} G_\tri:\na \left(m_\ell / \varrho\right)_1)}_{N-1}
&\lec \sum_{N_{1}+N_{2} = N-1}
\norm{\cR \cP_{\gtrsim \la_{q+1}} G_\tri}_{N_{1}} \norm{\na \left(m_\ell / \varrho\right)}_{N_{2}}\\
&\lec \la_{q+1}^{N-2} \left(\frac{\de_{q+1}}{\mu_q} + \frac{\de_{q+1}^\frac12}{\tau_q} \right) \la_q\de_q^\frac12.
\end{align*}
As for the remaining term $\norm{D_{t,\ell}\cal{R}( \cR \cP_{\lec \la_{q+1}} G_\tri:\na \left(m_\ell / \varrho\right)_1))}_{N-1}$, 
we set $
D_{t, \ell}^{L} := \pa_t + (m_\ell/\varrho)_1 \cdot \na$
and estimate 
\begin{align*}
\norm{D_{t, \ell}^{L} &\cR(\cR \cP_{\lec \la_{q+1}}G_\tri:\na \left(m_\ell / \varrho\right)_1))}_0
\\
&\lec\norm{ \cR D_{t, \ell}^{L}(\cR \cP_{\lec \la_{q+1}} G_\tri:\na \left(m_\ell / \varrho\right)_1))}_0
+
 \norm{[(m_\ell/\varrho)_1\cdot\na ,\cR] (\cR \cP_{\lec \la_{q+1}} G_\tri:\na \left(m_\ell / \varrho\right)_1))}_0\\
&\lec  \norm{ D_{t, \ell}^{L}\cR \cP_{\lec \la_{q+1}} G_\tri}_0\norm{\na (m_\ell / \varrho)_1}_0 + \norm{  \cP_{\lec \la_{q+1}}G_\tri}_0\norm{ D_{t, \ell}^{L}\na (m_\ell / \varrho)_1}_0\\
&\qquad\qquad +\norm{(m_\ell/\varrho)_1}_0\norm{\na(\cR \cP_{\lec \la_{q+1}} G_\tri:\na (m_\ell/\varrho)_1)}_0 \\
&\lec ( \norm{ D_{t, \ell}^{L}\cP_{\lec \la_{q+1}} G_\tri}_0
+ \norm{[(m_\ell/\varrho)_1\cdot \na, \cR] \cP_{\lec \la_{q+1}} G_\tri}_0)\norm{\na (m_\ell/\varrho)_1}_0\\
&\quad+
\norm{  \cP_{\lec \la_{q+1}} G_\tri}_0\norm{ D_{t,\ell}\na (m_\ell/\varrho)_1}_0
+\la_{q+1}\norm{\cP_{\lec \la_{q+1}} G_\tri}_0\norm{\na (m_\ell/\varrho)_1}_0\\
&\lec
\la_q\de_q^\frac12(\norm{ D_{t, \ell}^{L}\cP_{\lec \la_{q+1}} G_\tri}_0
+\la_{q+1}\norm{\cP_{\lec \la_{q+1}} G_\tri}_0)
\lec \la_{q+1}^{-1}\de_{q+1}^\frac12 \left(\frac {\de_{q+1}^\frac12}{\tau_{q}}+ \frac {\de_{q+1}}{\mu_{q}}\right) \la_q\de_q^\frac12.
\end{align*}
Here, we used $\norm{\cR g}_0 \lec \norm{g}_0$. 
Now observe that the frequency of $D_{t, \ell}^{L} \cR(\cR \cP_{\lec \la_{q+1}} G_\tri:\na \left(m_\ell / \varrho\right)_1))$ is  localized to $\lec \la_{q+1}$, so that 
\begin{align*}
\norm{&D_{t, \ell}^{L} \cR(\cR \cP_{\lec \la_{q+1}} G_\tri:\na \left(m_\ell / \varrho\right)_1))}_{N-1}
\lec \la_{q+1}^{N-1} \norm{D_{t, \ell}^{L} \cR(\cR \cP_{\lec \la_{q+1}} G_\tri:\na \left(m_\ell / \varrho\right)_1))}_0.
\end{align*}
Thus we see
\begin{align*}
\norm{&D_{t,\ell} \cR(\cR \cP_{\lec \la_{q+1}} G_\tri:\na \left(m_\ell / \varrho\right)_1))}_{N-1}\\
&\le \norm{D_{t, \ell}^{L} \cR(\cR \cP_{\lec \la_{q+1}} G_\tri:\na \left(m_\ell / \varrho\right)_1))}_{N-1} + \norm{((m_\ell/\varrho)_2 \cdot \na) \cR(\cR \cP_{\lec \la_{q+1}} G_\tri:\na \left(m_\ell / \varrho\right)_1))}_{N-1}\\
&\lec \la_{q+1}^{N-1} \norm{D_{t, \ell}^{L} \cR(\cR \cP_{\lec \la_{q+1}} G_\tri:\na \left(m_\ell / \varrho\right)_1))}_0 + \la_{q+1}^{-2} \norm{\cR(\cR \cP_{\lec \la_{q+1}} G_\tri:\na \left(m_\ell / \varrho\right)_1))}_{N}\\
&\lec \la_{q+1}^N\de_{q+1}^\frac12
\left(\frac {\de_{q+1}^\frac12}{\la_{q+1}\tau_{q}}+ \frac {\de_{q+1}}{\la_{q+1}\mu_{q}}\right) \frac{\la_q\de_q^\frac12}{\la_{q+1}}
\end{align*}
As a result, combining with \eqref{est.HL}, we obtain
\begin{align*}
\norm{D_{t,\ell}\cR( R_\triangle:\na \left(m_\ell / \varrho\right)_1))}_{N-1}
\lec \la_{q+1}^N\de_{q+1}^\frac12
\left(\frac {\de_{q+1}^\frac12}{\la_{q+1}\tau_{q}}+ \frac {\de_{q+1}}{\la_{q+1}\mu_{q}}\right) \frac{\la_q\de_q^\frac12}{\la_{q+1}},
\end{align*}
Now we need to estimate $D_{t,\ell}\cR( R_\triangle:\na \left(m_\ell / \varrho\right)_2)$. Consider the decomposition
\begin{align*}
D_{t,\ell}\cR H
&=\cR D_{t,\ell}  H  
+\left [\frac{m_\ell}{\varrho}\cdot\na, \cR\right] H,
\end{align*}
We observe that by setting $H =  R_\triangle:\na \left(m_\ell / \varrho\right)_2$, we get
\begin{align*}
\norm{D_{t,\ell}\cR H}_{N-1}
&=\norm{\cR D_{t,\ell} ( R_\triangle:\na \left(m_\ell / \varrho\right)_2)}_{N-1} 
+ \norm{[\frac{m_\ell}{\varrho}\cdot\na, \cR] ( R_\triangle:\na \left(m_\ell / \varrho\right)_2)}_{N-1}\\
&\lesssim \norm{ D_{t,\ell}  ( R_\triangle:\na \left(m_\ell / \varrho\right)_2) }_{N-1} 
+ \norm{[\frac{m_\ell}{\varrho}\cdot\na, \cR] ( R_\triangle:\na \left(m_\ell / \varrho\right)_2)}_{N-1}
\end{align*}
Thus we get, using \eqref{est.varrho_smooth},
\begin{align*}
    \norm{D_{t,\ell}( R_\triangle:\na \left(m_\ell / \varrho\right)_2))}_{N-1} &\lesssim \sum_{\substack{N_1 + N_2\\ = N-1}} \norm{D_{t,\ell} R_\triangle}_{N_1} \norm{\na \left(m_\ell / \varrho\right)_2}_{N_2} + \norm{ R_\triangle}_{N_1} \norm{D_{t,\ell}\na \left(m_\ell / \varrho\right)_2}_{N_2}\\
    &\lesssim \la_{q+1}^N \de_{q+1} \frac1{\la_{q+1}^2 \tau_q} \la_q \de_q^{\frac12}.
\end{align*}
{Here, we used the estimate $\norm{D_{t,\ell}\na \left(m_\ell / \varrho\right)_2}_{N_2} \lec \de_{q+1}^\frac12 \la_q \de_q^\frac12$,
obtained similar to \eqref{est.Dtvl2} but additionally using \eqref{est.varrho_smooth}.  } The remaining term can be estimated as
\begin{align*}
    &\Norm{\left[\frac{m_\ell}{\varrho}\cdot\na, \cR\right]( R_\triangle:\na \left(m_\ell / \varrho\right)_2))}_{N-1}\\
    &\lesssim \sum_{N_1 + N_2 + N_3 = N-1} \Norm{\frac{m_\ell}{\varrho}}_{N_1} \norm{ R_\triangle}_{N_2+1} \norm{\na \left(m_\ell / \varrho\right)_2}_{N_3}  + \norm{\frac{m_\ell}{\varrho}}_{N_1} \norm{ R_\triangle}_{N_2} \Norm{\na \left(m_\ell / \varrho\right)_2}_{N_3+1}\\
    &\lesssim \la_{q+1}^N \de_{q+1} \frac{\de_q^{\frac12}}{\la_{q+1}^2 \tau_q}.
\end{align*}
We therefore get
\begin{align*}
\norm{   D_{t,q+1} \cal{R}( R_\tri:\na \frac{m_\ell}{\varrho})}_{N-1}
\lec
\la_{q+1}^N\de_{q+1}^\frac12
\left(\frac {\de_{q+1}^\frac12}{\la_{q+1}\tau_{q}}+ \frac {\de_{q+1}}{\la_{q+1}\mu_{q}}\right) \frac{\la_q\de_q^\frac12}{\la_{q+1}},
\end{align*}
and the estimates for $  \ph_{H2}$ follow, 
\begin{align*}
\norm{  \ph_{H2}}_N 
\lec \la_{q+1}^N \la_q \de_q^\frac12 \frac {\de_{q+1}}{\la_{q+1}},  \quad
\norm{   D_{t,q+1}  \ph_{H2}}_{N-1} 
\lec\la_{q+1}^N\de_{q+1}^\frac12 \la_q \de_q^\frac12 \frac {\de_{q+1}}{\la_{q+1}}.
\end{align*}
To summarize, we get
\begin{align*}
\norm{  \ph_{H}}_N 
\leq \frac 15\la_{q+1}^{N-3\ga} \de_{q+2}^\frac32 ,  \quad
\norm{   D_{t,q+1}  \ph_{H}}_{N-1} 
\leq \frac 15\la_{q+1}^{N-3\ga}\de_{q+1}^\frac12 \de_{q+2}^\frac32\, .
\end{align*}

\subsection{Estimates on $\zeta$}\label{sec:est.zeta}

{In this section, we prove
\begin{align}\label{est.zeta'}
 \norm{\zeta'}_0
    \leq \frac{\e_0^2}{40\underline{M}(1+T+\tau_0)} \la_{q+1}^{-3\ga} \de_{q+2}^\frac32,
\end{align}
which implies \eqref{est.zeta} by integration in time.}

\subsubsection{Estimates on $ \zeta_1$ and $ \zeta_3$} 

By \eqref{rep:low.freq.app}, $\zeta_3$ can be written as {
\begin{align*}
    \zeta_3'
    &= \underbrace{\dint_{\T^3} \left(\frac{n_o\otimes n_0}{\varrho} - \de_{q+1}\varrho\I + \varrho R_\ell\right) : \na \frac{m_\ell}{\varrho} dx }_{=:\td \zeta_{31}'} 
    \underbrace{-  \dint_{\T^3}
    \varrho (R_{O1} + R_T + R_N)
    : \na \frac{m_\ell}{\varrho}
    dx }_{\td \zeta_{32}'}.
\end{align*}}

First of all, we have $\norm{\zeta_1'}+\norm{\td\zeta_{31}'}_0 \leq \e_0^2/(200\underline{M}(1+T+\tau_0)) \la_{q+1}^{-3\ga} \de_{q+2}^\frac32$, taking advantage of Lemma \ref{phase} and the representations \eqref{rep.W} and \eqref{alg.eq}.

To estimate $\td\zeta_{32}'$, we argue as we did in the previous section to estimate $\|   \ph_{H2}\|_0 $. More precisely, using the decomposition \eqref{dec.Gtri}, we have {
\begin{align*}
    \left\langle \varrho R_\tri : \na \frac {m_\ell}{\varrho}\right\rangle
    &=
    \left\langle\mathcal{R}\mathcal{P}_{\lec \la_{q+1}}G_\tri : \na \frac {m_\ell}{\varrho}\right\rangle
    +\left\langle \mathcal{R}\mathcal{P}_{\gtrsim \la_{q+1}}G_\tri : \na ( {m_\ell}P_{\leq \ell^{-1}}{\varrho}^{-1})\right\rangle\\
    &\quad+\left\langle \mathcal{R}\mathcal{P}_{\gtrsim \la_{q+1}}G_\tri : \na  ({m_\ell}P_{\geq \ell^{-1}}{\varrho}^{-1})\right\rangle.
\end{align*}
where $\varrho R_\tri$ represents either $\varrho R_{O1}$, $\varrho R_T$, or $\varrho R_N$ and can be written as $\mathcal{R}G_\tri$. Since the argument of $\langle{\cdot}\rangle$ in the second term has frequency localized to $\gtrsim \la_{q+1}$, it has zero-mean. The magnitude of }the first term can be estimated {by ${\e_0^2}/(400\underline M(1+T+\tau_0)) \la_{q+1}^{-3\ga}\de_{q+2}^\frac32$} as in \eqref{est.low.1} and \eqref{est.low.2}. The last term can be estimated using
\begin{align*}
   |\langle \mathcal{R}\mathcal{P}_{\gtrsim \la_{q+1}}G_\tri : \na  ({m_\ell}P_{\geq \ell^{-1}}{\varrho}^{-1})\rangle |
   &\lec 
 \norm{\mathcal{R}\mathcal{P}_{\gtrsim \la_{q+1}}G_\tri}_0 \norm{\na  ({m_\ell}P_{\geq \ell^{-1}}{\varrho}^{-1})}_0\\
   &\lec \la_{q+1}^{-1}\norm{G_\tri}_0 \ell^2\la_q\de_q^\frac12
   \leq {\e_0^2}/(400\underline M(1+T+\tau_0)) \la_{q+1}^{-3\ga}\de_{q+2}^\frac32,
\end{align*}
where the second inequality follows from \eqref{est.varrho_smooth} and the last one from \eqref{est.Gtr}. Combining the estimates, we get
\begin{align*}
    \norm{\zeta_1'}_0+\norm{\zeta_3'}_0
    \leq \frac{\e_0^2}{100\underline{M}(1+T+\tau_0)} \la_{q+1}^{-3\ga} \de_{q+2}^\frac32.
\end{align*}

\subsubsection{Estimates on $ \zeta_0$, $ \zeta_2$,  and $ \zeta_4$.} 

Decompose $\zeta_0'$ into $\zeta_{01}'$ and $\zeta_{02}'$ where
\begin{align*}
(2\pi)^3 \zeta_{01}' &= \int_{\T^3} \frac{\varrho}2 D_{t,\ell}\tr\left(  \frac{n_o\otimes n_o}{\varrho^2}- \de_{q+1} \I + R_\ell \right)   dx \\
(2\pi)^3\zeta_{02}'&= - \int_{\T^3}  \frac 12\tr\left(  \frac{n_o\otimes n_o}{\varrho^2} - \de_{q+1} \I + R_\ell \right)\div(m_q-m_\ell) dx
\end{align*}
Since the integrands can be written as
\begin{align*}
    (2\pi)^3  \zeta_{01}' &= \int \sum_{u} \sum_{k\in \Z^3\setminus \{0\}} \frac{ \de_{q+1}  }2\varrho\tr(D_{t,\ell} c_{u,k}) e^{ i\la_{q+1} k\cdot \xi_I}\, dx\\
(2\pi)^3  \zeta_{02}' 
&= {-}\int \sum_{u} \sum_{k\in \Z^3\setminus \{0\}} \frac{ \de_{q+1} }2\tr(c_{u,k}\div (m_q - m_\ell) ) e^{ i\la_{q+1} k\cdot \xi_I}\, dx\\
&={-}\int \sum_{u} \sum_{k\in \Z^3\setminus \{0\}} \frac{ \de_{q+1} }2\tr(c_{u,k}(-\pa_t \varrho + (\pa_t \varrho)_\ell) ) e^{ i\la_{q+1} k\cdot \xi_I}\, dx,
\end{align*}
it thus suffices to use Lemma \ref{phase} to estimate
\[
\norm{ \zeta_0'}_0 \leq 
{\frac{\e_0^2}{200\underline{M}(1+T+\tau_0)} }\la_{q+1}^{-3\ga} \de_{q+2}^\frac32.
\]

In a similar way, we write $\zeta_2'$ and $\zeta_4'$ as
\begin{align*}
(2\pi)^3  \zeta_{2}' 
&=\int \div (m_q-m_\ell) \frac{n\cdot m_\ell}{\varrho^2} dx
=\int \sum_{u} \sum_{k\in \Z^3\setminus \{0\}} \de_{q+1}^\frac12 \div (m_q - m_\ell) \frac{m_\ell}{\varrho^2} \cdot b_{u,k} e^{ i\la_{q+1} k\cdot \xi_I}\, dx\\
&=\int \sum_{u} \sum_{k\in \Z^3\setminus \{0\}} \de_{q+1}^\frac12 (-\pa_t \varrho + (\pa_t \varrho)_\ell) \frac{m_\ell}{\varrho^2} \cdot b_{u,k} e^{ i\la_{q+1} k\cdot \xi_I}\, dx
\end{align*}
and
\begin{align*}
(2\pi)^3  \zeta_{4}' 
&=
\int \frac{n}{\varrho} \cdot \na(p(\varrho) - p_\ell (\varrho))\, dx
= \int \sum_{u} \sum_{k\in \Z^3\setminus \{0\}} \de_{q+1}^\frac12 \frac1\varrho \na(p(\varrho) - p_\ell (\varrho)) \cdot b_{u,k} e^{ i\la_{q+1} k\cdot \xi_I}\, dx.
\end{align*}
Therefore, as before, we apply Lemma \ref{phase} to get
\[
\norm{ \zeta_2'}_0+\norm{ \zeta_4'}_0 \leq {\frac{\e_0^2}{200\underline{M}(1+T+\tau_0)} }\la_{q+1}^{-3\ga} \de_{q+2}^\frac32. \]

\medskip
Thus, we get the desired estimate \eqref{est.zeta'}.

\subsection{Transport current error}
We use the definition of $   \ph_T$ and recall its splitting into $  \ph_{T1}+  \ph_{T2}$. 
Since  we have $\norm{e^{ i\la_{q+1} k\cdot \xi_I}}_{N} \lec \la_{q+1}^N|k|^2$ for any $k\in \Z^3\setminus\{0\}$ and $N\leq2$, $D_{t,\ell} e^{i\la_{q+1}k\cdot \xi_I} =0$, and almost disjoint support of $c_{u,k}$, \eqref{est.k}, \eqref{alg.eq} and \eqref{est.c} imply
\begin{align}\label{est.sym.h1}
\Norm{\frac{n_o\otimes n_o}{\varrho^2} - \de_{q+1} \I + R_\ell}_N
&\lec \sum_{k\in \Z^3\setminus \{0\}}\Norm{\sum_{u\in \Z}   \de_{q+1} \frac{c_{u,k}}{\varrho^2} e^{ i\la_{q+1} k\cdot \xi_I}}_{N} 
\lec \la_{q+1}^N \de_{q+1}, 
\end{align}
 \begin{equation}\begin{split}
\label{est.sym.h2}
\Norm{   D_{t,q+1} (\frac{n_o\otimes n_o}{\varrho^2} - \de_{q+1} \I + R_\ell)}_{N-1}
&\lec \sum_{k\in \Z^3\setminus \{0\}}\Norm{\sum_{u\in \Z}   \de_{q+1} \left(D_{t,\ell}\frac{c_{u,k}}{\varrho^2}\right) e^{ i\la_{q+1} k\cdot \xi_I}}_{N-1}\\
&\quad +\Norm{(n+(m_q-m_\ell)\cdot \na )\left(\frac{n_o\otimes n_o}{\varrho^2} - \de_{q+1} \I + R_\ell\right)}_{N-1}\\
&\lec \la_{q+1}^N\de_{q+1}^\frac12 \cdot \de_{q+1}. 
\end{split}\end{equation}
Recall that
\begin{align*}
    \varrho\ph_{T1} &= -\ka_{q+1}  n+ \frac12 \tr\left(  \frac{n_o\otimes n_o}{\varrho^2} - \de_{q+1} \I + R_\ell \right)(m_q-m_\ell) - \frac{m_q\zeta}{\varrho}
\end{align*}
We then can use \eqref{est.asR}, \eqref{est.w}, \eqref{est.sym.h1}, \eqref{est.sym.h2}, \eqref{est.v.dif}, \eqref{est.zeta}, and $\ka_{q+1} = \frac12 \tr(R_{q+1})$ to estimate
\begin{align*}
\norm{ \varrho \ph_{T1}}_N 
&\lec  \sum_{N_1+N_2=N} \norm{R_{q+1} -{\textstyle{\frac 23}}  \zeta {/\varrho}\I +{\textstyle{\frac 23}}  \zeta{/\varrho}\I}_{N_1} \norm{n}_{N_2}\\
&\qquad+  \sum_{N_1+N_2=N}\Norm{ \frac{n_o\otimes n_o}{\varrho^2} - \de_{q+1} \I + R_\ell}_{N_1}\norm{m_q-m_\ell}_{N_2}
+\norm{m_q/\varrho}_{N} \norm{\zeta}_0\\
&\lec  \la_{q+1}^N\left( \la_q^\frac12\la_{q+1}^{-\frac12}\de_q^\frac14\de_{q+1}^\frac14 {+ \ell^2\la_q^2 \de_q^\frac12}\right) \de_{q+1}
\lec \la_{q+1}^N 
 \la_q^\frac12\la_{q+1}^{-\frac12}\de_q^\frac14\de_{q+1}^\frac54,
\end{align*}
and using additionally \eqref{nmtdv.ml} and \eqref{est.mtdv.rho}
\begin{align*}
&\norm{   D_{t,q+1} \ph_{T1}}_{N-1}\\
&\quad\lec
 \sum_{N_1+N_2=N-1} \norm{   D_{t,q+1} (R_{q+1} -{\textstyle{\frac 23} \frac{\zeta}{\varrho}} \I + {\textstyle{\frac 23} \frac{\zeta}{\varrho}} \I)}_{N_1}\norm{ n}_{N_2} + \norm{R_{q+1}-{\textstyle{\frac 23}\frac{\zeta}{\varrho} \I + {\textstyle{\frac 23}} \frac{\zeta}{\varrho} \I} }_{N_1}\norm{    D_{t,q+1} n}_{N_2} \\
&\qquad+ \sum_{N_1+N_2=N-1}  \Norm{   D_{t,q+1} \left(\frac{n_o\otimes n_o}{\varrho^2} -\de_{q+1}I+ R_\ell\right)}_{N_1}\norm{m_q-m_\ell}_{N_2}\\
&\qquad+ \sum_{N_1+N_2=N-1}   \norm{\frac{n_o\otimes n_o}{\varrho^2} -\de_{q+1}I+ R_\ell}_{N_1}\norm{   D_{t,q+1} (m_q-m_\ell)}_{N_2}
+ \norm{D_{t,q+1}(m_\ell\zeta /\varrho^2)}_N
\\
&\quad\lec \la_{q+1}^N \de_{q+1}^\frac12
 \la_q^\frac12\la_{q+1}^{-\frac12}\de_q^\frac14\de_{q+1}^\frac54.
 \end{align*}
As for $  \ph_{T2}$, observe that 
\begin{align*}
    \varrho\ph_{T2} &= \mathcal{R} \left(\frac{\varrho}2 D_{t,\ell}\tr\left(  \frac{n_o\otimes n_o}{\varrho^2} - \de_{q+1} \I + R_\ell  \right) 
\right)\\
&\qquad -
\mathcal{R}\left( \frac 12\tr\left(  \frac{n_o\otimes n_o}{\varrho^2} - \de_{q+1} \I + R_\ell \right)(\div m_q - \div m_\ell)\right)
\end{align*}
from \eqref{e:phi_T2} and since $\div m_q  = - \pa_t \varrho$. By \eqref{alg.eq},  
\begin{align*}
  \varrho  \ph_{T2} 
&=\frac 12\idv{\sum_{m} \sum_{k\in \Z^3\setminus \{0\}}  \de_{q+1} \tr(\varrho D_{t,\ell} c_{m,k}) e^{ i\la_{q+1} k\cdot \xi_I}}\\
& \qquad + \frac 12\idv{\sum_{m} \sum_{k\in \Z^3\setminus \{0\}}  \de_{q+1} \tr((\pa_t \varrho - (\pa_t \varrho)_\ell) c_{m,k}) e^{ i\la_{q+1} k\cdot \xi_I}}
\end{align*}

and estimate it using Corollary \ref{cor.mic2} with \eqref{est.c}
as follows
\begin{align*}
\norm{  \ph_{T2}}_N
\lec  \la_{q+1}^N\cdot
\frac {\de_{q+1}}{\la_{q+1} \tau_q},\quad
\norm{   D_{t,q+1}  \ph_{T2}}_{N-1} \lec \la_{q+1}^N\de_{q+1}^\frac12 \cdot
\frac {\de_{q+1}}{\la_{q+1} \tau_q}. 
\end{align*}
To summarize, we have
\begin{align*}
\norm{  \ph_{T}}_N\leq \frac 1{5}  \la_{q+1}^{N-3\ga}\de_{q+2}^\frac32,\quad
\norm{   D_{t,q+1}  \ph_{T}}_{N-1}\leq \frac 1{5}  \la_{q+1}^{N-3\ga}\de_{q+1}^\frac12\de_{q+2}^\frac32.
\end{align*}
for sufficiently small $b-1>0$ and large $\la_0$. 

\subsection{Oscillation current error} 
Recall that $ \varrho \ph_{O1}
=\mathcal{R}\left(\na\cdot \left(\frac {|n_o|^2n_o}{2\varrho^2} +\varrho \ph_\ell \right)\right)$. We remark that \eqref{e:rep3} gives
\begin{align*}
\div \left(\frac {|n_o|^2n_o}{2\varrho^2} +\varrho \varphi_\ell \right)
&= \div\left(\sum_{u\in \Z} \sum_{k\in \Z^3\setminus \{0\}} \de_{q+1}^\frac 32 \frac{d_{u,k}}{\varrho^2} e^{ i\la_{q+1} k\cdot \xi_I}\right)
= \sum_{u,k}  \de_{q+1}^\frac 32 \div\left(\frac{d_{u,k}}{\varrho^2}\right)e^{ i\la_{q+1} k\cdot \xi_I},
\end{align*}
because of $\dot{d}_{I,k} (f_I\cdot k) =0$. Also, we have
\begin{align*}
\Norm{D_{t,\ell} \div \frac{d_{u,k}}{\varrho^2}}_{\bar N}
\lec \Norm{D_{t,\ell} \frac{d_{u,k}}{\varrho^2}}_{\bar N+1} + \Norm{\na (m_\ell/\varrho)^{\top}:\na \frac{d_{u,k}}{\varrho^2}}_{\bar N }
{\lec_{M}\la_{q+1}^{\bar N} \frac{\la_{q+1}\de_{q+1}^\frac12}{\mu_q}}, \quad \forall \bar N\in[0,n_0{+1}].
\end{align*}
Therefore, using $\supp(d_{u,k}) \subset (t_u -\frac 12\tau_q, t_u + \frac 32\tau_q) \times \R^3$, it follows from Corollary \ref{cor.mic2} with \eqref{est.d} that
\begin{align*}
\norm{ \varrho \ph_{O1}}_N
\lec \la_{q+1}^N \cdot \frac {\de_{q+1}^\frac 32}{\la_{q+1}\mu_q}, \quad
\norm{   D_{t,q+1} (\varrho \ph_{O1})}_{N-1} 
\lec \la_{q+1}^N \de_{q+1}^\frac12\cdot \frac {\de_{q+1}^\frac 32}{\la_{q+1}\mu_q} .
\end{align*}
Next recall that $ \varrho \ph_{O2} = \frac {|n|^2n - |n_o|^2 n_o}{2\varrho^2}$. Then, \eqref{est.W}-\eqref{est.w} imply
\begin{align*}
\norm{ \varrho \ph_{O2}}_N
&\lec 
\Norm{\frac{(n_o\cdot n_c) n}{\varrho^2}}_N 
+ \Norm{\frac{|n_c|^2 n}{2\varrho^2}}_N + \Norm{\frac{|{n_o}|^2 n_c}{2\varrho^2}}_N
\lec \la_{q+1}^N \cdot \frac {\de_{q+1}^\frac32}{\la_{q+1}\mu_q}\\
\norm{   D_{t,q+1} (\varrho \ph_{O2})}_{N-1}
&\lec \Norm{   D_{t,q+1} \left(\frac{|{n_o}|^2 n_c}{2\varrho^2}\right)}_{N-1}
+ \Norm{   D_{t,q+1} \left(\frac{(n_o\cdot n_c)n}{\varrho^2} +\frac{ |n_c|^2n}{2\varrho^2}\right)}_{N-1}\\
& \lec \la_{q+1}^N\de_{q+1}^\frac12 \cdot \frac {\de_{q+1}^\frac32}{\la_{q+1}\mu_q}.
\end{align*}
Therefore, combining the estimates, we get
\begin{align*}
\norm{  \ph_{O}}_N\leq \frac 1{5} \la_{q+1}^{N-3\ga} \de_{q+2}^\frac32, \quad 
\norm{   D_{t,q+1}  \ph_{O}}_{N-1}\leq \frac 1{5} \la_{q+1}^{N-3\ga} \de_{q+1}^\frac12\de_{q+2}^\frac32,
\end{align*}
for sufficiently small $b-1>0$ and large $\la_0$. 

\subsection{Reynolds current error} 
Recall that $ \varrho \ph_R = (R_{q+1}-{\textstyle{\frac23}} \zeta{/\varrho} \I) n + {\textstyle{\frac23}} (\zeta{/\varrho})  n$. Similar to the estimate for $\ka_{q+1} n$ in $  \ph_{T1}$, we have
\begin{align*}
\norm{  \ph_R}_N 
&\lec \la_{q+1}^N 
\la_q^\frac12\la_{q+1}^{-\frac12} \de_q^\frac14\de_{q+1}^\frac54
\leq \frac 15 \la_{q+1}^{N-3\ga} \de_{q+2}^\frac32,\\
\norm{   D_{t,q+1}   \ph_R}_{N-1}
&\lec \la_{q+1}^N\de_{q+1}^\frac12 \la_q^\frac12\la_{q+1}^{-\frac12} \de_q^\frac14\de_{q+1}^\frac54
\leq \frac 15 \la_{q+1}^{N-3\ga}\de_{q+1}^\frac12 \de_{q+2}^\frac32
\end{align*}
for sufficiently small $b-1>0$ and large $\la_0$. 

\subsection{Mediation current error} 
Recall that $\varrho \ph_M = \varrho \ph_{M1} + \varrho \ph_{M2} + \varrho \ph_{M3} + \varrho \ph_{M4}$. We estimate each term separately. Recall now
\begin{align*}
  \varrho\varphi_{M1} =   \frac{|m_q-m_\ell|^2}{2\varrho}\frac n{\varrho}+ \varrho(\ph_q-\ph_\ell)
\end{align*}
\begin{align*}
  \varrho\varphi_{M2} = \left (\frac{n\otimes n}{\varrho}+\varrho R_q -\varrho R_{q+1} - \de_{q+1} \varrho \I \right) \frac{m_q-m_\ell}{\varrho}
\end{align*}
\begin{align*}
  \varrho\ph_{M3} = \mathcal{R} \left( 
  \div (m_q-m_\ell) \frac{n\cdot m_\ell}{\varrho^2} \right)
\end{align*}
\begin{align*}
  \varrho\ph_{M4} &:= \mathcal{R} \left( \frac{n}{\varrho} \cdot \na(p(\varrho) - p_\ell (\varrho))  \right)
\end{align*}
For $\ph_{M1}$, we use \eqref{est.v.dif}, \eqref{est.ph.dif}, and \eqref{est.w} to get 
\begin{align*}
\norm{  \ph_{M1}}_N \leq \frac 1{10} \la_{q+1}^{N-3\ga} \de_{q+2}^\frac32, \quad 
\norm{   D_{t,q+1}    \ph_{M1}}_{N-1} \leq \frac 1{10} \la_{q+1}^{N-3\ga}\de_{q+1}^\frac12 \de_{q+2}^\frac32.
\end{align*}
For $\ph_{M2}$, we use \eqref{rep:low.freq.app}, 
and we estimate $\ph_{M2}$ in a similar way as $  \ph_{T1}$ and $\ph_{H2}$,
\begin{align*}
\Norm{\left (\frac{n\otimes n}{\varrho}+\varrho R_q -\varrho R_{q+1} - \de_{q+1} \varrho \I \right) \frac{m_q-m_\ell}{\varrho}}_{N}
&\leq \la_{q+1}^N\la_q^\frac12\la_{q+1}^{-\frac12} \de_q^\frac14\de_{q+1}^\frac54,\\
\Norm{   D_{t,q+1} \left[\left (\frac{n\otimes n}{\varrho}+\varrho R_q -\varrho R_{q+1} - \de_{q+1} \varrho \I \right) \frac{m_q-m_\ell}{\varrho}\right]}_{N-1}
&\leq_M \la_{q+1}^N\de_{q+1}^\frac12\la_q^\frac12\la_{q+1}^{-\frac12} \de_q^\frac14\de_{q+1}^\frac54
\end{align*}
For $\ph_{M3}$ and $\ph_{M4}$, observe that the argument of $\mathcal{R}$ have the respective forms
\begin{align*}
     &\sum_{u} \sum_{k\in \Z^3\setminus \{0\}} \de_{q+1}^\frac12 \div (m_q - m_\ell) \frac{m_\ell}{\varrho^2} \cdot b_{u,k} e^{ i\la_{q+1} k\cdot \xi_I}\\
     &\sum_{u} \sum_{k\in \Z^3\setminus \{0\}} \de_{q+1}^\frac12 \na(p(\varrho) - p_\ell (\varrho)) \frac{1}{\varrho} \cdot b_{u,k} e^{ i\la_{q+1} k\cdot \xi_I}
\end{align*}
Therefore, using $\supp(b_{u,k}) \subset (t_u -\frac 12\tau_q, t_u + \frac 32\tau_q) \times \R^3$, the desired estimate follows from Corollary \ref{cor.mic2};
\begin{align*}
\norm{  \ph_{M}}_N \leq \frac 1{5} \la_{q+1}^{N-3\ga} \de_{q+2}^\frac32, \quad 
\norm{   D_{t,q+1}    \ph_{M}}_{N-1} \leq \frac 1{5} \la_{q+1}^{N-3\ga}\de_{q+1}^\frac12 \de_{q+2}^\frac32.
\end{align*}

\section{Proofs of the key inductive propositions}

\subsection{Proof of Proposition \ref{ind.hyp}}
For a given dissipative Euler-Reynolds flow $(m_q,c_q,R_q,\ph_q)$ on the time interval $[0,T]+\tau_{q-1}$, we recall the construction of the corrected one: $m_{q+1} = m_q + n_{q+1}$, where $n_{q+1}$ is defined by \eqref{def.w}. Furthermore, we find a new Reynolds stress $R_{q+1}$ and an unsolved flux current $\ph_{q+1}$ which solve \eqref{app.eq} together with $m_{q+1}$, and satisfy \eqref{est.asR}, \eqref{est.asph} and \eqref{est.zeta} for sufficiently small $b-1>0$ and large $\la_0$. In other words, $(m_{q+1},c_{q+1},R_{q+1},\ph_{q+1})$ is a dissipative Euler-Reynolds flow {for the energy loss $E$} and the error $(R_{q+1}, \ph_{q+1})$ satisfies \eqref{est.R}-\eqref{est.ph} for $q+1$ as desired. Now, denote the absolute implicit constant in the estimate {\eqref{est.w.indM}} for $n$ by $M_0$ and define $M = 2M_0$. Then, one can easily see that
\begin{align*}
\norm{m_{q+1}- m_q}_0
+ \frac 1{\la_{q+1}} \norm{m_{q+1} - m_q}_1 
= \norm{n_{q+1}}_0 
+\frac 1{\la_{q+1}} \norm{n_{q+1}}_1 
\leq 2M_0 \de_{q+1}^\frac12  = M \de_{q+1}^\frac12. 
\end{align*}
Also, using \eqref{est.vp} and \eqref{est.w}, we have
\begin{align*}
\norm{m_{q+1}}_0
&\leq \norm{m_q}_0 + \norm{n_{q+1}}_0
\leq \underline{M}-\de_q^\frac12 + M_0\de_{q+1}^\frac12 
\leq \underline{M}-\de_{q+1}^\frac12,\\
\norm{m_{q+1}}_N
&\leq \norm{m_q}_N + \norm{n_{q+1}}_N
\leq M \la_q^N\de_q^\frac12 + \frac12 M \la_{q+1}^N \de_{q+1}^\frac12
\leq M\la_{q+1}^N \de_{q+1}^\frac12,\\
\end{align*}
for $N=1,2$, provided that $\la_0$ is sufficiently large. Therefore, we construct a desired corrected flow $(m_{q+1},c_{q+1},R_{q+1},\ph_{q+1})$. 

\subsection{Proof of Proposition \ref{p:ind_technical}}
Consider a given time interval $\cal I \subset (0,T)$ with $|\cal I| \geq 3\tau_q$. Then, we can always find $u_0$ such that $\supp(\th_{u_0}(\tau_q^{-1}\cdot))\subset  \cal I$. Now, if $I = (u_0, v, f)$ belongs to $\mathscr{S}_R$, we replace $\ga_{I}$ in $n_{q+1, o}$ by $\td \ga_{I} =-\ga_{I}$. In other words, we replace $\Ga_{I}$ by $\td \Ga_{I} = -\Ga_{I}$. Otherwise, we keep the same $\ga_{I}$. Note that $\td\Ga_{I}$ still solves \eqref{eq.Ga1} and hence $\td \ga_{I}$ satisfies \eqref{eq.Ga}. Since the replacement does not change the estimates for $\Ga_{I}$ used in the proof of Lemma \ref{lem:est.coe}, the corresponding coefficients $\td b_{u,k}$, $\td c_{u,k}$, $\td d_{u,k}$, and $\td e_{u,k}$ satisfy \eqref{est.b}-\eqref{est.e}, and $\td n_o$, $\td n_c$, and $\td n_{q+1}$, generated by them, also fulfill \eqref{est.W}-\eqref{est.w}. As a result, the corrected dissipative Euler-Reynolds flow $(\td m_{q+1}, c_{q+1}, \td R_{q+1},  \td\ph_{q+1})$ satisfies \eqref{est.vp}-\eqref{est.ph} for $q+1$ and \eqref{cauchy} as desired. On the other hand, by the construction, the correction $\td n_{q+1}$ differs from $n_{q+1}$ on the support of $\th_{u_0}(\tau_q^{-1}\cdot)$. Therefore, we can easily see 
\[
\supp_t(m_{q+1} - \td m_{q+1})
=\supp_t(n_{q+1} - \td n_{q+1})
\subset \cal I. 
\] 
Furthermore, by \eqref{eq.Ga} and \eqref{eq.Ga1}, we have 
\begin{align*}
\sum_{I\in \mathscr{I}_{u,v,R}}\ga_I^2 |(\na \xi_I)^{-1}f_I|^2
&=\tr\left(
(\na \xi_I)^{-1}  \sum_{f\in \cF_{u,v,R}} \ga_I^2 f_I\otimes f_I [(\na \xi_I)^{-1}]^\top
\right)\\
&= \tr (\varrho^2(\de_{q+1}\I - R_\ell - \widetilde{M}_{I})
\end{align*} 
where $\widetilde{M}_I := \sum_{(u',v')\in I(u,v)}\th_{I'}^2\chi_{I'}^2(\xi_{I'})\sum_{f'\in \mathscr{I}_{u',v',\ph}}\ga_{I'}^2\left(\dint_{\T^3}\psi_{I'}^2dx \right) (\na \xi_{I'})^{-1} f' \otimes  (\na \xi_{I'})^{-1} f' $.
In particular 
\[
\norm{\widetilde{M}_I}_0 \lec \la_q^{-2\ga} \de_{q+1}
\] 
(see the proof in section \ref{subsec:R}). In this proof, $\norm{\cdot}_N$ denotes $\norm{\cdot}_{C([0,T];C^N(\T^3))}$.
Then, it follows that
\begin{align*}
|n_o - \td n_o |^2
&=  \sum_{I\in \mathscr{I}_R: u_I = u_0}
4\th_I^2(t) \chi_I^2(\xi_I) \ga_I^2 |(\na \xi_I)^{-1} f_I|^2 (1+(\psi_I^2(\la_{q+1} \xi_I)-1))\\
&= \sum_{I\in \mathscr{I}_R: u_I = u_0}
4\th_I^2\chi_I^2(\xi_I)  (\varrho^2(3\de_{q+1} - \tr(R_\ell)) - \tr(\widetilde{M}_I))\\&\quad+
\sum_{k\in \Z^3\setminus\{0\}}\sum_{I\in \mathscr{I}_R: u_I = u_0}
4\th_I^2\chi_I^2(\xi_I) \ga_I^2 |(\na \xi_I)^{-1} f_I|^2
\dot{c}_{I,k} e^{i\la_{q+1}k\cdot \xi_I}\\
&= 4\th_{u_0}^6(\tau_q^{-1}t) (\varrho^2(3\de_{q+1}- \tr R_\ell) - \tr {\widetilde{M} }) + \sum_{k\in \Z^3\setminus\{0\}} 4\de_{q+1}\tr(\td c_{u_0,k}^R)e^{i\la_{q+1}k\cdot \xi_I},
\end{align*}
where 
\[
\tr(\td c_{u_0,k}^R) = \sum_{I\in \mathscr{I}_R: u_I = u_0}
\th_I^2(t) \chi_I^2(\xi_I) \de_{q+1}^{-1}\ga_I^2\dot{c}_{I,k} |(\na \xi_I)^{-1} f_I|^2.
\]
Since we can obtain $\norm{\tr(\td c_{u_0,k}^R)}_N \lec \mu_q^{-N} |\dot{c}_{I,k}|$ for $N=0,1,2$ in the same way used to get the estimate \eqref{est.c} for $c_{u_0,k}$, we conclude
\begin{align*}
\norm{n_o - \td  n_o}_{C^0([0,T]; L^2(\T^3))}^2
&\geq 12\de_{q+1}\norm{\varrho}^2_{C^0({[t_{u_0}+\frac18\tau_q, t_{u_0}+\frac78\tau_q]}; L^2(\T^3))} - 4(2\pi)^3(\norm{\varrho^2 R_\ell}_0 + \norm{\tr(\widetilde{M})}_0) \\
&\quad- {\sup_{t\in [0,T]}}\sum_{k\in \Z^3\setminus\{0\}}4 
\left| 
\de_{q+1} \int\tr(\td c_{u_0,k}^R)e^{i\la_{q+1}k\cdot \xi_I} dx
\right|\\
&\geq 12\de_{q+1}\norm{\varrho}^2_{C^0([t_{u_0}+\frac18\tau_q, t_{u_0}+\frac78\tau_q]; L^2(\T^3))} - c_\varrho \de_{q+1}(\la_q^{-3\ga} + \la_q^{-2\ga} + (\la_{q+1} \mu_q)^{-2})\\
&\geq 4\de_{q+1}{\e_0}
\end{align*}
for sufficiently large $\la_0$. Indeed, in the second inequality, we used Lemma \ref{phase} to get
\begin{align*}
&\sum_{k\in \Z^3\setminus\{0\}}\left| \int \tr(\td c_{u_0,k}^R) e^{\la_{q+1} k\cdot \xi_I} dx\right|
\lec \sum_{k \in \Z^3\setminus\{0\}}\frac{\norm{\tr(\td c_{u_0,k}^R)}_2 + \norm{\tr(\td c_{u_0,k}^R)}_0\norm{{\na} \xi_I}_{C^0([t_{u_0}-\frac12\tau_q, t_{u_0}+\frac32\tau_q];C^2( \T^3))}}{\la_{q+1}^2 |k|^2}
\\
& \qquad \lec (\la_{q+1}\mu_q)^{-2} \sum_{k \in \Z^3\setminus\{0\}}\frac{|\dot{c}_{I,k}|}{|k|^2}
\lec (\la_{q+1}\mu_q)^{-2} 
\left(\sum_{k \in \Z^3\setminus\{0\}} |\dot{c}_{I,k}|^2\right)^\frac12
\left(\sum_{k \in \Z^3\setminus\{0\}} \frac{1}{|k|^4}\right)^\frac12.
\end{align*}
Therefore, we obtain
\begin{align*}
\norm{m_{q+1} - \td m_{q+1} }_{C^0([0,T]; L^2(\T^3))}
&=\norm{n_{q+1}- \td n_{q+1}}_{C^0([0,T]; L^2(\T^3))}\\
&\geq \norm{n_o - \td  n_o}_{C^0([0,T]; L^2(\T^3))}
-(2\pi)^\frac32(\norm{n_c}_0 + \norm{\td n_c}_0)\\
&\geq  2\de_{q+1}^\frac12 \e_0 -\frac{(2\pi)^\frac32 2M_0}{\la_{q+1}\mu_q} \de_{q+1}^\frac12 \geq \de_{q+1}^\frac12 \e_0
\end{align*}
for sufficiently large $\la_0$. 

Lastly, we suppose that a dissipative Euler-Reynolds flow $(\td m_q, c_q, \td R_q,  \td \ph_q)$ satisfies \eqref{est.vp}-\eqref{est.ph} and
\[
\supp_t(m_q-\td m_q, R_q-\td R_q,  \ph_q-\td \ph_q)\subset \cal{J} 
\]
for some time interval $\cal{J}$. Proceed to construct the regularized flow, $\td R_\ell$ and $\td \ph_\ell$ as we did for  $R_\ell$ and $\ph_\ell$ and note that they differ only in $\cal J + \ell_t\subset\cal J + (\la_q\de_q^\frac12)^{-1}$. Consequently, $n_{q+1}$ differ from $\td n_{q+1}$ only in $\cal J + (\la_q\de_q^\frac12)^{-1}$ and hence the corrected dissipative Euler-Reynolds flows $({m}_{q+1},  c_{q+1},  R_{q+1},  \ph_{q+1})$ and $(\td m_{q+1},  c_{q+1}, \td R_{q+1},  \td\ph_{q+1})$ satisfy 
\[
\supp_t(m_{q+1}-\td m_{q+1}, R_{q+1}-\td R_{q+1},  \ph_{q+1}-\td \ph_{q+1})\subset \cal{J} + (\la_q\de_q^\frac12)^{-1}.
\]

 \appendix
 \section{Some technical lemmas}
The proof of the following two lemmas can be found in \cite[Appendix]{BDLSV2020}.
 \begin{lem}[H\"{o}lder norm of compositions] \label{lem:est.com} Suppose $F:\Omega \to \R$ and $\Psi: \R^n \to \Omega$ are smooth functions for some $\Omega\subset \R^m$. Then, for each $N\in \N$, we have
\begin{align}
&\norm{\na^N (F\circ \Psi)}_0
\lec \norm{\na F}_0 \norm{\na\Psi}_{N-1} + \norm{\na F}_{N-1} \norm{\Psi}_0^{N-1}\norm{\Psi}_N \nonumber\\
&\norm{\na^N (F\circ \Psi)}_0
\lec \norm{\na F}_0 \norm{\na\Psi}_{N-1} + \norm{\na F}_{N-1} \norm{\na\Psi}_0^{N} \label{chain},
\end{align}
where the implicit constant in the inequalities depends only on $n$, $m$, and $N$.
 \end{lem}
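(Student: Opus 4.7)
The natural starting point is Faà di Bruno's formula, which expresses
\[
\na^N(F\circ\Psi) \,=\, \sum_{k=1}^N \sum_{\substack{m_1+\cdots+m_k=N\\ m_j\geq 1}} c_{k,\vec m}\,(\na^k F)(\Psi)\bigl[\na^{m_1}\Psi,\ldots,\na^{m_k}\Psi\bigr]
\]
as a finite linear combination indexed by the number $k$ of $F$-derivatives and an ordered decomposition $(m_1,\ldots,m_k)$ of $N$, with universal combinatorial constants $c_{k,\vec m}$. After taking sup norms and applying the triangle inequality, it suffices to bound each product $\norm{\na^k F}_0 \prod_{j=1}^k \norm{\na^{m_j}\Psi}_0$ by the right-hand side of \eqref{chain}.

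The core step will be a two-fold Gagliardo--Nirenberg interpolation. For $1\leq k\leq N$ and $1\leq m\leq N$ one has
\[
\norm{\na^k F}_0 \lec \norm{\na F}_0^{\frac{N-k}{N-1}}\,\norm{\na F}_{N-1}^{\frac{k-1}{N-1}},\qquad \norm{\na^{m}\Psi}_0 \lec \norm{\na \Psi}_0^{\frac{N-m}{N-1}}\,\norm{\na \Psi}_{N-1}^{\frac{m-1}{N-1}},
\]
(with trivial interpretation for $N=1$). Plugging these in and using the constraint $\sum_j m_j = N$, the exponents collect as follows: the power of $\norm{\na F}_{N-1}$ is $\sigma_k:=\tfrac{k-1}{N-1}$, the power of $\norm{\na\Psi}_{N-1}$ is $1-\sigma_k=\tfrac{N-k}{N-1}$, the power of $\norm{\na F}_0$ is $1-\sigma_k$, and the power of $\norm{\na\Psi}_0$ is $\sum_j \tfrac{N-m_j}{N-1}=\tfrac{(k-1)N}{N-1}=N\sigma_k$. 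Hence each term in the Faà di Bruno expansion is bounded by
\[
\bigl(\norm{\na F}_0 \,\norm{\na\Psi}_{N-1}\bigr)^{1-\sigma_k}\bigl(\norm{\na F}_{N-1} \,\norm{\na\Psi}_0^{N}\bigr)^{\sigma_k},
\]
and an application of the weighted Young inequality $a^{1-\sigma}b^{\sigma}\leq a+b$ delivers the second inequality of \eqref{chain}. The first displayed inequality is then a consequence of the second together with the standard bound $\norm{\na\Psi}_0^{N}\lec \norm{\Psi}_0^{N-1}\norm{\Psi}_N$ (itself a Gagliardo--Nirenberg estimate).

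The principal bookkeeping obstacle will be verifying that the combinatorial exponents indeed reassemble into the clean form $(1-\sigma_k,\sigma_k)$: the identity $\sum_j(m_j-1)=N-k$ is what makes the fractional powers of the intermediate norms of $\Psi$ telescope into a single power of $\norm{\na\Psi}_{N-1}$ and a single power of $\norm{\na\Psi}_0$. Apart from this exponent count, every other ingredient is off-the-shelf; in particular, the induction-on-$N$ alternative (differentiate once and apply the inductive hypothesis to the product $(\na F)(\Psi)\na\Psi$ via Leibniz and a product-rule interpolation) is available but requires essentially the same interpolation argument and so is not meaningfully shorter.
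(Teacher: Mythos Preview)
The paper does not give its own proof of this lemma; it simply refers to the appendix of \cite{BDLSV2020}. Your argument via Fa\`a di Bruno combined with Landau--Kolmogorov/Gagliardo--Nirenberg interpolation is correct and is exactly the approach taken in that reference, so there is nothing to compare.
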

 
 \begin{lem}\label{phase}
 Let $N\geq 1$. Suppose that $a\in C^\infty(\T^3)$ and $\xi\in C^\infty(\T^3;\R^3)$ satisfies
 \[
 \frac 1{C} \leq |\na \xi| \leq C
 \]
 for some constant $C>1$. Then, we have
 \[
 \left| \int_{\T^3} a(x) e^{ik\cdot \xi} dx \right|
 \lec \frac{\norm{a}_N + \norm{a}_0\norm{{\na} \xi}_{N}}{|k|^N} ,
 \]
 where the implicit constant in the inequality is depending on $C$ and $N$, but independent of $k$. 
 \end{lem}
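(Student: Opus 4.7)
The plan is to prove Lemma \ref{phase} by a standard non-stationary phase argument via iterated integration by parts on $\T^3$, exploiting that the bilipschitz hypothesis makes the phase $\phi(x) := k \cdot \xi(x)$ non-stationary with $|\nabla \phi| \gtrsim |k|$. First I would record the pointwise identity
\[
\nabla \phi(x) = (\nabla \xi(x))^\top k, \qquad |\nabla \phi(x)|^2 = k^\top \nabla\xi(x)\,(\nabla\xi(x))^\top k \geq C^{-2}|k|^2,
\]
where the lower bound uses $1/C \leq |\nabla \xi| \leq C$ (interpreted so that the singular values of $\nabla\xi$ are bounded away from zero, which is exactly the situation in which the lemma is applied, since in the paper $\xi = \xi_I$ is close to the identity by \eqref{est.flow1}).

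Next I would use $\nabla\phi \cdot \nabla e^{ik\cdot\xi} = i|\nabla\phi|^2 e^{ik\cdot\xi}$ to rewrite $e^{ik\cdot\xi}$ and integrate by parts:
\[
\int_{\T^3} a\, e^{ik\cdot\xi}\, dx = \int_{\T^3} (Ta)\, e^{ik\cdot\xi}\, dx, \qquad Ta := -\nabla \cdot \left(\frac{a\,\nabla\phi}{i|\nabla\phi|^2}\right).
\]
There are no boundary contributions because $\T^3$ is closed. Iterating $N$ times reduces the estimate to the pointwise bound
\[
\|T^N a\|_0 \lec_{C,N} \frac{\|a\|_N + \|a\|_0\,\|\nabla\xi\|_N}{|k|^N}.
\]
Note that each application of $T$ manufactures a factor of $1/|\nabla\phi|^2 \sim |k|^{-2}$ together with one derivative, and that derivatives of $\nabla\phi = (\nabla\xi)^\top k$ carry one power of $|k|$, so the net gain per step is $|k|^{-1}$, matching the claimed rate.

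The main obstacle is bookkeeping the intermediate terms in $T^N a$ produced when derivatives fall on the amplitude $(i|\nabla\phi|^2)^{-1}\nabla\phi$ instead of on $a$. I would proceed by induction on $N$, writing $Ta$ as a finite linear combination of terms of the form $|\nabla\phi|^{-2j}\,P(\nabla^{\leq 2}\xi,\,k)\,\nabla^{\leq 1} a$ with $j\leq 2$; the lower bound $|\nabla\phi|\geq |k|/C$ combined with Lemma~\ref{lem:est.com} applied to the composition $F(y)=|y|^{-2j}$ with $\Psi = \nabla\xi \cdot k / |k|$ controls the derivatives of the rational factors purely in terms of $\|\nabla\xi\|_N$ and powers of $C$. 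Iterating and collecting terms, every summand in $T^N a$ is bounded by $|k|^{-N}$ times a product of a $C^j$-norm of $a$ and a $C^{N-j}$-norm of $\nabla\xi$ for some $0\leq j\leq N$. The extreme cases $j=N$ and $j=0$ give the two terms on the right-hand side, while the intermediate terms are absorbed by interpolation $\|a\|_j \|\nabla\xi\|_{N-j} \lec \|a\|_N^{j/N}\|a\|_0^{1-j/N}\|\nabla\xi\|_N^{(N-j)/N}\|\nabla\xi\|_0^{j/N}$ and Young's inequality, yielding the stated estimate with an implicit constant depending only on $C$ and $N$.
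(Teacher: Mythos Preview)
Your proposal is correct and is precisely the standard non-stationary phase argument; the paper does not supply its own proof of Lemma~\ref{phase} but instead refers to \cite[Appendix]{BDLSV2020}, where the same integration-by-parts scheme with the operator $T a = -\nabla\cdot\big(a\,\nabla\phi/(i|\nabla\phi|^2)\big)$ is used. Your remark that the hypothesis $C^{-1}\le|\nabla\xi|\le C$ must be read as a two-sided bound on the singular values (equivalently, $\nabla\xi$ invertible with $|\nabla\xi^{-1}|\le C$) is well taken and is indeed how the lemma is applied in the paper via \eqref{est.flow1}.
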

 
The following lemmas contain various commutator estimates, used in the proof. 
\begin{lem} Let $f$ and $g$ be in $C^\infty([0,T]\times \T^3)$ and set $f_\ell = P_{\leq \ell^{-1} } f$, $g_\ell = P_{\leq \ell^{-1}} g$ and $(fg)_\ell =
 P_{\leq \ell^{-1}} (fg)$. Then, for each $N \geq 0$, the following holds,
\begin{align}
\norm{f_\ell g_\ell  - (fg)_\ell }_N \lec_N  \ell^{2-N} \norm{f}_1\norm{g}_1. \label{est.com} 
\end{align}
 \end{lem}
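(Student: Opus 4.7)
The plan is to use the classical Constantin--E--Titi commutator identity combined with a Bernstein-type inequality exploiting the fact that both $f_\ell g_\ell$ and $(fg)_\ell$ are frequency-localized at scale $\ell^{-1}$. Concretely, writing $P_{\leq \ell^{-1}}$ as convolution with a kernel $\eta_\ell(y) = 2^{3J}\widecheck{\phi}(2^J y)$ (with $2^J$ the dyadic approximation to $\ell^{-1}$), a direct algebraic manipulation yields the pointwise identity
\begin{equation*}
(fg)_\ell(x) - f_\ell(x) g_\ell(x) = \int \eta_\ell(y)\bigl(f(x-y)-f(x)\bigr)\bigl(g(x-y)-g(x)\bigr)\,dy - (f-f_\ell)(x)(g-g_\ell)(x).
\end{equation*}

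First I would use this identity to establish the case $N=0$. For the integral term, the mean-value estimate $|f(x-y)-f(x)| \leq |y|\norm{f}_1$ (and similarly for $g$), together with the moment bound $\int |\eta_\ell(y)|\,|y|^2\,dy \lec \ell^2$ which holds because $\eta_\ell$ is Schwartz, gives a bound of order $\ell^2 \norm{f}_1 \norm{g}_1$. For the second term, the standard estimate $\norm{f-f_\ell}_0 = \norm{P_{>\ell^{-1}} f}_0 \lec \ell \norm{f}_1$ (and the analogue for $g$) produces the same order. Summing, we get $\norm{f_\ell g_\ell - (fg)_\ell}_0 \lec \ell^2 \norm{f}_1 \norm{g}_1$.

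For $N \geq 1$, the key observation is that both $f_\ell g_\ell$ and $(fg)_\ell$ have Fourier support in a ball of radius $\lec \ell^{-1}$: $(fg)_\ell$ by direct definition of the projector, and $f_\ell g_\ell$ because the product of two functions with spectral support in $|\xi| \leq 2\ell^{-1}$ has spectral support in $|\xi| \leq 4\ell^{-1}$. Consequently, their difference is itself spectrally localized at scale $\ell^{-1}$, so Bernstein's inequality yields $\norm{f_\ell g_\ell -(fg)_\ell}_N \lec_N \ell^{-N}\norm{f_\ell g_\ell - (fg)_\ell}_0$, and combining with the $N=0$ estimate gives the claim.

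There is no real obstacle here — this is an essentially standard estimate. The only mild subtlety is verifying the spectral localization of $f_\ell g_\ell$, which simply uses that the support of $\widehat{\phi}$ lies in $B(0,2)$, so convolution of two spectrally truncated functions respects the expected doubling of frequency scale.
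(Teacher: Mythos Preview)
Your proposal is correct and follows essentially the same strategy as the paper: reduce to the case $N=0$ via Bernstein's inequality (exploiting the spectral localization of $f_\ell g_\ell$ and $(fg)_\ell$), and then handle $N=0$ by the Constantin--E--Titi commutator identity. The paper records a slightly different double-integral form of the identity (citing \cite{CoETi1994}), but the substance and the estimates are identical to what you wrote.
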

 \begin{proof}
 Since the expression that we need to estimate is localized in frequency, by Bernstein's inequality it suffices to prove the case $N=0$. Recall now the function $\phi$ used to define the Littlewood-Paley operators and the number $J$, which is the maximal natural number such that $2^J \leq \ell^{-1}$. Denoting by $\widecheck{\phi}$ the inverse Fourier transform and by $\widecheck{\phi}_\ell$ the function $\widecheck{\phi}_\ell (x) =
2^{3J} \widecheck{\phi} (2^{J} x)$,
 a simple computation (see for instance \cite{CoETi1994}) gives
\begin{align*}
(f_\ell g_\ell  - (fg)_\ell) (x) &= \frac{1}{2} \int\int (f(x)- f(x-y)) (g(x)-g(x-z)) \widecheck{\phi}_\ell (y) \widecheck{\phi}_\ell (z)\, dy\, dz\, 
\end{align*}
and the claim easily follows.
 \end{proof}

\begin{lem}\label{lem:com2} Let $f$ and $g$ be in $C^\infty([0,T]\times \T^3)$ and set $f_\ell = P_{\leq \ell^{-1} } f$ and $(fg)_\ell =
P_{\leq \ell^{-1}} (fg)$. Then, for each $N \geq 0$, the following holds,
\begin{equation}\label{est.com0}\begin{split}
\norm{[g, P_{\leq \ell^{-1}}]f}_0 
&\lec  \ell\norm{f}_0\norm{\na g}_0   \\
\norm{[g, P_{\leq \ell^{-1}}]f}_N 
&\lec_N  \ell^{1-N} \norm{f}_0\norm{g}_{\max \{1,N\}}.  
\end{split}\end{equation}
In particular, for any smooth function $v, F\in C^\infty(\T^3)$ and for each $N\geq 0$, we have 
\begin{align}
&\norm{ [v\cdot \na, {P}_{\le \ell^{-1}}]F}_0=\norm{[v\cdot\na, {P}_{> \ell^{-1}}] F}_{0} \lec \ell\norm{\na F}_0\norm{\na v}_0\label{est.com1}\\
&\norm{ [v \cdot \na, {P}_{\le \ell^{-1}}]F}_N =\norm{[v\cdot\na, {P}_{> \ell^{-1}}] F}_{N}\lec_N \ell^{1-N}\norm{\na F}_0\norm{v}_{\max \{1,N\}}.\label{est.com3}
\end{align} 
\end{lem}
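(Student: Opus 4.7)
\textbf{Proof proposal for Lemma \ref{lem:com2}.} The plan is to reduce everything to the integral representation of the commutator and then split into a ``low-derivative'' and a ``high-derivative'' regime according to how many derivatives hit the kernel versus the multiplier $g$.

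First, write $P_{\leq \ell^{-1}} h = h * \widecheck{\phi}_\ell$ with $\widecheck{\phi}_\ell (x) \sim \ell^{-3} \widecheck{\phi} (x/\ell)$ (up to the dyadic rounding that defines $J$, irrelevant for the estimates), so that
\[
[g, P_{\leq \ell^{-1}}]f (x) = \int \bigl(g(x) - g(x-y)\bigr) f(x-y)\, \widecheck{\phi}_\ell (y)\, dy \,.
\]
For the $N=0$ bound, the fundamental theorem of calculus gives $|g(x)-g(x-y)| \leq |y|\,\|\nabla g\|_0$, and the elementary fact $\int |y| |\widecheck{\phi}_\ell (y)|\,dy \lec \ell$ yields the claim.

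For $N \geq 1$, substitute $z=x-y$ and differentiate $N$ times in $x$:
\[
\nabla^N_x [g, P_{\leq \ell^{-1}}]f (x)
= \sum_{N_1+N_2 = N} \binom{N}{N_1} \int \nabla^{N_1}_x\bigl(g(x)-g(z)\bigr)\, f(z)\, \nabla^{N_2}_x \widecheck{\phi}_\ell (x-z)\, dz \,.
\]
When $N_1 \geq 1$ the $x$-gradient kills the $g(z)$ piece, leaving $\nabla^{N_1} g(x)\cdot \bigl(\nabla^{N_2} P_{\leq \ell^{-1}} f\bigr)(x)$, which by Bernstein is bounded by $\|g\|_{N_1}\,\ell^{-N_2}\,\|f\|_0 \lec \ell^{1-N}\|g\|_N\|f\|_0$ (using $\ell \leq 1$ to absorb the $N_1=N$, $N_2=0$ case). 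When $N_1=0$, revert to the $y$-variable and use again $|g(x)-g(x-y)|\leq |y|\|\nabla g\|_0$ together with $\int |y| |\nabla^N \widecheck{\phi}_\ell (y)|\, dy \lec \ell^{1-N}$, obtaining $\lec \ell^{1-N}\|\nabla g\|_0\|f\|_0$. Combining these, and noting that $\|\nabla g\|_0 \leq \|g\|_{\max\{1,N\}}$, establishes \eqref{est.com0}.

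Finally, \eqref{est.com1}--\eqref{est.com3} follow by writing
\[
[v\cdot \nabla, P_{\leq \ell^{-1}}]F = \sum_i [v_i, P_{\leq \ell^{-1}}]\partial_i F
\]
(since $P_{\leq \ell^{-1}}$ commutes with $\partial_i$), applying \eqref{est.com0} with $f = \partial_i F$ and $g=v_i$, and using that $\|\partial_i F\|_0 \leq \|\nabla F\|_0$; the identity $[v\cdot \nabla, P_{\leq \ell^{-1}}] = -[v\cdot \nabla, P_{> \ell^{-1}}]$ gives the equality of the two commutator norms. The main technical hurdle is the bookkeeping in the Leibniz split to ensure that no derivative ever falls on $f$, so that only $\|f\|_0$ appears on the right-hand side; apart from that, everything reduces to moment bounds on $\widecheck{\phi}_\ell$ and Bernstein's inequality.
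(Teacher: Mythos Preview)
Your proof is correct and follows essentially the same route as the paper: the same integral representation of $[g,P_{\leq\ell^{-1}}]f$, the same Leibniz split distinguishing whether at least one derivative hits $g$ versus all derivatives hitting the kernel, and the same reduction of \eqref{est.com1}--\eqref{est.com3} to \eqref{est.com0} via $[v\cdot\nabla,P_{\leq\ell^{-1}}]F=\sum_i [v_i,P_{\leq\ell^{-1}}]\partial_i F$ together with $[v\cdot\nabla,P_{\leq\ell^{-1}}]=-[v\cdot\nabla,P_{>\ell^{-1}}]$. The only cosmetic difference is that where you invoke Bernstein for the $N_1\geq 1$ terms, the paper writes the equivalent kernel moment bound $\int |\nabla^{N-N_1}\widecheck{\phi}_\ell|\,dy\lec \ell^{-(N-N_1)}$ directly.
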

\begin{rem}
When $v$ has the frequency localized to $\ell^{-1}$, using the Bernstein's inequality, \eqref{est.com3} can be improved to
$\norm{ [v\cdot \na, {P}_{\le \ell^{-1}}]F}_N\lec_N \ell^{1-N}\norm{\na v}_0\norm{\na F}_0$. 
\end{rem}
\begin{proof}
We first write
\begin{align*}
 f_\ell g  - (fg)_\ell = \int f(y) (g(x)-g(y)) \widecheck{\phi}_\ell (x-y) \, dy.
\end{align*}
Then, \eqref{est.com0} is obtained as $\norm{ f_\ell g  - (fg)_\ell}_0 
\leq \ell\norm{f}_0\norm{\na g}_0 $
and
\begin{align*}
|\na^N(f_\ell g  - (fg)_\ell)|
&\leq  \int |f(y)| |g(x)-g(y)| |\na^N \widecheck{\phi}_\ell (x-y) |\, dy \\& \quad+ \sum_{N_1=1}^N c_{N_1,N} \int |f(y)| |\na^{N_1}g(x)| |\na^{N-N_1}\widecheck{\phi}_\ell (x-y)| \, dy\\
&\lec \ell^{1-N}\norm{f}_0(\norm{\na g}_0 + \norm{g}_N)
 \end{align*}
for some constants $c_{N_1,N}$. Since we have
\begin{align*}
[v\cdot\na, {P}_{> \ell^{-1}}] F (x)
&= v\cdot \na  ({P}_{> \ell^{-1}}F -F)+(v\cdot \na F) -  {P}_{> \ell^{-1}}(v\cdot \na F)    \\
&= -v \cdot \na {P}_{\le \ell^{-1}} F + {P}_{\le \ell^{-1}} (v \cdot \na F) = - [v\cdot\na, {P}_{\le \ell^{-1}}] F,
\end{align*}
we apply \eqref{est.com0} to $g=v_i$ and $f=\pa_i F$, then \eqref{est.com1} and \eqref{est.com3} follow. 
 \end{proof}

\begin{lem}\label{lem.com4} For a fixed $\overline{N}\in \mathbb{N}$, if $v$ and $g$ satisfy
\[
\norm{v}_N \lec_{\overline N} \ell^{-N}v_F, \quad
\norm{g}_N \lec_{\overline N}  g_F
\]
for all integer $N\in [1, \overline{N}]$ and for some positive constants $v_F$ and $g_F$, then we have
\begin{align}
\norm{[v  \cdot \na, P_{\le \ell^{-1}}](fg)-([v \cdot \na, P_{\le \ell^{-1}}]f) g}_{N} 
\lesssim \ell^{1-N}\norm{\na f}_0 v_F g_F
+\ell^{-N}\norm{f}_0v_Fg_F, 
\label{est.com5}    
\end{align}
for any integer $N\in [0,\overline N]$.
\end{lem}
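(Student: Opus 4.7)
\emph{Proof Plan.} The approach is a direct kernel computation based on the representation $P_{\le\ell^{-1}}h(x)=\int h(y)\cp_\ell(x-y)\,dy$, where $\cp_\ell$ is the rescaled kernel $2^{3J}\widecheck{\phi}(2^J\cdot)$ used in Lemma \ref{lem:com2}. First, using the integration by parts identity $\na_x\cp_\ell(x-y)=-\na_y\cp_\ell(x-y)$, one has
\[
[v\cdot\na,P_{\le\ell^{-1}}]h(x)=\int(v(x)-v(y))\cdot\na h(y)\,\cp_\ell(x-y)\,dy .
\]
Applying this with $h=fg$ and subtracting $([v\cdot\na,P_{\le\ell^{-1}}]f)\,g$, the product rule for $\na(fg)$ makes the $v\cdot\na f$ term split off: the residue is the sum $K_1+K_2$, where
\[
K_1(x):=\int(v(x)-v(y))\cdot\na g(y)\,f(y)\,\cp_\ell(x-y)\,dy,
\]
\[
K_2(x):=\int(v(x)-v(y))\cdot\na f(y)\,(g(y)-g(x))\,\cp_\ell(x-y)\,dy .
\]
So the task reduces to bounding $\norm{\na^N K_j}_0$ for $j=1,2$.

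Next I would estimate each term by differentiating $N$ times under the integral via Leibniz and distributing the derivatives among the $x$-dependent factors $v(x)-v(y)$, $g(y)-g(x)$, and $\cp_\ell(x-y)$. The basic ingredients are: (i) the $L^1$ rescaling bound $\int|x-y|^k\,|\na_x^m\cp_\ell(x-y)|\,dy\lec\ell^{k-m}$; (ii) the first-order difference bounds $|v(x)-v(y)|\le|x-y|\norm{\na v}_0\lec|x-y|\ell^{-1}v_F$ and $|g(x)-g(y)|\le|x-y|\norm{\na g}_0\lec|x-y|g_F$; and (iii) the pointwise bounds $|\na^\al v|\lec\ell^{-\al}v_F$ for $\al\in[1,\overline N]$ and $|\na^\be g|\lec g_F$ for $\be\in[1,\overline N]$ supplied by the hypothesis.

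For $K_1$, in the base case where all $N$ derivatives fall on the kernel one uses the single $(v(x)-v(y))$-cancellation together with (i) with $k=1,m=N$, yielding $\norm{\na v}_0\norm{\na g}_0\norm{f}_0\cdot\ell^{1-N}\lec\ell^{-N}v_Fg_F\norm{f}_0$. When $\al\ge1$ of the derivatives instead hit $v(x)$, one pulls $\na^\al v(x)$ out and estimates the remaining integral using (i) with $k=0,m=N-\al$, giving $\ell^{-\al}v_F\cdot g_F\norm{f}_0\cdot\ell^{-(N-\al)}=\ell^{-N}v_Fg_F\norm{f}_0$. Summing the Leibniz terms gives $\norm{\na^N K_1}_0\lec\ell^{-N}v_Fg_F\norm{f}_0$. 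For $K_2$ there are two cancellations $|x-y|^2$ at hand, and the generic case (no derivatives on either difference) produces $\ell^{-1}v_F\cdot g_F\cdot\norm{\na f}_0\cdot\ell^{2-N}=\ell^{1-N}v_Fg_F\norm{\na f}_0$. Splitting $N=\al+\be+\ga$ with $\al$ derivatives on $v$, $\be$ on $g$, and $\ga$ on $\cp_\ell$, each case with $\al\ge1$ or $\be\ge1$ loses one $|x-y|$ factor but gains $\ell^{-\al}$ or reduces to $\norm{\na^\be g}_0\lec g_F$, and a short bookkeeping shows the exponent of $\ell$ always reassembles to $\ell^{1-N}$ (or better). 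Summing yields $\norm{\na^N K_2}_0\lec\ell^{1-N}v_Fg_F\norm{\na f}_0$, and combining gives \eqref{est.com5}.

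The step I expect to require the most care is the combinatorial bookkeeping of the Leibniz expansion for $K_2$: one needs to verify that no single choice of $(\al,\be,\ga)$ produces a worse power of $\ell$, and in particular that the hypothesis range $\al\le N\le\overline N$ is sufficient to invoke the bound $|\na^\al v|\lec\ell^{-\al}v_F$ at each step. There is no analytic difficulty beyond this; the cancellation structure of the double commutator, encoded in the presence of the two differences $v(x)-v(y)$ and $g(y)-g(x)$ in $K_2$, is exactly what produces the extra factor of $\ell$ in the first term on the right-hand side of \eqref{est.com5}.
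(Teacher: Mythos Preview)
Your proposal is correct and takes exactly the same approach as the paper: the same kernel identity yielding the decomposition into the two integrals $K_1$ and $K_2$, followed by the same difference-and-moment estimates $|\na_x^N(v(x)-v(y))|\lec\ell^{-N}v_F$, $|g(x)-g(y)|\lec|x-y|g_F$, and $\int|y||\cp_\ell(y)|dy\lec\ell$. The paper records only these three ingredients and leaves the Leibniz bookkeeping implicit, whereas you spell it out term by term; the content is identical.
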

\begin{proof}
We first write
\begin{align*}
[v &\cdot \na, P_{\le  \ell^{-1}}](fg)-([v \cdot \na, P_{\le \ell^{-1}}]f) g\\
&=\int_{\R^3} (v  (x) - v  (y)) \cdot \na (fg)(y) \widecheck{\phi}_{\ell}(x-y) \, dy - \int_{\R^3} (v  (x) - v  (y)) \cdot \na f(y) g(x) \widecheck{\phi}_{\ell}(x-y) \, dy \\
&=\int_{\R^3} (v  (x) - v  (y)) \cdot \na f(y) (g(y)-g(x)) \widecheck{\phi}_{\ell}(x-y) \, dy \\
&\hspace{7cm} +\int_{\R^3} (v  (x) - v  (y)) \cdot \na g(y) f(y) \widecheck{\phi}_{\ell}(x-y) \, dy. 
\end{align*}
Then, \eqref{est.com5} follows from
\[
\norm{\na_x^N (v(x)-v(y))}_0
\lec \ell^{-N} v_F, \quad 
|g(x)-g(y)|
\lec {|x-y|} g_F, \quad 
\int |y||\widecheck{\phi}_{\ell}(y)| dy \lec \ell.
\]
\end{proof}

\begin{lem}\label{lem:com*}
For {vector-valued} functions $H$ and {$v$} in $C^\infty([0,T]\times \T^3)$, the following commutator estimate holds,
\begin{align*}
\norm{[P_{\lec \ell^{-1}}v \cdot\na, \cR]P_{\gtrsim \la_{q+1}} H}_{N-1} 
\lec \sum_{N_1+N_2=N-1} \ell \norm{\na v}_{N_1} \norm{H}_{N_2}. \end{align*}
for $N=1,2$, where $\cR= \De^{-1}\na$.
\end{lem}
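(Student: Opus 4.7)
The plan is to reduce the commutator to a form where the disparate frequency supports of $u:=P_{\lec\ell^{-1}}v$ and $g:=P_{\gtrsim\la_{q+1}}H$ can be exploited. The first step is to use that $\cR=\De^{-1}\na$ commutes with $\pa_l$ (both are constant-coefficient Fourier multipliers) to rewrite
\[
[u\cdot\na,\cR]g \;=\; u_l\pa_l\cR g-\cR(u_l\pa_l g)\;=\;[u_l,\cR](\pa_l g),
\]
so that only the multiplication-by-$u_l$ has been commuted past the order $-1$ operator $\cR$.

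The main step is the base case $N=1$, i.e.\ the bound
\[
\|[u_l,\cR]\pa_l g\|_0\;\lec\; \la_{q+1}^{-1}\|\na u\|_0\|g\|_0.
\]
Writing things in Fourier, the symbol of $\cR$ is $m(\zeta)=-i\zeta/|\zeta|^2$, so that on the relevant frequency pairs (where $\widehat{u}_l$ is supported on $|\xi-\eta|\lec\ell^{-1}$ and $\hat g$ on $|\eta|\gtrsim\la_{q+1}$, hence $|\xi|\sim|\eta|\gtrsim\la_{q+1}$), a mean value expansion using $|\na m(\zeta)|\lec|\zeta|^{-2}$ yields
\[
|m(\eta)-m(\xi)|\;\lec\; |\xi-\eta|\,\la_{q+1}^{-2}.
\]
The $|\xi-\eta|$ factor is exactly what absorbs one derivative on $u$, while the remaining $|\eta|/\la_{q+1}^2$ combined with the $|\eta|$ from $\pa_l g$ produces an effective $|D|^{-1}$ acting on $g$. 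Decomposing $g=\sum_{2^j\gtrsim\la_{q+1}}P_{2^j}g$ dyadically and summing the geometric series $\sum_{2^j\gtrsim\la_{q+1}}2^{-j}\lec\la_{q+1}^{-1}$ via Bernstein's inequality produces the desired bound. Finally, the parameter choice $\ell^{-1}=\la_q^{3/4}\la_{q+1}^{1/4}(\de_q/\de_{q+1})^{3/8}\lec\la_{q+1}$ gives $\la_{q+1}^{-1}\lec\ell$, and the boundedness of $P_{\lec\ell^{-1}}$ gives $\|\na u\|_0\lec\|\na v\|_0$, yielding the advertised estimate.

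The $N=2$ case is an immediate consequence via the Leibniz rule, since $\na$ commutes with each of $\cR$, $P_{\lec\ell^{-1}}$, and $P_{\gtrsim\la_{q+1}}$:
\[
\na[u\cdot\na,\cR]g \;=\; [(\na u)\cdot\na,\cR]g \;+\; [u\cdot\na,\cR]\na g.
\]
The first summand is the base case applied to $(\na v, H)$ (noting $\na u=P_{\lec\ell^{-1}}\na v$), giving $\ell\|\na v\|_1\|H\|_0$; the second applied to $(v,\na H)$ (noting $\na g=P_{\gtrsim\la_{q+1}}\na H$), giving $\ell\|\na v\|_0\|H\|_1$.

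The main obstacle is the base case, specifically translating the symbol-difference bound $|m(\eta)-m(\xi)|\lec|\xi-\eta|\la_{q+1}^{-2}$ into a physical-space $L^\infty$ estimate. A naive kernel-based argument (writing $[u_l,\cR]f(x)=\int K(x-y)(u_l(x)-u_l(y))f(y)\,dy$ with $|K(z)|\sim|z|^{-2}$) only produces a Riesz potential of order 2 acting on $|\pa_l g|$, and $|D|^{-2}:L^\infty\to L^\infty$ is false in general. The high-frequency support of $g$ is essential in order to convert this to a geometric sum of dyadic Bernstein contributions summing to $\la_{q+1}^{-1}$; without this localization the estimate would not hold at the $C^0$ level.
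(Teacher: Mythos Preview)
Your approach is correct and in fact more economical than the paper's. The paper Taylor-expands the symbol $\crF[\cR](k)-\crF[\cR](\eta)$ to high order $l_0>2$, writes the first $l_0$ terms as products $\na^l v_\ell:\na\cR_l H_j$ with $\cR_l$ the multiplier with symbol $(-i)^l\na_\eta^l\crF[\cR](\eta)$, bounds these via the $L^1$ norms of the kernels of $\na\cR_l P_{2^j}$ (summing geometrically in $j$), and finally disposes of the Taylor remainder crudely in $L^2$ using that $l_0$ is large enough for $(\ell^{-1}\la_{q+1}^{-1})^{l_0}\la_{q+1}^3\lec 1$. Your rewriting $[u\cdot\na,\cR]g=[u_l,\cR]\pa_l g$ followed by a single mean-value step and dyadic summation is essentially the $l_0=1$ version, but by localizing each dyadic piece you avoid any remainder term and any appeal to $L^2$; the Leibniz reduction for $N=2$ is also cleaner than tracking the sum over $N_1+N_2$ through the Taylor terms.

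The one point you leave implicit is precisely the step you flag as the ``main obstacle'': how the symbol bound $|m(\eta)-m(\xi)|\lec|\xi-\eta|\,2^{-2j}$ becomes a $C^0$ estimate on $[u_l,\cR]\pa_l g_j$. The standard resolution (which you should state) is to insert a fattened projector $\tilde P_j$ with $\tilde P_j\pa_l g_j=\pa_l g_j$ and $\tilde P_j(u_l\pa_l g_j)=u_l\pa_l g_j$ (possible since $\ell^{-1}\ll 2^j$), so that $[u_l,\cR]\pa_l g_j=[u_l,\cR\tilde P_j]\pa_l g_j$. The operator $\cR\tilde P_j$ has a Schwartz kernel $K_j$ with $\||z|K_j\|_{L^1}\lec 2^{-2j}$ by scaling, and then the physical-space commutator formula $\int K_j(x-y)(u_l(x)-u_l(y))\pa_l g_j(y)\,dy$ gives the bound $2^{-2j}\|\na u\|_0\|\pa_l g_j\|_0\lec 2^{-j}\|\na u\|_0\|g\|_0$ directly. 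With this one line added, your argument is complete.
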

\begin{proof}

For convenience, we write $v_\ell:= P_{\lec \ell^{-1}}v$ and $H_j = P_{2^j}H$ for for $2^j \gtrsim \la_{q+1}$. We first use the Fourier expansion and the Taylor's theorem to get
\begin{align}
&{-}[v_{\ell} \cdot\na, \cR]H_j \nonumber\\ 
&= \sum_{k, \eta \in \Z^3}  (\crF[\cR](k)-\crF[\cR](\eta)) i\eta \cdot \crF[ v_\ell](k-\eta) \crF[H_j] (\eta) e^{ik\cdot x}\nonumber\\
&= \sum_{k, \eta \in \Z^3} \sum_{l=1}^{l_0} \frac1{l!}[(k-\eta)\cdot \na]^l \crF[\cR](\eta) i\eta \cdot \crF[v_\ell](k-\eta) \crF[H_j] (\eta) e^{ik\cdot x}\nonumber\\
\begin{split}
&\quad + \frac1{l_0!} \sum_{k, \eta \in \Z^3} \int_0^1 [(k-\eta)\cdot \na]^{l_0+1} \crF[\cR](\eta + \sigma (k-\eta))(1-\sigma)^{l_0} d\sigma i\eta \\
&\hspace{7cm}\cdot \crF[ v_\ell](k-\eta) \crF[H_j] (\eta) e^{ik\cdot x},\label{com*}
\end{split}
\end{align}
{where $l_0>2$, independent of $q$, is chosen to satisfy $(\ell^{-1}\la_{q+1}^{-1})^{l_0} \la_{q+1}^3\lec 1$.} The first term can be written as $\sum_{l=1}^{l_0}\frac1{l!} \na^l v_\ell : \na \cR_l H_j$ where the operator $\cR_l$ has a Fourier multiplier defined by $\crF[\cR_l g](\eta) = { (-i)^l}\na_\eta^l \crF[\cR](\eta) \crF[g](\eta)$. 

Using this decomposition, we now estimate 
\begin{align*}
\Norm{\sum_{l=1}^{l_0}\frac1{l!} \na^l v_\ell : \na \cR_l P_{\gtrsim \la_{q+1}} H}_{N-1} &\lesssim \sum_{N_1+N_2=N-1} \sum_{l=1}^{l_0}\frac1{l!} \norm{ \na^l v_\ell }_{N_1}\norm{ \na \cR_l P_{\gtrsim \la_{q+1}} H}_{N_2}\\
&\lesssim \sum_{N_1+N_2=N-1} \sum_{l=1}^{l_0}\frac{\ell^{1-l}}{l!} \norm{ \na v }_{N_1} \sum_{2^j\gtrsim \la_{q+1}}\norm{ K_{l,j}}_{L^1} \norm{ H}_{N_2}\\
&\lesssim \sum_{N_1+N_2=N-1} \ell \norm{ \na v }_{N_1} \norm{ H}_{N_2}
\end{align*}
{where $K_{l,j}$ is the kernel of the operator $\na \cR_l P_{2^j}$ and the last estimate follows from 
\begin{align*}
K_{l,j} = 2^{j(-l+3)}K_{l,0}(2^j \cdot), \quad
\sum_{2^j\gtrsim\la_{q+1}}\norm{K_{l,j}}_{L^1(\R^3)}
\lec \la_{q+1}^{-l} \norm{K_{l,0}}_{L^1(\R^3)}.
\end{align*}

The remaining term can be estimated as follows. Since $|k-\eta|\leq \frac 12 |\eta|$ and hence $|k|\lec |\eta|$, we get
\begin{align*}
\Norm{\na^{N-1} \sum_{2^j\gtrsim \la_{q+1}} \eqref{com*}}_0 
&\lesssim {\sum_{2^j\gtrsim \la_{q+1}}}\sum_{k, \eta \in \Z^3} |k|^{N-1} \frac{|k-\eta|^{l_0+1}}{|\eta|^{l_0+1}} |\crF[v_\ell](k-\eta)| |\crF[H_j] (\eta)|\\
&\lec \sum_{2^j\gtrsim \la_{q+1}}\sum_{k, \eta \in \Z^3} \frac{|k-\eta|^{l_0}}{|\eta|^{l_0+1}}
|\crF[\na v_\ell](k-\eta)| |\eta|^{N-1}|\crF[H_j] (\eta)|\\
&\lec\sum_{2^j\gtrsim \la_{q+1}}
\sum_{|k|\lec 2^j}
(\ell^{-1}2^{-j})^{l_0} 2^{-j} \norm{\na v}_{L^2(\T^3)}\norm{{\na^{N-1}}H_j}_{L^2(\T^3)}\\
&\lec \sum_{2^j\gtrsim \la_{q+1}} 
(\ell^{-1}2^{-j})^{l_0} 2^{2j} \norm{\na v}_{L^2(\T^3)}\norm{\na^{N-1}H_j}_{L^2(\T^3)}\\
&\lec (\ell^{-1}\la_{q+1}^{-1})^{l_0} \la_{q+1}^2
\norm{\na v}_0 \norm{H}_{N-1}
\lec \la_{q+1}^{-1}\norm{\na v}_0 \norm{H}_{N-1}.
\end{align*}

}

\end{proof}

\subsection*{Acknowledgments}
The first author has been supported by the National Science Foundation under Grant No. DMS-FRG-1854344. The second author has been supported by the NSF under Grant No. DMS-1926686. {The authors are grateful to Camillo De Lellis for helpful discussions and his contribution to Section \ref{sec:time-dependent.density}.}


\end{document}